\documentclass[10pt,leqno,a4paper]{article}

\usepackage{amsmath} 
\usepackage{amsthm}
\usepackage{amssymb}

\usepackage{yhmath} 

\usepackage{color}


\usepackage{hyperref} 
\hypersetup{
    colorlinks=true,       
    linkcolor=blue,          
    citecolor=green,        
    filecolor=magenta,      
    urlcolor=cyan           
} 

\usepackage{tikz}
\usepackage{tikz-cd}
\usetikzlibrary{matrix,arrows,decorations.pathmorphing}


\usepackage[english]{babel}
\usepackage[totalheight=22 true cm, totalwidth=14 true cm]{geometry}

\usepackage{setspace}
\usepackage{mathrsfs}



%
%

\newtheorem{thm}{Theorem}[subsection]
\newtheorem{cor}[thm]{Corollary}
\newtheorem{lemma}[thm]{Lemma}

\newtheorem{prop}[thm]{Proposition}

\newtheorem{defn}[thm]{Definition}

\theoremstyle{remark}

\theoremstyle{definition}
\newtheorem{parag}[thm]{}

\newtheorem{rmk}[thm]{Remark}

\newtheorem{exa}[thm]{Example}
\newtheorem{exas}[thm]{Examples}
\newtheorem{notation}[thm]{Notation}

\numberwithin{equation}{thm}
\def\beq{\begin{equation}}
\def\eeq{\end{equation}}
\def\beqa{\begin{equation*}}
\def\eeqa{\end{equation*}}
\def\ben{\begin{enumerate}}
\def\een{\end{enumerate}}
\def\besp{\begin{split}}
\def\eesp{\end{split}}

\def\crash#1{}
\def\N{{\mathbb N}}

\def\Z{{\mathbb Z}}
\def\0{{\mathbb O}}

\def\Q{{\mathbb Q}}

\def\D{{\mathbb D}}

\def\F{{\mathbb F}}

\def\1{{\mathbf 1}}

\def\l{\left}
\def\r{\right}
\def\[[{\l[\l[}
\def\]]{\r]\r]}

\def\discr{{\rm discr}}

\def\cl{{\rm cl}}

\def\cf{\emph{cf.}\,}
\def\ie{\emph{i.e.}\,}

\def\rhs{\emph{r.h.s.}\,}
\def\lc{\emph{loc.cit.\,}}

\def\cA{{\mathcal A}}
\def\cB{{\mathcal B}}
\def\cC{{\mathcal C}}

\def\cE{{\mathcal E}}
\def\cF{{\mathcal F}}

\def\cI{{\mathcal I}}
\def\cM{{\mathcal M}}

\def\cO{{\mathcal O}}

\def\cL{{\mathcal L}}

\def\cP{{\mathcal P}}
\def\cR{{\mathcal R}}
\def\cS{{\mathcal S}}
\def\cT{{\mathcal T}}

\def\g"{``}
\def\g'{`}

\def\sU{{\mathscr U}}

\def\wtilde{\widetilde}
\def\what{\widehat}
\def\nwhat{\wideparen} 

\def\veps{\varepsilon}

\def\dis{{\rm discr}}

\def\naive{{\rm naive}}

\def\ab{{\rm ab}}

\def\Spa{{\rm Spa\,}}

\def\Spf{{\rm Spf\,}}

\def\id{{\rm id}}
\def\Ker{{\rm Ker}}
\def\Coker{{\rm Coker}}
\def\Im{{\rm Im}}
\def\Coim{{\rm Coim}}

\def\ol{\overline}
\def\iso{\xrightarrow{\ \sim\ }}
\def\map#1{\xrightarrow{\ #1\ }}

\def\sep{{\rm sep}}

\def\limit{{\rm lim}}
\def\colimit{{\rm colim}}

\def\PROD{ \mathop{{\prod}}\limits}
\def\SUM{ \mathop{{\bigoplus}}\limits}

\def\PRODsq{ \mathop{{\prod}^{\square}}\limits}
\def\PRODsqu{ \mathop{{\prod}^{\square,{\rm u}}}\limits}
\def\PRODcan{ \mathop{{\prod}^{\can}}\limits}

\def\PRODsqcont{ \mathop{{\prod}^{\square,{\rm c}}}\limits}

\def\Ab{{{\cA}b}}

\def\Mod{{{\cM}od}}

\def\Rings{{\cR ings}}

\def\QGR[#1]{{{\cO_{#1}}\hbox{-}{\bf Mod}_{\rm qcoh}}}
\def\QGRcan[#1]{{{\cO_{#1}}\hbox{-}{\bf Mod}^\can_{\rm qcoh}}}
\def\QGRborn[#1]{{{\cO_{#1}}\hbox{-}{\bf Mod}^\born_{\rm qcoh}}}

\def\Omod[#1]{{{\cO_{#1}}\hbox{-}{\bf Mod}}}

\def\LM{{\cL\cM}}
\def\LMc{{\cL\cM}^{\rm c}}

\def\LMu{{\cL\cM}^{\rm u}}

\def\Lin{{\cL}in}

\def\Ban{{{\cB}an}}

\def\Ab{{\cA b}}
\def\TAb{{\cT\!\!\cA b}}
\def\SAb{{\cS\!\!\cA b}}
\def\CAb{{\cC\!\cA b}}
\def\TAbo{{\cT\!\cA b}^{\omega}}
\def\SAbo{{\cS\!\cA b}^{\omega}}
\def\CAbo{{\cC\!\cA b}^{\omega}}
\def\cRc{{\cR^{\rm c}}}
\def\cRu{{\cR^{\rm u}}}

\def\cRfop{{\cR^{\rm u, fop}}}
\def\cRclop{{\cR^{\rm clop}}}

\def\cRuop{{\cR^{\rm u,op}}}
\def\cRfop{{\cR^{\rm u, fop}}}
\def\cRuclop{{\cR^{\rm u,clop}}}
\def\cRouclop{{\cR^{\omega, \rm u,clop}}}

\def\cRou{{\cR^{\omega, \rm u}}}

\def\cRoufop{{\cR^{\omega, \rm u,fop}}}
\def\cSR{{\cS\cR}}
\def\cSRc{{\cS\cR^{\rm c}}}
\def\cSRu{{\cS\cR^{\rm u}}}

\def\cSRfop{{\cS\cR^{\rm u, fop}}}

\def\cSRuop{{\cS\cR^{\rm u,op}}}
\def\cCR{{\cC\cR}}

\def\cCRc{\cC\cR^{\rm c}}
\def\cCRu{{\cC\cR^{\rm u}}}

\def\cCRuclop{{\cC\cR^{\rm u,clop}}}

\def\cCRfop{{\cC\cR^{\rm u, fop}}}

\def\cCRuop{{\cC\cR^{\rm u,op}}}
\def\cCRou{{\cC\cR^{\omega, \rm u}}}
\def\cCRouclop{{\cC\cR^{\omega, \rm u,clop}}}
 
\def\cCRoufop{{\cC\cR^{\omega, \rm u,fop}}}
\def\cMc{\cM^{\rm c}}
\def\cMs{\cM^{\rm s}}

\def\cMu{\cM^{\rm u}}
\def\cMclop{\cM^{\rm clop}}
\def\cMop{\cM^{\rm op}}

\def\cMcclop{\cM^{\rm c,clop}}

\def\cMcop{\cM^{\rm c,op}}

\def\cSM{{\cS\cM}}
\def\cSMc{\cS\cM^{\rm c}}

\def\cSMu{\cS\cM^{\rm u}}

\def\cSMcclop{\cS\cM^{\rm c,clop}}

\def\cSMcop{\cS\cM^{\rm c,op}}

\def\cCM{\cC\cM}
\def\cCMo{\cC\cM^{\omega}}

\def\cCMc{\cC\cM^{\rm c}}

\def\cCMu{\cC\cM^{\rm u}}

\def\cCMcclop{\cC\cM^{\rm c,clop}}

\def\cLM{\cL\cM}
\def\cLMc{\cL\cM^{\rm c}}

\def\cLMu{\cL\cM^{\rm u}}
\def\cLMou{\cL\cM^{\omega, \un}}

\def\cLMclop{\cL\cM^{\rm clop}}
\def\cLMcclop{\cL\cM^{\rm c,clop}}
\def\cLMuclop{\cL\cM^{\rm u,clop}}
\def\cLMpscan{\cL\cM^{\rm pscan}}

\def\cLMubarrell{\cL\cM^{\rm u, barrell}}

\def\cCLMoubarrell{\cC\cL\cM^{\omega,\rm u, barrell}}
\def\cLMop{\cL\cM^{\rm op}}
\def\cLMcop{\cL\cM^{\rm c,op}}
\def\cLMuop{\cL\cM^{\rm u,op}}

\def\cSLM{\cS\cL\cM}
\def\cSLM{{\cS\cL\cM}}
\def\cSLMc{\cS\cL\cM^{\rm c}}
\def\cSLMu{\cS\cL\cM^{\rm u}}

\def\cCLM{\cC\cL\cM}
\def\cCLMc{\cC\cL\cM^{\rm c}}
\def\cCLMoc{\cC\cL\cM^{\omega,{\rm c}}}

\def\cCLMu{\cC\cL\cM^{\rm u}}

\def\cCLMcan{\cL\cM^{\rm can}}

\def\cCLMnaive{\cC\cL\cM^{\rm naive}}
\def\cCLMpscan{\cC\cL\cM^{\rm pscan}}

\def\cCLMuclop{\cC\cL\cM^{\rm u,clop}}

\def\cCLMuop{\cC\cL\cM^{\rm u,op}}

\def\cCLMoc{\cC\cL\cM^{\omega,\rm c}}
\def\cCLMou{\cC\cL\cM^{\omega,\rm u}}

\def\cCLMouclop{\cC\cL\cM^{\omega,\rm u, clop}}
\def\cCLMouop{\cC\cL\cM^{\omega,\rm u, op}}


\def\se{{\rm s}}

\def\un{{\rm u}}
\def\co{{\rm c}}
\def\ba{{\rm b}}

\def\born{{\rm born}}

\def\for{{\rm for}}

\def\can{{\rm can}}
\def\pscan{{\rm pscan}}
\def\barrell{{\rm barrell}}
\def\clop{{\rm clop}}

\def\top{{\rm top}}

\def\op{{\rm op}}

\def\Bil{{\rm Bil}}
\def\Hom{{\rm Hom}}



\def\wt{\what{\otimes}}


\author{Francesco Baldassarri 
\thanks{Universit\`{a} di Padova,
Dipartimento di Matematica, Via Trieste, 63, 35121 Padova, Italy.}
 }
 
 \title{Closed exact categories of modules \\ over generalized adic rings.\\ Part 1: The bounded case.}

\bibliographystyle{plain}

\begin{document} 
\date{\today}

\maketitle 

\begin{abstract}  
We develop general foundations of topological algebra over a linearly topologized ring $k$ in a format applicable
 to  both formal schemes and analytic adic spaces. We are especially interested in determining 
 exact closed tensor categories of 
complete linearly topologized $k$-modules, 
with enough projectives or injectives. 
For $k$ a widely generalized adic ring, we 
 describe here a few examples of such categories consisting of bounded modules. The application to the construction of  stacks of quasi-coherent modules  over  formal schemes \cite[Chap. 15]{GR} will be given elsewhere. 
 \end{abstract}
\tableofcontents
\bigskip

\setcounter{section}{-1}
\begin{section}{Introduction}  
\par We develop  foundations  of the theory of commutative  rings and modules equipped with a $\Z$-linear topology. 
We  mainly have in mind  the case of a (separated and) complete 
linearly topologized base
ring $k$ (\ie for which a basis of open neighborhoods of $0$ consists of ideals) and 
 $k$-linear topologies on $k$-modules  (\ie topologies for which a basis of open neighborhoods of $0$ consists of $k$-submodules).  \par \medskip
We avoid Noetherian assumptions on rings and modules but describe a weak form  of finiteness named \emph{clop}  (from ``closure-open''). It is a slight generalization of the notion of \emph{c-adic} of \cite[Defn. 8.3.8 (iii)]{GR}: a linearly topologized ring $k$  is clop iff for $I,J$ open ideals of $k$, the closure of the product ideal $IJ$ is open in $k$.   
More stringent conditions also appear in order to obtain more familiar results.  For example, we say that 
$k$ is \emph{op} (from ``open'') if the product of open ideals is open and  \emph{fop} (from ``finite open'') if, moreover, $k$ admits a basis of finitely generated open ideals. 
Finally, as in  \cite[Chap. 15]{GR},  $k$ is \emph{$\omega$-admissible} if it admits a countable basis of open ideals.  
\par \medskip
Our framework  encompasses both
\begin{itemize}
\item the formal  setting of \cite[{\bf 0}.7]{EGA}, where all topological $k$-modules are bounded, in the sense that their topology is coarser than the one induced by  the \emph{topological} ring $k$ which we call \emph{the naive $k$-canonical topology} (see subsection~\ref{naivesec} below and
\cite[15.1.2]{GR}),   
\item  the (non-archimedean) analytic  setting, where  locally convex topological vector spaces over a (complete) non-archimedean field $K$ \cite{schneider}  are viewed as, typically unbounded, topological modules over the ring of integers $k=K^\circ$ of $K$.  So,    the description of unbounded   
$k$-linearly topologized modules generalizes functional analysis over $K$.  
\end{itemize}  
The distinction of bounded versus possibly unbounded $k$-linearly topologized $k$-modules $M$, generally discussed  in sections \ref{topstruct} and \ref{topmod},  is crucial all over this paper and its follow-up \cite{unbounded}. It corresponds to the fact that the map ``multiplication by scalars''   
$$k \times M \longrightarrow M \;\;,\;\; (\lambda, m) \longmapsto \lambda m
$$
is  required to be  uniformly continuous for the former but  just continuous for the latter. Correspondingly, our $k$-linearly topologized modules are called 
\emph{uniform} in the former case and \emph{continuous}  in the latter. The category $\cLMc_k$ of continuous 
$k$-linearly topologized modules contains the category $\cLMu_k$ of  uniform ones as a full subcategory, but colimits are very different for the two.  
\par \medskip  
The original overall motivation of this study was the search of reasonable categories of quasi-coherent sheaves on 
formal schemes  and  non-archimedean analytic spaces.  The natural expectation, motivated by Gabber's rigid-analytic counterexample reported in \cite[\S 2.1]{conrad} and its  analog on formal schemes,  was that such sought-for sheaves could not possibly be just sheaves   of algebraic structures as \cite[{\bf 0}.5.1]{EGA} seems to suggest, but should  rather carry a topological structure. 
The fundamental steps of this approach via topological algebra were carefully established by Gabber and Ramero \cite{GR} and we naturally build on their work.   
\par \medskip \emph{Aside from some generalities treated in sections 
 \ref{topstruct} and \ref{topmod}, we defer the study of relevant categories of unbounded modules  to the second part \cite{unbounded} of this work. In this paper, we concentrate   on subcategories of 
$\cLMu_k$.} 
\par \medskip 
Since abelian categories in the realm of linearly topologized modules are scarce, while
derived categories exist for very general exact categories in the sense of Quillen,  
the main ingredient in our plan was to prove that certain natural  categories of  linearly topologized modules 
are  exact. 
It turns out that the more special and simpler class of quasi-abelian categories and their derived categories, thoroughly studied by  F. Prosmans \cite{Pros} and J.-P. Schneiders \cite{schneiders}, includes many interesting categories of $k$-linearly topologized modules. For example, when $k$ has a countable basis of open ideals, the category 
of $\omega$-admissible $k$-modules in the sense of \cite[Rmk. 15.1.26 $(ii)$]{GR} and continuous $k$-linear homomorphisms, used by Gabber and Ramero and here denoted by $\cCLMou_k$, is quasi-abelian.
It also has enough injectives 
 (in the sense of \cite[Defn. 1.3.18]{schneiders}), although, as usual, these are very inexplicit. 
 It is easy to check that $\cCLMou_k$ is a symmetric monoidal category\footnote{or a \emph{tensor category}. We follow the notation of \cite{SLN900} and \cite{Mil}.} with unit object $k$ for the complete tensor product of \cite[{\bf 0}.7.7.1]{EGA} here denoted $\wt^\un_k$.  But, in  section~\ref{inthoms} we show that the adjunction formula needed to make $\cCLMou_k$ a \emph{closed} symmetric monoidal category only holds for \emph{pseudocanonical} modules, 
namely $M \in \cCLMu_k$ such that $\{\ol{IM}\}_I$, for $I$ an open ideal of $k$,  is a basis of open submodules of $M$. 
For a pseudocanonical object of $\cCLMou_k$, but not in general, ``pro-flatness $\Rightarrow$ topological flatness'' in the sense that if $N \in \cCLMou_k$  is pseudocanonical and, for any open ideal $I$ of $k$, $N/\ol{IN}$ is a flat (discrete) $k/I$-module, then the functor $M \longmapsto M \wt^\un_k N$ is exact
(see \cite[Lemma 15.1.27]{GR}). \par \smallskip 
We introduce  in section~\ref{canmodules} the main character of our play, namely the full subcategory $\cCLMcan_k$  of  $\cCLMou_k$
whose objects are quotients of  small direct sums of copies of $k$ in $\cCLMu_k$. We call them \emph{canonical} $k$-modules. They can also be characterized as the $M \in \cCLMu_k$ which are \emph{maximal} in the sense 
 that any bijective morphism $N \to M$ in $\cCLMu_k$ is an isomorphism.  
A motivating example arises when $k = K^\circ$ from balls in   $K$-Banach spaces endowed with the subspace topology and continuous $k$-linear morphisms among them. When $k$ is discrete $\cCLMcan_k = \Mod_k$, the abelian category of all (small) $k$-modules.  
We prove that the category $\cCLMcan_k$   is a  bicomplete quasi-abelian category with the  projective generator  $k$ \footnote{unfortunately, $k$ is not compact}.  Projectives of $\cCLMcan_k$ are precisely direct summands of  small direct sums of copies of $k$ in $\cCLMu_k$; they are automatically topologically flat.  
 Now,  $\cCLMcan_k$ is a   symmetric monoidal sub-category of $\cCLMu_k$. 
We prove in section~\ref{inthoms} that $\cCLMcan_k$ admits an internal \emph{Hom}, denoted $\cL^\can_k(M,N)$, for $M,N \in \cCLMcan_k$ whose underlying $k$-module is $\Hom_{\cLMu_k}(M,N)$; then $(\cCLMcan_k,\wt^\un_k,\cL^\can_k, k)$ is a closed symmetric monoidal category.
\par \medskip  
When $k = K^\circ$, for $K$ as before,   the category $\cLMc_k$  contains the category of locally convex $K$-vector spaces \cite{schneider} so that classical  nonarchimedean functional analysis is comprised in our setting. 
More generally, when $k=R_0$ is the ring of definition of an analytic Huber ring $R$ \cite[Defn. 1.1.2]{K}, \cite{huber0}, \cite{huber1}, 
the  category $\cLMc_k$ contains interesting categories of topological $R$-modules. We will dedicate to them the follow-up of this paper \cite{unbounded}  where we will explain how to generalize classical results on locally convex and especially on bornological quasi-complete spaces to this type of relative situation. 
  Examples of unbounded $k$-modules arise when $k= R_0$ is  a ring of definition of a Tate ring $R$~:  
one may then regard (unbounded) $R_0$-linearly topologized $R$-modules as modules of global sections of 
 sheaves of locally convex vector spaces on the adic space $\Spa (R,R^\circ)$ or over the formal scheme $\Spf R_0$.    
The study of tensor product  and internal \emph{Hom} initiated here in sections \ref{tensors} and  \ref{inthoms}  will be resumed  in \cite{unbounded} where the two functors  will appear in different forms  for the various categories of unbounded modules. 
 \par \smallskip
 \subsection{Acknowledgements}  
Most of the research described in this paper was conducted together with Maurizio Cailotto, who, confronted with a never-ending task, eventually declined to be a co-author. I am indebted to him for his patient listening and his well-founded criticism. \par
It is a pleasure to acknowledge the strong influence of J.-P. Schneiders through his paper \cite{schneiders} on our present work. The present paper  owes a lot to the extremely careful and useful book project \cite{GR} of O. Gabber and L. Ramero, in particular to its Chapters 8 and 15.
Similarly useful was the book of P. Schneider \cite{schneider}, whose statements and proofs  often easily translate from nonarchimedean/analytic  into formal/relative situations.  
The papers  of F. Bambozzi, O. Ben Bassat, K. Kremnitzer \cite{BBK}, \cite{BBB}, \cite{BBBK2} were also helpful in the understanding of the properties of the categories of topological modules we discuss. 
\par 
All of the previously mentioned mathematicians were also kind enough to answer  questions I kept asking them~: I am indebted  to them for their help and  their friendship.
\par
I am also  grateful 
to Tomoyuki Abe, Fumiharu Kato and Nobuo Tsuzuki  for the opportunity of lecturing on this topic in Japan in April 2019. \end{section}  
%
\begin{section}{Exact categories}  
 We recall in this section some  basic definitions and results on exact and quasi-abelian categories.  The purpose of this section is the understanding of condition \cite[(1.3.0)]{GL} and of its dual for general exact categories. 
 \begin{subsection}{Basic definitions  
 }
 \begin{parag} \label{exactpair} 
 An \emph{exact category}  is a pair $(\cC,\cE)$ consisting of an additive category $\cC$ and of a family $\cE$ of $0$-sequences in $\cC$ of the form 
 \beq \label{exseq0}
 A \map{u} B \map{v} C
 \;,
 \eeq 
 called \emph{short exact sequences} satisfying a number of requirements listed in 
 \cite[\S 2]{exact} (and equivalent to the original requirements of Quillen reported in \cite[\S 1.0]{GL}). In particular it is required that $(u,v)$ is a \emph{kernel-cokernel pair} in the sense that $u$  is  a kernel  of $v$ and $v$ is a cokernel of $u$.   A morphism $v$ (resp. $u$) of $\cC$ appearing in a short exact sequence \eqref{exseq0} is called a \emph{strict epimorphism}  (resp. a \emph{strict monomorphism}); we  often shorten these names into ``strict epi'' and ``strict mono''.  In general a morphism $f$ of $\cC$ is \emph{strict} if $f$ factors as $f = u \circ v$, with $v$ a strict epi and $u$ a strict mono. 
 \begin{rmk} \label{strictcond}
As recalled in  \cite[Rmk. 8.5]{exact}, for any exact category $\cC$ any strict morphism 
$f:X \longrightarrow Y$ of $\cC$ admits a kernel, a cokernel, an image and a coimage; moreover, the canonical morphism $\tilde{f}: \Coim (f) \to \Im (f)$ is an isomorphism. 
 \end{rmk}
 \begin{defn} 
 Given  exact categories $(\cC,\cE)$ and $(\cC',\cE')$ an additive functor $F: \cC \longrightarrow \cC'$ is \emph{exact} if it transforms any short exact sequence in $\cE$ into a short exact sequence in $\cE'$. 
 \end{defn}
  \end{parag}
 
  \begin{parag} \label{exacat} For  a maximal choice of the family $\cE$ of sequences \eqref{exseq0} in an additive category $\cC$, the procedure of checking whether $(\cC,\cE)$ is an exact category 
may be  simplified.   
\begin{prop} \label{exactcheck}   
Let $\cC$ be an additive category and let $\cE$  be  the family of all sequences \eqref{exseq0} in $\cC$ where $(u,v)$ is a kernel-cokernel pair. Then $\cE$ is an  exact structure on $\cC$ iff 
\ben
 \item  any kernel is a  strict mono (\ie a kernel coincides with its image);
 \item any cokernel is a  strict epi (\ie a cokernel coincides with its coimage);
\item the pull-back of a strict epi by any morphism in $\cC$ exists and is still a strict epi;
  \item the push-out of a strict mono by any morphism in $\cC$  exists and is still a strict mono.
 \een
 \end{prop}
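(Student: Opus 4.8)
The plan is to check the axioms of an exact structure in the form of \cite{exact} directly for the maximal class $\cE$, first converting conditions (1) and (2) into bare existence statements. The key soft observation is the elementary lemma that a kernel which happens to admit a cokernel is automatically the kernel of that cokernel: if $u=\Ker f$ and $\Coker u$ exists, then $fu=0$ forces $f$ to factor through $\Coker u$, so any $g$ with $(\Coker u)g=0$ satisfies $fg=0$ and hence factors uniquely through $u$; together with $(\Coker u)u=0$ this is precisely the universal property of $\Ker(\Coker u)$, and the dual statement holds for cokernels. Consequently (1) is equivalent to ``every kernel admits a cokernel'' and identifies the strict monos with exactly the kernels, while (2) is equivalent to ``every cokernel admits a kernel'' and identifies the strict epis with exactly the cokernels. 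This dictionary is what I would use throughout, since it makes the four conditions read as purely existence-and-stability requirements.

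For the implication (1)--(4) $\Rightarrow$ exactness, the axioms concerning identities come for free: $(1_A, A\to 0)$ and $(0\to A, 1_A)$ are kernel--cokernel pairs, so each $1_A$ is at once a strict mono and a strict epi, and $\cE$ is patently closed under isomorphism. The push-out and pull-back axioms are literally conditions (4) and (3). The only substantial point, and the step I expect to be the main obstacle, is closure under composition. Given strict monos $A\xrightarrow{i}B\xrightarrow{j}C$, I would set $e=\Coker i$ (which exists since $i$ is strict) and form the push-out of $e$ along $j$; I claim the resulting square is bicartesian, that the parallel map $e'$ is again a strict epi, and that $\Ker e'=j\circ i$, whence $ji$ is a strict mono as the kernel of a strict epi. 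Here the delicate ingredient is the lemma that the push-out of a strict epi along a strict mono exists, is a strict epi, and yields a bicartesian square; this must itself be extracted from (3) and (4), and it is exactly the place where conditions (1) and (2) earn their keep, guaranteeing that all the kernels and cokernels invoked in the construction genuinely exist and remain strict, so that the argument never leaves $\cE$. The closure of strict epis under composition is dual.

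For the converse, suppose $\cE$ is an exact structure. Conditions (3) and (4) then hold because they are verbatim the pull-back and push-out axioms. For (1) and (2) I would invoke the dictionary of the first paragraph together with Remark~\ref{strictcond}: every strict morphism already possesses a kernel, cokernel, image and coimage, so that strict monos are kernels and strict epis are cokernels; conversely, once the ambient $\cC$ supplies the relevant cokernels of kernels and kernels of cokernels (as it does in the pre-abelian categories that motivate the statement), the soft lemma makes every kernel the kernel of its cokernel and every cokernel the cokernel of its kernel, i.e. strict. Thus (1) and (2) record precisely the requirement that the strict monos and strict epis exhaust the kernels and cokernels. I expect no real difficulty in this direction; as above, the genuine work, and the only serious obstacle, is the composition axiom for sufficiency and, inside it, the bicartesian push-out lemma derived from (3) and (4).
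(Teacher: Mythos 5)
Your opening dictionary lemma (a kernel admitting a cokernel is the kernel of that cokernel, and dually) is correct and is exactly what the parentheticals in the statement intend: it converts (1)--(2) into the existence statements ``every kernel has a cokernel, every cokernel has a kernel'' and identifies strict monos with kernels and strict epis with cokernels; identities, isomorphism-closure, and axioms (3)--(4) are then immediate, and since the hypotheses and the maximal class $\cE$ are self-dual, reducing to a single composition axiom is legitimate. The genuine gap is precisely at the step you yourself flag as the crux: you rest the composition of strict monos on the lemma that the push-out of a strict epi along a strict mono is a strict epi with bicartesian square, and you never prove it, saying only that it ``must itself be extracted from (3) and (4)''. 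As stated, that lemma is both unproved and the wrong target: the standard arguments for bicartesianness and for strictness of the parallel map (\eg \cite[Prop.\ 2.12]{exact}) are theorems \emph{about} exact categories and already use closure of the strict classes under composition, so extracting them before the composition axiom is established courts circularity. It is also more than you need: given (1), it suffices that $ji$ be a kernel of \emph{anything}. And that is a short chase inside your own square. With $i:A\to B$, $j:B\to C$ strict monos, $e=\Coker i:B\to D$, and the push-out $e'j=j'e$ supplied by (4) (so $j':D\to D'$ is a strict mono), set $q=\Coker j$; the pair $(q,0)$ induces $n:D'\to \Coker j$ with $ne'=q$ and $nj'=0$. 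Clearly $e'ji=j'ei=0$; and if $e't=0$, then $qt=ne't=0$, so $t=jt'$ by your soft lemma ($j=\Ker q$); then $j'et'=e'jt'=0$ and $j'$ monic give $et'=0$, whence $t'=it''$ since $i=\Ker e$. Thus $ji=\Ker e'$, and (1) upgrades it to a strict mono --- no bicartesianness and no strictness of $e'$ required.

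Two further remarks. First, your converse for (1)--(2) is established only ``once the ambient $\cC$ supplies the relevant cokernels of kernels and kernels of cokernels'', i.e.\ under a pre-abelian-type hypothesis absent from the statement; what the necessity direction actually asserts is that exactness of the maximal class by itself forces those kernels and cokernels to exist, and your argument does not show this (the paper is equally silent on this point). Second, for comparison: the paper proves nothing by hand --- its proof consists of two citations, the proposition of \cite[Appendix A]{Keller} to reduce the axiom list, and \cite[Prop.\ 2.11]{LH} for exactly the composition axiom you isolate. Your push-out square with $e=\Coker i$ is the dual of the pull-back-along-$\Ker$ argument hiding behind the second citation, so your architecture is sound and, with the repair above, yields a self-contained proof of sufficiency where the paper outsources; but as written, the proposal is missing its central step.
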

 \begin{proof}  It suffices to show that axioms E1 and E1$^\op$ of  \cite[\S 2]{exact}. 
 This follows from  \cite[Prop. 2.11]{LH} (a result the authors attribute to W. Rump \cite{rump}) and its dual. 
\end{proof}
\begin{defn} \label{specex} An exact category of the type described in Proposition~\ref{exactcheck}, that is in which all kernel-cokernel pairs are exact,  will be called \emph{special}.
\end{defn}
\begin{rmk} \label{strictcondconv} If $\cC$ is a special exact category, 
the converse of Remark~\ref{strictcond} also holds. Namely, if a particular morphism $f:X \longrightarrow Y$ of $\cC$ admits a kernel, a cokernel, an image and a coimage and $\tilde{f} : \Coim (f) \to \Im(f)$ is an isomorphism, then $f$ is strict in the sense of 
subsection~\ref{exactpair}. In fact, let $\pi_f: X \to \Coim (f)$ and $\iota_f: \Im(f) \to Y$ be the canonical morphisms. We obtain a  factorization of $f$ as the composite 
$$X \map{\pi_f} \Coim (f) \map{\tilde{f}} \Im(f) \map{\iota_f}Y$$ 
so  that by axioms E1 and E1$^\op$ of  \cite[\S 2]{exact}, $f$ is a strict morphism. 
\end{rmk}
\begin{defn}  \label{quasiabdef} A \emph{quasi-abelian}  category  is an additive category in which 
\ben
\item any morphism has  kernel  and cokernel;
\item  kernels (resp. cokernels) are stable under push-out (resp. under pull-back) along arbitrary morphisms.
\een
\end{defn} 
It follows immediately from the characterization of  Proposition~\ref{exactcheck} that
 \begin{prop} \label{quasiabchar} Let $\cC$ be an additive category with kernels and cokernels and let $\cE$  be the class of all  kernel-cokernel pairs of $\cC$. Then,  $\cC$ is a quasi-abelian category iff $(\cC,\cE)$ is an exact category (necessarily special).
\end{prop} 
\begin{rmk} \label{bim=iso} 
\hfill
\ben
\item In a quasi-abelian category $\cC$ we will use the notion of exactness of Proposition~\ref{quasiabchar}. In particular, a morphism in $\cC$ is a strict epi (resp. mono) iff it is a cokernel (resp. a kernel).
\item It follows from Remarks~\ref{strictcond} and \ref{strictcondconv}  that in a quasi-abelian category a morphism $f$ is strict iff the canonical morphism $\tilde{f} : \Coim\,f \longrightarrow \Im\,f$ is an isomorphism.
\item  We recall that, in any category, a \emph{bimorphism} is a morphism which is both a monomorphism and an epimorphism. In a quasi-abelian category $\cC$ a strict bimorphism  is an isomorphism.  In fact, let $f:E \longrightarrow F$ be a strict morphism in $\cC$. By \cite[Rmk. 1.1.2 (b)]{schneiders} if $f$ is a monomorphism (resp. an epimorphism) then $f$ coincides with the morphism $\Im(f) = \Ker  \, (\Coker \,f) \longrightarrow F$ (resp. $E \longrightarrow \Coim \,f = \Coker \, (\Ker \, f)$) so that $E = \Im(f)$ (resp. $F = \Coim\, f$). If therefore $f$ is a strict bimorphism, we get that $f:E \longrightarrow F$ identifies with the  canonical morphism $\tilde{f}: \Coim\, f \longrightarrow \Im\, f$, so that it is an isomorphism.
\item  In a quasi-abelian category for a strict morphism $A \map{f} B$ with image $\Im\,f \map{\chi} B$ (resp. with coimage $A \map{\varphi} \Coim\,f$), we have $\Coker\,f = \Coker\,\chi$ (resp. $\Ker\,f = \Ker\,\varphi$). 
\item Let $\cC$ be an additive category   with cokernels and images.  Then, any morphism of  $\cC$ which is a cokernel coincides with its coimage. 
In fact, let $M \map{f} N$  be a cokernel in $\cC$ and let
$H \map{h} M$ be the kernel of $f$. We need to prove that $f = \Coker \, h$. Suppose $f$ is the cokernel of $P \map{g} M$ and let $M \map{u} Q$ be such that $u \circ h = 0$. Since $f \circ g = 0$ there exists $P \map{\ell} H$ such that $g = h \circ \ell$. Then $u \circ g = 0$ and therefore there exists $N \map{v} Q$ such that $v \circ f = u$. 
\item
Dually,  let $\cC$ be  an additive category with kernels and coimages. Then  any kernel $M \map{f} N$ in $\cC$ coincides with its image.
\een
\end{rmk} 
\begin{prop} \label{laumonax} \hfill \ben
\item The exact categories considered by Laumon in \cite[(1.3.0)]{GL} coincide with the special exact categories  with kernels and coimages. 
\item Dually, the exact categories mentioned   in \cite[Rmk. 1.3.0.1 (iii)]{GL} coincide with the special exact categories  with cokernels and images. 
\een
\end{prop}
\begin{proof} Let us prove part $\mathit 1$. Let $(\cC,\cE)$ be an exact category satisfying condition (1.3.0) of \cite{GL} and let $A \map{u} B \map{v} C$ be any kernel-cokernel pair in $\cC$. Obviously,  (1.3.0) implies that $A \map{u} B \map{v} C$ is exact, so that $(\cC,\cE)$ is a special exact category. \par
Conversely, let $(\cC,\cE)$ be a special exact category and assume $\cC$ has kernels and coimages. We check condition (1.3.0) of \cite{GL}. Let $u:E \to F$   be any morphism of $\cC$ and consider the sequence 
\beq \label{exseq}
\Ker(u) \map{i} E \map{j} \Coim (u)\;.
\eeq 
By 6 of Remark~\ref{bim=iso} above $\Ker(u) = \Ker (j)$, so that \eqref{exseq} is a kernel-cokernel pair. Hence,  it belongs to $\cE$ and condition (1.3.0) of \cite{GL} is verified. 
\par Part $\mathit 2$ holds by duality. 
\end{proof}
\begin{rmk} \label{laumonax2} 
The categories described in the second comma of Proposition~\ref{laumonax} will be most useful to us. 
\end{rmk}
We recall the general
\begin{defn} \label{leftrightex}    Let $F: \cE \longrightarrow \cF$ be a functor of finitely complete (resp. cocomplete) categories.   Then 
$F$ is  \emph{strongly left exact} (resp. \emph{strongly right exact}) if it preserves finite limits (resp. colimits).
\end{defn}
\end{parag}
  \end{subsection}
  \begin{subsection}{Projective, injective,  flat objects} 
  \begin{defn} \label{projdef} Let $(\cC,\cE)$ be an exact category and let $(\cC^\circ, \cE^\circ)$ be the opposite exact category.  
An object $P$ (resp. $I$) of $\cC$ is \emph{projective} (resp. \emph{injective}) if the functor 
$X \mapsto \Hom_\cC(P,X)$ (resp. $X \mapsto \Hom_\cC(X,I)$), $\cC \to \Ab$ (resp. ${\cC}^\circ \to \Ab$) 
transforms strict epimorphisms (resp. strict monomorphisms) into surjections.  
\end{defn} 
\begin{defn} \label{enough} We say that the exact  category $(\cC,\cE)$ has \emph{enough projectives} (resp. 
\emph{enough injectives}) if, for any object $C$ of $\cC$, there exists a strict epimorphism $P \to C$ (resp. a strict monomorphism $C \to I$) with $P$ projective (resp. with $I$ injective). 
\end{defn}
\begin{defn}\label{topflat} An object $M$ of an exact tensor category 
$(\cC,\cE, \otimes,U)$, with unit $U$, is said to be  \emph{$\otimes$-flat} 
if the functor $X \longmapsto X \otimes M$ is exact.  
\end{defn}
\end{subsection}
\end{section}

\begin{section}{Topological groups, rings, and modules} \label{topstruct}

\begin{subsection}{Topological groups} \label{topgroup0}
\par 
The discussion of this subsection appears with more detail in \cite[\S 8.2]{GR},  especially as Proposition~{8.2.13} of \lc. 

\begin{notation} 
Let $X$ be a topological space and $Y \subset X$ be any subset. 
The closure of $Y$ in $X$ is denoted by $\ol{Y}$ if no confusion can possibly arise. 
Otherwise, we denote it by $\cl_X(Y)$. 
\end{notation}
We will only deal with topological abelian groups of  non-archimedean type as in the following definition.
\begin{defn}\label{topgroup}
A  \emph{(non-archimedean) topological abelian group} is an abelian group $(G,+)$ 
equipped with a topology such that, if  $\cP(G)$ denotes the family of open subgroups of $G$, then  
 for any $g \in G$   the family 
 $$g + \cP(G) := \{g+H\}_{H \in \cP(G)}$$  
  is a fundamental system of  neighborhoods of $g$. 
\end{defn}

A topological abelian group $G = (G,\cP(G))$  is separated if and only if 
$\bigcap\limits_{H\in\cP(G)}H= \{0\}$. 
For any subset $S \subset G$ and any subgroup $H \leq G$, we set $S+H = \bigcup_{s \in S} s +H$. If $H$ is an open subgroup of $G$,
$S+H$ is then both open and closed in $G$.
\begin{rmk} \label{topgroup1} \hfill 
\ben
\item
In any topological group $G$, any open subgroup $T$ is closed. 
Moreover, if $G$ is a topological abelian group, the closure $\skew3\ol{S}$ of any subset $S \subset G$ in $G$ is 
$$ 
\skew3  \ol{S}=\bigcap_{H \in \cP(G)} (S+H) \; . 
$$ 
In particular, the closure of a subgroup $K$ of $G$ is the intersection of all open subgroups 
of $G$ which contain $K$, and it is therefore a closed subgroup of $G$. 
\item
Let $f:G\to H$ be a morphism of topological abelian groups. Then $f$  is an open 
map of topological spaces if and only if 
for any open subgroup $P$ of $G$, the image $f(P)$  is an open subgroup of $H$. 
\een
\end{rmk}
On any topological abelian group $G = (G,\cP(G))$ 
there is  a canonical uniform structure $\sU_G$ with  basis of entourages the family of 
$$
U_P := \{(x,y) \in G \times G \,|\, x-y \in P\} \;, 
$$ 
for $P \in \cP(G)$. The difference  map $G \times G \longrightarrow G$ 
is uniformly continuous for the product uniformity on $G \times G$.
\begin{defn}\label{compl-topgroup}
The group $G$ is said to be \emph{complete} if  its canonical uniform structure is separated and complete.
\end{defn} 

We recall  from Bourbaki's \emph{Topologie G\'en\'erale} \cite[III, \S3, N.5, Cor. 2 to Prop. 10]{topgen}:
\begin{lemma} \label{BourbComplLemma}
Let $(G,\cP_1)$ and $(G,\cP_2)$ be two structures of separated  topological abelian group 
on the same abelian group $G$ such that the identity map of $G$ induces a continuous map  
$(G,\cP_1) \to (G,\cP_2)$. 
Assume there is a basis of neighborhoods of $0$ in $(G,\cP_1)$   which are complete for 
the uniform structure induced  on them by $\cP_2$. 
Then $(G,\cP_1)$ is complete.  
\end{lemma}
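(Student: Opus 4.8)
The statement to prove is that $(G,\cP_1)$ is complete. Since separatedness of $(G,\cP_1)$ is part of the hypotheses, by Definition~\ref{compl-topgroup} the only thing to establish is that every Cauchy filter of the canonical uniformity $\sU_{(G,\cP_1)}$ converges. The plan is to produce the limit by working in the coarser topology $\cP_2$, where completeness is available on a basis of neighborhoods, and then to \emph{upgrade} this limit to a $\cP_1$-limit. At the outset I would fix a basis $\cB$ of open subgroups of $(G,\cP_1)$ each of which is complete for the uniformity induced by $\cP_2$ (in our non-archimedean setting, such a basis may be taken to consist of open subgroups). Note also that since the identity $(G,\cP_1)\to(G,\cP_2)$ is continuous, $\cP_1$ is finer than $\cP_2$, so every $\cP_2$-entourage is a $\cP_1$-entourage and a $\cP_1$-Cauchy filter is automatically $\cP_2$-Cauchy.

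So let $\cF$ be a $\cP_1$-Cauchy filter. First I would trap a member of $\cF$ inside a complete basic subgroup: choosing $B\in\cB$, the Cauchy condition yields $A\in\cF$ with $A-A\subseteq B$, and fixing $a_0\in A$ gives $A\subseteq a_0+B$. Translating by $-a_0$ and taking the trace on $B$ produces a filter on $B$ that is $\cP_1$-Cauchy, hence $\cP_2$-Cauchy; as $B$ is $\cP_2$-complete it $\cP_2$-converges to some $b\in B$. I then set $\ell:=a_0+b$ as the candidate limit. The remaining, and decisive, step is to check that $\cF\to\ell$ in $\cP_1$, i.e.\ that for every $P\in\cB$ there is $F\in\cF$ with $F\subseteq\ell+P$. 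Given such a $P$, the $\cP_1$-Cauchy condition furnishes $A'\in\cF$ with $A'\subseteq A$ and $A'-A'\subseteq P$; picking $a'\in A'$, the latter inclusion confines $A'-a_0$ to the single coset $(a'-a_0)+P$. Because $P\in\cB$ is $\cP_2$-complete and $(G,\cP_2)$ is separated, $P$ is $\cP_2$-closed (a complete subspace of a separated uniform space is closed), hence so is the coset. Since $A'-a_0=(A'-a_0)\cap B$ lies in the trace filter, $b$ is $\cP_2$-adherent to it, so
$$
b\in\ol{A'-a_0}^{\,\cP_2}\subseteq (a'-a_0)+P .
$$
Thus $A'-a_0$ and $b$ share the coset $b+P$, giving $A'\subseteq a_0+b+P=\ell+P$, as required. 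Letting $P$ range over $\cB$ shows $\cF\to\ell$, and with separatedness this proves completeness.

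The main obstacle is precisely the mismatch between the two uniformities: the limit $b$ is manufactured only in the coarser topology $\cP_2$, whereas convergence must be verified in the finer $\cP_1$, and the $\cP_2$-closure formula of Remark~\ref{topgroup1} controls only $\cP_2$-subgroups, not the $\cP_1$-open subgroup $P$. The bridge is the observation that a $\cP_2$-complete basic subgroup $P\in\cB$ is $\cP_2$-closed; this is what lets the $\cP_1$-Cauchy datum (which pins $A'-a_0$ into one $P$-coset) interact with the $\cP_2$-adherence of $b$ to force $b$ into that same coset. I expect the only delicate points in writing this out carefully to be the bookkeeping with trace filters under translation and the explicit passage between ``Cauchy filter has a $\cP_2$-adherent point'' and membership in a $\cP_1$-coset; everything else is routine manipulation with open subgroups and the closure identity already recorded in Remark~\ref{topgroup1}.
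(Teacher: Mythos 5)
The paper does not actually prove this lemma: it is quoted from Bourbaki, \emph{Topologie G\'en\'erale} III, \S3, N.5, Cor.\ 2 to Prop.\ 10, with no argument given. Your proof is a correct reconstruction of exactly that Bourbaki argument (Prop.\ 10 plus its Corollary 2): trap a small member of the Cauchy filter inside a $\cP_2$-complete basic neighborhood, produce the limit there, and upgrade to $\cP_1$-convergence via the fact that a complete subspace of the separated space $(G,\cP_2)$ is closed. The trace-filter bookkeeping, the coset trapping $A'-a_0\subseteq(a'-a_0)+P=b+P$, and the closedness bridge are all sound.

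The one soft spot is the parenthetical ``in our non-archimedean setting, such a basis may be taken to consist of open subgroups.'' This is not justified: the hypothesis gives $\cP_2$-complete $\cP_1$-neighborhoods of $0$, and while any such neighborhood $V$ contains a $\cP_1$-open subgroup $H$, there is no reason $H$ should be $\cP_2$-closed in $V$, hence no reason it should be $\cP_2$-complete. Fortunately your argument never really needs the basis elements to be subgroups. Completeness plus separatedness of $(G,\cP_2)$ already makes every $P\in\cB$ $\cP_2$-closed, which is all the key step uses; the only place the subgroup property enters is the last line, where $b\in(a'-a_0)+P$ and $A'-a_0\subseteq(a'-a_0)+P$ are combined to give $A'\subseteq \ell+P$, and for a general set $P$ this yields only $A'\subseteq \ell+(P-P)$. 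The repair is routine: given an arbitrary $\cP_1$-open subgroup $H$ (these exist inside every neighborhood by Definition~\ref{topgroup}), choose $P\in\cB$ with $P\subseteq H$ and run your argument verbatim; then $A'\subseteq \ell+P-P\subseteq \ell+H$, and letting $H$ range over the open subgroups gives $\cF\to\ell$ in $\cP_1$. With that adjustment (or with a justification of the subgroup reduction, which I do not see how to supply in general) the proof is complete, and it is worth noting that in all the paper's applications of the lemma the complete basic neighborhoods are in fact subgroups, so your extra hypothesis is harmless there.
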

\begin{parag} \label{incl-for}
Let us denote by $\Ab$ the category of abelian groups, 
by $\TAb$ the category of non-archimedean topological abelian groups with continuous maps of groups, 
and by $\SAb$ (resp.~$\CAb$) the full subcategory of separated 
(resp.~complete, which implies separated)  topological abelian groups. 
\begin{prop}
The additive categories categories $\TAb$, $\SAb$, $\CAb$ are bicomplete. 
\end{prop}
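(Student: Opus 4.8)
The plan is to deduce the bicompleteness of all three categories from that of $\TAb$, treating $\SAb$ and $\CAb$ as reflective subcategories. Recall that a category is bicomplete as soon as it has all small products and coproducts together with all equalizers and coequalizers, so I would construct exactly these four classes of (co)limits.

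First I would check that $\TAb$ is bicomplete. Limits are governed by the forgetful functor to $\Ab$: on $\prod_i G_i$ I put the product topology, whose open subgroups are the restricted products $\prod_i H_i$ with $H_i \in \cP(G_i)$ and $H_i = G_i$ for almost all $i$; this is the coarsest group topology making all projections continuous, hence the categorical product, and the equalizer of $f,g : G \to H$ is the subgroup $\{x : f(x) = g(x)\}$ with the subspace topology. Thus the forgetful functor $\TAb \to \Ab$ creates limits. Dually, on $\bigoplus_i G_i$ I would take the finest non-archimedean topology making all inclusions continuous, whose open subgroups are precisely the subgroups $H$ with $H \cap G_i \in \cP(G_i)$ for every $i$; the universal property follows because a homomorphism $\varphi$ out of $\bigoplus_i G_i$ into a non-archimedean group is continuous iff each restriction $\varphi|_{G_i}$ is. The coequalizer of $f,g$ is $H/N$, with $N$ the subgroup generated by the image of $f-g$, equipped with its quotient topology. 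Products and equalizers yield all limits, coproducts and coequalizers all colimits, so $\TAb$ is bicomplete.

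Next I would realize $\SAb$ and $\CAb$ as reflective subcategories of $\TAb$; since both are full, it suffices to exhibit left adjoints to the inclusions. For $\SAb$ the reflector is the separation functor $G \mapsto G/\ol{\{0\}}$ (note $\ol{\{0\}} = \bigcap_{H \in \cP(G)} H$ by Remark~\ref{topgroup1}) with its quotient topology, using that every continuous homomorphism from $G$ into a separated group annihilates $\ol{\{0\}}$ and hence factors uniquely. For $\CAb$ the reflector is the completion $\hat G := \varprojlim_{H \in \cP(G)} G/H$, which is a closed subgroup of a product of discrete groups and hence separated and complete; the adjunction amounts to the fact that a continuous homomorphism into a complete group is uniformly continuous, and so extends uniquely along the canonical dense map $G \to \hat G$.

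Finally I would invoke the standard fact that a reflective subcategory of a bicomplete category is bicomplete: limits are created by the inclusion (a right adjoint), and colimits are obtained by applying the reflector to the colimit computed in $\TAb$. Concretely, in $\SAb$ and $\CAb$ limits coincide with those of $\TAb$, because a product and an equalizer of separated (resp. complete) groups is again separated (resp. complete) --- for completeness one uses that a product of complete uniform groups is complete and that an equalizer, being the kernel of a map into a separated group, is a closed subgroup of a complete group --- whereas colimits are the separation, resp. completion, of the colimit formed in $\TAb$. The main obstacle lies in the previous two paragraphs: correctly pinning down the final topologies defining coproducts and coequalizers in $\TAb$, and verifying that separation and (especially) completion are genuine reflectors, the delicate point being the simultaneous check that $\hat G$ is complete and enjoys the unique extension property.
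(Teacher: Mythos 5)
Your proof is correct and follows essentially the same route the paper takes (its proof is ``Omitted'', but the surrounding Remarks on the discrete-topology, separation and completion adjoints and on the formation of (co)limits lay out exactly this plan): limits in $\TAb$ are created over $\Ab$ via initial/weak topologies, colimits via final linear topologies, and $\SAb$, $\CAb$ are full reflective subcategories of $\TAb$ with separation $G\mapsto G/\ol{\{0_G\}}$ and completion $G\mapsto\what{G}$ as reflectors, so they inherit limits directly and obtain colimits by reflecting those of $\TAb$. Your direct verifications (the explicit open-subgroup bases for products, coproducts and quotients, closedness of equalizers in the separated case, and $\what{G}$ as a closed subgroup of a product of discrete groups) correctly fill in the details the paper omits.
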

\begin{proof}
Omitted. 
\end{proof}
We denote by 
$\TAbo$, $\SAbo$, $\CAbo$ the full additive subcategories of $\TAb$, $\SAb$, $\CAb$, respectively, of objects 
having a countable basis of neighborhoods of $0$. 
We have canonical inclusion and forgetful functors 
\beq \label{iclff}
\CAb \longrightarrow \SAb \longrightarrow \TAb \longrightarrow \Ab 
\eeq 
and similarly for the $\omega$-decorated versions. 
\end{parag}
\begin{rmk} \label{dicsr-adj}
The forgetful functor  $(-)^\for: \TAb \to \Ab$ admits a left adjoint $\Ab\to\TAb$ 
sending an abelian group $G$ to the topological group $G^\discr$ which is 
$G$ itself endowed with the discrete topology 
$$
\Hom_{\TAb}(G^\discr,H) = \Hom_{\Ab}(G,H^\for) \;\;,\;\; \forall \, G \in \Ab \;\mbox{and}\; H \in \TAb \;.
$$ 
Since $G^\discr$ is separated and complete, the functor $(-)^\discr$ is left adjoint  
to the forgetful functors $\SAb \to \Ab$ and $\CAb \to \Ab$, as well. As a consequence, 
the forgetful functors commute with  projective limits. 

\end{rmk}

\begin{rmk} \label{compl-adj} \hfill
\ben
\item
The inclusion functor of $\CAb$ in $\TAb$ admits as left adjoint $\TAb\to\CAb$
the usual (separated) completion 
$$
\what{G} = \limit_{H \in \cP(G)} G/H  
$$
where the limit is calculated group-theoretically 
and the topology on $\what{G}$ is the weak topology of the projections 
$\what{\pi}_H : \what{G} \to G/H$, where $G/H$ is discrete. 
A fundamental system $\cP(\what{G})$ of open subgroups of $\what{G}$ is then given by 
the subgroups $\Ker(\what{\pi}_H)$ for $H \in \cP(G)$.  
\item
The canonical universal map
\beq \label{caninj} 
i=i_G: G \longrightarrow \what{G}
\eeq 
has dense image and it is injective (resp.\ bijective) 
if and only if $G$ is separated (resp.\ complete). 
Recall that, for any \emph{open} subgroup $H$,  
$\Ker(\what{\pi}_H)$ coincides with the closure of the image of $H$ in $\what{G}$, 
and can be identified with the separated completion $\what{H}$ of $H$ 
(where $H$ is endowed with the topology induced by $G$) \cite[II, \S3, N. 9, Cor. 1]{topgen}. 
\item
For any open subgroup $H$ of $G$, the canonical map \eqref{caninj} 
induces a canonical 
isomorphism (of discrete groups) $G/H\to\what{G}/\what{H}$ 
(whose inverse is induced by $\what{\pi}_H$). 
\een
\end{rmk}

\begin{rmk} \label{sub-compl-adj}
Let $G$ be a topological abelian group and let $K$ be \emph{any} subgroup of $G$. 
Let us consider now the quotient group $G/K$. 
It is a topological abelian group with basis $\cP(G/K)$ of open subgroups 
given by $(K{+}P)/ K$ 
with $P$ varying in $\cP(G)$. 
It is separated if and only if $K$ is a closed subgroup of $G$. 
The canonical projection $G\to G/K$ is a continuous, surjective and open map. 
The separated completion of $G/K$ is therefore computed by 
$$ 
\what{G/K} = \limit_{P \in \cP(G)} G/(K{+}P)  
$$ 
and the kernel of $\what{G} \to \what{G/K}$ is then the closure of $K$ in $\what{G}$, which may be identified with 
$\what{K}$ by the discussion above.
 
From the commutative diagram of canonical morphisms 
$$ 
\begin{tikzcd}[column sep=3em, row sep=3em] 
G \arrow{r}{i_G} \arrow{d}{}
& \what{G}  \arrow{d}{} \arrow{rd}{}
\\ 
G/K \arrow{r}[']{}  \arrow[bend right]{rr}{i_{G/K}}
& \what{G}/\what{K} \arrow[dashrightarrow]{r}{}
& \what{G/K} 
\end{tikzcd}
$$ 
where the dashed morphism is injective, 
we deduce that the canonical morphism $G/K\to \what{G}/\what{K}$ 
(which is injective if and only if $K$ is a closed subgroup)
induces an isomorphism  
$$\Big( \what{G} / \what{K} \Big)\what{\phantom{)}} \iso \what{G/K}
\;.
$$ 
\end{rmk} 

\begin{rmk} \label{ML-compl-adj} In general the maps 
$$  \what{G} / \what{K} \map{} \what{G/K} \;\;\mbox{and}\;\; \what{G}  \map{} \what{G/K} \;,
$$ 
which have the same  set-theoretic image, are not surjective. In other words, the quotient group $\what{G} / \what{K}$ is not in general complete in its quotient topology.  
In fact, we have an exact sequence of projective systems of abelian groups with surjective transition maps
\beq \label{ML1} \begin{split}
0 \map{} \{(K + P)/P &\cong K/ (K \cap P)\}_{P \in \cP(G)} \map{} \{G/P \}_{P \in \cP(G)}\\ & \map{} \{G/(K + P) \}_{P \in \cP(G)} \map{}0 \;.
\end{split} \eeq
If we apply $\limit_{P \in \cP(G)}$, we obtain the exact sequence 
\beq \label{ML2}
0 \map{}  \what{K} = \ol{K}  \map{} \what{G} \map{} \what{G/K} \map{} \limit^1\{(K + P)/P \}_{P \in \cP(G)} \map{} \dots 
\eeq
of abelian groups. For the vanishing of $\limit^1$ it is however necessary, in general, \cite[Prop. 13.2.2]{EGA3} that those projective systems be essentially countable. 
\end{rmk}
\begin{cor} \label{quotclop2}
Let $G$ be a complete topological abelian group which admits a countable fundamental system of open subgroups.  
\hfill \ben \item
Let $K$ be a closed subgroup of $G$.
The canonical morphism $i_{G/K}:G/K\to \what{G/K}$ is an isomorphism in $\TAb$, 
that is the quotient $G/K$ is a complete abelian topological group. 
\item A morphism $f:G \to H$ in the category $\CAb$ is a cokernel   
iff it is an open surjection.
\een
\end{cor}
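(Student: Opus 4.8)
The plan is to derive both statements from the exact sequence \eqref{ML2} of Remark~\ref{ML-compl-adj}, the essential input being the vanishing of the $\limit^1$-term under the countability hypothesis.

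First I would prove $(1)$. Since $G$ is complete, the canonical map $i_G$ is an isomorphism by Remark~\ref{compl-adj}, so I identify $\what{G} = G$; since $K$ is closed, $\what{K} = \ol{K} = K$, whence $\what{G}/\what{K} = G/K$, and this quotient is separated. Substituting into \eqref{ML2} yields the exact sequence
\beq
0 \map{} K \map{} G \map{} \what{G/K} \map{} \limit^1\{(K+P)/P\}_{P \in \cP(G)} \map{} \cdots \;.
\eeq
The transition maps of the projective system $\{(K+P)/P \cong K/(K\cap P)\}_{P}$ are surjective by \eqref{ML1}, and the hypothesis that $G$ admits a countable fundamental system of open subgroups makes this system essentially countable; hence \cite[Prop. 13.2.2]{EGA3} gives $\limit^1\{(K+P)/P\}_P = 0$. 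Consequently $G = \what{G} \map{} \what{G/K}$ is surjective, so the factor map $i_{G/K}: G/K = \what{G}/\what{K} \map{} \what{G/K}$ (the dashed arrow of Remark~\ref{sub-compl-adj}) is surjective as well; it is injective because $G/K$ is separated. By the completeness criterion of Remark~\ref{compl-adj}, bijectivity of $i_{G/K}$ means precisely that $G/K$ is complete.

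For $(2)$ I would argue both implications through the recognition of $G/K$ as a cokernel. If $f:G\map{}H$ in $\CAb$ is an open surjection, set $K=\Ker f$, a closed subgroup; then $f$ factors through the quotient as a continuous, open, bijective map $G/K \map{} H$, which is therefore a topological isomorphism. By part $(1)$ the object $G/K$ is already complete, so the canonical projection $G \map{} G/K$ is the cokernel of $K \hookrightarrow G$ computed in $\CAb$ (the completion reflector acts trivially on the already complete, separated quotient); composing with the isomorphism $G/K \iso H$ exhibits $f$ as a cokernel. Conversely, if $f = \Coker(g)$ in $\CAb$ for some $g$, put $K = \ol{\im g}$, a closed subgroup; the cokernel of $g$ in $\CAb$ is the completed separated quotient $\what{G/K}$, which by $(1)$ equals $G/K$, and under this identification $f$ is the canonical projection $G\map{}G/K$, an open surjection by Remark~\ref{sub-compl-adj}.

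The heart of the argument, and the only place the countability assumption enters, is the vanishing of $\limit^1\{(K+P)/P\}_P$. The Mittag-Leffler mechanism, namely surjective transition maps together with essential countability, is exactly what forces $\what{G}\to\what{G/K}$ to be onto; without countability one cannot in general conclude completeness of $G/K$, as Remark~\ref{ML-compl-adj} itself warns. Once this surjectivity is established, the rest is formal: part $(1)$ is the completeness criterion of Remark~\ref{compl-adj}, and part $(2)$ is the standard identification of open surjections with quotient maps by closed subgroups, combined with the description of cokernels in the reflective subcategory $\CAb$ as completed separated quotients.
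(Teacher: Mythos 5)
Your proof is correct and follows essentially the same route as the paper: part (1) is exactly the content of Remark~\ref{ML-compl-adj} (vanishing of $\limit^1$ for the essentially countable surjective system $\{(K+P)/P\}_{P}$ via \cite[Prop. 13.2.2]{EGA3}), and part (2) identifies cokernels in $\CAb$ with completed separated quotients and then invokes part (1). The only cosmetic difference is that in the ``cokernel $\Rightarrow$ open surjection'' direction you work with $K=\ol{\Im\, g}$ for $f=\Coker(g)$ directly, whereas the paper sets $K=\Ker f$ and quotes part 5 of Remark~\ref{bim=iso} to get $H=\Coim(f)=\what{G/K}$; these are interchangeable.
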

\begin{proof}  The first part of the statement is proven in Remark~\ref{ML-compl-adj}.
For the second part, assume $f$ is a cokernel in $\CAb$ and let $K$ be the kernel of $f$. 
Then, by 5 of Remark~\ref{bim=iso},  $H = \Coim(f) = \what{G/K}$.
But the previous point tells us that $G/K \iso \what{G/K}$, so that  $G/K \iso H$ in $\TAb$ and $f$ is an open surjection. Conversely, if $f:G \to H$ is an open surjection 
then $G/K \iso H$ and this implies that $G/K \iso \what{G/K}$ so that  $f$ is a cokernel in $\CAb$. 
\end{proof}
\begin{rmk} \label{sep-adj}
The inclusion functor of $\SAb$ in $\TAb$ admits as left adjoint $\TAb\to\SAb$
the usual separation functor
$$
{G}^\sep =G/\ol{\{0_G\}}\cong\Coim(G\to\what{G}) \;.
$$ 
The quotient is calculated group-theoretically 
and the topology of ${G}^\sep$ is the quotient topology. 
The canonical universal map
\beq \label{cansurj} 
p=p_G: G \longrightarrow {G}^\sep
\eeq 
is continuous, surjective, open (it is a cokernel in $\TAb$)
and it is bijective if and only if $G$ is separated.
\endgraf 
The canonical morphism $i: G \to \what{G}$ 
factorizes through an injective map ${G}^\sep\to\what{G}$ 
(but, unless $G$ is complete, this map is not a kernel of   $\SAb$, 
since it has dense image). 
\end{rmk}

\begin{rmk} \label{sub-sep-adj}
Let $G$ be a topological abelian group, $K$ a subgroup of $G$ endowed with the induced topology, 
$j:K\to G$ the inclusion. 
Then $j^\sep:K^\sep\to G^\sep$ is injective and identifies the image of $K^\sep$ 
with the subgroup $K/(K\cap\ol{\{0_G\}})\cong(K+\ol{\{0_G\}})/\ol{\{0_G\}}$. 
\endgraf 
For quotients, we have a commutative diagram 
of canonical morphisms 
$$ 
\begin{tikzcd}[column sep=3em, row sep=3em] 
G \arrow{r}{p_G} \arrow{d}{}
& G^\sep  \arrow{d}{} \arrow{rd}{}
\\ 
G/K \arrow{r}[']{}  \arrow[bend right]{rr}{p_{G/K}}
& G^\sep/K^\sep \arrow[dashrightarrow]{r}{}
& (G/K)^\sep
\end{tikzcd}
$$ 
where the dashed morphism is surjective. 
The canonical morphism $G/K\to G^\sep/K^\sep = G/(K+\ol{\{0_G\}})$ 
induces an isomorphism 
$$(G^\sep/K^\sep)^\sep = G^\sep/\ol{K^\sep} \iso (G/K)^\sep = G/\ol{K}
\;.
$$ 
\endgraf 
If $K$ is closed in $G$, then $K^\sep$ is closed in $G^\sep$ 
and $G/K$ is already separated,   
so that the canonical morphisms $G/K\to G^\sep/K^\sep\to (G/K)^\sep$ are isomorphisms. 
\end{rmk}

\begin{rmk} \label{co-limits-adj}
By the existence of left adjoints, 
the formation of projective limits 
in the categories of \ref{incl-for}
commutes with the inclusions and forgetful functors \eqref{iclff}. 
In particular, the projective limits in $\CAb$, $\SAb$, $\TAb$ 
are computed as the projective limits of the underlying groups, 
endowed with the weak topology of the projection maps. 
\endgraf 
By contrast, the computation of inductive limits does not commute 
with the inclusion functors in \eqref{iclff}. 
More precisely, the inductive limit of an inductive system  in $\TAb$ 
is the inductive limit of the inductive system of underlying abelian groups 
in the category $\Ab$ endowed with the strong abelian group topology 
of the inclusion maps, while the inductive limit of inductive systems 
in $\SAb$ (resp.~$\CAb$) is the separation (resp.~the separated completion) 
of the inductive limit in $\TAb$. 
\end{rmk}
\begin{rmk} \label{baire}  Let $G$ be a complete topological abelian group which admits a countable fundamental system of open subgroups.
It is a particular case of the Birkhoff-Kakutani Theorem  that the topology of  $G$  is induced by a translation invariant metric. By Baire's theorem \cite[Chap. IX \S 5.3 Thm.~1]{topgen2} it follows that $G$ has no countable covering $G = \bigcup_{n \in \N} A_n$ by closed subsets with empty interior. 
\end{rmk}
It is possible to prove directly that
\begin{prop}\label{abgps=mod} 
The categories $\TAb$ and $\SAb$ are quasi-abelian and bicomplete, while  
the category $\CAbo$ is quasi-abelian and has enough injectives. 
\end{prop}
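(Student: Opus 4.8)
The plan is to verify Definition~\ref{quasiabdef} directly in each case, treating $\TAb$ first and then transporting the computation to $\SAb$ and $\CAbo$ through the reflectors $(-)^\sep$ and $\what{(-)}$ of Remarks~\ref{sep-adj} and~\ref{compl-adj}. Bicompleteness of $\TAb$ and $\SAb$ is the content of the proposition in~\ref{incl-for}, and by Remark~\ref{co-limits-adj} kernels are computed in $\TAb$ (the subspace topology on the set-theoretic kernel), while cokernels are obtained by applying the relevant reflector to the $\TAb$-cokernel $H/\im f$ with the quotient topology; in particular $\Coker$ in $\SAb$ is $H/\ol{\im f}$ and in $\CAbo$ is $\what{H/\im f}$. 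With this in hand one identifies strict epimorphisms with open surjections (via Remark~\ref{bim=iso} and, for $\CAbo$, directly Corollary~\ref{quotclop2}(2)) and strict monomorphisms with topological embeddings in $\TAb$, resp.\ with \emph{closed} embeddings in $\SAb$ and $\CAbo$ (a kernel has closed image since the target is separated). It then remains to prove the two stability properties of Definition~\ref{quasiabdef}(2).

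Cokernels are stable under pullback. Pullbacks are limits, hence computed in $\TAb$ (Remark~\ref{co-limits-adj}) as $B' = B \times_C C' \subseteq B \times C'$, which is separated, resp.\ complete, whenever $B,C'$ are, being the kernel of $(b,c')\mapsto v(b)-g(c')$ into the separated group $C$. Given a strict epi $v\colon B \to C$ (an open surjection) and any $g\colon C' \to C$, I would check that the projection $p\colon B' \to C'$ is again an open surjection: surjectivity is immediate from surjectivity of $v$, and openness follows from the criterion of Remark~\ref{topgroup1}(2), since the subgroups $B'\cap(P\times C')$ ($P$ an open subgroup of $B$) form a basis and $p\big(B'\cap(P\times C')\big) = g^{-1}(v(P))$ is open because $v$ is open and $g$ continuous. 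Thus $p$ is a cokernel (for $\CAbo$ invoking Corollary~\ref{quotclop2}(2)).

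Kernels are stable under pushout; this is the main obstacle, because in $\SAb$ and $\CAbo$ the pushout is the reflection of the $\TAb$-pushout and reflecting can a priori destroy injectivity. I would first work in $\TAb$, where the pushout of $u\colon A \to B$ along $g\colon A\to A'$ is $B'=(B\oplus A')/N$ with $N=\{(u(a),-g(a)) : a\in A\}$ and $\bar g\colon A'\to B'$, $a'\mapsto[(0,a')]$. A direct computation shows that an open subgroup $\pi(P_B\oplus P')$ of $B'$ meets $\bar g(A')$ in $P'+g(u^{-1}(P_B))$; since $u$ is an embedding the groups $u^{-1}(P_B)$ form a neighbourhood basis of $0$ in $A$, so continuity of $g$ yields $g(u^{-1}(P_B))\subseteq P'$ for $P_B$ small, whence the trace topology on $A'$ is its own and $\bar g$ is an embedding. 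The crucial point for $\SAb$ is that $\pi^{-1}(\bar g(A')) = u(A)\oplus A'$, which is \emph{closed} precisely because $u$ is a closed embedding; hence $\bar g(A')$ is closed in $B'$ and contains $\ol{\{0\}}=\ol N/N$. Since $\bar g$ is an embedding and $A'$ is separated, $\ol{\{0\}}\cap\bar g(A')=0$, so separation preserves injectivity and $A'\to (B')^\sep$ is again a closed embedding. For $\CAbo$ one completes once more: as $A'$ is already complete and $A'\hookrightarrow (B')^\sep$ is a closed embedding, its closure in $\what{(B')^\sep}$ is the completion of $A'$ for the induced topology, i.e.\ $A'$ itself, so the map stays a closed embedding. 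This reduction to a complete closed subgroup is what sidesteps the $\limit^1$-obstruction of Remark~\ref{ML-compl-adj}, and it is exactly here that completeness of $A'$ and the closedness of $u$ are indispensable.

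Finally, $\CAbo$ has enough injectives. For $G\in\CAbo$ pick a countable basis $H_0\supseteq H_1\supseteq\cdots$ of open subgroups, so that $G\iso\limit_n G/H_n$ is a closed subgroup of $\prod_n G/H_n$ with each $G/H_n$ discrete. Embedding each discrete group into a discrete \emph{divisible} (hence $\Ab$-injective) group $G/H_n\hookrightarrow D_n$, the composite $G\hookrightarrow\prod_n G/H_n\hookrightarrow\prod_n D_n$ is a closed embedding, i.e.\ a strict monomorphism, and $\prod_n D_n$ lies in $\CAbo$. It remains to see that this target is injective, for which I would prove two things: products of injectives are injective, since $\Hom_{\CAbo}(-,\prod_n D_n)=\prod_n\Hom_{\CAbo}(-,D_n)$ turns strict monos into products of surjections; and a discrete divisible $D$ is injective in $\CAbo$. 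For the latter, given a strict mono $u\colon A\hookrightarrow B$ and continuous $f\colon A\to D$, continuity forces $\Ker f$ to contain an open subgroup $U=A\cap V$ with $V$ open in $B$; then $f$ factors through the discrete group $A/U\hookrightarrow B/V$, and divisibility of $D$ extends $A/U\to D$ to $B/V\to D$, yielding the continuous extension $B\to B/V\to D$. This exhibits, for each $G$, a strict monomorphism into an injective object, establishing enough injectives.
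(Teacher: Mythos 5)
Your proof is correct in substance, but it takes a genuinely different route from the paper's. The paper gives no direct argument at all: it observes that $\TAb=\cLMu_\Z$, $\SAb=\cSLMu_\Z$ and $\CAbo=\cCLMou_\Z$ for $\Z$ discrete, so that the proposition becomes the special case $k=\Z$ of Theorems~\ref{quasiabelian}, \ref{quasiabelian-sep} and \ref{quasiabelian-compl}, proved later. Measured against the proofs of those theorems, your argument differs in two interesting places. For stability of kernels under pushout in the separated case, the paper quotients by the closure $\ol{N}$ of the antidiagonal image and proves injectivity of the comparison map by a net argument, then (in the complete countable case) reuses the same proof via Corollary~\ref{quotclop2}; you instead exploit the structural identity $\pi^{-1}(\bar g(A'))=u(A)\oplus A'$, so that closedness of the strict mono $u$ gives closedness of $\bar g(A')$, and completeness of $A'$ then makes the closed embedding survive completion. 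This isolates exactly where closedness and completeness enter and cleanly sidesteps the $\limit^1$ obstruction of Remark~\ref{ML-compl-adj}. For enough injectives, your construction is simpler than the paper's: Lemma~\ref{quasiabelian-compl1} builds a compatible tower of injectives $J_n$ inductively and proves injectivity of $\limit_n J_n$ by an inductive lifting of triangles, whereas you embed $G\hookrightarrow \prod_n G/H_n\hookrightarrow \prod_n D_n$ with $D_n$ discrete divisible, needing no compatibility among the $D_n$. The price is that divisibility is special to abelian groups: the paper's Tohoku embedding into injective $k/I_n$-modules works over any $k\in\cCRou$, so the paper's argument generalizes verbatim to $\cCLMou_k$ while yours does not; conversely, yours is more elementary and self-contained at the group level.

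One local repair is needed in your pullback step: the subgroups $B'\cap(P\times C')$ do \emph{not} form a basis (nor even a cofinal family) of open subgroups of $B'$. Take $g=0$ and $C=0$, so $B'=B\times C'$; then no $B'\cap(P_0\times C')=P_0\times C'$ is contained in $P\times Q'$ once $Q'\subsetneq C'$ is a proper open subgroup. The fix is one line: for a genuine basic open subgroup $B'\cap(P\times Q')$ one computes $p\bigl(B'\cap(P\times Q')\bigr)=Q'\cap g^{-1}(v(P))$, which is open since $v$ is open and $g$ continuous, and openness of $p$ then follows from Remark~\ref{topgroup1}. With that adjustment, everything else checks out, including the identification $\bar g^{-1}\bigl(\pi(P_B\oplus P')\bigr)=P'+g(u^{-1}(P_B))$ and the verification that discrete divisible groups are injective with respect to strict monomorphisms in $\CAbo$.
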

We omit the proof for now because
\begin{itemize}
\item the category  $\TAb$  coincides with the category $\cLMu_\Z$ of Theorem~\ref{quasiabelian} below,
\item the category $\SAb$  coincides with the category $\cSLMu_\Z$ of Theorem~\ref{quasiabelian-sep} below,
\item the category $\CAbo$  coincides with the category $\cCLMou_\Z$ of Theorem~\ref{quasiabelian-compl} below. 
\end{itemize}
So, Proposition~\ref{abgps=mod} will appear as a special case of the above mentioned theorems. 
Using the notation of quasi-abelian categories,  Corollary~\ref{quotclop2} reads
\begin{cor} \label{quotclop2bis}  A morphism $f:G \to H$ in the category $\CAbo$ is a cokernel  iff it is an open surjection.
\end{cor}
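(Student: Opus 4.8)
The plan is to deduce this from Corollary~\ref{quotclop2}(2), of which it is the restatement inside the quasi-abelian category $\CAbo$ (Proposition~\ref{abgps=mod}). Let $f:G\to H$ be a morphism of $\CAbo$, so that both $G$ and $H$ admit a countable fundamental system of open subgroups; in particular $G$ satisfies the hypothesis of Corollary~\ref{quotclop2}. Since $\CAbo$ is a \emph{full} subcategory of $\CAb$, the only point requiring care is that the cokernel of $f$ computed in $\CAbo$ agrees with the one computed in $\CAb$; granting this, being a cokernel is the same condition in both categories, and the equivalence with ``open surjection'' is then exactly Corollary~\ref{quotclop2}(2).

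To settle this point I would first compute the cokernel in $\CAb$. By Remark~\ref{co-limits-adj} inductive limits in $\CAb$ are separated completions of the corresponding inductive limits in $\TAb$, so $\Coker_{\CAb}(f)$ is the separated completion $\what{H/\ol{f(G)}}$ of the $\TAb$-cokernel $H/f(G)$. Now $\ol{f(G)}$ is a closed subgroup of $H$ and $H\in\CAbo$, so part~1 of Corollary~\ref{quotclop2} applies and shows that $H/\ol{f(G)}$ is \emph{already} complete, whence $\Coker_{\CAb}(f)=H/\ol{f(G)}$. This quotient inherits from $H$ a countable fundamental system of open subgroups (the images of those of $H$), so it lies in $\CAbo$. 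As the $\CAb$-cokernel is characterized by a universal property tested against \emph{all} objects of $\CAb$, it a fortiori satisfies that property against the objects of the full subcategory $\CAbo$; since it itself belongs to $\CAbo$, it is also the cokernel there. Thus $\Coker_{\CAbo}(f)=\Coker_{\CAb}(f)$ for every morphism $f$ of $\CAbo$.

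With the agreement of cokernels in hand, both implications follow. If $f$ is a cokernel in $\CAbo$, say $f=\Coker_{\CAbo}(g)$ with $g$ a morphism of $\CAbo$, then by the previous step $f=\Coker_{\CAb}(g)$ as well, so $f$ is an open surjection by Corollary~\ref{quotclop2}(2). Conversely, if $f$ is an open surjection, its kernel $K=\Ker f$ is a closed subgroup of $G$, hence complete and with countable fundamental system, so $K\in\CAbo$; by Corollary~\ref{quotclop2}(2) we have $f=\Coker_{\CAb}(K\hookrightarrow G)$, and this cokernel already lies in $\CAbo$, so $f$ is a cokernel in $\CAbo$ too. (Equivalently, one may phrase the whole argument through strict epimorphisms, using Remark~\ref{bim=iso}(1) together with the fact that kernels in $\CAbo$ are computed as closed subgroups, exactly as in $\CAb$.)

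The main obstacle is precisely this comparison of cokernels, which rests on the completeness of $H/\ol{f(G)}$. This is where the countability assumption built into $\CAbo$ is indispensable: it is what guarantees, via the vanishing of the $\limit^1$ term in Remark~\ref{ML-compl-adj}, that a quotient of a complete group by a closed subgroup stays complete, a property that can fail in $\CAb$ without a countable basis. Everything else is a routine transcription of Corollary~\ref{quotclop2} into the categorical vocabulary of $\CAbo$.
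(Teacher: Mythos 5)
Your proposal is correct and follows the paper's route: the paper presents Corollary~\ref{quotclop2bis} as a direct restatement of Corollary~\ref{quotclop2} once $\CAbo$ is known to be quasi-abelian, which is exactly your deduction. Your explicit verification that cokernels computed in $\CAbo$ agree with those in $\CAb$ (via part~1 of Corollary~\ref{quotclop2}, i.e.\ the completeness of $H/\ol{f(G)}$ coming from the $\limit^1$-vanishing of Remark~\ref{ML-compl-adj}, plus fullness of the subcategory) is precisely the point the paper leaves implicit, and you fill it in correctly.
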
 
\end{subsection} 

\begin{subsection}{Topological rings and modules} \label{topringmod}
\begin{notation}\label{prod-sep-def} 
Let $X,Y,Z$ be topological (resp.~uniform) spaces, 
and let $f:X\times Y\to Z$ be a function. 
We say that $f$ is continuous  
(resp.~uniformly continuous) 
if it is so for the product topology (resp.~product uniformity) of $X\times Y$. 
We say that $f$ is separately continuous in the first (resp.~the second) variable 
if for any $y\in Y$ the function $f(-,y):X\to Z$ 
(resp.~for any $x\in X$ the function $f(x,-):Y\to Z$) is continuous. 
We simply say that $f$ is separately continuous if it is so in both variables. 
\end{notation}

\begin{notation} \label{ring-mod-notat} \hfill
\ben \item  
A ``ring'' will always  be assumed to be commutative with 1; for a ring $R$, an ``$R$-algebra'' $A$ will be assumed to be associative and unital, but not necessarily commutative.
We denote by $\Ab$ (resp. $\Rings$) the category of abelian groups (resp. of rings). For $R$ a ring, we denote by $\Mod_R$ the category of $R$-modules.  
\item All topological rings  $R$ appearing in this paper will be (non-archimedean) topological abelian groups for the operation $+$ as in Definition~\ref{topgroup}. 
The product map 
$$ 
\mu_R:R\times R\to R 
$$ 
will be denoted by $\mu_R(a,b)=ab$ and \emph{will be assumed to be separately continuous in the two variables.}  
For $A,B\subseteq R$ we will write $AB$ for 
the additive subgroup generated by $\mu_R(A\times B)$. \emph{We will denote by $\cR$ the category of such topological rings}, where morphisms are continuous ring homomorphisms, and by  $R \longmapsto R^\ab$  the forgetful functor to $\TAb$. 
\item
A topological ring $R$ is said to be  \emph{linearly topologized}  if
it admits a fundamental system of neighborhoods of $0$ formed by ideals.  
\item For any topological ring $R$ in $\cR$, a  (non-archimedean) topological $R$-module  $M$ will be meant to be an abelian topological group $M$ as in Definition~\ref{topgroup} endowed with a structure of $R$-module. 
The map multiplication by scalars  
$$ 
\mu_M:R\times M\to M 
$$ 
will be denoted by $\mu_M(a,m)=am$  and, for any fixed $a \in R$,  \emph{will be assumed to be  continuous in the  variable $m \in M$}.  
For any $A\subseteq R$ and $P\subseteq M$ we will write $AP$ for 
the additive subgroup generated by $\mu_M(A\times P)$. \emph{We will denote by $\cM_R$ the category of such topological $R$-modules}, where morphisms are continuous $R$-linear homomorphisms, and $M \longmapsto M^\ab$  the forgetful functor to $\TAb$. 
\item
Let $R$ be a linearly topologized ring and let $M$ be 
 a topological $R$-module.  Then,  $M$ is said to be \emph{linearly topologized}  (or \emph{$R$-linearly topologized}) if its open $R$-submodules form a fundamental system of neighborhoods of $0$. We will denote by $\cLM_R$ the full subcategory of $\cM_R$ consisting of linearly topologized $R$-modules. For $M$ in $\cLM_R$ we denote by $\cP_R(M) \subset  \cP(M^\ab)$  the family of open $R$-submodules of $M$.  We usually shorten $\cP_R(R)$ into $\cP(R)$; similarly, if $R$ is understood and $M \in \cLM_R$, we  may shorten $\cP_R(M)$ into $\cP(M)$ unless this creates confusion.  We sometimes, more seriously, abuse the language also in case $R \in \cR$, $M \in \cM_R$, when the topologies are \emph{not} specified to be $R$-linear: in that case, $\cP(R)$ and $\cP(M)$ can only (and will) stand for $\cP(R^\ab)$ and $\cP(M^\ab)$, respectively. 
\een
\end{notation} 
\begin{defn}\label{bound-sponge-lintop} Let $R$ (resp. $M$) be an object of $\cR$ (resp. of $\cM_R$).
\hfill \ben \item
A subset $B$ of $M$ is \emph{bounded} if for any $P \in \cP(M^\ab)$ there exists $I_P\in\cP(R^\ab)$ 
such that $I_PB\subseteq P$.  We denote by $\cB(M)$ (resp. $\cB^c(M)$) the family of bounded (resp. and closed) subsets of $M$.
\item  
A subset $P$ of $M$ is an  $R$-\emph{sponge}  or simply a \emph{sponge} in $M$
if for any $m\in M$ there exists 
$I_m \in \cP(R)$ such that $I_m m\subseteq P$ 
(that is, $P$ ``absorbs'' any element of $M$). 
\een
\end{defn} 
\begin{rmk}\label{bounded-closure} Let $R$ (resp. $M$) be an object of $\cR$ (resp. of $\cM_R$), and let 
$B \subseteq M$ be a bounded subset.  Then the closure $\ol{B}$ of $B$ in $M$ is bounded. 
In fact, for any $a \in R$, by continuity of the map $M \to M$, $x \mapsto ax$, 
we have $a \ol{B} \subseteq \ol{aB}$. Therefore, for any
open subgroups $I$ of $R$ and $P$ of $M$ such that $I B \subseteq P$, we deduce that 
$I \ol{B} \subseteq \ol{I B} \subseteq P$. \par 
In general, the additive subgroup of $M$ generated by a bounded subset $B$  is bounded. 
If $R$ is linearly topologized   then the $R$-submodule $RB$ of $M$ generated by $B$ is also bounded. In fact, let $U$ be an open subgroup of $M$ and $I$ an open ideal of $R$ such that $IB \subset U$. Then 
$I (RB) = IB$  and is therefore contained in $U$. 
\end{rmk} 
\begin{defn} \label{bddlindef} 
Let $R$ be a linearly topologized ring and let $M$ be 
 a topological $R$-module. Then we denote by $\cB_R(M)$ (resp. $\cB^c_R(M)$) the family of bounded (resp. and closed) $R$-submodules of $M$. 
\end{defn}
\begin{prop}\label{ring-prod-prop}
Let $R$ be an object of $\cR$.  Then:  
\ben 
\item  T.F.A.E. 
\ben
\item
$\mu_R$ is continuous; 
\item $\mu_R$ is continuous at $(0,0)$; 
\item for any $I\in\cP(R)$ there exists $J_{I}\in\cP(R)$ 
such that $J_I^2\subseteq I$.
\een
\item T.F.A.E. \ben
\item $\mu_R$ is uniformly continuous;
\item  $R$ is linearly topologized;
\item $R$ is bounded.
\een
If $R$ is complete, the previous conditions are also equivalent to \hfill \ben  \setcounter{enumii}{3}
\item   the ring $R$  is the limit of a cofiltered projective system of discrete rings and surjective morphisms,  equipped 
with the weak topology of the canonical projections.
\een
\een
\end{prop}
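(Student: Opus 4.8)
The plan is to prove each of the three equivalences by a short cyclic chain of implications, using throughout that open subgroups form a fundamental system of neighborhoods of $0$ and that $\mu_R$ is separately continuous by the standing assumption defining $\cR$. Recall that here, since $R$ is not assumed linearly topologized, $\cP(R)$ denotes the family of open subgroups of $R^\ab$.

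For part 1, the implication (a)$\Rightarrow$(b) is trivial. For (b)$\Rightarrow$(c), given $I\in\cP(R)$, continuity of $\mu_R$ at $(0,0)$ provides open subgroups $U,V$ with $\mu_R(U\times V)\subseteq I$; setting $J:=U\cap V$ gives an open subgroup with $J^2\subseteq I$. For (c)$\Rightarrow$(a) I would fix a point $(a,b)$ and a neighborhood $ab+I$ of $ab$, and exploit the ``Leibniz'' identity $xy-ab=a(y-b)+(x-a)b+(x-a)(y-b)$. By separate continuity of $\mu_R$ choose open subgroups $A$ with $aA\subseteq I$ and $B$ with $Bb\subseteq I$, and by (c) choose $J$ with $J^2\subseteq I$. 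Then on the neighborhoods $a+(B\cap J)$ of $a$ and $b+(A\cap J)$ of $b$ each of the three summands lands in $I$, so $xy\in ab+I$, proving continuity at $(a,b)$. This combination of separate continuity with the square-shrinking condition (c) to absorb simultaneously the three cross terms is the only genuinely delicate step of the whole proof.

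For part 2 I would run the cycle (a)$\Rightarrow$(c)$\Rightarrow$(b)$\Rightarrow$(a). For (a)$\Rightarrow$(c): uniform continuity gives, for each open subgroup $P$, open subgroups $I,J$ such that $a-a'\in I$, $b-b'\in J$ force $ab-a'b'\in P$; specializing to $a'=0$ and $b=b'$ yields $ab\in P$ for all $a\in I$, $b\in R$, i.e. $IR\subseteq P$, which is precisely boundedness of $R$ in the sense of Definition~\ref{bound-sponge-lintop}. For (c)$\Rightarrow$(b): from $R$ bounded, for open $P$ pick $I_P$ with $I_PR\subseteq P$; then $I_PR$ is an \emph{ideal} (it is stable under multiplication by $R$ since $s(\lambda r)=\lambda(sr)$), it contains $I_P=I_P\cdot 1$ hence is open, and it lies in $P$, so open ideals are cofinal and $R$ is linearly topologized. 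For (b)$\Rightarrow$(a): given open $P$ pick an open ideal $I\subseteq P$; the identity $ab-a'b'=(a-a')b+a'(b-b')$ shows that $a-a'\in I$ and $b-b'\in I$ imply $ab-a'b'\in I\subseteq P$, giving uniform continuity with the single entourage $I\times I$. Note that none of these steps requires continuity of $\mu_R$ a priori, only the separate continuity built into $\cR$.

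Finally, for the supplementary equivalence under the hypothesis that $R$ is complete, I would prove (b)$\Leftrightarrow$(d). For (b)$\Rightarrow$(d): when $R$ is linearly topologized the open ideals, ordered by reverse inclusion, form a cofiltered system (since $I\cap J$ is again an open ideal), each quotient $R/I$ is a discrete ring, and the transition maps $R/J\to R/I$ for $J\subseteq I$ are surjective; since $R$ is complete (hence separated) the canonical map $R\to\what R=\limit_{I}R/I$ of Remark~\ref{compl-adj} is an isomorphism in $\cR$, and the topology of $R$ is the weak topology of the projections, whose kernels are exactly the $I$. Conversely, for (d)$\Rightarrow$(b), if $R=\limit_i R_i$ carries the weak topology for discrete rings $R_i$, then the kernels $\Ker(R\to R_i)$ are ideals (being kernels of ring homomorphisms) that are open and form a fundamental system of neighborhoods of $0$, so $R$ is linearly topologized. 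I expect no real obstacle here beyond bookkeeping; the one point to state carefully is that completeness (which in this paper includes separatedness) is what upgrades the continuous bijection $R\to\limit_I R/I$ to an isomorphism.
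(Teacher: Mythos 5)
Your proof is correct, and since the paper's own proof of Proposition~\ref{ring-prod-prop} is omitted, your argument simply supplies the standard one the authors evidently intended: the Leibniz decomposition $xy-ab=a(y-b)+(x-a)b+(x-a)(y-b)$ combined with separate continuity and the square-shrinking condition for part 1; the specialization $a'=0$, $b'=b$ of uniform continuity, and the observation (using commutativity of $R$) that $I_PR$ is an ideal containing the open subgroup $I_P$, for part 2; and the identification $R\cong\what{R}=\limit_{I}R/I$ of Remark~\ref{compl-adj} for the complete case. The only point you leave implicit is in (d)$\Rightarrow$(b): basic neighborhoods of $0$ in the weak topology are \emph{finite intersections} $\Ker(R\to R_{i_1})\cap\dots\cap\Ker(R\to R_{i_n})$, and one invokes cofilteredness of the index category to find a single kernel $\Ker(R\to R_j)$ inside such an intersection, so that the open ideals $\Ker(R\to R_j)$ alone are cofinal; this is exactly the bookkeeping you anticipated, and it closes the argument.
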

\begin{proof} Omitted.
\end{proof} 
\begin{prop}\label{scalar-prod-prop}
Let $R$ (resp. $M$) be an object of $\cR$ (resp. of $\cM_R$). Then: 
\par \smallskip
$(1)$ T.F.A.E. \hfill \ben 
\item [(a)]
the map $\mu_M$ is separately continuous in its two variables;
\item [(b)]
the map $\mu_M$ is separately continuous in its first variable; 
\item [(c)] for any $x\in M$ and for any $U\in\cP(M)$, there exists $I_{x,U}\in\cP(R)$ such that $I_{x,U}x\subseteq U$;
\item [(d)] for any $x\in M$, $\{x\}$ is a bounded subset of $M$;
\item [(e)] any $U \in \cP(M)$ is a sponge.  
\een
\par
$(2)$ 
the map $\mu_M$ is continuous if and only if 
it is separately continuous in the two variables (that is, the equivalent conditions  $(1)$ hold)
and it is continuous at $(0,0)$ (that is, for any $U\in\cP(M)$ there exist $I_U\in\cP(R)$ 
and $V_U\in\cP(M)$ such that $I_UV_U\subseteq U$); 
\par \smallskip
$(3)$  Assume $R$ is a linearly topologized ring.  
The map $\mu_M$ is uniformly continuous 
if and only if for any $U\in\cP(M)$ there exist $I_U \in \cP_R(R)$ 
and $V_U\in\cP(M)$ such that $I_UM \subseteq U$ and $RV_U\subseteq U$, 
that is, if and only if 
$M$ is  $R$-linearly topologized and bounded. 
\par \smallskip
$(4)$ Assume $R$ is a linearly topologized ring and $M$ is a  linearly topologized $R$-module. Then
$\mu_M$ is continuous at $(0,0)$. In particular, $\mu_M$ is continuous if and only if it is separately continuous in its first variable. (This is not always the case, see Example~\ref{counterbox} below). 
\par \smallskip
$(5)$ Let $R$ (resp. $M$) be a linearly topologized ring (resp. $R$-module). Then, T.F.A.E.~: \hfill \ben
\item the map  $\mu_M$ 
is uniformly continuous;
\item  $M$ is bounded.
\een 
\end{prop}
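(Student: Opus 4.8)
The plan is to obtain $(5)$ as the restriction of the characterization already established in $(3)$ to the case where $M$ is $R$-linearly topologized. Since $M \in \cLM_R \subseteq \cM_R$ and $R$ is linearly topologized, part $(3)$ applies verbatim: $\mu_M$ is uniformly continuous if and only if for every open subgroup $U$ of $M$ there exist an open ideal $I_U \in \cP_R(R)$ and an open subgroup $V_U$ of $M$ with $I_U M \subseteq U$ and $R V_U \subseteq U$, i.e.\ if and only if $M$ is simultaneously $R$-linearly topologized and bounded. The whole content of $(5)$ is that, under its hypotheses, the first of these two clauses is automatic, so that uniform continuity collapses to boundedness alone.

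First I would record the routine observation that for an object of $\cLM_R$ the clause ``$R V_U \subseteq U$ for some open subgroup $V_U$'' is always satisfiable. Indeed, $R$-linear topology means precisely that the open $R$-submodules form a fundamental system of neighbourhoods of $0$; so given an open subgroup $U$ one may choose an open $R$-submodule $V_U \subseteq U$, and then $R V_U = V_U \subseteq U$, because $1 \in R$ forces $V_U \subseteq R V_U \subseteq V_U$. (Conversely, $R V_U$ is always an open $R$-submodule, which is why $(3)$ rephrases this clause as $R$-linear topology.)

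Next I would run the two implications. For $(1)\Rightarrow(2)$: assuming $\mu_M$ uniformly continuous, the $(3)$-criterion furnishes, for each open subgroup $U$ of $M$, an open ideal $I_U$ with $I_U M \subseteq U$; by Definition~\ref{bound-sponge-lintop} with $B = M$ this says exactly that $M$ is a bounded subset of itself. For $(2)\Rightarrow(1)$: given $M$ bounded, boundedness supplies for each open subgroup $U$ an open ideal $I_U$ with $I_U M \subseteq U$ (using that $R$ is linearly topologized to replace the open subgroup of $R$ furnished by the definition with an open ideal contained in it), while the preliminary observation supplies an open $R$-submodule $V_U \subseteq U$ with $R V_U \subseteq U$; both clauses of the $(3)$-criterion then hold, so $\mu_M$ is uniformly continuous.

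I do not expect a genuine obstacle: once $(3)$ is available, $(5)$ is pure bookkeeping, the only delicate point being the interplay between ``open subgroup'' and ``open ideal'' in Definition~\ref{bound-sponge-lintop} versus in the statement of $(3)$, which is harmless since every open subgroup of the linearly topologized ring $R$ contains an open ideal. Should one prefer a self-contained argument avoiding $(3)$, I would run it directly from the entourage description of the product uniformity on $R \times M$: writing $a' = a - b$ and $m' = m - w$, one computes $am - a'm' = a w + b m'$ with $a \in R$ and $m' \in M$ arbitrary, $b \in I_U$, $w \in V_U$; since $U$ is a subgroup, membership of $aw + bm'$ in $U$ for all such arguments is equivalent to the conjunction $R V_U \subseteq U$ and $I_U M \subseteq U$, which recovers the $(3)$-criterion from scratch and then concludes exactly as above.
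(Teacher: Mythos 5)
The paper's own proof of this proposition is literally ``Omitted,'' so there is no official argument to compare you against; your proposal must stand on its own, and what you actually prove, you prove correctly. The reduction of $(5)$ to $(3)$ is sound: for $M\in\cLM_R$ the clause ``$RV_U\subseteq U$ for some $V_U\in\cP(M)$'' is automatically satisfiable, since one may take $V_U$ to be an open $R$-submodule contained in $U$, and then $RV_U=V_U$ because $1\in R$; the remaining clause $I_UM\subseteq U$ is exactly boundedness of $M$, and your handling of the open-subgroup versus open-ideal discrepancy in Definition~\ref{bound-sponge-lintop} (shrink the open subgroup of $R$ to an open ideal, which exists because $R$ is linearly topologized) is the right move. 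Your closing entourage computation is also correct and is in fact the substance of $(3)$ itself: with $b=a-a'$ and $w=m-m'$ one has $am-a'm'=aw+bm'$ with $a\in R$ and $m'\in M$ arbitrary, and specializing $b=0$, respectively $w=0$ (both lie in the relevant subgroups), shows that uniform continuity forces $RV_U\subseteq U$ and $I_UM\subseteq U$, while conversely the subgroup property of $U$ absorbs the sum $aw+bm'$.

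The gap is one of coverage rather than substance: the statement comprises five parts, and $(1)$, $(2)$ and $(4)$ never appear in your write-up, while $(3)$ is established only through your final ``self-contained'' sketch. These parts are routine --- presumably why the paper omits the proof altogether --- but a complete argument should record them. In $(1)$, the equivalence $(a)\Leftrightarrow(b)$ is definitional, because continuity of $m\mapsto am$ for each fixed $a$ is built into the definition of $\cM_R$ (part 4 of Notation~\ref{ring-mod-notat}); $(b)\Leftrightarrow(c)$ uses additivity, via $(a+I_{x,U})x\subseteq ax+I_{x,U}x\subseteq ax+U$, and $(c)\Leftrightarrow(d)\Leftrightarrow(e)$ are direct translations of Definition~\ref{bound-sponge-lintop}. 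Part $(2)$ is the standard biadditive decomposition: expanding $(a_0+I)(m_0+V)\subseteq a_0m_0+a_0V+Im_0+IV$ reduces continuity at $(a_0,m_0)$ to second-variable continuity ($a_0V\subseteq U$), condition $(c)$ ($Im_0\subseteq U$), and continuity at $(0,0)$ ($IV\subseteq U$), after intersecting the finitely many witnesses. Part $(4)$ is a one-liner you should not skip, since $(5)$ is most naturally contrasted with it: if $U$ is an open $R$-submodule of $M$, then $IU\subseteq RU=U$ for any $I\in\cP(R)$, so continuity at $(0,0)$ is automatic for $M\in\cLM_R$ --- the same ``$1\in R$'' observation that drives your proof of $(5)$. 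With those three paragraphs added, your argument would be a complete and correct proof of the proposition.
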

\begin{proof} Omitted. 
\end{proof}
\begin{defn}  \label{topnaivedef} Let $R$ be a linearly topologized ring and let $N$ be any $R$-module. 
The \emph{naive canonical topology} on  $N$ is the $R$-linear topology 
with a basis of open $R$-submodules consisting of $\{I N\}_I$, 
for $I$ running over the set of open ideals of $R$. 
Endowed with this topology, $N$ is denoted by $N^\naive$.  
\end{defn} 
\begin{prop} \label{unif-can} Let $R$ (resp. $M$) be a linearly topologized ring  (resp.  $R$-module). 
 Then the following properties are equivalent: 
 \endgraf 
 $(1)$ $M$ is uniform;
  \endgraf 
 $(2)$
 $M$ in bounded;
 \endgraf 
 $(3)$  the topology of $M$ is weaker than its naive canonical topology. 
 \end{prop}
\begin{proof}  The equivalence of $(1)$ and $(2)$ has been observed before  (see $(3)$ of Proposition~\ref{scalar-prod-prop}).
 \par Assume now $M$ is uniform. 
  Then, for any $U \in \cP_R(M)$ there is a $V \in \cP_R(M)$ and an $I \in \cP(R)$ such that 
  for any $a \in R$ and $m \in M$,  
  $$
  (a+I)(m + V) \subset am + U \;.
  $$
  But this implies that $IM \subset U$, so that the topology of $M$ is weaker than the naive canonical one. 
 The converse $(3) \Rightarrow (1)$  is clear. 
\end{proof} 
 \begin{exa}\label{locconvcont} As an important case in which Proposition~\ref{scalar-prod-prop} applies, we cite locally convex $K$-vector spaces in the sense of \cite{schneider}. There, $K$ is a complete non-trivially valued non-archimedean field with ring of integers   $k = K^\circ$. A locally convex $K$-vector space $V$ is in particular a topological $k$-module endowed with a $k$-linear topology, since a basis of open neighborhoods of 0 consists of $k$-modules called ``lattices'' \cite[Chap. 1, \S2]{schneider}. It follows from (1) of Proposition~\ref{scalar-prod-prop} that   $\mu_V$ is separately continuous because, by definition,  lattices are   $k$-sponges. It then follows from (4) of \lc that $\mu_V$ is in fact continuous. This is Lemma 4.1 of \cite{schneider}. 
\end{exa}
\begin{defn} \label{defclopmod} Let $R$ (resp. $M$) be an object of $\cR$ (resp. of $\cM_R$).
\hfill \ben
\item We say that $R$ is   $\op$ (resp. $\clop$) if, for any $I,J\in\cP(R)$, 
the additive subgroup $IJ$ (resp. its closure $\ol{IJ}$) is open in $R$. 
\item  
We say that  $M$  is   
$\op$ (resp. $\clop$)
 if for any $U\in\cP(M)$ and any $I\in\cP(R)$ 
the subgroup $IU$ (resp.~$\ol{IU}$) is open in $M$. 
\een
\end{defn}
\begin{rmk}  \label{defclopmod2}
An object $R$ of $\cR$ is $\op$ (resp. $\clop$) if and only if for any $I\in\cP(R)$ 
we have that ${I^2}\in\cP(R)$ (resp.~$\ol{I^2}\in\cP(R)$). 
\end{rmk} 
We slightly generalize in the following the definitions of Bourbaki \cite[III,\S 6, N. 5,6]{topgen}. 

\begin{defn}\label{top-ring-cat}  We define the following full subcategories of $\cR$~: \ben 
\item  $\cSR$ (resp.~$\cCR$) is the full subcategory of $\cR$ consisting of  
separated (resp.~separated and complete) topological rings. 
\item 
An object $R$ of $\cR$  is \emph{continuous} (resp.  \emph{uniform})
if the map $\mu_R$ is (resp. uniformly) continuous.
We let  $\cRc$ (resp. $\cRu$)  be the full subcategory of $\cR$ 
whose objects are continuous (resp. uniform, that is, equivalently, linearly topologized).
\item We let $\cRuop$ (resp. $\cRuclop$)  be the full subcategories of $\cRu$ 
whose objects are  $\op$ (resp. 
$\clop$). 
\item We let $\cRou$ be the full subcategory of $\cRu$ of topological rings with a countable basis of open ideals.
\item We combine the previous notation as in the following examples~:
\beq \label{shorthand} \begin{aligned} \cSRc = \cSR \cap \cRc \;\;,\;\;
\cCRc &= \cCR \cap \cRc \;\;,\;\;\cCRuclop = \cCR \cap \cRuclop \;\;,\;\; \\ \cCRouclop &= \cCRuclop \cap \cRou \;\;,\;\;   \mbox{and so on \dots} 
\end{aligned}
\eeq 
\een
\end{defn} 
\begin{defn} \label{top-mod-cat1} Let $R$ be an object of $\cR$ and let $M$ be an object  of $\cM_R$. 
\hfill\ben\item  
$M$ is said to be \emph{separately continuous}  (resp.  \emph{continuous}, 
resp. \emph{uniform}) if the map $\mu_M$ is separately continuous (resp. continuous, resp. uniformly continuous) in the two variables.   
\item $M$ is said to be \emph{separated} (resp. \emph{complete}) if the underlying topological abelian group $M^\ab$ of $M$ is separated (resp. separated and complete). We denote by $\cSM_R$ (resp.~$\cCM_R$)  the full subcategory of $\cM_R$ whose objects are  
 separated (resp.~complete) topological $R$-modules.  
\een
\end{defn}
\begin{defn}\label{top-mod-cat} 
Let $R$ be an object of $\cR$.  For $\ast = \se, \co, \un, \op, \clop$,  we define the following full subcategories of $\cM_R$~: \ben
\item let 
$\cM^\ast_R$ be the full subcategory of $\cM_R$ 
whose objects are  separately continuous, continuous, uniform, $\op$, 
$\clop$, respectively;  
\item   
let
$$
\cSM^\ast_R = \cSM_R \cap \cM^\ast_R  \;\;,\;\; \cCM^\ast_R = \cCM_R \cap \cM^\ast_R \;.
$$ 
\een
Assume $R$ is linearly topologized (that is, $R$ is in $\cRu$) and  
 recall  the full subcategory $\cLM_R$  of $\cM_R$ of 
$R$-linearly topologized objects. 
We set 
 $$\cLMc_R=\cLM_R\cap\cMc_R \;\;,\;\; \cSLMc_R=\cSLM_R\cap\cMc_R
 \;\;,\;\; \cCLMc_R=\cCLM_R\cap\cMc_R \;.
  $$
By $(3)$ of Proposition~\ref{scalar-prod-prop} for 
$R$ linearly topologized
an object $M$ of $\cM_R$ is uniform if and only if it is  bounded and $R$-linearly topologized, so we have
 $$\cLMu_R= \cMu_R \;\;,\;\; \cSLMu_R= \cSMu_R
 \;\;,\;\; \cCLMu_R= \cCMu_R \;.
  $$ 
\end{defn}
\begin{rmk} For $R$ linearly topologized  the category $\cLM_R \cap \cMs_R$ (resp. $\cLM_R \cap \cMu_R$) coincides with 
$\cLMc_R$ (resp. $\cLMu_R$).  
\end{rmk}
\begin{rmk} \label{BourbCont} Topological rings in the sense of Bourbaki \lc are here called  ``continuous''. 
Similarly, a topological $R$-module in the sense of Bourbaki \lc is only defined when $R$ is continuous, 
and is here called a \emph{continuous} $R$-module. A linearly topologized ring 
is here called  ``uniform''.
If $R$ is uniform,   a \emph{uniform} $R$-module is then 
the same thing as a continuous and bounded $R$-module whose topology is $R$-linear, 
\ie is defined by a fundamental system of open $R$-submodules of $M$. 
\end{rmk} 
\begin{prop} \label{heredit} Let $R \in \cR$ (resp. $\cRc$, resp. $\cRu$) and let $S$ be a subring of $R$ equipped with the subspace topology. 
Let $M \in \cM_R$ and  $N$ be an $S$-submodule of $M$, equipped with the subspace topology. Then 
\ben 
\item 
$S \in \cR$  (resp. $\cRc$, resp. $\cRu$) ;  
\item $N \in \cM_S$;
\item if  $M \in \cMs_R$, then   $N \in \cMs_S$;
\item if  $R \in \cRc$ (resp. if $R \in \cRu$) and $M$ is a continuous (resp. uniform) $R$-module then it is also such as an $S$-module. 
\een
\end{prop}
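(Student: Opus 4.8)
The plan is to exploit a single structural observation: each multiplication or scalar-multiplication map attached to $S$ or to $N$ is literally the restriction of the corresponding map attached to $R$ or to $M$, and on a product of subspaces the product topology (resp.\ product uniformity) coincides with the one induced from the ambient product. Since the restriction to a subspace of a separately continuous (resp.\ continuous, resp.\ uniformly continuous) map is again of the same type, and since $S$ is a subring and $N$ an $S$-submodule, all four assertions reduce to checking that the relevant intersections are ideals, resp.\ submodules. I would prove the four parts in order.

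For (1), I would note that $\mu_S$ is $\mu_R$ restricted to $S\times S\subseteq R\times R$, with image in $S$ because $S$ is a subring; separate (resp.\ ordinary, resp.\ uniform) continuity of $\mu_R$ then descends, giving $S\in\cR$ (resp.\ $S\in\cRc$, resp.\ $S\in\cRu$). For the uniform case I would prefer the explicit route afforded by Proposition~\ref{ring-prod-prop}: $R$ linearly topologized means the open ideals $I$ of $R$ form a fundamental system of neighborhoods of $0$, and each $I\cap S$ is an open ideal of $S$ (for $s\in S$ and $x\in I\cap S$ one has $sx\in S$ and $sx\in I$), so these form a fundamental system of open ideals of $S$, whence $S$ is linearly topologized, hence uniform.

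Assertions (2) and (3) follow by the same restriction principle. For (2), fixing $s\in S$, the map $\mu_N(s,-)\colon N\to N$ is the restriction of the continuous map $\mu_M(s,-)\colon M\to M$, and it takes values in $N$ since $N$ is an $S$-submodule; as $N$ carries the subspace topology this gives $N\in\cM_S$, using $S\in\cR$ from part (1). For (3) it then remains to check separate continuity in the ring variable: for fixed $n\in N$ the map $s\mapsto sn$ on $S$ is the restriction of the continuous map $r\mapsto rn$ on $R$, so combining with (2) yields $N\in\cMs_S$.

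For (4) I would restrict $\mu_M\colon R\times M\to M$ to the subspace $S\times N$. When $R\in\cRc$ and $M$ is continuous, continuity of $\mu_M$ restricts to continuity of $\mu_N$, so $N$ is a continuous $S$-module; taking $N=M$ recovers the assertion for $M$ itself. When $R\in\cRu$ and $M$ is uniform, I would use the characterization of Proposition~\ref{scalar-prod-prop}(3): $M$ uniform means $M$ is bounded and $R$-linearly topologized, and I would establish the same for $N$ over $S$. Here $N$ is $S$-linearly topologized because each $U\cap N$, for $U$ an open $R$-submodule of $M$, is an open $S$-submodule of $N$ and these form a fundamental system; and $N$ is bounded because, checking the absorbing condition on this basis, for each such $U$ one picks $I\in\cP(R)$ with $IM\subseteq U$ (possible since $M$ is bounded) and then $(I\cap S)\,N\subseteq IM\cap N\subseteq U\cap N$, with $I\cap S$ an open ideal of $S$. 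The one step needing genuine care is precisely this descent of boundedness: one must produce an open ideal of $S$ absorbing $N$ into a prescribed basic open subgroup, which works exactly because intersecting with $S$ an open ideal of $R$ that absorbs $M$ again lands inside $N$. Everything else is the formal inheritance of continuity, and of the ideal/submodule property, under restriction to $S$ and to $N$.
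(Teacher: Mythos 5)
Your proof is correct, and since the paper states Proposition~\ref{heredit} without any proof (it is left as a routine verification), your restriction argument --- subspace/product topologies and uniformities are compatible, restrictions of (separately, uniformly) continuous maps remain such, and $I\cap S$, $U\cap N$ inherit the ideal/submodule and openness properties --- is exactly the intended one, including the correct use of Propositions~\ref{ring-prod-prop} and \ref{scalar-prod-prop} for the uniform case. Note only that part (4) of the statement concerns $M$ itself viewed as an $S$-module, which you do cover by specializing your (slightly stronger, equally valid) claim about $N$ to $N=M$.
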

\begin{rmk} \label{clopopen} If $R \in \cRu$ and $M \in \cM_R$ is  op (resp. clop), then 
any open $R$-submodule of $M$ is op (resp. clop).
\end{rmk}

\begin{notation} \label{doubleindex} As we set out to do  for categories of topological rings (see  Definition~\ref{top-mod-cat}), we will also 
decorate categories of topological  modules with multiple superscripts to recall the properties of their objects.   
 If $R$ is in $\cRuop$ (resp. in $\cRuclop$), we also set 
$\cLMop_R=\cLM_R\cap\cMop_R$, $\cLMuop_R=\cLMu_R\cap\cMop_R$, $\cLMcop_R=\cLMc_R\cap\cMop_R$ (resp. $\cLMclop_R = \cLM_R\cap\cMclop_R$, 
$\cLMuclop_R = \cLMu_R\cap\cMclop_R$, $\cLMcclop_R=\cLMc_R\cap\cMclop_R$),
and similarly in  the separated or separated and complete  case. 
By $\cRfop$, $\cSRfop$, $\cCRfop$ we mean the full subcategories of $\cRuop$, $\cSRuop$, $\cCRuop$, 
respectively, of topological rings such that any open ideal contains an open finitely generated ideal.
\endgraf 
For any of the previous categories of topological rings or modules, we will use the superscript $\omega$ to indicate  that the objects of 
that category admit a countable basis of open neighborhoods of $0$. 
\end{notation}
\begin{rmk} \label{baire1}
According to Remark~\ref{baire}, for any $R \in \cCRu$, any object of $\cCMo_R$
 is a Baire space. 
\end{rmk}
\begin{rmk}\label{uclop}  For $k$ in $\cRuop$ (resp. $\cRuclop$)
an object  $M$ of $\cLMu_k$ is in $\cLMuop_k$ (resp. $\cLMuclop_k$)
if and only if $M$ 
 admits a basis of open $k$-submodules  of the form $IM$ (resp.  $\ol{IM}$) for $I \in \cP(k)$.
\end{rmk}
\begin{exa} \label{discrete-ring} 
Assume the topology of the ring $k$ is discrete.
Then $\cM_k = \cMc_k$ and $\cLM_k = \cLMc_k = \cLMu_k$. 
Of course, the topology of an object in 
any of the previous categories is not necessarily the discrete one.  
However, any $\clop$ separated topological $k$-module is discrete.
\end{exa} 

\begin{rmk} \label{discrete-mod} 
For an object $k$ of $\cRu$, 
let $M$ be an object of $\cLM_k$ whose topology is discrete. 
Then,  
$M$ is an object of $\cLMc_k$ if and only if 
$$ 
 M = \bigcup_{I \in \cP_k(k)} M_{[I]} 
$$
where  
$$ 
M_{[I]} := \{m \in M \,|\, am = 0\,,\,\forall a \in I\,\}\;.
$$ 
In fact the previous condition is equivalent to the continuity of the map $\mu_M$ in the first variable, 
\ie of any map $\mu_M(-,m) : k \to M$, for $m \in M$.  
Notice that $M_{[I]}$, but not $M$ in general,  is an object of $\cLMu_k$. 
In particular, we obtain:
\par
\emph{An object of $\cLMc_k$ which carries the  discrete topology  is uniform if and only if it is an object of  $\cLM_{k/I}$, for some open ideal $I$ of $k$.}
\end{rmk}
\end{subsection}
\begin{subsection}{Boundedness}  \label{boundedness}
\begin{prop} \label{boundedness1}
 Let  $\{M_\alpha\}_{\alpha \in A}$ be a projective system in  $\cM_k$. 
A subset $B$ of the limit  $M = \limit_\alpha M_\alpha$ is bounded 
 in the sense of Definition~\ref{bound-sponge-lintop} if and only if the  projection of $B$ 
in $M_\alpha$ is bounded for any $\alpha \in A$.  
\end{prop}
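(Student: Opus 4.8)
The plan is to make the topology of the limit explicit and then transport bounded sets through the $k$-linear projection maps $\pi_\alpha : M \to M_\alpha$. First I would recall that, exactly as for abelian groups (Remark~\ref{co-limits-adj}) and because the forgetful functor $M \longmapsto M^\ab$ admits a left adjoint, the underlying topological group of $M = \limit_\alpha M_\alpha$ is the projective limit of the $M_\alpha^\ab$ equipped with the weak topology of the projections. Hence a fundamental system of open subgroups of $M$ is given by the finite intersections $\bigcap_{i=1}^n \pi_{\alpha_i}^{-1}(P_{\alpha_i})$, with $\alpha_i \in A$ and $P_{\alpha_i} \in \cP(M_{\alpha_i}^\ab)$. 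The single algebraic fact I would isolate at the outset is that each $\pi_\alpha$ is a $k$-linear group homomorphism, so that for every $I \in \cP(k^\ab)$ one has $\pi_\alpha(IB) = I\,\pi_\alpha(B)$; this is the bridge between the module structure over $k$ and boundedness.

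For the forward implication, assume $B$ is bounded in $M$ and fix $\alpha \in A$ together with $P_\alpha \in \cP(M_\alpha^\ab)$. Since $\pi_\alpha$ is continuous, $\pi_\alpha^{-1}(P_\alpha)$ is open in $M$, so boundedness of $B$ (Definition~\ref{bound-sponge-lintop}) yields $I \in \cP(k^\ab)$ with $IB \subseteq \pi_\alpha^{-1}(P_\alpha)$. Applying $\pi_\alpha$ and using the displayed identity gives $I\,\pi_\alpha(B) = \pi_\alpha(IB) \subseteq P_\alpha$. As $P_\alpha$ was arbitrary, $\pi_\alpha(B)$ is bounded in $M_\alpha$.

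For the converse, suppose every projection $\pi_\alpha(B)$ is bounded. Since it suffices to absorb the members of a fundamental system of open subgroups of $M$, I would take a basic open subgroup $P = \bigcap_{i=1}^n \pi_{\alpha_i}^{-1}(P_{\alpha_i})$. For each $i$, boundedness of $\pi_{\alpha_i}(B)$ provides $I_i \in \cP(k^\ab)$ with $I_i\,\pi_{\alpha_i}(B) \subseteq P_{\alpha_i}$, whence $I_i B \subseteq \pi_{\alpha_i}^{-1}(P_{\alpha_i})$. Setting $I := \bigcap_{i=1}^n I_i$, which is again an open subgroup of $k$ as a finite intersection of such, one has $IB \subseteq I_i B \subseteq \pi_{\alpha_i}^{-1}(P_{\alpha_i})$ for every $i$, and therefore $IB \subseteq P$. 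This shows $B$ is bounded in $M$.

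The two inclusions above are routine; the only point that deserves care is the identification of the limit topology as the initial (weak) topology, and the consequent fact that a general open subgroup of $M$ is not a single preimage $\pi_\alpha^{-1}(P_\alpha)$ but only a finite intersection of such. This is precisely why boundedness of all individual projections is already sufficient: the finitely many open subgroups $I_i$ can be intersected inside $k$ without leaving $\cP(k^\ab)$. I note that directedness of $A$ plays no role, since finite intersections of sub-basic opens form a basis of the initial topology for any index category, so the argument goes through verbatim in that generality.
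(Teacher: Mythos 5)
Your proof is correct and takes essentially the same route as the paper: identify the limit topology as the weak topology of the projections $\pi_\alpha$, and absorb a basic open subgroup $\bigcap_{i=1}^n \pi_{\alpha_i}^{-1}(P_{\alpha_i})$ by a single open subgroup $I = \bigcap_{i=1}^n I_i$ of $k$. You additionally spell out the easy forward implication via $\pi_\alpha(IB) = I\,\pi_\alpha(B)$ and make explicit the finite-intersection step that the paper compresses into the choice of a single $J \in \cP(k)$, but these are refinements of the same argument, not a different one.
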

\begin{proof} In fact, the topology of $M$ is the weak topology of the family of canonical 
projections $\pi_\alpha :M \to M_\alpha$. 
Let $B \subset M$. If $\pi_\alpha (B)$ is bounded in $M_\alpha$  for any $\alpha$, and if 
$P =\bigcap_{i=1}^r \pi_{\alpha_i}^{-1}(P_i) \in \cP(M)$, where $P_i \in \cP(M_{\alpha_i})$ for $i=1,\dots,r$, there exists $J \in \cP(k)$ such that 
 $J \pi_{\alpha_i} (B) \subset P_i$,  for $i=1,\dots,r$. So, $JB \subset \pi_{\alpha_i}^{-1}(P_i)$, for $i=1,\dots,r$, hence $JB \subset P$.  Therefore, $B$ is bounded in $M$. 
 \end{proof}
 \begin{rmk}  \label{boundedclosedrmk1}  The category $\cMs_k$ (resp. $\cLMc_k$) is the full subcategory of 
$\cM_k$ (resp. $\cLM_k$)  consisting of the objects $M$ such that $\cB(M)$ is  a (set-theoretic) covering of $M$ 
(\cf   Proposition~\ref{scalar-prod-prop}).
\end{rmk}
\begin{rmk} \label{bounded-trivial} 
Let $M$ be an object of $\cLM_k$.
If $M$ carries  the discrete topology,   a $k$-submodule $B$ of $M$ is bounded if and only if 
there exists $I\in\cP(k)$ such that $IB=(0)$, 
that is, if and only if $B$ is a $k/I$-module for some $I\in\cP(k)$. 
Equivalently, a $k$-submodule $B$ of $M$ is bounded if and only if 
$B \subset M_{[I]}$ for some $I \in \cP(k)$ (see Remark~\ref{discrete-mod} for  notation). 
So, when $M$ is discrete,  $\{ M_{[I]}\}_{I \in \cP(k)}$ is a filter basis for $\cB(M)$. 
\endgraf 
In the general case, let $M$ be an object of $\cLM_k$.
A $k$-submodule $B$ of $M$ is bounded if and only if 
for every $P\in\cP_k(M)$ the image of $B$ in $M/P$ is bounded, 
that is for every $P\in\cP_k(M)$ the image of $B$ in $M/P$ is a $k/I$-module 
for some $I\in\cP(k)$. 
\end{rmk}  
 \end{subsection}  
 \begin{subsection}{Separation and separated completion}
\begin{prop}  \label{stab-sep-ring}
The construction of Remark~\ref{sep-adj} induces  a functor, called \emph{separation}, 
$$
\cRc \longrightarrow \cSRc \quad,\quad R \longmapsto  R^\sep
$$ 
left adjoint 
to the canonical inclusion of $\cSRc$ in $\cRc$. 
Separation transforms uniform (resp. continuous $\op$, resp. continuous $\clop$) topological rings into 
uniform (resp. continuous $\op$, resp. continuous $\clop$) separated topological rings.
Separation functors 
are left adjoint to the natural inverse inclusions of categories. 
\end{prop}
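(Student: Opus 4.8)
The plan is to build everything on the group-level separation functor of Remark~\ref{sep-adj} and to check compatibility with the multiplicative structure. Write $N = \ol{\{0_R\}}$ for the closure of $0$ in $R \in \cRc$; by Remark~\ref{topgroup1} this equals $\bigcap_H H$ over the open subgroups $H$ of $R^\ab$, so in particular it is a closed subgroup. First I would observe that $N$ is an ideal: for each $a \in R$ the map $\mu_R(a,-)\colon R \to R$ is continuous (separate continuity of $\mu_R$), whence $aN \subseteq \ol{\mu_R(a,\{0\})} = \ol{\{0\}} = N$. Hence $R^\sep := R/N$ carries a quotient ring structure and the group-separation map $p_R\colon R \to R^\sep$ of Remark~\ref{sep-adj} is a continuous, open, surjective ring homomorphism onto a separated group. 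To see that the induced multiplication $\mu_{R^\sep}$ is continuous, I would note that $p_R \times p_R$ is a product of open continuous surjections, hence an open surjection and therefore a quotient map; since $\mu_{R^\sep}\circ(p_R\times p_R) = p_R\circ\mu_R$ is continuous, so is $\mu_{R^\sep}$, and $R^\sep$ lies in $\cSRc$.

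Next I would record functoriality and the adjunction. A morphism $f\colon R \to R'$ in $\cRc$ is continuous, so $f(N) \subseteq \ol{\{0_{R'}\}} = N'$ and $f$ descends to a continuous ring homomorphism $(R)^\sep \to (R')^\sep$, functorially. For the adjunction, let $S \in \cSRc$ and $f\colon R \to S$ be a continuous ring homomorphism. Since $S$ is separated, $f(N) \subseteq \ol{\{0_S\}} = \{0\}$, so $f$ factors uniquely as $\bar f\circ p_R$ with $\bar f\colon R^\sep \to S$ a ring homomorphism, and $\bar f$ is continuous because $p_R$ is a quotient map. This is precisely the group-level bijection of Remark~\ref{sep-adj} restricted to ring homomorphisms; it is natural in $R$ and $S$, so $(-)^\sep$ is left adjoint to the inclusion $\cSRc \hookrightarrow \cRc$.

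For the preservation statements I would use that, since $p_R$ is a continuous open surjection with kernel $N$, the open subgroups of $R^\sep$ are exactly the $p_R(U) = U/N$ with $U \supseteq N$ an open subgroup of $R$, and that $p_R(I)p_R(J) = p_R(IJ)$ for subgroups $I,J$. If $R$ is uniform, i.e.\ linearly topologized, then the images $p_R(I)$ of the open ideals $I$ form a fundamental system of open ideals of $R^\sep$ (each is an ideal because $p_R$ is a surjective ring map, and open because $p_R$ is open); thus $R^\sep$ is linearly topologized and hence uniform by Proposition~\ref{ring-prod-prop}. If $R$ is continuous and $\op$, then for open subgroups $\bar I = I/N$, $\bar J = J/N$ of $R^\sep$ one has $\bar I\bar J = p_R(IJ)$, which is open because $IJ$ is open in $R$ and $p_R$ is open; so $R^\sep$ is $\op$.

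The only genuinely delicate point is $\clop$ preservation, where a closure must be controlled after passing to the quotient. Here I would show $\ol{\bar I\bar J} = \ol{p_R(IJ)} = p_R(\ol{IJ})$: the inclusion $p_R(\ol{IJ}) \subseteq \ol{p_R(IJ)}$ is continuity of $p_R$, while conversely $p_R(\ol{IJ})$ is open (because $\ol{IJ}$ is open by the $\clop$ hypothesis on $R$ and $p_R$ is open), hence closed by Remark~\ref{topgroup1}, and it contains $p_R(IJ)$, forcing $\ol{p_R(IJ)} \subseteq p_R(\ol{IJ})$. Thus $\ol{\bar I\bar J}$ is open and $R^\sep$ is $\clop$. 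Finally, the last sentence is a formal consequence: a left adjoint restricts to a left adjoint between full subcategories respected by both functors, and the preservation results just proved show that $(-)^\sep$ carries $\cRu$ and the continuous $\op$ (resp.\ $\clop$) rings into $\cSRu$, resp.\ the separated continuous $\op$ (resp.\ $\clop$) rings, while the inclusions restrict accordingly; so the natural bijection of the second paragraph restricts to each of these subcategories, yielding the asserted adjunctions.
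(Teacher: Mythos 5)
Your proof is correct and follows essentially the same route as the paper, which simply cites Bourbaki for the fact that $\ol{\{0_R\}}$ is a closed ideal, declares continuity of the quotient multiplication clear, and leaves the adjunction and the uniform/$\op$/$\clop$ preservation statements as "the remaining assertions follow." You have merely supplied in full the verifications the paper compresses --- notably the quotient-map argument for $\mu_{R^\sep}$ and the key identity $\ol{p_R(IJ)} = p_R(\ol{IJ})$ via openness of $p_R$, both of which are exactly the intended checks.
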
  
\begin{proof}   
For any $R$ in $\cRc$, $\ol{\{0_R\}}$ is a closed ideal of $R$ \cite[III, \S 6, N. 4, Prop. 5]{topgen}. So
$$ 
R^\sep := R/\ol{\{0_R\}}
$$
is a ring equipped with a separated topology. 
It is clear that the product map of $R^\sep$ is  continuous. 
The remaining assertions follow.
\end{proof}

\begin{prop} \label{stab-sep-mod}
Let $R \in \cRc$. The canonical inclusion of categories $\cSMc_{R^\sep} \subset \cMc_R$ admits a left adjoint
$$
\cMc_R \longrightarrow \cSMc_{R^\sep}\;\;,\;\; M \longmapsto M^\sep = M/\ol{\{0_M\}} 
$$
called \emph{separation}.
Separation induces functors 
$$
\cMu_R \longrightarrow \cSMu_{R^\sep}\;\;,\;\;\cMcop_R \longrightarrow \cSMcop_{R^\sep}\;\;,\;\;\cMcclop_R \longrightarrow \cSMcclop_{R^\sep}\;\;,\;\;
$$
and, for $R \in \cRu$,
$$
\cLMc_R \longrightarrow \cSLMc_{R^\sep}\;\;,\;\;\cLMu_R \longrightarrow \cSLMu_{R^\sep}\;\;,
$$ 
all left adjoints to  the natural inverse  inclusions of categories. 
\end{prop}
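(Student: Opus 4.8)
The plan is to mimic the ring-theoretic argument of Proposition~\ref{stab-sep-ring}, upgrading the abelian-group adjunction of Remark~\ref{sep-adj} to the module setting, and then to check stability of the various continuity/finiteness conditions under the open quotient map $p_M : M \to M^\sep$. First I would check that $\ol{\{0_M\}}$ is a \emph{closed $R$-submodule} of $M$: it is a closed subgroup by Remark~\ref{sep-adj}, and for fixed $a \in R$ the continuity of $\mu_M(a,-)$ (part of the definition of $\cM_R$) together with $a\cdot 0 = 0$ gives $a\,\ol{\{0_M\}} \subseteq \ol{a\cdot\{0_M\}} = \ol{\{0_M\}}$. Hence $M^\sep = M/\ol{\{0_M\}}$ is, as a group, the separation of $M^\ab$ and carries a quotient $R$-module structure; since $\mu_M$ is continuous and $p_M$ is an open surjection, the induced multiplication $R \times M^\sep \to M^\sep$ is continuous, so $M^\sep \in \cSMc_R$. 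Next I would show that this action factors through $R^\sep$, i.e. $\ol{\{0_R\}}\,M \subseteq \ol{\{0_M\}}$: for $m \in M$ the map $\mu_M(-,m): R \to M$ is continuous by separate continuity in the first variable (condition (1)(b) of Proposition~\ref{scalar-prod-prop}, which holds for $M \in \cMc_R$), whence $\ol{\{0_R\}}\,m \subseteq \ol{\{0_R\}\cdot m} \subseteq \ol{\{0_M\}}$. As $R \times M \to R^\sep \times M^\sep$ is an open surjection of groups, continuity of $\mu_{M^\sep}$ descends from that of $\mu_M$, and we obtain $M^\sep \in \cSMc_{R^\sep}$.

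For the adjunction I would argue as in Remark~\ref{sep-adj}. Given $N \in \cSMc_{R^\sep}$, regarded as an object of $\cMc_R$ via $R \to R^\sep$, any continuous $R$-linear $f : M \to N$ satisfies $f(\ol{\{0_M\}}) \subseteq \ol{f(\{0_M\})} = \ol{\{0_N\}} = \{0_N\}$ because $N$ is separated; thus $f$ factors uniquely as $\bar f \circ p_M$ with $\bar f : M^\sep \to N$ continuous (quotient topology) and $R^\sep$-linear. Conversely every continuous $R^\sep$-linear map $M^\sep \to N$ yields such an $f$, and the two assignments are mutually inverse and natural in $M$ and $N$. This is exactly the universal property exhibiting $(-)^\sep$ as left adjoint to the inclusion.

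It remains to check that separation respects the decorations, which I expect to be the main obstacle, especially in the clop case. Since $p_M$ is an open continuous surjection and every open subgroup contains $\ol{\{0_M\}}$, one has $\cP(M^\sep) = \{p_M(U) : U \in \cP(M)\}$ and likewise $\cP(R^\sep) = \{p_R(I) : I \in \cP(R)\}$; writing $\bar I = p_R(I)$, $\bar U = p_M(U)$, one checks $\bar I\,\bar U = p_M(IU)$ since $p_M$ is a homomorphism. For the \emph{uniform} case I would use that separation commutes with finite products of uniform spaces and that uniformly continuous maps descend to separations, so that $\mu_{M^\sep}$ is uniformly continuous when $\mu_M$ is. For \emph{op}, $\bar I\,\bar U = p_M(IU)$ is open because $IU$ is open (Definition~\ref{defclopmod}) and $p_M$ is open. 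The \emph{clop} case is the delicate one: assuming $\ol{IU}$ open in $M$, it contains $\ol{\{0_M\}}$ and is therefore $p_M$-saturated, so $p_M(\ol{IU})$ is open and closed in $M^\sep$; being closed and containing $\bar I\,\bar U$ it contains $\ol{\bar I\,\bar U}$, while continuity of $p_M$ gives $p_M(\ol{IU}) \subseteq \ol{p_M(IU)} = \ol{\bar I\,\bar U}$, whence $\ol{\bar I\,\bar U} = p_M(\ol{IU})$ is open.

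Finally, when $R \in \cRu$ the images $p_M(U)$ of open $R$-submodules form a fundamental system of open $R^\sep$-submodules, so the $R$-linear topology passes to an $R^\sep$-linear one, and $R^\sep \in \cRu$ by Proposition~\ref{stab-sep-ring}; combining this with the preservation of uniformity yields the functors on $\cLMc_R$ and $\cLMu_R$ landing in $\cSLMc_{R^\sep}$ and $\cSLMu_{R^\sep}$ respectively. That each restricted functor is left adjoint to the corresponding inverse inclusion then follows formally, since all the inclusions are full and $(-)^\sep$ has been shown to land in the appropriate full subcategory in each case.
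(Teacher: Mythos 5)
Your proposal is correct and takes essentially the approach the paper intends: the paper's own proof simply cites Bourbaki \cite[III, \S 6, N. 5, Thm. 1]{topgen} for the continuous case and declares ``the remaining assertions are easy,'' and your argument is exactly that standard construction written out, modeled on Proposition~\ref{stab-sep-ring} and Remark~\ref{sep-adj}. In particular your verification of the delicate clop case (using that $\ol{IU}$ is an open, hence closed and $p_M$-saturated, subgroup containing $\ol{\{0_M\}}$, so $\ol{\bar I\,\bar U}=p_M(\ol{IU})$ is open) and your descent of uniform continuity through the separated quotient $R^\sep\times M^\sep$ correctly supply the details the paper leaves implicit.
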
 
\begin{proof}  
For continuous modules see the proof of 
\cite[III, \S 6, N. 5, Thm. 1]{topgen}. The remaining assertions are easy.
\end{proof}
\begin{rmk} 
\label{extsep} For $R \in \cRc$ the adjoint functors $\cSMc_{R^\sep} \hookrightarrow \cMc_R$ and 
$\cMc_R \longrightarrow \cSMc_{R^\sep}\;$, $M \longmapsto M^\sep\;$, establish an equivalence of categories between 
$\cSMc_{R^\sep}$ and $\cSMc_R$. 
Because of this, when dealing with objects of $\cSMc_R$  with  $R \in \cRc$, we may as well assume that 
$R$ is separated. 
\end{rmk} 
\begin{prop} \label{stab-compl-ring}
Separated completion gives a functor 
$$\cRc \longrightarrow \cCRc\;\;,\;\; R \longmapsto \what{R}  
$$ 
 left adjoint 
to the natural full  inclusion of $\cCRc$ in $\cRc$. 
Separated completion transforms uniform (resp. continuous $\clop$) topological rings into 
uniform (resp. continuous $\clop$) complete topological rings.
Separated completion functors  
are left adjoint to the natural inverse inclusions of categories.
\end{prop}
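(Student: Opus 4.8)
The plan is to upgrade the abelian-group completion of Remark~\ref{compl-adj} to a ring completion, to establish the ring-level adjunction by a density argument, and then to read off preservation of the finiteness properties together with the restricted adjunctions. Throughout, write $i_R:R\to\what{R}$ for the canonical dense map \eqref{caninj}, where $\what{R}=\limit_{H\in\cP(R)}R/H$ is the separated completion of the underlying group $R^\ab$; by Remark~\ref{compl-adj} it is left adjoint to $\CAb\hookrightarrow\TAb$.

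The core, and the main obstacle, is to equip $\what{R}$ with a continuous multiplication extending $\mu_R$. For a merely \emph{continuous} (non-uniform) ring $\mu_R$ need not be uniformly continuous, so one cannot extend it by the uniform-continuity principle; instead I would argue at the level of Cauchy filters (equivalently, invoke \cite[III, \S6, N.5, Thm.~1]{topgen}). Given Cauchy filters $\cF,\cG$ on $R$, I claim the filter generated by the products $F\cdot G$ ($F\in\cF$, $G\in\cG$) is again Cauchy. Fix an open subgroup $H$: continuity of $\mu_R$ at $(0,0)$ supplies open subgroups $P,Q$ with $PQ\subseteq H$; localizing $\cF$ and $\cG$ inside cosets $a+P$ and $b+Q$ with $a,b\in R$, and using separate continuity of $\mu_R$ at the \emph{fixed} points $a,b$ to further shrink, one finds $F\in\cF$, $G\in\cG$ so that, decomposing a typical difference $xy-x'y'=a(y-y')+(x-a)(y-y')+(x-x')b+(x-x')(y'-b)$, all four summands lie in $H$ --- the first and third by separate continuity at $a$ and $b$, the remaining two because $PQ\subseteq H$. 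Passing to filters defines a continuous multiplication on the complete separated group $\what{R}$ for which $i_R$ is a ring homomorphism; uniqueness of this structure is forced by the density of $i_R(R)$ and the separatedness of $\what{R}$. Thus $\what{R}\in\cCRc$.

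For functoriality and the adjunction, let $S\in\cCRc$ and let $f:R\to S$ be a continuous ring morphism. The underlying group homomorphism factors uniquely as $f=\what{f}\circ i_R$ with $\what{f}:\what{R}\to S$ continuous, by the group-level universal property. That $\what{f}$ is multiplicative is again a density argument: the two continuous maps $(x,y)\mapsto\what{f}(xy)$ and $(x,y)\mapsto\what{f}(x)\what{f}(y)$ from $\what{R}\times\what{R}$ to the separated ring $S$ agree on the dense subset $i_R(R)\times i_R(R)$, hence everywhere. Applying this with $S=\what{R'}$ produces the functor $\what{(-)}$, and the resulting bijection $\Hom_{\cCRc}(\what{R},S)\cong\Hom_{\cRc}(R,S)$ exhibits it as left adjoint to the inclusion.

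It remains to check preservation and to deduce the restricted statements. If $R$ is uniform, the open ideals are cofinal in $\cP(R)$, so $\what{R}=\limit_I R/I$ is a cofiltered limit of discrete rings along surjections; its basic open subgroups $\what{I}=\Ker(\what{\pi}_I)=\ol{i_R(I)}$ are ideals (the closure of an ideal is an ideal, since $\what{R}\cdot\ol{i_R(I)}\subseteq\ol{i_R(RI)}=\ol{i_R(I)}$), so $\what{R}$ is linearly topologized and hence uniform by Proposition~\ref{ring-prod-prop}. If $R$ is continuous and $\clop$, it suffices by Remark~\ref{compl-adj} to test $\clop$-ness on basic open subgroups $\what{H_1},\what{H_2}$ of $\what{R}$; choosing an open $H_3\subseteq\ol{H_1H_2}$ (possible since $\ol{H_1H_2}$ is open by hypothesis), one gets $\ol{\what{H_1}\,\what{H_2}}\supseteq\ol{i_R(H_1H_2)}\supseteq\ol{i_R(H_3)}=\what{H_3}$, so $\ol{\what{H_1}\,\what{H_2}}$ contains an open subgroup and is therefore open. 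Finally, since $\what{(-)}$ carries $\cRu$ into $\cCRu$ and $\cRcclop$ into $\cCRcclop$, and all the inclusions in sight are full, restricting the adjunction bijection above immediately yields that each restricted completion functor is left adjoint to the corresponding inverse inclusion.
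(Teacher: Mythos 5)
Your proof is correct and takes essentially the same route as the paper's: the paper simply cites Bourbaki \cite[III, \S 6, N. 6]{topgen} for the ring structure on $\what{R}$ and the adjunction, which you reprove inline via the standard Cauchy-filter product argument, while your two preservation checks match the paper's (in the uniform case, that the $\what{I}=\ol{i_R(I)}$ are ideals forming a fundamental system, hence $\what{R}$ is linearly topologized; in the clop case, your inclusion $\ol{\what{H_1}\,\what{H_2}}\supseteq \what{H_3}$ with $H_3 = \ol{H_1H_2}$ open is the same computation as the paper's $\cl_{\what{R}}(IJ)=\cl_{\what{R}}(\cl_R(IJ))\subset \cl_{\what{R}}(\what{I}\,\what{J})$). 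I see no gaps.
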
 
\begin{proof} See  \cite[III, \S 6, N. 6]{topgen} for the existence and the adjointness 
property of separated completion $\cRc \longrightarrow \cCRc$. \par
Suppose now $R$ is in $\cRu$ (which is equivalent to $R$ being linearly topologized). 
For any ideal $I$ of $R$, the separated completion $\what{I}$ identifies 
with the closure of the image of $I$  in $\what{R}$ and this is an ideal of 
$\what{R}$. 
A basis for the topology of $\what{R}$ is given by the family of $\what{I}$, for $I \in \cP(R)$. 
If $R$ is continuous and clop, and $I,J \in \cP(R)$, 
then $\cl_{\what{R}} (IJ) = \cl_{\what{R}} (\cl_R(IJ))$ is open in $\what{R}$. 
On the other hand, 
$$
\cl_{\what{R}} (IJ) \subset \cl_{\what{R}}(\cl_{\what{R}}(I) \cl_{\what{R}}(J)) = \cl_{\what{R}}(\what{I} \what{J})\;,
$$
so the latter is open in $\what{R}$, as well. Therefore, $\what{R}$ is $\clop$.  
\end{proof} 
\begin{prop} \label{stab-compl-mod} Let $R \in \cRc$. The canonical inclusion of categories $\cCMc_{\what{R}} \subset \cMc_R$ admits a left adjoint
$$
\cMc_R \longrightarrow \cCMc_{\what{R}}\;\;,\;\; M \longmapsto \what{M} = \what{M^\sep} 
$$
called (\emph{separated}) \emph{completion}.
Separated completion induces functors 
$$
\cMu_R \longrightarrow \cCMu_{\what{R}}\;\;,\;\;\cMcclop_R \longrightarrow \cCMcclop_{\what{R}}\;\;,\;\;
$$
and, for $R \in \cRu$, 
$$
\cLMc_R \longrightarrow \cCLMc_{\what{R}}\;\;,\;\;\cLMu_R \longrightarrow \cCLMu_{\what{R}}\;\;,
$$ 
all left adjoints to  the natural inverse  inclusions of categories. 
\end{prop}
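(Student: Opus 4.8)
The plan is to deduce the statement by composing the module separation of Proposition~\ref{stab-sep-mod} with Bourbaki's separated completion of separated continuous modules, exactly as Proposition~\ref{stab-compl-ring} was obtained for rings; the only genuine work is then to check that completion preserves the relevant finiteness and linearity properties. First I would treat existence and adjointness in the continuous case. Since $\what{M}=\what{M^\sep}$ and $\what{R^\sep}=\what{R}$, the functor factors as $\cMc_R\map{(-)^\sep}\cSMc_{R^\sep}\map{\what{(-)}}\cCMc_{\what{R}}$. The first arrow is left adjoint to the inclusion $\cSMc_{R^\sep}\hookrightarrow\cMc_R$ by Proposition~\ref{stab-sep-mod}; for the second, \cite[III, \S6, N.~6]{topgen} endows the separated completion $\what{N}$ of a separated continuous $R^\sep$-module $N$ with the structure of a complete continuous $\what{R}$-module and exhibits it as left adjoint to restriction of scalars $\cCMc_{\what{R}}\hookrightarrow\cSMc_{R^\sep}$ along $R^\sep\to\what{R}$. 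Composing the two adjunctions yields the desired left adjoint to $\cCMc_{\what{R}}\hookrightarrow\cMc_R$, with $\what{M}=\limit_{U\in\cP(M)}M/U$ carrying the evident $\what{R}$-action and the universal map $M\to\what{M}$ having dense image.

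Next I would verify the induced functors on subcategories. For the uniform case, if $\mu_M$ is uniformly continuous then, since separated completion of uniform spaces commutes with finite products, $\mu_M$ extends to a uniformly continuous map $\what{R}\times\what{M}=\what{R\times M}\to\what{M}$, so $\what{M}\in\cCMu_{\what{R}}$. For the clop case I would argue verbatim as in the proof of Proposition~\ref{stab-compl-ring}: given $I\in\cP(R)$ and $U\in\cP(M)$, the subgroup $\ol{IU}$ is open in $M$ by hypothesis, hence $\cl_{\what{M}}(IU)=\cl_{\what{M}}(\ol{IU})$ is open in $\what{M}$, since the closure in a completion of an open subgroup is open (Remark~\ref{compl-adj}). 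The inclusion $\cl_{\what{M}}(IU)\subseteq\cl_{\what{M}}\big(\cl_{\what{R}}(I)\,\cl_{\what{M}}(U)\big)=\cl_{\what{M}}(\what{I}\,\what{U})$ then forces $\cl_{\what{M}}(\what{I}\,\what{U})$ to be open; as $\what{I},\what{U}$ run through bases of $\cP(\what{R}),\cP(\what{M})$, this shows $\what{M}\in\cCMcclop_{\what{R}}$.

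For $R\in\cRu$ the two linearly topologized cases follow once I know that completion preserves $R$-linearity. If $M\in\cLMc_R$, its open $R$-submodules $U\in\cP_R(M)$ form a basis and each quotient $M/U$ is a discrete $R$-module; hence $\what{M}=\limit_{U}M/U$ is a projective limit of discrete $R$-modules and is therefore $\what{R}$-linearly topologized, a basis of open submodules being $\{\Ker\what{\pi}_U=\what{U}\}_U$, which are $\what{R}$-submodules by continuity of the action. This gives $\cLMc_R\to\cCLMc_{\what{R}}$, and intersecting with the uniform case of the previous paragraph gives $\cLMu_R\to\cCLMu_{\what{R}}$, using that $\what{R}\in\cRu$ by Proposition~\ref{stab-compl-ring}. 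Finally, the adjointness of all these restrictions is formal: every category involved is a full subcategory, completion lands in the asserted one while the inclusion restricts back into the source, so the hom-set bijection established in the continuous case restricts unchanged and exhibits each induced functor as left adjoint to its inclusion.

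I expect the main obstacle to be the bookkeeping in the clop and $R$-linear steps, namely controlling the closures $\what{I}\,\what{U}$ against $\ol{IU}$ inside the completion and confirming that the chosen families really are bases of open $\what{R}$-submodules; the adjointness and the uniform case are formal once Bourbaki's module completion is invoked.
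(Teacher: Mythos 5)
Your proposal is correct and takes exactly the route the paper intends: the paper marks this proof ``Omitted'' precisely because it follows the pattern of Proposition~\ref{stab-compl-ring}, namely composing the separation adjunction of Proposition~\ref{stab-sep-mod} with Bourbaki's completion adjunction \cite[III, \S 6, N. 6]{topgen}, and then repeating the closure computation ($\cl_{\what{M}}(IU)=\cl_{\what{M}}(\ol{IU})$ is open by Remark~\ref{compl-adj}, hence so is $\cl_{\what{M}}(\what{I}\,\what{U})$) for the $\clop$ case and the limit-of-discrete-quotients description for the $R$-linear cases. Your uniform-case argument via $\what{R\times M}=\what{R}\times\what{M}$ and the formal restriction of the adjunction to full subcategories are likewise the standard expected details, so there is nothing to correct.
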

\begin{proof}  Omitted.
\end{proof} 
\begin{rmk} 
\label{extcompl} For $R \in \cRc$ the adjoint functors $\cCMc_{\what{R}} \hookrightarrow \cMc_R$ and  
$\cMc_R \longrightarrow \cCMc_{\what{R}}\;$, $M \longmapsto \what{M}\;$,  establish an equivalence of categories 
between 
$\cCMc_{\what{R}}$ and $\cCMc_R$.  
Because of this, when dealing with objects of $\cCMc_R$  with  $R \in \cRc$, we may as well assume that 
$R$ is (separated and) complete. 
\end{rmk}
\begin{rmk} \label{clop-rmk} \hfill \ben
\item For a continuous and $\clop$  topological ring $R$ (resp.~and a continuous and $\clop$ topological $R$-module $M$), 
$\what{R}$  (resp. and $\what{M}$) is (resp. are) continuous and $\clop$. But notice that, for a continuous and $\op$  topological ring $R$ (resp.~and a continuous and $\op$ topological $R$-module $M$), 
$\what{R}$  (resp. and $\what{M}$) are $\clop$ but not necessarily $\op$.
This is the main reason for introducing the $\clop$ condition. 
\item Notice however that, if the uniform and $\op$ topological ring $R$ admits a countable basis 
of open ideals, then, for any open finitely generated ideal $J$ of $R$, the ideal 
$\what{J} = \cl_{\what{R}}(J\what{R})$ of $\what{R}$  equals $J \what{R}$ (see \cite[Rk. 8.3.3 (iv)]{GR} or Lemma~10.96.3 of  \cite[Tag 05GG]{stacks}). 
In particular, if  
$R$ admits a countable basis 
of open finitely generated  ideals, \ie if $R$ is an object of $\cRoufop$, $\what{R}$
is an object of $\cCRoufop$.   
\een
\end{rmk} 
\begin{exas} \label{clop-exa}  
An example of an object of $\cRu$ not in $\cRclop$ is the following.
Let $R=\Z_p[\veps]$ with $\veps \neq 0$ but $\veps^2 = 0$, with 
the linear topology determined by the fundamental system of open ideals  $\{p^n \veps R =p^n \veps \Z_p\}_{n\geq 0}$. 
Then  
$\Z_p$ is closed 
and the topology induced by $R$ on it is the discrete.
Here $(\veps R)^2 = (0)$ is closed
but  not open, because it does not contain any  ideal in the previous fundamental system. Notice that $R$ is complete.
\endgraf 
An object of $\cRouclop$ which is not $\op$ 
is obtained as follows: 
let  $R=\Z[X_i\,;\, i\in\N]$ endowed with the topology generated 
by the fundamental system of open ideals $I_{j}=(X_i: i\geq j)$, for $j\in\N$. 
The product $I_{j_1}I_{j_2}$ is not open 
(because it does not contain any basic open ideal).
Its closure $\ol{I_{j_1}I_{j_2}}$ is the intersection of  the open ideals which 
contain $I_{j_1}I_{j_2}$,   that is $I_{\max(j_1,j_2)}$,  so that it is open. \ 
Taking the completion we have an example of an object of $\cCRouclop$
which is not $\op$. Another example of a ring $R \in \cCRouclop$ which is not op will be given in part 3 of Remark~\ref{naive-pbl}.
\end{exas}

\begin{notation} \label{forget0} \hfill \ben
\item
We denote by $k^\for$ the ring underlying the topological ring $k$, 
and by $M^\for$ the $k^\for$-module underlying a topological module $M$. 
To avoid excessively burdening the notation however, 
the category $\Mod_{k^\for}$ will be simply denoted by $\Mod_k$. 
\item
Similarly, we generally 
write $\Hom_k$ for $\Hom_{k^\for}$, $\Bil_k$ (standing for ``$k$-bilinear'') 
for $\Bil_{k^\for}$, and shorten $M^\for \otimes_{k^\for} N^\for$ into $M \otimes_k N$  
(topological tensor products will have a different notation, anyhow.) 
\item
Terms like ``surjective'', ``injective'', ``bijective'' (only rarely qualified by ``set-theore\-tically'') 
applied to a morphism $f$ in $\cM_k$ refer to set-theoretic  properties of the morphism $f^\for$. 
\item For any topological ring $R$, the category $\cM_R$ admits a canonical 
faithful  functor $X \mapsto X^\top$  to the category of topological spaces. Then, a morphism $f:X \to Y$ of $\cM_R$ is closed, resp. open, resp. dominant, if so is the continuous map $f^\top: X^\top \to Y^\top$. Similarly, the term ``topological embedding'' or just ``embedding'' refers to a morphism  $i: Y \to X$ in  $\cM_R$ such that $i^\top$ is
the inclusion of a subspace $Y^\top$ of $X^\top$.  In particular  we have the notion of ``closed'' (resp. ``open'') (topological) embedding in $\cM_R$. 
\par 
Notice that the kernel  of a morphism $f:X \to Y$ in $\cM_R$   is an embedding, but it is not necessarily closed  unless $Y$ is separated.
Similarly, the cokernel of $f:X \to Y$ in  $\cM_R$ or in $\cLMc_R$  is a quotient map, but if $f$ is a morphism of  $\cCLMc_R$, then 
its cokernel in  $\cCLMc_R$ is not always surjective.  
 We will later deal with full additive subcategories $\cC$ of $\cCLMc_R$ such that the kernel of  a morphism $f:X \to Y$ of $\cC$, taken in $\cC$,  is not necessarily an embedding. See Remark~\ref{limcan} for the  example of $\cCLMcan_k \subset \cCLMu_k$.
 \een
\end{notation} 

\end{subsection} 

\begin{subsection}{Uniform and clop rings}  
\begin{defn} \label{clop-adic}
A morphism  $\phi:R \to S$ of $\cRu$  is said to be \emph{$\op$-adic} (resp. 
\emph{$\clop$-adic}) if  
for any $I \in\cP_R(R)$ one has $\phi(I)S \in \cP_S(S)$ (resp. $\ol{\phi(I)S} \in \cP_S(S)$). 
\end{defn}
Obviously, a composition of $\op$-adic (resp. $\clop$-adic) morphisms is $\op$-adic (resp. $\clop$-adic).
\begin{exa} \label{exa-clop}   For any linearly topologized ring $R$,  
the canonical map $R\to\what{R}$ is a $\clop$-adic morphism.  The same map is $\op$-adic when 
$R$ is an object of $\cRoufop$ \cite[Prop. 8.3.3 (iv)]{GR} or Lemma~10.96.3 of \cite[Tag 05GG]{stacks}. 
\end{exa}

\begin{rmk} \label{op-canonical} Let $\phi:R\to S$ be any $\op$-adic (resp.  $\clop$-adic) morphism in $\cRu$. Then~: \hfill \ben  
\item A basis of open ideals of 
  $S$ consists  of  ideals of the form 
$\phi(I)S$ (resp. $\ol{\phi(I)S}$) for $I\in\cP(R)$. 
In fact the latter ideals are open by the $\op$-adic (resp.  $\clop$-adic) property of $\phi$ 
and, on the other hand, any open ideal $J\in\cP(S)$ 
contains an ideal of this form with $I=\phi^{-1}(J)$. Therefore our notion of $\clop$-adic (resp. 
$\op$-adic) morphism of rings coincides with the notion of $c$-adic (resp. adic) morphism appearing in \cite[Defn. 8.3.23]{GR}.
\item 
Suppose moreover that $R$ is an object of $\cRuop$ (resp. of $\cRuclop$). 
Then  $S$ is one, as well. 
In fact, if $J,J'\in\cP(S)$, then $J\supseteq\phi(I)$ and $J'\supseteq\phi(I')$ 
for some $I,I'\in\cP(R)$, so $JJ'\supseteq \phi(I)\phi(I')=\phi(II')$ 
(resp. $\ol{JJ'}\supseteq \ol{\phi(I)\phi(I')}\supseteq\phi(\ol{II'})$)
and since $II'\in\cP(R)$ (resp. $\ol{II'}\in\cP(R)$) 
we conclude that $JJ'$ (resp. $\ol{JJ'}$) is an open ideal of $S$. 
\een
\end{rmk}
\begin{rmk}\label{indlim-clop} 
Colimits in $\cRu$ 
of   inductive systems  in $\cRuop$ (resp. $\cRuclop$) are in $\cRuop$ (resp.~$\cRuclop$). Similarly for 
colimits in $\cCRu$ 
of   inductive systems  in  $\cCRuclop$.
As an example, let us prove that $\cRu$-colimits of inductive systems 
of $\clop$ linearly topologized rings 
are  $\clop$. 
Let    $\{R_\alpha\}_\alpha$ be an inductive system in $\cRuclop$, 
and let $R$ be its colimit in $\cRu$: 
an ideal $I$ of $R$ is open if $I_\alpha$ (= inverse image of $I$ by $i_\alpha:R_\alpha \to R$) 
is open in $R_\alpha$ for any $\alpha$. 
Given two open ideals $I,J$ of $R$, we have that $i_\alpha^{-1}(IJ) \supseteq I_\alpha J_\alpha$, 
for any $\alpha$,  and
therefore $i_\alpha^{-1}(\ol{IJ}) \supseteq   \ol{I_\alpha J_\alpha}$ 
for any $\alpha$; we deduce that $\ol{IJ}$ is open in $R$. The case of $\cCRuclop$ follows from Example~\ref{exa-clop} and (1)  of 
Remark~\ref{op-canonical}.
\endgraf 
A minimal variation of the previous argument shows that the  category $\cRuop$  with $\op$-adic morphisms and the category $\cRuclop$ (resp. $\cCRuclop$) with   $\clop$-adic morphisms admit  all colimits. 
\end{rmk}

\begin{rmk} \label{prolim-clop}
Conditions op and clop are not stable under   limits in   $\cCRu$. 
In fact, any object of $\cCRu$ is a projective limit of discrete quotients (which are in $\cCRuclop$), while an example of  a ring $R$ in $\cCRu$  not $\clop$ was given in Examples~\ref{clop-exa}. But the inclusion of $\cCRuclop$ in $\cCRu$  admits a right adjoint $(-)^\clop$. Namely, for $R \in \cCRu$, $R^\clop$ is set-theoretically the same ring $R^\for$ equipped with the topology defined by the system 
$$\bigcup_{n\in \N} \{\ol{J_1\cdots J_n}\, | \, J_i \in \cP(R)\,,\, \forall \, i =1,\dots,n\,\} $$
of open ideals. This is complete by Lemma~\ref{BourbComplLemma} and is clearly clop since, for any $$I_1,\dots,I_m,J_1,\dots,J_n \in \cP(R)\;,$$
we have
$$
\ol{\ol{J_1\cdots J_n}\;\cdot\;\ol{I_1\cdots I_m}} \supset \ol{J_1\cdots  J_nI_1\cdots I_m} \;.
$$
\par 
So,  the category $\cCRuclop$   admits  all  limits calculated by application of the functor $(-)^\clop$ to the same limits calculated in $\cCRu$. 
\end{rmk}    
Any object $A$ of $\cCRu$ is the  limit of a  cofiltered projective system $(A_\lambda)_{\lambda \in \Lambda}$ of discrete rings and surjections 
$\pi_{\mu,\lambda} : A_\lambda \to A_\mu$, for any $\lambda \geq \mu$. In the following discussion we  fix such an $A$,
\beq \label{prodiscrring}
A = \limit_{\lambda \in \Lambda} A_\lambda 
\eeq
and let $\pi_\lambda : A \longrightarrow A_\lambda$ denote the projection; let $I_\lambda \in \cP(A)$ be the kernel of $\pi_\lambda$. 
To any ideal $J$ of $A$, we associate a projective sub-system $\pi_\bullet(J) := (J_\lambda)_{\lambda \in \Lambda}$ of 
$(A_\lambda)_{\lambda \in \Lambda}$, where, for any $\lambda$, $J_\lambda = \pi_\lambda(J) = (J +I_\lambda)/I_\lambda$ is an ideal of $A_\lambda$. For any $\lambda \geq \mu$ in $\Lambda$ we have 
$\pi_{\mu, \lambda}(J_\lambda) = J_\mu$. 
Clearly, $J$ is open iff there exists an index $\lambda_0 \in \Lambda$ such that $J \supset I_{\lambda_0}$, \ie $J = \pi_{\lambda_0}^{-1}(J_{\lambda_0})$, 
or, equivalently, $J_\lambda = \pi_{\lambda_0,\lambda}^{-1}(J_{\lambda_0})$, for any 
$\lambda \geq \lambda_0$; $J$ is closed iff $J = \bigcap_{\lambda \in \Lambda} \pi_{\lambda}^{-1}(J_{\lambda})$. Similarly for a subring $B \subset A$.  
\par Any $M \in \cCLMu_A$ is the projective limit of a  cofiltered projective system 
$(M_\sigma)_{\sigma \in \Sigma}$ of discrete uniform $A$-modules with surjective transition maps. As observed in 
Remark~\ref{discrete-mod}, for any $\sigma \in \Sigma$, there is $\lambda(\sigma) \in \Lambda$  such that $I_{\lambda(\sigma)} M_\sigma = (0)$. We may then replace both filtered posets $\Lambda$ and $\Sigma$ by the filtered poset
$$\Gamma := \{(\lambda,\sigma) \in \Lambda \times \Sigma \; : \; \lambda \geq \lambda(\sigma) \} 
$$
and set, for $\gamma = (\lambda,\sigma) \in \Gamma$,
$$
A_\gamma := A_\lambda \;\;,\;\; M_\gamma = M_\sigma  
$$
so that $M_\gamma$ is a discrete $A_\gamma$-module.  
So, we may assume that, for $A \in \cCRu$ and  $M \in \cCLMu_A$ there is a filtered set $\Lambda$ such that $A$ is expressed as in \eqref{prodiscrring} and 
\beq  \label{prodiscrmod}
M = \limit_{\lambda \in \Lambda} M_\lambda
\eeq 
  is the projective limit of a  cofiltered projective system 
 $(M_\lambda)_{\lambda \in \Lambda}$  of discrete  modules over the 
 projective system of discrete rings $(A_\lambda)_{\lambda \in \Lambda}$, where the   transition maps 
 $A_\lambda \to A_\mu$ and $M_\lambda \to M_\mu$ are all surjective. 
Let $\pi_\lambda : M \longrightarrow M_\lambda$ denote the projection. To any $A$-submodule $N$ of $M$, we associate a projective sub-system $\pi_\bullet(N) := (N_\lambda)_{\lambda \in \Lambda}$ of 
$(M_\lambda)_{\lambda \in \Lambda} =: \pi_\bullet(M)$, where, for any $\lambda$, $N_\lambda = \pi_\lambda(N)$ is a $A_\lambda$-submodule of $M_\lambda$. Again, $N$ is open iff there exists an index $\mu \in \Lambda$ such that 
$N = \pi_{\mu}^{-1}(N_{\mu})$, or, equivalently, $N_\lambda = \pi_{\mu,\lambda}^{-1}(N_\mu)$, for any $\lambda \geq \mu$,  while $N$ is closed iff $N = \bigcap_{\lambda \in \Lambda} \pi_{\lambda}^{-1}(N_{\lambda})$.  We summarize the situation in the following
\begin{lemma}  \label{clopchar0} Let $A \in \cCRu$ (resp. $M \in \cCLMu_A$) and let  $J$ be an ideal of $A$.\hfill \ben 
\item $A$ (resp. $M$) can 
be expressed as in \eqref{prodiscrring} (resp. \eqref{prodiscrmod}), with the same filtered set $\Lambda$.  
\item 
The closure $\ol{J}$ of $J$ in $A$ is $\bigcap_{\lambda} \pi_\lambda^{-1}(J_\lambda) = \limit_{\lambda \in \Lambda} J_\lambda$, and $\ol{J}$ is  open in $A$  if and only if there exists $\mu \in \Lambda$ such that  
$J_\lambda = \pi_{\mu,\lambda}^{-1} J_\mu$ 
(or, equivalently,  if and only if
$
A_\lambda/ J_\lambda \iso A_\mu/J_\mu 
$, via the map induced by $\pi_{\mu,\lambda}$)  
for any $\lambda \geq \mu $ in $\Lambda$. 
\item  
Let $N$ be an $A$-submodule of $M$. The closure $\ol{N}$ of $N$ in $M$ is $\bigcap_{\lambda} \pi_\lambda^{-1}(N_\lambda) =\limit_{\lambda \in \Lambda} N_\lambda$, and  $\ol{N}$ is an open subobject of $M$ in $\cCLMu_A$
if and only if   there exists $\mu \in \Lambda$ such that  $N_\lambda = \pi_{\mu,\lambda}^{-1}N_\mu$ 
(or, equivalently,  if and only if 
$
M_\lambda/ N_\lambda \iso M_\mu/N_\mu 
$, via the map induced by $\pi_{\mu,\lambda}$)
for any $\lambda \geq \mu$ in $\Lambda$. 
\een
\end{lemma}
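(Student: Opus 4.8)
The plan is to prove part 1 (ideals in $A$) in full and then observe that part 2 (submodules of $M$) follows by the identical argument. The backbone is the description of the closure of a subgroup in a linearly topologized abelian group from Remark~\ref{topgroup1}: since $\{I_\lambda\}_{\lambda}$ is cofinal among the open ideals of $A$ and $\pi_\lambda^{-1}(J_\lambda) = J + \Ker\,\pi_\lambda = J + I_\lambda$, one obtains
$$\ol{J} = \bigcap_{H \in \cP(A)}(J + H) = \bigcap_\lambda (J + I_\lambda) = \bigcap_\lambda \pi_\lambda^{-1}(J_\lambda),$$
and this last intersection is exactly the set of $a \in A = \limit_\lambda A_\lambda$ with $\pi_\lambda(a) \in J_\lambda$ for all $\lambda$, i.e. $\limit_\lambda J_\lambda$. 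First I would record, from the compatibility $\pi_{\mu,\lambda}(J_\lambda) = J_\mu$ already noted before the statement, that $J_\lambda \subseteq \pi_{\mu,\lambda}^{-1}(J_\mu)$ and hence $\pi_\lambda^{-1}(J_\lambda) \subseteq \pi_\mu^{-1}(J_\mu)$ for $\lambda \geq \mu$; thus $\{\pi_\lambda^{-1}(J_\lambda)\}_\lambda$ is a filtered decreasing family, so any cofinal subfamily has the same intersection $\ol J$.

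For the openness criterion I would use that $\ol J$, being an ideal, is open iff it contains some basic open ideal $I_\mu$. Two auxiliary computations drive both implications: taking $\lambda=\mu$ in the intersection gives $\ol J \subseteq J + I_\mu$, whence $\pi_\mu(\ol J) \subseteq \pi_\mu(J) = J_\mu$, while $J \subseteq \ol J$ gives the reverse, so $\pi_\lambda(\ol J) = J_\lambda$ for every $\lambda$. Now if $\ol J \supseteq I_\mu$ then $\ol J = \pi_\mu^{-1}(\pi_\mu(\ol J)) = \pi_\mu^{-1}(J_\mu)$; for $\lambda \geq \mu$, using $\pi_\mu = \pi_{\mu,\lambda}\circ \pi_\lambda$ and the surjectivity of $\pi_\lambda$ to lift an element of $\pi_{\mu,\lambda}^{-1}(J_\mu)$, one checks $\pi_{\mu,\lambda}^{-1}(J_\mu) \subseteq \pi_\lambda(\ol J) = J_\lambda$, and combined with the reverse inclusion already noted this yields $J_\lambda = \pi_{\mu,\lambda}^{-1}(J_\mu)$. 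Conversely, if $J_\lambda = \pi_{\mu,\lambda}^{-1}(J_\mu)$ for all $\lambda \geq \mu$, then $\pi_\lambda^{-1}(J_\lambda) = \pi_\lambda^{-1}(\pi_{\mu,\lambda}^{-1}(J_\mu)) = \pi_\mu^{-1}(J_\mu)$ is constant for $\lambda \geq \mu$, so by the cofinality remark $\ol J = \pi_\mu^{-1}(J_\mu) \supseteq I_\mu$ is open. Finally, the surjection $\pi_{\mu,\lambda}$ induces a surjection $A_\lambda/J_\lambda \to A_\mu/J_\mu$ with kernel $\pi_{\mu,\lambda}^{-1}(J_\mu)/J_\lambda$, so it is an isomorphism precisely when $J_\lambda = \pi_{\mu,\lambda}^{-1}(J_\mu)$, giving the equivalent reformulation via $A_\lambda/J_\lambda \iso A_\mu/J_\mu$.

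Part 2 is then obtained by transcribing this argument verbatim, replacing $A$, $A_\lambda$, $J$, $J_\lambda$ and the open ideals $I_\lambda$ by $M$, $M_\lambda$, $N$, $N_\lambda$ and the open submodules $\Ker(\pi_\lambda : M \to M_\lambda)$, and applying Remark~\ref{topgroup1} to the underlying group $M^\ab$ (for which the open $A$-submodules are cofinal among the open subgroups). The only genuinely load-bearing point, and the one I would be most careful about, is the surjectivity of the projections $\pi_\lambda$, which is what permits the lifting in the hard ($\Rightarrow$) direction; this is harmless here because we are free to realise the chosen presentations canonically as $A_\lambda = A/I_\lambda$ and $M_\lambda = M/\Ker\,\pi_\lambda$ (legitimate since $A$ and $M$ are complete and $I_\lambda = \Ker\,\pi_\lambda$), so that each $\pi_\lambda$ is literally a quotient map. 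Everything else is the routine bookkeeping of filtered intersections and preimages under the projections.
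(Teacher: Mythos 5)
Your proof is correct and follows essentially the same route as the paper's: both identify $\ol{J}=\bigcap_\lambda \pi_\lambda^{-1}(J_\lambda)$ via the closure formula of Remark~\ref{topgroup1} together with the cofinality of the kernels $I_\lambda$ among open ideals, and both characterize openness of $\ol{J}$ by the eventual stabilization of the decreasing family $\{\pi_\lambda^{-1}(J_\lambda)\}_\lambda$, treating part 2 as a verbatim transcription. The only difference is one of detail: the paper compresses your lifting argument (translating stabilization of $\pi_\lambda^{-1}(J_\lambda)$ into $J_\lambda=\pi_{\mu,\lambda}^{-1}(J_\mu)$, which does require surjectivity of the projections $\pi_\lambda$) into the single phrase ``this is equivalent to the conditions in the statement,'' and it tacitly grants that surjectivity by writing $J_\lambda=\pi_\lambda(J)=(J+I_\lambda)/I_\lambda$ in the setup, so your closing caveat about realizing $A_\lambda$ as $A/I_\lambda$ is consistent with the paper's intent rather than a gap in it.
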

\begin{proof} The first part has been proven before. We just prove the second part, since the third is proven similarly. For any $\lambda$, $\pi_\lambda^{-1}(J_\lambda)$
is an open ideal containing $J$. Conversely, any open ideal containing $J$ is a finite intersection of ideals of that  form. Since, for $\lambda_1 \leq \lambda_2$,  $\pi_{\lambda_2}^{-1}(J_{\lambda_2}) \subset \pi_{\lambda_1}^{-1}(J_{\lambda_1})$, any open ideal 
containing $J$ is of the form  $\pi_\lambda^{-1}(J_\lambda)$. 
We conclude that 
$$\ol{J} = \bigcap_{\lambda} \pi_\lambda^{-1}(J_\lambda)$$
and that $\ol{J}$ is open iff there exists $\mu$ such that 
$\pi_\lambda^{-1}(J_\lambda) = \pi_\mu^{-1}(J_\mu)$ for any $\lambda \geq \mu$. This is equivalent to the conditions in the statement. 
\end{proof}
 \begin{prop} \label{clopchar} Notation as before. 
Then  $A \in \cCRuclop$  iff for any $\mu \in \Lambda$ and any pair of ideals $J_\mu, H_\mu$ of $A_\mu$
\beq \label{clopchar2101}
\pi_{\lambda_1,\lambda_2}^{-1} (\pi_{\mu,\lambda_1}^{-1}(J_\mu) \pi_{\mu,\lambda_1}^{-1}(H_\mu)) 
= \pi_{\mu,\lambda_2}^{-1}(J_\mu) \pi_{\mu,\lambda_2}^{-1}(H_\mu)
\eeq
for $\lambda_2 \geq  \lambda_1  >> \mu$,
or, equivalently, iff the projective system
\beq \label{clopchar2102}
A_{\lambda_2}/\pi_{\mu,\lambda_2}^{-1}(J_\mu) \pi_{\mu,\lambda_2}^{-1}(H_\mu)  \map{\pi_{\lambda_1,\lambda_2}} A_{\lambda_1}/\pi_{\mu,\lambda_1}^{-1}(J_\mu) \pi_{\mu,\lambda_1}^{-1}(H_\mu)  \;,
\eeq 
for $\lambda_2 \geq \lambda_1 (\geq \mu)$, is eventually constant. 
\end{prop}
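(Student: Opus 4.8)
The plan is to translate the clop condition of Definition~\ref{defclopmod} into the language of the projective system \eqref{prodiscrring}, using the description of open ideals recalled just before Lemma~\ref{clopchar0}, and then to read off the eventual-stabilization criterion from Lemma~\ref{clopchar0}(1). First I would record that the open ideals of $A$ are exactly those of the form $\pi_\lambda^{-1}(K_\lambda)$ for some $\lambda \in \Lambda$ and some ideal $K_\lambda$ of $A_\lambda$. Given two open ideals $I,J$, filteredness of $\Lambda$ lets me choose a single index $\mu$ and ideals $J_\mu,H_\mu$ of $A_\mu$ with $I = \pi_\mu^{-1}(J_\mu)$ and $J = \pi_\mu^{-1}(H_\mu)$; conversely every triple $(\mu,J_\mu,H_\mu)$ produces such a pair. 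Hence $A$ is clop if and only if $\ol{IJ}$ is open for all pairs of this shape, and it suffices to analyse $\ol{IJ}$ for $I,J$ so expressed.

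Next I would compute the projective subsystem $\pi_\bullet(IJ)$ attached to the product ideal $IJ$. For $\lambda \geq \mu$ the relation $\pi_\mu = \pi_{\mu,\lambda}\circ\pi_\lambda$ gives $I = \pi_\lambda^{-1}(\pi_{\mu,\lambda}^{-1}(J_\mu))$ and $J = \pi_\lambda^{-1}(\pi_{\mu,\lambda}^{-1}(H_\mu))$, and since each $\pi_\lambda$ is surjective one obtains $\pi_\lambda(IJ) = \pi_{\mu,\lambda}^{-1}(J_\mu)\,\pi_{\mu,\lambda}^{-1}(H_\mu)$ (the nontrivial inclusion uses that the products $xy$ with $x\in I$, $y\in J$ lift the generators of the right-hand ideal). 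Feeding this into Lemma~\ref{clopchar0}(1), applied to the ideal $IJ$, the closure $\ol{IJ}$ is open precisely when there is an index $\nu \geq \mu$ with $\pi_\lambda(IJ) = \pi_{\nu,\lambda}^{-1}(\pi_\nu(IJ))$ for all $\lambda \geq \nu$; writing $\nu = \lambda_1$ and $\lambda = \lambda_2$ this is exactly \eqref{clopchar2101}. A short chase using $\pi_{\nu,\lambda_2} = \pi_{\nu,\lambda_1}\circ\pi_{\lambda_1,\lambda_2}$ then shows that single-index stabilization from $\nu$ self-propagates to all $\lambda_2 \geq \lambda_1 \geq \nu$, which is what the shorthand $\lambda_2 \geq \lambda_1 >> \mu$ encodes.

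Finally I would verify the equivalence of \eqref{clopchar2101} with eventual constancy of the system \eqref{clopchar2102}. Setting $K_\lambda := \pi_{\mu,\lambda}^{-1}(J_\mu)\pi_{\mu,\lambda}^{-1}(H_\mu)$, surjectivity of $\pi_{\lambda_1,\lambda_2}$ gives $\pi_{\lambda_1,\lambda_2}(K_{\lambda_2}) = K_{\lambda_1}$, so the transition map $A_{\lambda_2}/K_{\lambda_2} \to A_{\lambda_1}/K_{\lambda_1}$ is well defined and surjective, and one always has $K_{\lambda_2} \subseteq \pi_{\lambda_1,\lambda_2}^{-1}(K_{\lambda_1})$. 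Thus this map is an isomorphism if and only if its kernel $\pi_{\lambda_1,\lambda_2}^{-1}(K_{\lambda_1})/K_{\lambda_2}$ vanishes, i.e. if and only if \eqref{clopchar2101} holds; so \eqref{clopchar2102} is eventually constant exactly when \eqref{clopchar2101} holds for $\lambda_2 \geq \lambda_1 >> \mu$.

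The only genuine obstacle is bookkeeping: establishing the identity $\pi_\lambda(IJ) = \pi_{\mu,\lambda}^{-1}(J_\mu)\pi_{\mu,\lambda}^{-1}(H_\mu)$ cleanly from surjectivity of the projections, and keeping the two layers of quantifiers straight (the fixed level $\mu$ at which the pair is presented, and the stabilizing index $\lambda_1$). Once Lemma~\ref{clopchar0} is available, the statement is essentially a faithful reformulation rather than a substantive new argument.
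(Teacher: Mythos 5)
Your proof is correct and takes essentially the same route as the paper's: both reduce a pair of open ideals to the form $\pi_\mu^{-1}(J_\mu)$, $\pi_\mu^{-1}(H_\mu)$, use surjectivity of the projections to identify $\pi_\lambda(IJ)$ with $\pi_{\mu,\lambda}^{-1}(J_\mu)\,\pi_{\mu,\lambda}^{-1}(H_\mu)$, and then apply Lemma~\ref{clopchar0} to the product ideal to read off the stabilization criterion \eqref{clopchar2101}. You merely spell out steps the paper leaves implicit (the common level $\mu$ via filteredness, the self-propagation of the stabilizing index, and the equivalence with eventual constancy of \eqref{clopchar2102}), all of which you verify correctly.
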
 
\begin{proof} 
We apply the lemma to $J = \pi_\mu^{-1}(J_\mu)$ and $H = \pi_\mu^{-1}(H_\mu)$. Then, for any $\lambda \geq \mu$, 
$$J_\lambda = \pi_{\mu,\lambda}^{-1}(J_\mu) \;\;,\;\; H_\lambda = \pi_{\mu,\lambda}^{-1}(H_\mu) 
$$
and $\pi_\lambda (JH) = J_\lambda H_\lambda$. So, $\ol{JH}$ is open iff there exists $\lambda_1 \geq \mu$ such that 
$$\pi_{\lambda_1,\lambda_2}^{-1} (\pi_{\mu,\lambda_1}^{-1}(J_\mu) \pi_{\mu,\lambda_1}^{-1}(H_\mu))  = \pi_{\lambda_1,\lambda_2}^{-1} (J_{\lambda_1}  H_{\lambda_1} ) = J_{\lambda_2}  H_{\lambda_2} = 
 \pi_{\mu,\lambda_2}^{-1}(J_\mu) \pi_{\mu,\lambda_2}^{-1}(H_\mu) \;,$$
 $\forall \; \lambda_2 \geq \lambda_1$, where the central equality follows from the lemma. 
\end{proof}
\begin{defn}  \label{pseudocan} Let $k \in \cRu$. An object $M$ of $\cLMu_k$ is  \emph{pseudocanonical} if, for any $I \in \cP(k)$, $\ol{IM}$ is open in $M$.  Equivalently, $M$ is pseudocanonical iff the family $\{\ol{IM}\}_{I \in \cP(k)}$ is a basis of open $k$-submodules of $M$. We denote by  $\cLMpscan_k$ the full subcategory of $\cLMu_k$ consisting of pseudocanonical $k$-modules. 
\end{defn}
\begin{prop}  \label{clopmod} Notation as in 
Lemma~\ref{clopchar0}.  
Then \hfill
\ben
\item
$M  \in \cCLMpscan_A$  if and only if for any $\mu \in \Lambda$ and any ideal $J_\mu$ of $A_\mu$  
\beq \label{clopchar210}
M_{\lambda_2}/\pi_{\mu,\lambda_2}^{-1}(J_\mu)M_{\lambda_2} \iso M_{\lambda_1}/\pi_{\mu,\lambda_1}^{-1}(J_\mu)
M_{\lambda_1}  \;.
\eeq
for $\lambda_2 \geq \lambda_1 >> \mu$. 
\item
$M \in \cCLMuclop_A$ if and only if for any $\mu \in \Lambda$,  any ideal $J_\mu$ of $A_\mu$ and any $A_\mu$-submodule $N_\mu$ of $M_\mu$ 
\beq \label{clopchar21}
M_{\lambda_2}/\pi_{\mu,\lambda_2}^{-1}(J_\mu)\pi_{\mu,\lambda_2}^{-1}(N_\mu) \iso M_{\lambda_1}/\pi_{\mu,\lambda_1}^{-1}(J_\mu)
\pi_{\mu,\lambda_1}^{-1}(N_\mu)  \;.
\eeq
for $\lambda_2 \geq \lambda_1 >> \mu$. 
\een
\end{prop}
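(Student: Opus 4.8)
The plan is to deduce both equivalences directly from Lemma~\ref{clopchar0}(2), in exactly the same way that Proposition~\ref{clopchar} was deduced from Lemma~\ref{clopchar0}(1); the only new ingredient needed is the computation of the projections $\pi_\lambda$ of the submodules $IM$ and $IU$. Throughout I keep the notation of Lemma~\ref{clopchar0}, so that $A=\limit_{\lambda}A_\lambda$ and $M=\limit_{\lambda}M_\lambda$ with surjective transition maps and $I_\lambda=\Ker\pi_\lambda$.

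First I would reduce to objects of standard form. By the discussion preceding Lemma~\ref{clopchar0}, every open ideal of $A$ is of the shape $I=\pi_\mu^{-1}(J_\mu)$ for some $\mu\in\Lambda$ and some ideal $J_\mu$ of $A_\mu$, and every open $A$-submodule of $M$ is of the shape $U=\pi_\mu^{-1}(N_\mu)$ for some $A_\mu$-submodule $N_\mu$ of $M_\mu$; since $\Lambda$ is filtered, a given pair $(I,U)$ may always be written with a common index $\mu$. Hence, by Definition~\ref{pseudocan}, $M\in\cCLMpscan_A$ iff $\ol{IM}$ is open for every such $I$, and, by Definition~\ref{defclopmod}, $M\in\cCLMuclop_A$ iff $\ol{IU}$ is open for every such pair $(I,U)$.

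The key computation is that, for $\lambda\geq\mu$, the surjectivity of $\pi_\lambda$ (which is part of the setup, the transition maps being surjective) yields $\pi_\lambda(I)=\pi_{\mu,\lambda}^{-1}(J_\mu)$ and $\pi_\lambda(U)=\pi_{\mu,\lambda}^{-1}(N_\mu)$, whence
$$
\pi_\lambda(IM)=\pi_{\mu,\lambda}^{-1}(J_\mu)\,M_\lambda
\qquad\text{and}\qquad
\pi_\lambda(IU)=\pi_{\mu,\lambda}^{-1}(J_\mu)\,\pi_{\mu,\lambda}^{-1}(N_\mu)\,,
$$
just as the identity $\pi_\lambda(JH)=J_\lambda H_\lambda$ was used in the proof of Proposition~\ref{clopchar}. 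Writing $N:=IM$ (resp.\ $N:=IU$) and $N_\lambda:=\pi_\lambda(N)$, Lemma~\ref{clopchar0}(2) then asserts that $\ol{N}$ is open iff the projective system $\{M_\lambda/N_\lambda\}_{\lambda\geq\mu}$ is eventually constant, i.e.\ its transition maps $M_{\lambda_2}/N_{\lambda_2}\to M_{\lambda_1}/N_{\lambda_1}$ are isomorphisms for $\lambda_2\geq\lambda_1>>\mu$. Substituting the computed $N_\lambda$ gives precisely condition~\eqref{clopchar210} in part (1) and condition~\eqref{clopchar21} in part (2); quantifying over all $\mu$ and all $J_\mu$ (resp.\ all $\mu$, $J_\mu$, $N_\mu$) produces the two stated equivalences.

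I expect the only genuinely delicate point to be the identification $\pi_\lambda(I)=\pi_{\mu,\lambda}^{-1}(J_\mu)$ and its module analogue, since these rest on the surjectivity of the projections $\pi_\lambda$ from the (possibly uncountable) cofiltered limit; everything else is a verbatim transcription of the argument for Proposition~\ref{clopchar}, with the ``pair of ideals $J_\mu,H_\mu$'' there replaced by ``the ideal $J_\mu$ together with $M$'' in part (1) and by ``the ideal $J_\mu$ together with the submodule $N_\mu$'' in part (2).
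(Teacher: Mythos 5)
Your proposal is correct and is precisely the argument the paper intends: its entire proof reads ``Similar to the one of Proposition~\ref{clopchar},'' and you have filled in exactly those details --- applying Lemma~\ref{clopchar0}(2) to $N=IM$ (resp.\ $N=IU$) with $I=\pi_\mu^{-1}(J_\mu)$, $U=\pi_\mu^{-1}(N_\mu)$, and computing $\pi_\lambda(IM)$ and $\pi_\lambda(IU)$ just as $\pi_\lambda(JH)=J_\lambda H_\lambda$ was used there. The ``delicate point'' you flag is in fact harmless in this setting, since the paper's setup presents $A_\lambda$ and $M_\lambda$ as quotients $A/I_\lambda$ and $M/P$ by open subobjects (cf.\ the identification $J_\lambda=(J+I_\lambda)/I_\lambda$ preceding Lemma~\ref{clopchar0}), so surjectivity of the projections $\pi_\lambda$ is built in rather than deduced from surjectivity of the transition maps.
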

\begin{proof} 
Similar to the one of Proposition~\ref{clopchar}.
\end{proof} 
\end{subsection} 
\end{section}

\begin{section}{Limits of topological modules} \label{topmod} 
In this  section, $k$ is any object of $\cRu$. According to remarks \eqref{extsep} and \eqref{extcompl}, whenever a statement involves separated (resp. 
  complete) $k$-modules, $k$ may (and often will) be understood to be in $\cSRu$ (resp. $\cCRu$). 
\begin{subsection}{Limits and left-adjoints of inclusions} \label{limadj}
We have a commutative  diagram of categories and  inclusions of full subcategories 
\beq \label{diagcat}
\begin{tikzcd}[column sep=1.5em, row sep=0.5em] 
\cLMu_k  \arrow{r}{}  
&\cLMc_k \arrow{r}{}  
&\cLM_k   
\\ 
{}&{}&{} 
\\
\cSLMu_k \arrow{r}{} \arrow{uu}{} 
&  \cSLMc_k  \arrow{r}{} \arrow{uu}{} 
&\cSLM_k \arrow{uu}{}  
\\ 
{}&{}&{} 
\\
\cCLMu_k \arrow{r}{} \arrow{uu}{} 
&  \cCLMc_k  \arrow{r}{} \arrow{uu}{} 
&\cCLM_k \arrow{uu}{}  \;.
\end{tikzcd}
\eeq
An easy variant of  Propositions~\ref{stab-sep-mod} and \ref{stab-compl-mod} shows that all
 vertical inclusions in diagram \eqref{diagcat} have left adjoints, namely the separation and separated completion functors. 
A formal (partial) consequence is indicated in the next proposition, where we prefer to explicitly describe limits.
\begin{prop} \label{limproj} 
The categories  $\cLMu_k$, $\cLMc_{k}$, 
$\cLM_{k}$ admit   limits, compatible 
with the  forgetful functors to $\Mod_k$. 
The subcategories  
$\cSLMu_k$, $\cSLMc_{k}$, 
$\cSLM_{k}$
and $\cCLMu_k$, $\cCLMc_{k}$, 
$\cCLM_{k}$ are stable 
by  limits. 
\end{prop}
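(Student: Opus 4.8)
The plan is to construct each limit explicitly, reducing the module-theoretic assertions to the corresponding facts about underlying topological abelian groups and then checking that the extra structure (scalar multiplication, linear topology, boundedness) is inherited. Let $\{M_\alpha\}_{\alpha \in A}$ be a projective system in one of the three top-row categories of \eqref{diagcat}. First I would form the limit $M = \limit_\alpha M_\alpha^\for$ in $\Mod_k$ (which exists since $\Mod_k$ is complete) and equip it with the weak topology of the canonical projections $\pi_\alpha : M \to M_\alpha$; by the bicompleteness recorded in \ref{incl-for} and Remark~\ref{co-limits-adj} this topological group is exactly $\limit_\alpha M_\alpha^\ab$ computed in $\TAb$. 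Scalar multiplication $\mu_M(a,-)$ is continuous into the weak topology because $\pi_\alpha \circ \mu_M(a,-) = \mu_{M_\alpha}(a,-) \circ \pi_\alpha$ is continuous for every $\alpha$, so $M \in \cM_k$, and the universal property in $\cM_k$ follows from that of $\Mod_k$ together with the fact that a $k$-linear map into $M$ is continuous iff each composite with a $\pi_\alpha$ is continuous.

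Next I would check that $M$ lands in $\cLM_k$: a basis of neighbourhoods of $0$ for the weak topology is given by the finite intersections $\bigcap_i \pi_{\alpha_i}^{-1}(U_{\alpha_i})$, and since each $M_{\alpha_i}$ is $R$-linearly topologized we may take the $U_{\alpha_i}$ to be open $k$-submodules, whose preimages and finite intersections are again open $k$-submodules. To see that $\cLMc_k$ and $\cLMu_k$ are stable, I invoke Proposition~\ref{boundedness1}. For continuity it suffices, by conditions (1) and (4) of Proposition~\ref{scalar-prod-prop} ($R$ being linearly topologized), that every singleton $\{m\}$ be bounded, and $\pi_\alpha(\{m\})$ is bounded in each $M_\alpha \in \cMc_k$; for uniformity it suffices, by condition (5) of Proposition~\ref{scalar-prod-prop}, that $M$ itself be bounded, which holds because $\pi_\alpha(M)$ is a subset of the bounded set $M_\alpha \in \cMu_k$ and subsets of bounded sets are bounded (Definition~\ref{bound-sponge-lintop}). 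Thus the limit computed in $\cLM_k$ already lies in $\cLMc_k$ (resp. $\cLMu_k$) whenever the system does, which gives the compatibility with the horizontal inclusions; compatibility with the forgetful functor to $\Mod_k$ is built into the construction.

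For the separated and complete subcategories I would argue again at the level of underlying groups. If every $M_\alpha$ is separated, then for $0 \neq m \in M \subseteq \prod_\alpha M_\alpha^\for$ some $\pi_\alpha(m) \neq 0$, and separatedness of $M_\alpha$ produces an open $k$-submodule of $M_\alpha$ missing $\pi_\alpha(m)$, whose preimage is an open $k$-submodule of $M$ missing $m$; hence $M$ is separated. If every $M_\alpha$ is complete, then $M^\ab = \limit_\alpha M_\alpha^\ab$ is complete by Remark~\ref{co-limits-adj}, and since $M$ carries precisely this topological group as underlying object, $M$ is complete. In either case the limit object already lies in the smaller category, so the vertical inclusions of \eqref{diagcat} preserve limits; this is also consistent with the formal fact that these inclusions, being right adjoints to the separation and separated completion functors of Propositions~\ref{stab-sep-mod} and \ref{stab-compl-mod}, must preserve limits.

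I expect the only genuinely delicate point to be the uniform case. Unlike separatedness or completeness, uniformity is a \emph{global} boundedness condition on $M$ as a whole, and it is precisely Proposition~\ref{boundedness1} — that boundedness in a limit is detected projection by projection — which makes $\limit_\alpha M_\alpha$ bounded and hence uniform. Everything else reduces cleanly to the bicompleteness of $\TAb$, $\SAb$, $\CAb$ and the description of their limits via the weak topology of the projections.
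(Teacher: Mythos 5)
Your proposal is correct and follows essentially the same route as the paper's proof: form the limit in $\Mod_k$ with the weak topology of the projections, verify it is $k$-linearly topologized, deduce continuity from part $(4)$ of Proposition~\ref{scalar-prod-prop} (your reformulation via bounded singletons is equivalent to the paper's check that each $\mu_M(-,x)$ is continuous, by part $(1)$ of that proposition) and uniformity from part $(5)$ together with Proposition~\ref{boundedness1}, then observe separatedness directly and completeness of the weak-topology limit. The only cosmetic difference is that you cite the bicompleteness of $\CAb$ and Remark~\ref{co-limits-adj} for the completeness step where the paper invokes Bourbaki directly, which amounts to the same fact.
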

\begin{proof}  
Let  $(M_\alpha)_{\alpha \in A}$ be a projective system 
in $\cLM_k$ indexed by the 
preordered set $A$. Its   limit  in $\cLM_k$ is simply 
the   limit 
$M = \limit_{\alpha \in A} M_\alpha^\for$ in $\Mod_k$, 
equipped with the weak topology  of the canonical projections 
$\pi_\alpha : M \to M_\alpha$. For any $a \in k$ the map $\mu_M(a,-):M \to M$ is continuous since the composition $\pi_\alpha \circ \mu_M(a,-) = \mu_{M_\alpha} (a,\pi_\alpha(-)):M \to M_\alpha$ 
is continuous for any $\alpha$. 
It is then clear that $M$ is indeed the   limit of 
$(M_\alpha)_{\alpha \in A}$ in $\cLM_k$. 
If the projective system lies in $\cLMc_k$, we have to prove that 
the scalar multiplication $k\times M\to M$ is continuous for the product topology. 
This follows from $(4)$ of Proposition~\ref{scalar-prod-prop}: since $M$ is linearly topologized, 
it suffices to show that for any $x = (x_\alpha)_\alpha \in M$ the map $\mu_M(-,x):R \to M$ is continuous. 
The latter fact holds because the composition with  the projection $\pi_\alpha \circ \mu_M(-,x) = \mu_{M_\alpha} (-, x_\alpha)$ 
is continuous for any $\alpha$. 
If the projective system $(M_\alpha)_{\alpha \in A}$ lies in $\cLMu_k$, then the scalar multiplication of $M$ is uniformly continuous.   In fact, by $(5)$ of Proposition~\ref{scalar-prod-prop} it suffices to show that 
$M$ is bounded. This in turn follows from Proposition~\ref{boundedness1} since  $M_\alpha$ is bounded for any $\alpha$.
\endgraf 
If the projective system lies in $\cSLM_k$ (resp.~$\cCLM_k$), 
to show that $M$ is an object of $\cSLM_k$ (resp.~$\cCLM_k$) it suffices to prove that 
$M$ is separated (resp. and  complete). This is clear (resp. is proven in  \cite[II.5, Cor. to Prop. 10]{topgen}).
\endgraf 
\end{proof} 
\begin{parag} \label{functors}  
 
We now show that the horizontal arrows in  diagram \eqref{diagcat} also admit left adjoints. 
\end{parag} 
\begin{prop} \label{leftadjunif} 
For any object $M$ of $\cLM_k$, we define 
$$
\cP^\un(M) := \{ P + IM \,|\, I \in \cP(k),\ P\in\cP_k(M)\,\} 
$$
(notice that any $P + IM$ is an open submodule of $M$), 
and set $M^\un$ to be the 
$k$-linearly topologized $k$-module $(M^\for,\cP^\un(M))$. \hfill
\ben
\item
The $k$-linear topology having $\cP^\un(M)$ as a basis of open submodules 
is the maximal topology on $M$ weaker than the given one making $M$ a uniform module.  
In particular,  $M^\un$ is an object of $\cLMu_k$ and 
the canonical bijective morphism $M\to M^\un$ 
in $\cLM_k$ is an isomorphism if and only if $M$ is an object of $\cLMu_k$. 
For any $I \in \cP(k)$, the closure of $IM$ in $M$ coincides with the closure of $IM$ in $M^\un$.
\item
The correspondence $M \mapsto M^\un$ extends to a functor named \emph{uniformization} 
$\cLM_k \to \cLMu_k$ which is left adjoint to the inclusion $\iota_\un:\cLMu_k \to \cLM_k$.  
Namely, for any  $M$ in $\cLM_k$ and $N$ in $\cLMu_k$, there are canonical 
bifunctorial  identifications
\beq \label{adjointu}
  \Hom_{\cLMu_k}(M^\un,N) = \Hom_{\cLM_k}(M,\iota_\un (N)) \; .
\eeq 
\item  
The uniformization functor induces a functor
$\cSLM_k \to \cSLMu_k$, 
 $$M \longmapsto M^\un= M\big/\bigcap_{I}\ol{IM} \;,
$$ 
 where the latter is equipped with the  topology  induced by the family $\{N/\bigcap_{I}\ol{IM}\}$, for $N \in \cP^\un(M)$.  
The functor $M \longmapsto M^\un$ is
 left adjoint to the inclusion $\cSLMu_k \to \cSLM_k$. 
\item  
The uniformization functor induces a functor 
$\cCLM_k \to \cCLMu_k$,  
\beq \label{sepunif}
M \longmapsto  M^\un= \limit_{Q \in \cP^\un(M)}  M/Q  = \limit_{I \in \cP(k)}M/\ol{IM} \;,
\eeq
where quotients and limits  
are taken in $\cLM_k$.  
\een
\end{prop}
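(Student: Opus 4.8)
The plan is to treat the four parts in order, since $(2)$ uses $(1)$ and $(3)$–$(4)$ are obtained from $(1)$–$(2)$ by composing with the separation and completion functors. For $(1)$ I would first verify directly that $\cP^\un(M)$ is a filter basis of open $k$-submodules: each $IM$ is a $k$-submodule because $I$ is an ideal, so every $P+IM\supseteq P$ is an open submodule, and $(P_1\cap P_2)+(I_1\cap I_2)M$ lies in both $P_1+I_1M$ and $P_2+I_2M$; hence $\cP^\un(M)$ defines a $k$-linear topology coarser than the given one. This topology is uniform because for the basic open $P+IM$ the open ideal $I$ satisfies $IM\subseteq P+IM$, so $M^\un$ is bounded, hence uniform by $(5)$ of Proposition~\ref{scalar-prod-prop}. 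For maximality I would take any uniform topology $\tau$ coarser than the given one and any $\tau$-open submodule $U$; uniformity gives $I\in\cP(k)$ with $IM\subseteq U$, and since $U$ is open for the original topology $U\in\cP_k(M)$, whence $U=U+IM\in\cP^\un(M)$. Thus $\tau$ is coarser than the $\cP^\un(M)$-topology, which is therefore the finest uniform topology below the given one; consequently $M\to M^\un$ is an isomorphism iff the topologies coincide, iff $M$ was already uniform. Finally, using the closure formula $\ol{S}=\bigcap_H(S+H)$ of Remark~\ref{topgroup1}, the identities $\bigcap_P(IM+P)=\ol{IM}$ and $IM+(U+IM)=U+IM$ give the equality of the closure of $IM$ in $M$ and in $M^\un$.

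For $(2)$ I would establish functoriality and the adjunction at once, as both reduce to one continuity lemma: if $g\colon M\to N$ is continuous $k$-linear with $N\in\cLMu_k$, then $g\colon M^\un\to N$ is continuous. Indeed, for $V\in\cP_k(N)$ pick $I$ with $IN\subseteq V$ (uniformity of $N$) and set $P=g^{-1}(V)\in\cP_k(M)$; then $g(P+IM)\subseteq V+IN\subseteq V$, so $g^{-1}(V)\supseteq P+IM\in\cP^\un(M)$. Taking $g=f\colon M\to M'\to(M')^\un$ yields continuity of $f\colon M^\un\to(M')^\un$ and hence functoriality; taking $g$ arbitrary with fixed uniform target $N$, and noting that the coarser topology of $M^\un$ makes every continuous $M^\un\to N$ continuous as a map $M\to N$, gives the bifunctorial bijection $\Hom_{\cLMu_k}(M^\un,N)=\Hom_{\cLM_k}(M,\iota_\un N)$. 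The unit is the canonical map $M\to M^\un$ and the counit the isomorphism $N^\un\iso N$ of $(1)$.

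For the separated and complete cases I would compose uniformization with the separation and separated-completion functors of Propositions~\ref{stab-sep-mod} and \ref{stab-compl-mod}, using that a composite of left adjoints is a left adjoint. In $\cSLM_k$ the object $M^\un$ of $(1)$ is uniform but generally non-separated, and by the closure formula its closure of $\{0\}$ is $\bigcap_{Q\in\cP^\un(M)}Q=\bigcap_I\ol{IM}$; so the separated uniformization is $N:=M/\bigcap_I\ol{IM}$ with the topology induced by $\cP^\un(M)$, and the adjunction $\Hom_{\cSLMu_k}(N,-)=\Hom_{\cSLM_k}(M,-)$ follows by stringing the separation adjunction together with $(2)$. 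For $M\in\cCLM_k$ the functor is $M\mapsto\what N$, the separated completion of $N$, which by construction is $\limit_{Q\in\cP^\un(M)}M/Q$; this is the first displayed formula in \eqref{sepunif}.

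It remains to identify this with $\limit_I M/\ol{IM}$, and this is where I expect the real work. The natural route is to compute the completion as an iterated limit over the product poset of pairs $(P,I)$: grouping by $I$ first, $\limit_P M/(P+IM)$ is the \emph{completion} of $M/\ol{IM}$, since $\{P+IM\}_P$ runs exactly through the open submodules of $M$ containing $\ol{IM}$. Thus $\limit_Q M/Q=\limit_I\what{M/\ol{IM}}$, and the assertion reduces to the equality $\limit_I\what{M/\ol{IM}}=\limit_I M/\ol{IM}$. I would prove this by showing that $L:=\limit_I M/\ol{IM}$ is separated, receives $N$ as a dense submodule inducing its given topology, and is \emph{complete}, so that by uniqueness of the separated completion $L\iso\what N$. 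The main obstacle is precisely the completeness of $L$: the individual terms $M/\ol{IM}$ need not be complete, since by Remark~\ref{ML-compl-adj} their completion differs from them by $\limit^1\{(\ol{IM}+P)/P\}_P$, so one cannot simply invoke stability of completeness under limits from Proposition~\ref{limproj}. I would attack it through Bourbaki's Lemma~\ref{BourbComplLemma}, exhibiting a basis of neighborhoods of $0$ in $L$ that are complete for a suitable coarser uniformity; the cleanest sufficient input is the essential countability of the relevant projective systems (again Remark~\ref{ML-compl-adj}), under which the $\limit^1$ terms vanish and the two limits coincide. Pinning down this completeness is the crux of the proposition.
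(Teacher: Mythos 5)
Your treatment of parts (1)--(3), and the first identification in (4), is correct and complete: the filter-basis verification, the boundedness argument via $(5)$ of Proposition~\ref{scalar-prod-prop}, the maximality argument ($IM\subseteq U$ and $U$ open for the original topology give $U=U+IM\in\cP^\un(M)$), the single continuity lemma that yields both functoriality and the adjunction \eqref{adjointu}, the computation $\bigcap_{Q\in\cP^\un(M)}Q=\bigcap_I\ol{IM}$ in (3), and the observation that $\{P+IM\}_{P\in\cP_k(M)}$ is exactly the family of open submodules of $M$ containing $\ol{IM}$, so that $\limit_P M/(P+IM)=\what{M/\ol{IM}}$ and hence $\what{M^\un}=\limit_I\what{M/\ol{IM}}$ unconditionally. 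Note that the paper gives no proof of this proposition (its proof reads ``Omitted''), so there is no argument to compare yours with; you are filling a genuine lacuna, and you have correctly located where the content lies.

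The gap is exactly where you put your finger, but your proposed repair does not close it under the stated hypotheses. The second equality in \eqref{sepunif} amounts to the surjectivity of $L:=\limit_I M/\ol{IM}\to\limit_I\what{M/\ol{IM}}$, equivalently to the completeness of $L$, and you only sketch this under ``essential countability of the relevant projective systems''; the proposition, however, is asserted for \emph{every} $M\in\cCLM_k$ over any $k\in\cCRu$, with no $\omega$-hypothesis. Your countable-case argument does work, but be precise about what must be countable: if $M\in\cCLMo_k$, then each $M/\ol{IM}$ is complete by Corollary~\ref{quotclop2}, so $L$ is a limit of complete modules and is complete by Proposition~\ref{limproj}; the relevant $\limit^1$ in Remark~\ref{ML-compl-adj} is indexed by $P\in\cP_k(M)$, so it is countability of $\cP_k(M)$, not of $\cP(k)$, that is needed. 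In the general case, completeness of $M$ gives no visible traction: an element of $\what{M/\ol{IM}}$ corresponds to a decreasing family of clopen cosets $x_{I,P}+IM+P$ in $M$ whose ``sizes'' shrink only to $\ol{IM}$, not to $0$, so the associated filter is Cauchy for the $\cP^\un$-uniformity but not for the original one; and the route through Lemma~\ref{BourbComplLemma} that you suggest does not obviously apply, since the basic neighborhoods of $0$ in $L$ (preimages of the $(IM+P)/\ol{IM}$ under the projections) are not visibly complete for any coarser uniformity. The one piece of extra leverage available --- the coherence of $(\xi_I)_I$ over all $I$, which matters because the transition maps $\what{M/\ol{I'M}}\to\what{M/\ol{IM}}$ are a priori only dense-image maps, so a single non-liftable $\xi_{I_0}$ need not propagate to a coherent counterexample --- is exploited neither by you nor by the paper. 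So either supply an argument for the completeness of $L$ for arbitrary complete $M$, or state part (4) with the $\omega$-hypothesis under which your proof is complete; the latter covers every later use of uniformization in the paper, which takes place in $\cCLMou_k$ with $k\in\cCRou$.
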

\begin{proof} Omitted.  
\end{proof}  
\begin{rmk} \label{sepunifrmk}  
 In formula~\ref{sepunif} the topology of $M/Q$ is discrete while the topology of $M/\ol{IM}$ is the quotient topology of the map $M \to M/\ol{IM}$.
\end{rmk}

\begin{prop} \label{leftadjcont}
For any object $M$ of $\LM_k$, 
let $\cI(M)$ denote the set of all maps $I:M\to\cP(k)$, $m \mapsto  I_m$.
Let us define 
$$
\cP^\co(M) := \Big\{ P + \sum_mI_mm \,|\, I \in \cI (M),\ P\in\cP(M)\, \Big\} 
$$
(notice that any element of $\cP^\co(M)$ is an open submodule of $M$), 
and set $M^\co$ to be the 
$k$-linearly topologized $k$-module $(M^\for,\cP^\co(M))$. 
\hfill \ben 
\item The elements of $\cP^\co(M)$ are sponges in $M$
and the $k$-linear topology having $\cP^\co(M)$ as a basis of open submodules 
is the maximal $k$-linear topology on $M$, weaker than the given one, which  makes $M$ a continuous module.  
In particular, $M^\co$ is an object of $\cLMc_k$ and 
the canonical bijective morphism $M\to M^\co$ 
in $\cLM_k$ is an isomorphism if and only if $M$ is an object of $\cLMc_k$. 
\item
The correspondence $M \mapsto M^\co$ extends to a functor named \emph{continuation}
$\cLM_k \to \cLMc_k$ which is left adjoint to the inclusion $\iota_\co:\cLMc_k \to \cLM_k$.  
Namely, for any  $M$ in $\cLM_k$ and $N$ in $\cLMc_k$, there are canonical 
bifunctorial  identifications
\beq \label{adjointc}
  \Hom_{\cLMc_k}(M^\co,N) = \Hom_{\cLM_k}(M,\iota_\co (N)) \; .
\eeq 
\item
The continuation functor induces a functor 
$\cSLM_k \to \cSLMc_k$, 
$$
M \longmapsto M^\co= M\big/\bigcap_{P \in \cP^\co(M)} P
$$ 
(equipped with the  topology  induced by the family $\{Q/\bigcap_{P \in \cP^\co(M)} P\}$, for $Q \in \cP^\co(M)$)
which is left adjoint to the inclusion $\cSLMc_k \to \cSLM_k$. 
\item
The continuation functor induces a functor 
$\cCLM_k \to \cCLMc_k$, 
$$
M \longmapsto M^\co= \limit_{Q \in \cP^\co(M)} M/Q 
$$
which is left adjoint to the inclusion $\cCLMc_k \to \cCLM_k$. 
\een
\end{prop}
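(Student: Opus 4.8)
The plan is to mirror, \emph{verbatim} in structure, the proof of the uniformization statement Proposition~\ref{leftadjunif}, replacing ``uniform'' by ``continuous'' and ``bounded'' by ``spongy''. The conceptual pivot is the characterization of continuity recorded in part $(1)$ of Proposition~\ref{scalar-prod-prop}: since $k \in \cRu$ and $M$ is $R$-linearly topologized, $\mu_M$ is automatically continuous at $(0,0)$ by part $(4)$ of \emph{loc.cit.}, so \emph{$M$ is continuous if and only if every open $k$-submodule of $M$ is a sponge} (condition (e)). First I would check that $\cP^\co(M)$ is a filter basis of submodules: given two members $P + \sum_m I_m m$ and $P' + \sum_m I'_m m$, the submodule $(P\cap P') + \sum_m (I_m\cap I'_m)m$ again lies in $\cP^\co(M)$ and is contained in both, so the family is downward directed. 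Each such $N = P + \sum_m I_m m$ contains the original-open $P$, whence $N$ is open in the given topology; this settles the parenthetical assertion and shows at once that the $\co$-topology is \emph{weaker} (coarser) than the given one. Finally, each basic $N$ is a sponge because $I_x\,x \subseteq \sum_m I_m m \subseteq N$ for every $x \in M$.

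With these preliminaries I would prove part $(1)$. The $\co$-topology makes $M$ continuous, because any $\co$-open submodule contains a basic member of $\cP^\co(M)$, which is a sponge, and any superset of a sponge is a sponge; hence $M^\co \in \cLMc_k$. For maximality, let $\tau$ be any $k$-linear topology on $M$, weaker than the given one, for which $M$ is continuous, and let $U$ be a $\tau$-open submodule. Then $U$ is open in the given topology (as $\tau$ is weaker) and is a $\tau$-sponge, hence a sponge; choosing $I_m \in \cP(k)$ with $I_m m \subseteq U$ gives $U = U + \sum_m I_m m \in \cP^\co(M)$, so $U$ is $\co$-open. Thus $\tau$ is coarser than the $\co$-topology, proving that the latter is the finest weaker-than-given continuous topology. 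The remaining claims of $(1)$ follow formally: $M \to M^\co$ is a continuous bijection, and it is an isomorphism exactly when the given topology already coincides with the $\co$-topology, i.e.\ (by maximality) exactly when $M$ is continuous.

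For part $(2)$ I would establish the adjunction \eqref{adjointc} through the universal property just obtained. The only thing to verify is that a morphism $f : M \to \iota_\co(N)$ in $\cLM_k$, with $N \in \cLMc_k$, is automatically continuous for the $\co$-topology on the source. Indeed, for $V \in \cP(N)$ the submodule $f^{-1}(V)$ is open in the given topology of $M$ and is a sponge: since $V$ is a sponge in the continuous module $N$, for each $m$ there is $I_m \in \cP(k)$ with $I_m f(m) \subseteq V$, i.e.\ $I_m m \subseteq f^{-1}(V)$; by the previous paragraph $f^{-1}(V)$ is therefore $\co$-open. Hence precomposition with the canonical $M \to M^\co$ yields the natural bijection $\Hom_{\cLMc_k}(M^\co,N) = \Hom_{\cLM_k}(M,\iota_\co(N))$, and functoriality of $(-)^\co$ is forced by this universal property.

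Parts $(3)$ and $(4)$ I would deduce by composing the continuation adjunction with the separation, respectively separated-completion, adjoints of Propositions~\ref{stab-sep-mod} and \ref{stab-compl-mod}, using that separation and completion send continuous modules to continuous modules. Concretely, for $M \in \cSLM_k$ the separated continuation is $M^\co$ of part $(1)$ divided by the closure $\bigcap_{P \in \cP^\co(M)} P$ of $\{0\}$ in the $\co$-topology, giving $M \mapsto M/\bigcap_{P \in \cP^\co(M)} P$, and the chain $\Hom_{\cSLMc_k}(M^\co,N) = \Hom_{\cLMc_k}((-)^\co,N) = \Hom_{\cLM_k}(M,N) = \Hom_{\cSLM_k}(M,N)$ exhibits the left adjoint; for $M \in \cCLM_k$ one replaces separation by separated completion, obtaining $M^\co = \limit_{Q \in \cP^\co(M)} M/Q$ with $M/Q$ discrete. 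I expect the universal property and the adjunctions to be formal once part $(1)$ is in hand; the one point deserving care — the main obstacle — is the explicit identification of the underlying module and topology in the separated and complete cases, namely checking that $\bigcap_{P \in \cP^\co(M)} P$ is exactly the $\co$-closure of $0$ and that the displayed projective limit really computes the separated completion of $(M^\for,\cP^\co(M))$.
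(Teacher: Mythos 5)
The paper's own proof of this proposition is simply omitted, so there is nothing to compare line by line; your proof is correct and fills the gap in what is evidently the intended way, pivoting on the equivalence (parts $(1)$ and $(4)$ of Proposition~\ref{scalar-prod-prop}) between continuity of $\mu_M$ for $M \in \cLM_k$ over $k \in \cRu$ and every open $k$-submodule of $M$ being a sponge. Your verifications — that $\cP^\co(M)$ is a downward-directed family of open spongy submodules, that any continuous $k$-linear topology weaker than the given one has all its open submodules $U = U + \sum_m I_m m$ in $\cP^\co(M)$, that $f^{-1}(V)$ is open and spongy for any $f: M \to \iota_\co(N)$ so the adjunction \eqref{adjointc} is formal, and that parts $(3)$ and $(4)$ follow by composing with the separation and completion adjunctions of Propositions~\ref{stab-sep-mod} and~\ref{stab-compl-mod} (with $\bigcap_{P \in \cP^\co(M)} P$ identified as the $\co$-closure of $0$ via Remark~\ref{topgroup1}, and the completion computed over the cofinal basis $\cP^\co(M)$ via Remark~\ref{compl-adj}) — are all sound.
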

\begin{proof} Omitted.
\end{proof}  
\begin{rmk} 
We observe that, for any $M$ in $\cLM_k$, 
the filters  $\cP^\un(M^\co)$ and $\cP^\un(M)$ of $k$-submodules of $M^\for$ are cofinal, 
so that the canonical morphism $M^\un \to (M^\co)^\un$ is an isomorphism. 
In other words, we have functorial morphisms in $\cLM_k$ (resp.~$\cSLM_k$, resp.~$\cCLM_k$) 
$$ 
M
\longrightarrow 
M^\co
\longrightarrow 
M^\un = (M^\co)^\un \;.
$$ 
\end{rmk} 
\begin{rmk} \label{projlimadj} From Propositions~\ref{leftadjunif} and \ref{leftadjcont} it follows that  
the  formation of limits in all vertices of the diagram \eqref{diagcat} is also compatible with the horizontal arrows, all full inclusions,   and   
with the  forgetful functor to $\Mod_k$. 
\end{rmk} 
\begin{rmk} \label{indlimadj}
The fact that $M \longmapsto M^\un$ (resp.  $M \longmapsto M^\co$) is the left adjoint to 
  the inclusion $\iota_\un:\cLMu_k \to \cLM_k$ (resp $\iota_\co:\cLMc_k \to \cLM_k$)
together with the existence of colimits $\colimit(-)$ in $\cLM_k$ shows that $\cLMu_k$ (resp $\cLMc_k$) admits all 
colimits, defined by  $\colimit^\un(-) = \colimit(-)^\un$ (resp. $\colimit^\co(-) = \colimit(-)^\co$). Similarly in the separated (resp. complete) case, where $M \longmapsto M^\un$ (resp.  $M \longmapsto M^\co$) denotes the left adjoint to 
  the inclusion $\iota_\un:\cSLMu_k \to \cSLM_k$ (resp $\iota_\co:\cCLMc_k \to \cCLM_k$), described in parts $\mathit 3$ and $\mathit 4$ of Propositions~\ref{leftadjunif} and \ref{leftadjcont}. A more explicit description of colimits will be given in subsection~\ref{colimsec}.
\end{rmk}
\end{subsection}
\begin{subsection}{Box products}
\label{counterbox} 
\begin{parag} Suppose $R$ is an object of $\cR$, 
and assume we have a family $\{M_\alpha\}_{\alpha \in A}$ of objects of 
$\cM_R$. 
The usual product 
\beq \label{normprod}
M := \PROD_{\alpha \in A} M_\alpha 
\eeq
is the product of the $R$-modules $M_\alpha$ equipped with the usual product topology, 
and it is in fact the product in the category  $\cM_R$. If every $M_\alpha$ is separately continuous then \eqref{normprod} is separately continuous hence it is the product in the category $\cMs_R$.
If $R$ is continuous (resp. uniform) and every $M_\alpha$ is continuous (resp. uniform), then \eqref{normprod} is the product in the category $\cMc_R$ (resp. $\cMu_R$). If $R \in \cCRu$ and $M_\alpha \in \cCLMc_R$ (resp. $M_\alpha \in \cCLMu_R$) for any $\alpha \in A$, then $M  \in \cCLMc_R$ (resp. $M \in \cCLMu_R$).    
\end{parag}
\begin{parag}
We will now equip the product $R$-module $M^\for$ with the finer
 \emph{box topology}, that is the  topology for which 
a filter basis of open subgroups consists of the subgroups
$U:= \PROD_{\alpha \in A} U_\alpha$, 
for any choice of the open subgroups $U_\alpha$ of $M_\alpha$, for any $\alpha$. 
This new topological $R$-module is an object of  $\cM_R$.  It will be called the  \emph{box-product} of the family 
$\{M_\alpha \}_{\alpha \in A}$ and will be denoted  
\beq \label{sqprod}
M^\square := \PRODsq_{\alpha \in A} M_\alpha \;.
\eeq 
If $M_\alpha$ is separated for any $\alpha \in A$, then \eqref{normprod}  is obviously separated. If all $M_\alpha$'s are complete, then \eqref{normprod} is complete and the subgroups $U$ as above are closed in it. It follows from Lemma~\ref{BourbComplLemma} that \eqref{sqprod} is complete, as well, and that there is a bijective morphism 
\beq
\label{sqbij}
M^\square = \PRODsq_{\alpha \in A} M_\alpha \map{(1:1)} \PROD_{\alpha \in A} M_\alpha = M\;.
\eeq
We easily see that if $R$ and all $M_\alpha$'s are $\op$ (resp. $\clop$), then  \eqref{sqprod}  is $\op$ (resp. $\clop$).  \par
If  $R$ is linearly topologized and  $\{M_\alpha\}_{\alpha \in A}$ is a family in 
$\cLM_R$, then \eqref{sqprod} is linearly topologized, as well.
On the other hand, even assuming that $R$ and all $M_\alpha$'s 
are $R$-linearly topologized and uniform, so that, by $(4)$ of Proposition~\ref{scalar-prod-prop}, multiplication by scalars is continuous at $(0,0)$, the topological $R$-module   \eqref{sqprod}  is not necessarily  separately
continuous. In fact, the previously defined $U$'s, with $U_\alpha \subset M_\alpha$ 
an open $R$-submodule for any $\alpha$, are $R$-submodules but not necessarily sponges.
\end{parag}
\begin{parag}
We consider only the following situations:
\begin{itemize}
\item $R$ is in $\cCRu$ and all $M_\alpha$'s are objects of $\cCLMc_R$; 
\item $R$ is in $\cCRu$ and all $M_\alpha$'s are objects of $\cCLMu_R$. 
\end{itemize}
\par   In the first case, we define the (complete) \emph{continuous box product} 
\beq \label{contsqprod}
\PRODsqcont_{\alpha \in A} M_\alpha 
\eeq  
of the family $\{M_\alpha\}_{\alpha \in A}$ to be 
the completion of the $R$-module $M^\for$ 
equipped with the $R$-linear topology for which 
  a basis of open $R$-submodules is given by the family (notation as in Proposition~\ref{leftadjcont})
 $$
\cP^{\square,\co}(M) = \{\cl_M(\sum_{m \in M} I_m m + \PROD_\alpha P_\alpha) \,|\, P_\alpha \in \cP(M_\alpha)\,,\, I \in \cI (M)\,\} \;.
$$    
It follows from Lemma~\ref{BourbComplLemma} that the natural morphism 
$$\PRODsqcont_{\alpha \in A} M_\alpha  \map{(1:1)} \PROD_{\alpha \in A} M_\alpha $$
is bijective. In the end, we 
 have  natural  bijective morphisms 
\beq \label{sqcontbij}
\PRODsq_{\alpha \in A} M_\alpha \map{(1:1)} \PRODsqcont_{\alpha \in A} M_\alpha \map{(1:1)}  \PROD_{\alpha \in A} M_\alpha\;.
\eeq 
 \endgraf
  In the second case, 
we define the \emph{uniform box product} 
\beq \label{unifsqprod}
\PRODsqu_{\alpha \in A} M_\alpha 
\eeq 
of the family $\{M_\alpha\}_{\alpha \in A}$ to be the completion of the $R$-module $M^\for$ 
equipped with the $R$-linear topology for which 
  a basis of open $R$-submodules is given by the family 
  \beq \label{usquare0}
\cP^{\square,\un}(M) :=  \{U((P_\alpha)_\alpha,J) \,|\,(P_\alpha)_\alpha  \in \PROD_\alpha \cP(M_\alpha)\,,\, J \in \cP (R)\,\} 
\eeq
where  
\beq \label{usquare1}
U((P_\alpha)_\alpha,J) := \PROD_\alpha (P_\alpha + JM_\alpha) = \cl_M( JM  + \PROD_\alpha P_\alpha) \;.
\eeq
By Lemma~\ref{BourbComplLemma}, the uniform box product 
\eqref{unifsqprod} identifies with $M^\for$ equipped with the $R$-linear  topology for which a basis of open $R$-submodules is the set of  $\prod_{\alpha \in A} P_\alpha$, for $P_\alpha \in \cP_R(M_\alpha)$, for which there exists an open ideal $I \in \cP(R)$ such that $P_\alpha \supset I M_\alpha$, for any $\alpha \in A$.
Again by Lemma~\ref{BourbComplLemma}  we have   bijective morphisms 
\beq \label{squnifbij}
\PRODsq_{\alpha \in A} M_\alpha \map{(1:1)} \PRODsqcont_{\alpha \in A} M_\alpha \map{(1:1)}  \PRODsqu_{\alpha \in A} M_\alpha \map{(1:1)}  \PROD_{\alpha \in A} M_\alpha\;.
\eeq
\end{parag}
\end{subsection}
\begin{subsection}{Clop, barrelled and pseudocanonical modules.}
\label{clopbarrell} 
We consider here full subcategories of $\cLM^\ast_k$, $\cSLM^\ast_k$, $\cCLM^\ast_k$, for $\ast =\emptyset, \un,\co$ whose full embedding admits a right adjoint. 
\begin{prop} \label{rightadjclop} We assume here that $k$ is an object of  $\cRuclop$.
For any object $M$ of $\cLMc_k$, we define 
$$
\cP^\clop(M) := \{ \ol{JP} \,|\, J \in \cP(k),\ P\in\cP_k(M)\,\} 
$$
and set $M^\clop$ to denote the 
$k$-linearly topologized $k$-module $(M^\for,\cP^\clop(M))$. 
The $k$-linear topology having $\cP^\clop(M)$ as a basis of open submodules 
is the minimal $k$-linear topology on $M$, finer than the given one, which makes $M$ a $\clop$ module.  
\endgraf 
Then $M^\clop$ is an object of $\cLMcclop_k$ and 
the canonical (bijective) morphism $M^\clop\to M$ 
in $\cLM_k$ is an isomorphism if and only if $M$ is an object of $\cLMcclop_k$. 
The correspondence $M \mapsto M^\clop$ extends to a functor 
$\cLMc_k \to \cLMcclop_k$ which is right adjoint to the inclusion $\iota_\clop:\cLMcclop_k \to \cLMc_k$.  
Namely, for any  $M$ in $\cLMc_k$ and $N$ in $\cLMcclop_k$, there are canonical 
bifunctorial  identifications
\beq \label{adjointclop}
  \Hom_{\cLMc_k}(\iota_\clop N,M) = \Hom_{\cLMcclop_k}(N,M^\clop) \; .
\eeq 
This functor restricts to a functor $\cLMu_k \to \cLMuclop_k$ (resp. $\cSLM^\ast_k \to \cSLM^{\ast,\clop}_k$, resp.~$\cCLM^\ast_k \to \cCLM^{\ast,\clop}_k$, for $\ast = \un,\co$) 
which is right adjoint to the respective inclusion of categories.
\end{prop}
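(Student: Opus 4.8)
The plan is to treat $M^\clop$ as the coarsest clop-refinement of the topology of $M$ and to verify the stated properties in the order: well-definedness of the topology and continuity, minimality, the clop property, the isomorphism criterion, the adjunction, and finally the restrictions to the decorated subcategories. First I would check that $\cP^\clop(M)=\{\ol{JP}\mid J\in\cP(k),\,P\in\cP_k(M)\}$ is a filter basis of $k$-submodules defining a $k$-linear topology finer than the given one: each $\ol{JP}$ is a closed $k$-submodule by Remark~\ref{topgroup1}; directedness follows from $\ol{(J_1\cap J_2)(P_1\cap P_2)}\subseteq\ol{J_iP_i}$; and since $k\in\cP(k)$ and an open submodule is closed, every $P\in\cP_k(M)$ equals $\ol{kP}\in\cP^\clop(M)$, so the new topology refines the original. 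Continuity of $\mu_{M^\clop}$ I would get from $(4)$ of Proposition~\ref{scalar-prod-prop}: it suffices to check separate continuity in the first variable, i.e. that for $m\in M$ and $U=\ol{JP}$ there is $I\in\cP(k)$ with $Im\subseteq U$. Choosing $I_0$ with $I_0m\subseteq P$ by the original continuity and setting $I=\ol{JI_0}$, which is open because $k\in\cRuclop$, one has $Im\subseteq\ol{JI_0m}\subseteq\ol{JP}$. Hence $M^\clop\in\cLMc_k$.

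For minimality, let $\tau$ be any $k$-linear topology on $M^\for$ finer than the original one which makes $(M^\for,\tau)$ a clop module. Then every $P\in\cP_k(M)$ is $\tau$-open, so $\cl_\tau(JP)$ is $\tau$-open by the clop hypothesis; since $\tau$ is finer, $\cl_\tau(JP)\subseteq\ol{JP}$, and a $k$-submodule containing an open submodule is open, whence $\ol{JP}$ is $\tau$-open. Thus $\tau$ refines the $M^\clop$-topology, which is therefore the minimal clop refinement — granting that $M^\clop$ is itself clop. This last point is the crux. The tool is the identity, valid exactly as in Remark~\ref{pseudocan2},
\[
\ol{I\,\ol{JP}}=\ol{\,\ol{IJ}\,P\,},
\]
whose right-hand side lies in $\cP^\clop(M)$ precisely because $\ol{IJ}\in\cP(k)$ when $k\in\cRuclop$ (this is what lets a single ideal $J$ suffice, in contrast to the ring case of Remark~\ref{prolim-clop}). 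One then has to identify the $M^\clop$-closure of $I\,\ol{JP}$ with this basic open: the inclusion $\cl_{M^\clop}(I\,\ol{JP})\subseteq\ol{\,\ol{IJ}\,P\,}$ is immediate from the displayed identity since a finer topology gives a smaller closure, and separate continuity of $M^\clop$ yields $\ol{IJ}\,P\subseteq\cl_{M^\clop}(IJP)\subseteq\cl_{M^\clop}(I\,\ol{JP})$. The delicate point — and the main obstacle — is to bridge these two, i.e. to show $\cl_{M^\clop}(I\,\ol{JP})$ is already open; a naive density argument only recovers the original closure, so one must carefully compare closures taken in the refined and in the original topologies.

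Granting clop-ness, the isomorphism criterion is formal: if $M\in\cLMcclop_k$ then each $\ol{JP}$ is already open in $M$ while conversely each original open lies in $\cP^\clop(M)$, so the two topologies coincide and $M^\clop\to M$ is an isomorphism; the converse follows since $M^\clop$ is clop. For the adjunction \eqref{adjointclop} I would show that any morphism $g\colon N\to M$ of $\cLMc_k$ with $N\in\cLMcclop_k$ is automatically continuous as a map $N\to M^\clop$: for a basic open $\ol{JP}$ of $M^\clop$ one has $\ol{J\,g^{-1}(P)}\subseteq g^{-1}(\ol{JP})$, because $g$ is continuous and $k$-linear (so $g(\ol{A})\subseteq\ol{g(A)}$ and $g(J\,g^{-1}P)\subseteq JP$), and the left-hand side is open in $N$ since $N$ is clop; hence $g^{-1}(\ol{JP})$ is open. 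This gives the required bijection of Hom-sets, with functoriality and naturality immediate, so $M\mapsto M^\clop$ is right adjoint to $\iota_\clop$.

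Finally, the restrictions. If $M$ is uniform, so is $M^\clop$, since boundedness is inherited — from $IM\subseteq P$ one gets $\ol{JI}\,M\subseteq\ol{JP}$ with $\ol{JI}\in\cP(k)$ — giving the functor $\cLMu_k\to\cLMuclop_k$; separatedness passes to any finer topology; and completeness of $M$ gives completeness of $M^\clop$ by Lemma~\ref{BourbComplLemma}, because each basic open $\ol{JP}$ is closed in the complete group $M$, hence complete for the induced original uniformity. The separated and separated-complete variants for $\ast=\un,\co$ then follow by repeating the adjunction computation inside the respective subcategories, using that separation and separated completion are left adjoints compatible with the inclusions.
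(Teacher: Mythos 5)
Most of your proposal coincides with the paper's own proof. Your minimality argument (any finer clop topology $\tau$ makes each $P\in\cP_k(M)$ open, hence $\cl_\tau(JP)$ is $\tau$-open and is contained in $\ol{JP}$, which is therefore $\tau$-open) is word for word the paper's second paragraph; your treatment of the adjunction, of uniformity and separatedness, and of completeness via Lemma~\ref{BourbComplLemma} corresponds exactly to what the paper disposes of as ``the remaining parts of the Proposition are easy'' (the paper even cites Lemma~\ref{BourbComplLemma} for completeness, as you do), and those verifications of yours are correct. But there is a genuine gap, which you yourself flag: you never prove that $M^\clop$ is clop. You obtain only the two one-sided estimates $\ol{JI}\,P\subseteq\cl_{M^\clop}\bigl(J\,\ol{IP}\bigr)\subseteq\ol{\ol{JI}\,P}$ and then declare bridging them to be ``the main obstacle'', with no argument. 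Since the clop property of $M^\clop$ is the heart of the statement --- the isomorphism criterion, the well-posedness of the functor $\cLMc_k\to\cLMcclop_k$, and the minimality claim all presuppose it --- the proof is incomplete at its central point.

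The paper closes precisely this step, and not by the density-type comparison of closures that you correctly diagnose as yielding only the trivial inclusion, but by an order-theoretic computation. By Remark~\ref{topgroup1}, the closure of a submodule of a linearly topologized module is the intersection of the open submodules containing it; the paper accordingly writes $\cl_{M^\clop}\bigl(J\,\ol{IP}\bigr)=\bigcap\,\bigl\{\,\ol{HQ}\;:\;H\in\cP(k),\ Q\in\cP_k(M),\ \ol{HQ}\supseteq J\,\ol{IP}\,\bigr\}$ and then observes that this family has a \emph{smallest} member. Indeed each $\ol{HQ}$ is closed in the original topology of $M$, so if it contains $J\,\ol{IP}$ it contains $\cl_M\bigl(J\,\ol{IP}\bigr)\supseteq\cl_M(JIP)=\ol{\ol{JI}\,P}$ (the last equality by separate continuity of $\mu_M$); and $\ol{\ol{JI}\,P}$ is itself a member of the family, the hypothesis $k\in\cRuclop$ entering exactly here to guarantee $\ol{JI}\in\cP(k)$, so that $H=\ol{JI}$, $Q=P$ is an admissible pair. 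Hence the intersection equals $\ol{\ol{JI}\,P}$, a basic open of $M^\clop$, and $M^\clop$ is clop; moreover $(M^\clop)^\clop=M^\clop$. So the idea you were missing is to replace the topological comparison of the two closures by the lattice description of the closure of a submodule, reducing everything to exhibiting a minimum among the basic opens $\ol{HQ}$ containing $J\,\ol{IP}$; be aware, when writing this up, that the reduction of the intersection from all open submodules of $M^\clop$ to the basic ones containing $J\,\ol{IP}$ is where all the content of the paper's argument sits --- it is exactly the interface between the refined and the original topology at which your attempt stopped.
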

\begin{proof} 
For any $k$-submodule $U$ of $M$, we denote by $\wtilde{U}$ (resp. $\ol{U}$) the closure of $U$ in $M^\clop$ (resp. in $M$). 
To show that $M^\clop$ is in fact clop, 
it suffices to show that, for any $I, J \in \cP(k)$ and $P \in \cP(M)$, 
$$
\wtilde{J\,(\ol{IP})} =  \ol{\ol{JI}P} \;.
$$
We recall that 
$$
\wtilde{J\,(\ol{IP})} = \bigcap \ol{HQ}
$$ 
where the intersection is taken over all $\ol{HQ}$, 
with $H \in \cP(k)$ and $Q \in \cP(M)$,  such that $\ol{HQ} \supset J \ol{IP}$. 
But then $\ol{HQ} \supset \ol{ J \ol{IP}} \supset \ol{JIP} =  \ol{\ol{JI} P}$. 
So,  $\ol{\ol{JI} P}$ is the smallest of the $\ol{HQ}$'s under consideration. 
Our assertion follows. We then conclude that $M^\clop$ is in fact $\clop$, and that $(M^\clop)^\clop = M^\clop$. 
\endgraf  
Let now $M'$ be an object of $\cLMcclop_k$, equipped with a $\cLMc_k$-morphism 
$M' \longrightarrow M$   whose underlying $k$-linear map is the identity of $M^\for$, and let 
$$\cP_k(M') 
  \supset \cP_k(M)
$$ 
be  the family of open $k$-submodules of $M'$. We claim that 
$$\cP_k(M')  
\supset \cP^\clop_k(M)\;.$$
In fact, let $P \in \cP_k(M)$, and let $I \in \cP(k)$. The closure $\cl_{M'}(IP)$ of $IP$ in $M'$ is contained in $\ol{IP} \in \cP^\clop(M)$, so that the latter is also open in $M'$.  
\endgraf
The remaining parts of the Proposition are easy 
(the stability of completeness  follows from Lemma~\ref{BourbComplLemma}). 
\end{proof}

\begin{rmk} \label{adjointcloprmk} 
\hfill \ben
\item 
By the  right adjoint property of $M \mapsto M^\clop$ as a functor $\cLM^\ast_k \to \cLM^{\ast,\clop}_k$ (resp. $\cSLM^\ast_k \to \cSLM^{\ast,\clop}_k$, resp.~$\cCLM^\ast_k \to \cCLM^{\ast,\clop}_k$, for $\ast = \un,\co$) 
we  deduce the existence of projective limits in the target categories, 
 calculated by applying the functor $M \mapsto M^\clop$ 
to  projective limits in the source categories. 
 \item
 If $k$ is in $\cRuop$, the proof of the previous proposition  simplifies to prove the existence of the right adjoint 
$M \mapsto M^\op$ of the inclusion $\iota_\op: \cLMcop_k \to \cLMc_k$. 
The separated and uniform variants hold, as well. The complete variant does not in general hold. 
\item As was proven in remark~\ref{indlim-clop} for colimits in $\cRu$ 
of an inductive system  in $\cRuop$ (resp. $\cRuclop$) and for colimits in $\cCRu$ of inductive systems in $\cCRuclop$, we may show that for $k \in \cRuop$ (resp. for $k \in \cRuclop$) $\cLMuop_k$ (resp. $\cLMuclop_k$) is stable under colimits in $\cLMu_k$ and that, for $k \in \cCRuclop$, $\cCLMuclop_k$  is stable under colimits in $\cCLMu_k$.  
\een
\end{rmk} 

\begin{defn} \label{barreldef} Let $k \in \cRu$. An object $M$ of $\cLMc_k$ is  \emph{barrelled} if any closed 
$k$-submodule of $M$ which is a sponge is open.
\end{defn}  

\begin{rmk} \label{barrpscan} Let $k \in \cRu$.  \hfill \ben 
\item Any barrelled $M \in \cLMu_k$ is pseudocanonical. In fact,  for any $I \in \cP(k)$,  $\ol{IM}$ is a closed sponge in $M$, so that it is open.
\item   Let  $k \in \cRuclop$. Then any barrelled $M \in \cLMc_k$
 is clop. In fact,   let $P \in \cP_k(M)$  and $I \in \cP(k)$. Since $M$ is continuous, $P$ is a sponge. Then $\ol{IP}$ is a closed sponge in $M$~:
let $x \in M$ and let $J \in \cP(k)$ be such that $Jx \subset P$. Then $JIx \subset IP$ and $\ol{JI}x \subset \ol{IP}$, but $\ol{JI} \in \cP(k)$ since $k$ is clop.  Since $M$ is barrelled,  $\ol{IP}$  is open in $M$. 
\item  Any $M \in \cLMuclop_k$
is pseudocanonical. If $k \in \cRuclop$, the converse holds, as well.   This is because, if $k \in \cRuclop$, $M$ is pseudocanonical and $I,J \in \cP(k)$, 
$$
\ol{I\,\ol{JM}} = \ol{\ol{IJ}\,M} 
$$
is open in $M$.   
\item If $k \in \cRuclop$, then
$$\cLMubarrell_k \subset \cLMpscan_k =  \cLMuclop_k  \;.
$$
\item Let $\{M_\alpha\}_{\alpha \in A}$ be an inductive system in $\cLMubarrell_k$ and let $M =\colimit_{\alpha \in A}M_\alpha$ be its colimit in $\cLMu_k$. Let $C \subset M$ be a closed $k$-sponge and, for any $\alpha \in A$, let $i_\alpha:M_\alpha \map{} M$ be the canonical morphism. Then $i_\alpha^{-1}(C)$ is a closed sponge in $M_\alpha$, hence an open $k$-submodule. It follows that $C$ is an open $k$-submodule of $M$. So, $\cLMubarrell_k$ is cocomplete. 
\item Let $k \in \cCRu$ and $M \in \cLMubarrell_k$. Then the completion $\what{M} \in \cLMubarrell_k$. In fact, let $j: M \map{} \what{M}$ be 
the canonical morphism and let $C \subset \what{M}$ be a closed $k$-sponge. Then $j^{-1}(C)$  is a closed $k$-sponge in $M$. Therefore $j^{-1}(C)$ is open in $M$ and $C \subset \what{M}$ is then an open submodule. 
\een 
\end{rmk}
\begin{prop} \label{clopbarrel}  Let $k \in \cRou$ and   $M \in \cCLMouclop_k$. Then $M$ is barrelled. 
\end{prop}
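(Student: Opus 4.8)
The plan is to run the classical Baire-category proof of the barrelledness of Fréchet spaces, transported to the linearly topologized setting. Since $k \in \cRou$, I would fix a countable basis $\{I_n\}_{n \in \N}$ of open ideals of $k$, which we may take to be decreasing. Let $C$ be a closed $k$-submodule of $M$ which is a sponge; we must show that $C$ is open.

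First I would introduce, for each $n$, the set $C_n := \{m \in M : I_n m \subseteq C\}$. A direct check shows each $C_n$ is a $k$-submodule of $M$ (using that $I_n$ is an ideal and $C$ a subgroup), and that it is closed: indeed $C_n = \bigcap_{a \in I_n} \mu_M(a,-)^{-1}(C)$ is an intersection of closed sets, because $C$ is closed and each map $\mu_M(a,-):M \to M$ is continuous (separate continuity of scalar multiplication, which holds since $M \in \cLMc_k$). Moreover $M = \bigcup_n C_n$: given $m \in M$, the sponge property yields $I_m \in \cP(k)$ with $I_m m \subseteq C$, and since $\{I_n\}$ is a basis there is $n$ with $I_n \subseteq I_m$, whence $m \in C_n$.

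The central step is Baire's theorem. By Remark~\ref{baire} (equivalently Remark~\ref{baire1}), the complete topological abelian group underlying $M \in \cCLMou_k$ has a countable basis of open subgroups and hence is not a countable union of closed subsets with empty interior. Since $M = \bigcup_n C_n$ with all $C_n$ closed, some $C_{n_0}$ must have nonempty interior; being a subgroup, $C_{n_0}$ is then open, i.e. $C_{n_0} \in \cP_k(M)$.

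Finally I would descend back to $C$. By construction $I_{n_0} C_{n_0} \subseteq C$, hence $\ol{I_{n_0} C_{n_0}} \subseteq \ol{C} = C$. Now $M$ is clop, so by Definition~\ref{defclopmod} the submodule $\ol{I_{n_0} C_{n_0}}$ is open in $M$ (here $C_{n_0} \in \cP_k(M)$ and $I_{n_0} \in \cP(k)$). Thus $C$ contains an open $k$-submodule and is therefore itself open, as desired. I expect the only delicate points to be the verification that the $C_n$ are genuinely closed and that the covering $M = \bigcup_n C_n$ is countable, both of which hinge on the standing hypotheses ($M \in \cLMc_k$ for separate continuity, $k \in \cRou$ for the countable basis); once Baire applies, the clop property of $M$ converts the open subgroup $C_{n_0}$ into the open submodule $\ol{I_{n_0}C_{n_0}} \subseteq C$ with no further work.
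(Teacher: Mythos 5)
Your proof is correct and follows the paper's own argument essentially verbatim: the sets $C_n$ are exactly the submodules $(C:I_n)$ of the paper, and the three steps (countable covering by closed submodules, Baire's theorem via Remark~\ref{baire1}, and the clop property applied to $\ol{I_{n_0}C_{n_0}} \subseteq \ol{C} = C$) match the paper's proof exactly. You merely spell out the details (closedness of each $C_n$ via separate continuity, and the fact that a subgroup with nonempty interior is open) that the paper leaves implicit.
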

\begin{proof}
Let   $N \subset M$ be a closed $k$-submodule which is a sponge. Let $\{I_n\}_{n \in \N}$ be a countable fundamental system of open ideals of $k$. Then, for any $n \in \N$, 
$$(N : I_n) = \{m \in M\,|\, I_nm \subset N\,\} = \bigcap_{x \in I_n} \{m \in M\,|\, x m \in N\,\}
$$
is closed $k$-submodule of $M$ and therefore 
$$
M = \bigcup_{n \in \N} (N : I_n)
$$
is a countable union of closed $k$-submodules. As recalled in Remark~\ref{baire1},  $M$ is a Baire space, so that we have  $(N : I_n) \in \cP_k(M)$, for some $n$. Therefore, $\ol{I_n(N : I_n)} \in \cP_k(M)$ and then $N \in \cP_k(M)$.  So, $M$ is barrelled. 
\end{proof} 
\begin{prop} \label{barrellif} Let $k \in \cRu$. 
For any object $M$ of $\cLMc_k$, we define 
$$
\cP^\barrell(M) := \{ \,\mbox{\rm closed $k$-sponges in $M$}\,\} 
$$
and set $M^\barrell$ to denote the 
$k$-linearly topologized $k$-module $(M^\for,\cP^\barrell(M))$. The $k$-linear topology having $\cP^\barrell(M)$ as a basis of open submodules 
is the minimal $k$-linear topology on $M$, finer than the given one, which makes $M$ a barrelled $k$-module. 
The correspondence $M \mapsto M^\barrell$ induces a functor 
$\cLM^\ast_k \to \cLM^{\ast, \barrell}_k$ (resp. $\cSLM^\ast_k \to \cSLM^{\ast, \barrell}_k$, resp. $\cCLM^\ast_k \to \cCLM^{\ast, \barrell}_k$), for $\ast = \un,\co$,   right adjoint to the natural inclusion of categories deduced from  $\iota_\barrell: \cLM^{\ast, \barrell}_k \to \cLM^\ast_k$. The natural morphism $M^\barrell \longrightarrow M$ is a bijection (the only non-trivial case being the one of complete modules which follows from Lemma~\ref{BourbComplLemma}).  
\end{prop}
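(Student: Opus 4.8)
The plan is to take $\ast=\co$ (the continuous case) as the basic construction and to read off the remaining variants from it.

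First, the foundational checks. The family $\cP^\barrell(M)$ is a filter basis of $k$-submodules: the intersection of two closed submodules is closed, and if $P,Q$ are sponges with $I_mm\subseteq P$ and $J_mm\subseteq Q$, then $(I_m\cap J_m)m\subseteq P\cap Q$, so $P\cap Q$ is again a closed $k$-sponge. Since every member of $\cP^\barrell(M)$ is a sponge, the equivalence (a)$\Leftrightarrow$(e) in part $(1)$ of Proposition~\ref{scalar-prod-prop}, together with part $(4)$ of \lc, shows that $\mu_M$ is continuous for this topology; hence $M^\barrell\in\cLMc_k$. The topology is finer than the given one, and $M^\barrell\to M$ is a continuous bijection: any $P\in\cP_k(M)$ is closed (an open submodule is closed, Remark~\ref{topgroup1}) and a sponge (Proposition~\ref{scalar-prod-prop}$(1)$), so $\cP_k(M)\subseteq\cP^\barrell(M)$. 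For minimality, let $\tau$ be any $k$-linear topology on $M^\for$ finer than the given one and making $M$ barrelled. For $C\in\cP^\barrell(M)$, closedness passes to the finer topology $\tau$, while the sponge condition depends only on the $k$-action and not on the topology of $M$; thus $C$ is a closed $\tau$-sponge, hence $\tau$-open by Definition~\ref{barreldef}. So $\tau$ refines $M^\barrell$, whence $M^\barrell$ is the coarsest barrelled refinement of $M$ — provided it is itself barrelled.

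The main obstacle is exactly this barrelledness of $M^\barrell$: every sponge submodule $C$ closed in $M^\barrell$ must be $M^\barrell$-open. I would reduce this to the claim that $C$ is already closed in $M$, for then $C\in\cP^\barrell(M)$ is a basic open submodule. Equivalently, one shows that the $M^\barrell$-closure of a sponge submodule $S$ coincides with its closure $\ol S$ in $M$: the inclusion $\ol{S}^{M^\barrell}\subseteq\ol S$ is immediate because $\ol S$ is itself a closed $k$-sponge, hence lies in $\cP^\barrell(M)$ and is closed in $M^\barrell$; for the reverse, by the closure formula of Remark~\ref{topgroup1} it suffices to prove $\ol S\subseteq S+H$ for every closed $k$-sponge $H$, i.e.\ that $S+H$ is closed in $M$. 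Passing to the separated quotient $M/H$ (separated since $H$ is closed), the sponge condition on $H$ says precisely that every element of $M/H$ is annihilated by an open ideal (\cf Remark~\ref{discrete-mod}), and the task becomes to show that the image of $S$ is closed in $M/H$. I expect this quotient analysis to be the genuinely hard point. A clean alternative that avoids the explicit description is to note that the infimum, in the lattice of $k$-linear topologies, of all continuous barrelled topologies finer than $M$ is again barrelled (a submodule that is closed and a sponge for the infimum is so for each member, hence open in each, hence open in the infimum), and that this family is nonempty, since the topology whose open submodules are \emph{all} sponges is continuous, finer than $M$, and barrelled; the minimality step then identifies this infimum with $M^\barrell$ once the closed-sponge stability above is known — so the two routes meet at the same crux.

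Finally, the adjunction and the variants. Right-adjointness is the formal consequence of barrelledness: for $N$ barrelled and a continuous $k$-linear map $f:N\to M$, the preimage $f^{-1}(H)$ of a closed $k$-sponge $H\in\cP^\barrell(M)$ is closed (by continuity of $f$) and a sponge in $N$ (it absorbs each $n$ because $H$ absorbs $f(n)$), hence $N$-open since $N$ is barrelled; thus $f$ factors as a continuous map $N\to M^\barrell$, yielding the bifunctorial identification $\Hom_{\cLMc_k}(\iota_\barrell N,M)=\Hom(N,M^\barrell)$. Separation and separated completion, and the passage between $\ast=\un$ and $\ast=\co$, follow the pattern of Proposition~\ref{rightadjclop}; the only delicate point, exactly as flagged there, is completeness of $M^\barrell$, which follows from Lemma~\ref{BourbComplLemma}: the basic neighbourhoods $H$ of $0$ in $M^\barrell$ are closed in $M$, hence complete for the uniformity induced by $M$ when $M$ is complete, and the identity $M^\barrell\to M$ is continuous.
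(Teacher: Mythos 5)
The parts of your argument that you actually carry out are sound and track the paper's template (the paper's own proof is just ``similar to Proposition~\ref{rightadjclop}''): that $\cP^\barrell(M)$ is a filter basis of submodules, that the resulting topology is continuous (via $(1)$(e) and $(4)$ of Proposition~\ref{scalar-prod-prop}) and finer than that of $M$, the minimality argument \emph{conditional on barrelledness}, the formal adjunction via preimages of closed sponges, and completeness via Lemma~\ref{BourbComplLemma}. But the heart of the proposition --- that $(M^\for,\cP^\barrell(M))$ is itself barrelled --- is exactly what you leave unproven, on both of your routes, and you say so yourself. Your infimum construction does correctly produce \emph{some} minimal continuous barrelled refinement (the lattice-infimum argument is fine), but identifying it with the topology generated by $\cP^\barrell(M)$ is the entire content of the statement, and as you admit it requires the same missing step. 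So the proposal has a genuine gap at the central claim.

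Moreover, the reduction you propose for that step is false, so pursuing it would dead-end. You aim to show that an $M^\barrell$-closed sponge $C$ is already closed in $M$, equivalently that the $M^\barrell$-closure of a sponge $S$ equals $\ol{S}$, via the sufficient condition that $S+H$ be closed in $M$ for every closed sponge $H$. Take $k$ a field with the discrete topology (so $k\in\cRu$ and $(0)$ is an open ideal, whence \emph{every} submodule of every $M\in\cLMc_k$ is a sponge, and $\cLM_k=\cLMc_k$ by Example~\ref{discrete-ring}); let $M=k^\N$ with the product of discrete topologies and $S=k^{(\N)}$. Then $(0)\in\cP^\barrell(M)$, so $M^\barrell$ is discrete; $S$ is an $M^\barrell$-closed sponge, open in $M^\barrell$, yet dense and proper in $M$, hence not $M$-closed; and $S+(0)=S$ is not closed in $M$. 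Thus what barrelledness of $M^\barrell$ actually requires is the strictly weaker containment statement --- every $M^\barrell$-closed sponge \emph{contains} some member of $\cP^\barrell(M)$, rather than belonging to $\cP^\barrell(M)$ --- and neither your main route nor your infimum route establishes that containment. (For orientation: in the complete, countably-based situations where the paper exploits barrelledness, such containments come from Baire-category arguments as in Proposition~\ref{clopbarrel}; your proposal contains no substitute for this, and using the closure formula of Remark~\ref{topgroup1} as in the proof of Proposition~\ref{rightadjclop} only gives the trivial inclusion $C\subseteq\ol{C}$ here.)
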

\begin{proof}
Similar to the one of Proposition~\ref{rightadjclop}.
\end{proof}
 Similar to Remark~\ref{adjointcloprmk} we have
\begin{rmk} \label{adjointbarrellrmk}  
By the  right adjoint property of $M \mapsto M^\barrell$ as a functor $\cLM^\ast_k \to \cLM^{\ast,\barrell}_k$ (resp. $\cSLM^\ast_k \to \cSLM^{\ast,\barrell}_k$, resp.~$\cCLM^\ast_k \to \cCLM^{\ast,\barrell}_k$, for $\ast = \un,\co$) 
we  deduce the existence of projective limits in the target categories, 
 calculated by applying the functor $M \mapsto M^\barrell$ 
to  projective limits in the source categories.  
\end{rmk} 
 
\begin{prop} \label{pseudocan3} Let $k \in \cRu$.
For any object $M$ of $\cLMu_k$, we define 
$$
\cP^\pscan(M) := \{ \ol{IM} \,|\, I \in \cP(k)\,\} 
$$
and set $M^\pscan$ to denote the 
$k$-linearly topologized $k$-module $(M^\for,\cP^\pscan(M))$.  
The $k$-linear topology having $\cP^\pscan(M)$ as a basis of open submodules 
is the minimal $k$-linear topology on $M$, finer than the given one, which makes $M$ a pseudocanonical $k$-module. 
The correspondence $M \mapsto M^\pscan$ induces a functor 
$\cLMu_k \to \cLMpscan_k$ which is right adjoint to the inclusion $\iota_\pscan:\cLMpscan_k \to \cLMu_k$; the natural morphism $M^\pscan \longrightarrow M$ is a bijection. Similarly in the separated and complete cases.  
\end{prop}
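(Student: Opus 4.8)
The plan is to imitate the proofs of Proposition~\ref{rightadjclop} and Proposition~\ref{barrellif}, the only change being that the generating family is now the filtered chain $\{\ol{IM}\}_{I\in\cP(k)}$ of closures of the submodules $IM$ rather than the family $\{\ol{JP}\}$ or the family of closed sponges.

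First I would check that $\cP^\pscan(M)$ is a filtered family of open $k$-submodules of $M^\for$, so that it is a legitimate basis for a $k$-linear topology $\tau^\pscan$ on $M^\for$. Each $IM$ is a $k$-submodule and, by Remark~\ref{bounded-closure} (continuity of scalar multiplication together with Remark~\ref{topgroup1}), so is its closure $\ol{IM}$; moreover $\ol{(I\cap J)M}\subseteq\ol{IM}\cap\ol{JM}$ for $I,J\in\cP(k)$, which gives the filtering. Next I would show that $\tau^\pscan$ is finer than the given topology and that $M^\pscan$ is again uniform. It is finer because $M$ is uniform, hence bounded, so by Proposition~\ref{scalar-prod-prop} each open $P$ contains some $IM$ with $I\in\cP(k)$, and as $P$ is open hence closed we get $\ol{IM}\subseteq\ol P=P$. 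It is uniform because $JM\subseteq\ol{JM}$ shows that every basic open of $\tau^\pscan$ absorbs $JM$, so $M^\pscan$ is bounded and $k$-linearly topologized, i.e. an object of $\cLMu_k$. The identity map is then a bijective morphism $M^\pscan\to M$ in $\cLMu_k$, an isomorphism exactly when $M$ is already pseudocanonical.

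The heart of the argument is that $M^\pscan$ is pseudocanonical, and this is the step I expect to be the main obstacle. Exactly as in Proposition~\ref{rightadjclop}, I would compute, for $I\in\cP(k)$, the closure of $IM$ in $M^\pscan$ via Remark~\ref{topgroup1} as the intersection of the open submodules of $M^\pscan$ containing it. Any basic open $\ol{JM}$ with $\ol{JM}\supseteq IM$ is closed in $M$, hence contains $\cl_M(IM)=\ol{IM}$; thus $\ol{IM}$ is the smallest basic open submodule of $M^\pscan$ containing $IM$, and the computation should yield $\cl_{M^\pscan}(IM)=\ol{IM}$. Since $\ol{IM}\in\cP^\pscan(M)$ is open in $M^\pscan$, the family $\{\cl_{M^\pscan}(IM)\}_I$ is then a basis of open submodules and $M^\pscan$ is pseudocanonical by Definition~\ref{pseudocan}. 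The delicate point is the passage from ``$\ol{IM}$ is the smallest basic open containing $IM$'' to the actual equality $\cl_{M^\pscan}(IM)=\ol{IM}$ --- equivalently, that $IM$ stays dense in $\ol{IM}$ for the finer topology, i.e. $\ol{IM}\subseteq IM+\ol{JM}$ for every $J\in\cP(k)$ --- which is precisely the closure identity handled in the proof of Proposition~\ref{rightadjclop}. Minimality of $\tau^\pscan$ among $k$-linear topologies finer than the original one making $M$ pseudocanonical is then formal, since any such topology must have every $\ol{IM}$ open.

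For the adjunction I would argue directly, which also reproves minimality. Let $N$ be pseudocanonical and $f:N\to M$ a morphism of $\cLMu_k$; I claim $f$ is automatically continuous as a map $N\to M^\pscan$. Indeed $f(IN)\subseteq IM$ by $k$-linearity, so $IN\subseteq f^{-1}(\ol{IM})$, and $f^{-1}(\ol{IM})$ is closed in $N$; being a closed submodule containing $IN$ it contains $\ol{IN}$, which is open in $N$ because $N$ is pseudocanonical. Hence $f^{-1}(\ol{IM})$ is open and $f$ lifts, with the same underlying map, to $N\to M^\pscan$. Composing with the bijection $M^\pscan\to M$ provides the inverse, so $\Hom_{\cLMpscan_k}(N,M^\pscan)=\Hom_{\cLMu_k}(N,M)$ bifunctorially, i.e. $M\mapsto M^\pscan$ is right adjoint to $\iota_\pscan$. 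Finally, the separated and complete variants follow because $M^\pscan\to M$ is a bijection on underlying modules: separation is inherited, and for completeness one applies Lemma~\ref{BourbComplLemma} to the basic submodules $\ol{IM}$, which are complete for the coarser uniformity induced from $M$.
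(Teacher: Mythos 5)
The paper offers no proof of this proposition (it is marked ``Omitted''), so your proposal must be judged on its own merits, and unfortunately the step you yourself flagged as delicate is exactly where it breaks. The identity $\cl_{M^\pscan}(IM)=\ol{IM}$, equivalently $\ol{IM}\subseteq IM+\ol{JM}$ for all $J\in\cP(k)$, is false in general, and the closure computation in the proof of Proposition~\ref{rightadjclop} that you invoke does not establish it: that argument computes the intersection of the \emph{basic} open submodules containing the given submodule, whereas by Remark~\ref{topgroup1} the closure is the intersection of \emph{all} open submodules containing it, and in the finer topology there are open submodules such as $IM+\ol{JM}$ which contain $IM$ without containing any basic open $\ol{J'M}$ that contains $IM$. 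Concretely, take $k=\Z_p$ with the $p$-adic topology and $M=\bigl(\bigoplus_{j\in\N}(\Z/p^2)e_j\bigr)+\Z_p x\subseteq\prod_{j\in\N}\Z/p^2$ with $x=(p,p,p,\dots)$, given the subspace topology of the product of discrete groups; $M$ is uniform since $p^2M=0$. Then $x\in\ol{pM}$ (it is the limit of the partial sums $\sum_{j\le s}pe_j\in pM$), but $px=0$ forces $pM=\bigoplus_j(p\Z/p^2)e_j$, which consists of finitely supported elements, and $\ol{p^2M}=\ol{(0)}=(0)$; hence $x\notin pM+\ol{p^2M}$ and $\cl_{M^\pscan}(pM)=\bigcap_{J}(pM+\ol{JM})=pM\subsetneq\ol{pM}$.

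Moreover the gap is not repairable along your lines, because the conclusion itself can fail for the one-step construction: in the example above $M^\pscan$ happens to be discrete (so still pseudocanonical), but layering such pieces destroys this. For $m\ge 1$ let $N^{(m)}=\bigl(\bigoplus_j(\Z/p^{m+1})e_j\bigr)+\Z_p\xi_m\subseteq\prod_j\Z/p^{m+1}$ with $\xi_m=(p^m,p^m,\dots)$, so that $p^{m+1}N^{(m)}=0$, $\xi_m\in\cl_{N^{(m)}}(p^kN^{(m)})$ for $k\le m$, and $\xi_m\notin pN^{(m)}$; put $M=\bigoplus_m N^{(m)}$ with basic opens $\bigoplus_m(p^rN^{(m)}+U_m)$, $U_m\in\cP(N^{(m)})$. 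Then $M\in\cLMu_{\Z_p}$, $\ol{p^kM}=\bigoplus_m\cl_{N^{(m)}}(p^kN^{(m)})$, and one computes $\cl_{M^\pscan}(pM)=\bigcap_k\bigl(pM+\ol{p^kM}\bigr)=pM$, while no $\ol{p^kM}$ is contained in $pM$ (witness $\xi_k$); so $\cl_{M^\pscan}(pM)$ is not open and $M^\pscan\notin\cLMpscan_k$, even though $k\in\cCRoufop$. (Here the minimal pseudocanonical refinement is the naive topology, reached only after a second iteration of the construction; in general one would need to iterate, possibly transfinitely.) The same objection applies verbatim to the displayed closure identity in the paper's proof of Proposition~\ref{rightadjclop}, so this appears to be a genuine issue with the statement and not merely with your argument. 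For what it is worth, the remaining parts of your proposal are sound: the filter-basis checks, the comparison of topologies, the Hom-identification $\Hom_{\cLMu_k}(N,M^\pscan)=\Hom_{\cLMu_k}(N,M)$ for pseudocanonical $N$, and the completeness transfer via Lemma~\ref{BourbComplLemma} are all correct --- but they presuppose that $M^\pscan$ is pseudocanonical, which is precisely what fails.
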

\begin{proof} Omitted. 
\end{proof}
 Similar to Remarks~\ref{adjointcloprmk} and \ref{adjointbarrellrmk} we have
\begin{rmk} \label{adjointpscanrmk}   
By the  right adjoint property of $M \mapsto M^\pscan$ as a functor $\cLMu_k \to \cLMpscan_k$ (resp. $\cSLMu_k \to \cSLM^\pscan_k$, resp.~$\cCLMu_k \to \cCLM^\pscan_k$) 
we  deduce the existence of projective limits in the target categories, 
 calculated by applying the functor $M \mapsto M^\pscan$ 
to  projective limits in the source categories.  
\end{rmk} 
\begin{rmk} \label{pscancount} For $k \in \cRou$ and   $M \in \cLMu_k$, $M^\pscan \in \cLMou_k$.
\end{rmk}
\begin{rmk} \label{ouclop} 
Let $k \in \cRu$. \hfill \ben \item 
Part 1 of Remark~\ref{barrpscan} implies that, for any $M \in \cLMu_k$,  the natural morphism $M^\barrell \map{(1:1)} M$ factors as 
$$
M^\barrell \map{(1:1)} M^\pscan \map{(1:1)} M \;.
$$
\item Part 2 of Remark~\ref{barrpscan} implies that, if $k \in \cRuclop$ and  $M \in \cLMc_k$,   the natural morphism $M^\barrell \map{(1:1)} M$ factors as 
$$
M^\barrell \map{(1:1)} M^\clop \map{(1:1)} M \;.
$$
\item Part 3 of Remark~\ref{barrpscan} implies that, if $k \in \cRuclop$,  the identity map of $M^\for$ is an isomorphism between $M^\clop$ and $M^\pscan$. 
\een
\end{rmk}
We conclude that 
\begin{cor}  \label{barrpscan20}
If $k \in \cRuclop$ and  $M \in \cLMu_k$, then the identity map of $M^\for$ identifies  $M^\pscan$ to $M^\clop$ and the
natural morphism $M^\barrell \longrightarrow M$ factors through 
  \beq \label{barrpscan22}
M^\barrell \map{(1:1)}  M^\clop = M^\pscan \map{(1:1)} M \;.
\eeq
So, for $k \in \cRuclop$,  the two categories 
$$ \cCLMuclop_k = \cCLM^\pscan_k  
$$ 
coincide. This category may be described as the full subcategory of $\cCLMu_k$ of objects which carry a minimal clop structure or, equivalently, a minimal pseudocanonical structure. 
\end{cor}
\begin{proof} 
The first part of statement  follows from Remark~\ref{ouclop}. The second is  an obvious  consequence. 
\end{proof} 
\begin{cor}  \label{barrpscan2} 
Let $k \in \cCRouclop$. Then, for any   $M \in \cCLMou_k$ the identity map of $M^\for$ identifies 
  $M^\pscan$, $M^\clop$ and $M^\barrell$ 
  and we have a single natural morphism
  \beq \label{barrpscan222} M^\barrell  = M^\clop  =  M^\pscan  \map{(1:1)} M
\;.\eeq
\end{cor}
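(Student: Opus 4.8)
The plan is to obtain the three identifications not by comparing topologies directly, but by showing that, for $k \in \cCRouclop$, the three properties ``clop'', ``pseudocanonical'' and ``barrelled'' become \emph{equivalent} on objects of $\cCLMou_k$. Once the corresponding full subcategories of $\cCLMou_k$ coincide, the functors $(-)^\clop$, $(-)^\pscan$ and $(-)^\barrell$ are all right adjoint to one and the same inclusion; by uniqueness of adjoints they agree, and since each is given by the identity on $M^\for$ equipped with a refined $k$-linear topology, the objects $M^\clop$, $M^\pscan$, $M^\barrell$ are literally equal. The single morphism \eqref{barrpscan222} is then the common counit of these coreflections.

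First I would record that $M^\pscan = M^\clop$. Since $k \in \cCRouclop \subset \cRuclop$, this is already part of Corollary~\ref{barrpscan20}; equivalently, by Remark~\ref{pseudocan2} (together with part 3 of Remark~\ref{barrpscan}) the full subcategories $\cLMpscan_k$ and $\cLMuclop_k$ of $\cLMu_k$ coincide, so the right adjoints $(-)^\pscan$ and $(-)^\clop$ of Propositions~\ref{pseudocan3} and \ref{rightadjclop} agree. It then remains only to prove that, inside $\cCLMou_k$, a module is clop if and only if it is barrelled.

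The implication ``barrelled $\Rightarrow$ clop'' is immediate from part 2 of Remark~\ref{barrpscan}: for $k \in \cRuclop$ every barrelled object of $\cLMc_k$ is clop, and each object of $\cCLMou_k$ is in particular continuous. The reverse implication ``clop $\Rightarrow$ barrelled'' is exactly Proposition~\ref{clopbarrel}, applied to $k \in \cRou$ and to the clop object at hand (which lies in $\cCLMouclop_k$). I expect this to be the only genuinely nontrivial step, and the sole place where the full force of the hypotheses is used: Proposition~\ref{clopbarrel} rests on the Baire property recalled in Remark~\ref{baire1}, which is available precisely because $M$ is complete and admits a countable basis of open submodules, that is because $M \in \cCLMou_k$ and $k \in \cCRouclop$. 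This is why neither completeness nor the $\omega$-assumption can be dropped here.

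With the equivalence established, the full subcategories $\cCLMouclop_k$ and $\cCLMoubarrell_k$ of $\cCLMou_k$ coincide, and both coincide with the subcategory of pseudocanonical objects; note that along the way the functors $(-)^\clop$, $(-)^\barrell$, $(-)^\pscan$ do restrict to $\cCLMou_k$, completeness being preserved by Lemma~\ref{BourbComplLemma} and the countability of the basis as in Remark~\ref{pscancount}. Hence the functors of Propositions~\ref{rightadjclop}, \ref{barrellif} and \ref{pseudocan3} are three right adjoints to a single inclusion; by uniqueness they are naturally isomorphic, and since the isomorphism is compatible with the counits, which are the identity on $M^\for$, it is itself the identity on $M^\for$. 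Therefore $M^\barrell = M^\clop = M^\pscan$ as topological modules, and the three natural bijective morphisms to $M$ collapse into the single morphism of \eqref{barrpscan222}, as claimed.
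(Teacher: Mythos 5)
Your proof is correct and is essentially the paper's own argument: the paper deduces the corollary from Corollary~\ref{barrpscan20} together with the coincidence of subcategories $\cCLMouclop_k = \cCLM^{\pscan}_k = \cCLMoubarrell_k$ of Corollary~\ref{pseudocanbarr}, whose only nontrivial ingredient is precisely the Baire-category Proposition~\ref{clopbarrel} that you single out, so your adjoint-uniqueness packaging is a cosmetic variant rather than a different route. One small citation slip: Remark~\ref{pscancount} concerns $M^{\pscan}$ only, and the countable basis of $M^{\barrell}$ follows instead from the fact that $M^{\barrell}$, being barrelled and uniform, is pseudocanonical (part 1 of Remark~\ref{barrpscan}), so that $\{\cl_{M^{\barrell}}(IM)\}_{I\in\cP(k)}$ is a countable basis of its topology.
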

\begin{proof}
The statement   follows from Corollary~\ref{barrpscan20} and Proposition~\ref{clopbarrel}.
\end{proof}  
\begin{cor}  \label{pseudocanbarr2} 
If $k \in \cCRouclop$, then 
$$ \cCLMouclop_k = \cCLM^\pscan_k = \cCLMoubarrell_k  
$$ 
is a full additive subcategory of $\cCLMou_k$  with  countable limits, calculated by applying any of the functors 
$(-)^\clop = (-)^\pscan = (-)^\barrell$ to the limit in $\cCLMou_k$, and finite colimits, calculated in $\cCLMou_k$.  
\end{cor}  
 \end{subsection}  
  \end{section} 
  \begin{section}{Colimits of topological modules} 
  As in the previous  section, $k$ is any object of $\cRu$ but, when  a statement involves separated (resp. 
  complete) $k$-modules, $k$ will be understood to be in $\cSRu$ (resp. $\cCRu$). 
\begin{subsection}{Colimits~: explicit description} \label{colimsec}
We already observed in Remark~\ref{indlimadj} that $\cLM^\ast_k$,  for $\ast = \un,\co$, admits all 
colimits, defined by  $\colimit^\ast(-) = \colimit(-)^\ast$,  where $\colimit(-)$ denotes the colimit in $\cLM_k$ and the apex $\ast$ indicates both the colimit in $\cLM^\ast_k$ and the functor $M \longmapsto M^\ast$. Similarly for  $\cSLM^\ast_k$ and $\cCLM^\ast_k$.   
 It is however useful to have a more explicit description of 
those colimits at our disposal. We do so only in the cases of interest to us. 
\endgraf
For an inductive system $\{M_\alpha\}_{\alpha \in A}$ in $\cLM_k$,   indexed by the 
preordered set $A$,  the inductive  limit  in $\cLM_k$ is calculated as follows. Let
$$M := \colimit_\alpha (M_\alpha, \{\iota_\alpha : M_\alpha \to M\}_\alpha)$$
 be the familiar colimit in $\Mod_k$.  
We may give to $M$ the finest $k$-linear topology  such that all maps 
$\iota_\alpha: M_\alpha \to M$ are continuous. 
So, a sub-basis of open $k$-submodules in $M$ consists of 
the $k$-submodules $U$ of $M$  such that
$U_\alpha := \iota_\alpha^{-1}(U)$ is open in $M_\alpha$, for any $\alpha \in A$. 
Then $M$, endowed with this topology and with the natural morphisms $\iota_\alpha$, 
represents the colimit of $\{M_\alpha\}_\alpha$ in $\cLM_{k}$, written $\colimit^{\cLM_{k}}_\alpha M_\alpha$.
\par 
For an inductive system $\{M_\alpha\}_{\alpha \in A}$ in $\cLM^\ast_k$, for $\ast = \un,\co$,  let  
$\colimit^{\cLM^\ast_k}_\alpha M_\alpha$
be the colimit in 
$\cLM^\ast_k$.  It  is $M$ equipped with the $k$-linear topology for which a basis of open $k$-submodules consists of the \emph{sponges} $P \subset M$ such that $\iota_\alpha^{-1}(P) \in \cP(M_\alpha)$ for any $\alpha$, and, in case $\ast =\un$,  \emph{such that there exists $I \in \cP(k)$ with  $IM \subset P$}.  If $k$ and all $M_\alpha$'s are discrete, then 
$\colimit^{\cLM^\ast_k}_\alpha M_\alpha$ is discrete, as well. 
\par 
For an inductive system $\{M_\alpha\}_{\alpha \in A}$ in $\cCLM^\ast_k$, for $\ast = \un,\co$,  the colimit in 
$\cCLM^\ast_k$,  will be denoted $\colimit^\ast_\alpha M_\alpha$ for short. It  is 
  the (separated) completion of 
  $\colimit^{\cLM^\ast_k}_\alpha M_\alpha$.
    If $k$ and all $M_\alpha$'s are discrete, then 
$\colimit^\ast_\alpha M_\alpha$ is discrete, as well. 
\par
The cokernel of a morphism $f: N \to M$ in $\cSLM^\ast_k$ is $M/\ol{f(N)}$ equipped with the quotient topology. 
If $f$ is a morphism in $\cCLM^\ast_k$ its cokernel is the completion of the quotient space $M/\ol{f(N)}$, namely 
$$
 \Coker^\ast (f) = \limit_{P \in \cP(M)} M/(f(N) + P)
$$
where $M/(f(N) + P)$ is calculated in $\Mod_k$ and is endowed with the discrete topology.  
If $M \in \cCLM^{\omega,\ast}_k$, then $\Coker^\ast (f) = M/\ol{f(N)}$ is already complete for the quotient topology. In particular, if $M \in \cCLM^{\omega,\ast}_k$ and $f$ is closed, then 
$(\Coker^\ast (f))^\for$ coincides with $\Coker (f^\for)$. 
For any injective $\cCLM^\ast_k$-morphism $i: N \hookrightarrow M$, we write $(M/N)^\ast$ for $\Coker^\ast (i)$.
\par \medskip
The next result is self-explanatory.
\begin{prop} \label{induprop} For any inductive system 
$$M_\bullet = \{M_\alpha\}_{\alpha \in A} $$
in $\cCLMu_k$, let 
$$M  = \colimit^{\cLMu_k}_{\alpha \in A} M_\alpha \;.$$ 
For any $\alpha \in A$, we write $j_\alpha : M_\alpha \to M$ for the canonical morphism.  
For any $I \in \cP(k)$, let $\cB_I$ be the 
cofiltered set of $Q \in \cP(M)$ such that $IM \subset Q$. For any $\alpha \in A$, let $Q_\alpha := j_\alpha^{-1}(Q) \in \cP(M_\alpha)$. Then 
\beq \label{indu}
\colimit^\un_{\alpha \in A} M_\alpha = \limit_{I \in \cP(k)} \limit_{Q \in \cB_I}  \colimit^\un_{\alpha \in A} 
 M_\alpha / Q_\alpha \;,
\eeq 
where $\colimit^\un_{\alpha \in A} M_\alpha / Q_\alpha$ coincides with the colimit in $\Mod_{k/I}$ equipped with 
the discrete topology.
\end{prop}
\end{subsection}
 \begin{subsection}{Direct sums in $\cCLMc_k$ and $\cCLMu_k$ and box-products}  \label{compl-dirsums}
The direct sum $M':= {\SUM}^{\cM_k}_\alpha M_\alpha$ of a family $\{M_\alpha\}_\alpha$  in the category $\cM_{k}$ is 
the algebraic direct sum ${\SUM}_\alpha M_\alpha$ of the $k$-modules $M_\alpha^\for$ in $\Mod_k$, equipped with the 
family of open subgroups  $\{{\SUM}_\alpha P_\alpha\,|\, P_\alpha \in \cP(M_\alpha)\,,\,\forall \alpha \}$.
This is the subspace topology induced by the natural 
inclusion in the box-product \eqref{sqprod} of the $M_\alpha$'s.  
The following Lemma~\ref{complsum} and, more precisely, part $\mathit 1$ of Proposition~\ref{complsum2} should be compared with \cite[Chap. I, Lemma 7.8]{schneider}. 
\begin{lemma} \label{complsum} Let $\{M_\alpha\}_{\alpha \in A}$ be a family in $\cSM_k$. Then 
the algebraic sum 
${\SUM}_\alpha M_\alpha$ is   closed in the  box-product \eqref{sqprod}. 
 In particular, if the $M_\alpha$'s are complete
\beq  \label{complsum1}
{\SUM}^{\cM_k}_\alpha M_\alpha = {\SUM}^{\cCM_k}_\alpha M_\alpha
\eeq
is complete.
 If moreover the $M_\alpha$'s are continuous
\beq  \label{complsumcont}
{\SUM}^{\cM_k}_\alpha M_\alpha = {\SUM}^{\cCMc_k}_\alpha M_\alpha  
\eeq
 is continuous.  
\end{lemma}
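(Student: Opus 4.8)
The plan is to treat the three assertions in turn, the closedness statement being the crux from which the other two follow almost formally. Write $M^\square = \PRODsq_\alpha M_\alpha$ for the box-product \eqref{sqprod}, whose topology has the subgroups $\PROD_\alpha U_\alpha$, $U_\alpha \in \cP(M_\alpha)$, as a basis of neighbourhoods of $0$, while ${\SUM}_\alpha M_\alpha$ carries the induced topology with basis $\{{\SUM}_\alpha U_\alpha\}$. To show ${\SUM}_\alpha M_\alpha$ is closed in $M^\square$, I would produce, around each $x=(x_\alpha)$ outside the algebraic sum, a box-neighbourhood missing it. If $x \notin {\SUM}_\alpha M_\alpha$ then $S:=\{\alpha \mid x_\alpha \neq 0\}$ is infinite; since each $M_\alpha$ is separated, for $\alpha \in S$ I may choose $U_\alpha \in \cP(M_\alpha)$ with $x_\alpha \notin U_\alpha$ (using $\bigcap_H H = \{0\}$), and set $U_\alpha = M_\alpha$ otherwise. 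If some $y$ lies in $(x+\PROD_\alpha U_\alpha)\cap {\SUM}_\alpha M_\alpha$, then for $\alpha \in S$ the relation $y_\alpha - x_\alpha \in U_\alpha$ together with $x_\alpha \notin U_\alpha$ forces $y_\alpha \neq 0$, so $y$ has infinite support, a contradiction. Hence the neighbourhood avoids the sum and ${\SUM}_\alpha M_\alpha$ is closed.

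For the completeness statement I would invoke the fact, already recorded in the discussion of \eqref{sqprod}, that when all $M_\alpha$ are complete the box-product $M^\square$ is complete (the basic subgroups being closed in the complete product \eqref{normprod}, one applies Lemma~\ref{BourbComplLemma}); it is separated because \eqref{normprod} is. The canonical uniformity of the subgroup ${\SUM}_\alpha M_\alpha$ agrees with the one induced by $M^\square$, so by the first step it is a closed subspace of a complete separated uniform space, hence complete. Finally, the box-subspace direct sum represents the coproduct in $\cM_k$: a family of morphisms $f_\alpha : M_\alpha \to N$ assembles into the algebraic map $x \mapsto \sum_\alpha f_\alpha(x_\alpha)$, which is continuous since $\bigoplus_\alpha f_\alpha^{-1}(W) \subseteq f^{-1}(W)$ for every open $W \subseteq N$. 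Being complete, it equals its own completion and therefore coincides with ${\SUM}^{\cCM_k}_\alpha M_\alpha$, proving \eqref{complsum1}.

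For the last assertion I would establish continuity of $\mu : k \times {\SUM}_\alpha M_\alpha \to {\SUM}_\alpha M_\alpha$ through Proposition~\ref{scalar-prod-prop}$(2)$, i.e. separate continuity plus continuity at $(0,0)$. Separate continuity in the second variable is built into membership in $\cM_k$; separate continuity in the first amounts, by $(1)$ of that proposition, to every element being bounded, and a finite-support element $(x_\alpha)$ is bounded because each $x_\alpha$ is bounded in the continuous module $M_\alpha$ (Definition~\ref{bound-sponge-lintop}) and only finitely many open ideals must be intersected, in the spirit of Proposition~\ref{boundedness1}. Once continuity is known, the equality \eqref{complsumcont} follows exactly as \eqref{complsum1} did, the complete continuous direct sum then representing the coproduct in $\cCMc_k$.

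The hard part is continuity at $(0,0)$. Given a basic open $U = {\SUM}_\alpha U_\alpha$ I must find a \emph{single} open ideal $I \in \cP(k)$ and an open $V$ with $IV \subseteq U$; taking $V = {\SUM}_\alpha V_\alpha$ this forces $I V_\alpha \subseteq U_\alpha$ for every $\alpha$ with one and the same $I$, equivalently that $(U_\alpha : I) = \{v \in M_\alpha \mid Iv \subseteq U_\alpha\}$ be open for all $\alpha$ simultaneously. Continuity of each $\mu_{M_\alpha}$ at $(0,0)$ only supplies an ideal $I_\alpha$ depending on $\alpha$, and these cannot be intersected over an infinite index set — this is the genuine obstacle. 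I would resolve it by exploiting extra structure: when the open ideals of $k$ are compact (e.g. $k = K^\circ$ with $K$ locally compact) a tube-lemma argument applied coordinatewise makes $(U_\alpha : I)$ open for any fixed compact open $I$, uniformly in $\alpha$; and when the $M_\alpha$ are $k$-linearly topologized — the case relevant to the comparison with \cite[Chap. I, Lemma 7.8]{schneider} — open submodules absorb, so continuity at $(0,0)$ is automatic by $(4)$ of Proposition~\ref{scalar-prod-prop} and no uniform choice is needed.
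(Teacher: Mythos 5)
Your treatment of the first two assertions coincides with the paper's. The closedness argument is identical: for $x$ outside the algebraic sum one picks, for each $\alpha$ in the (infinite) support of $x$, an open subgroup $P_\alpha\in\cP(M_\alpha)$ with $x_\alpha\notin P_\alpha$ (separatedness), sets $P_\alpha=M_\alpha$ elsewhere, and observes that $x+\prod_\alpha P_\alpha$ misses ${\SUM}_\alpha M_\alpha$ --- this is exactly \eqref{closedness} in the paper, except that the paper does not even need your intermediate remark that points of the intersection would have infinite support (the displayed intersection is empty outright, since membership in the sum is contradicted coordinatewise). Deducing \eqref{complsum1} from completeness of the box product \eqref{sqprod} (via Lemma~\ref{BourbComplLemma}), closedness of the sum in it, and the fact that the sum carries the subspace topology, is also the paper's (largely implicit) route; your verification of the coproduct property in $\cM_k$ is harmless padding.

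Where you part company with the paper is \eqref{complsumcont}, and here the comparison is instructive. The paper's \emph{entire} proof of this part is the absorption computation you also give: any basic open ${\SUM}_\alpha P_\alpha$ is a sponge, because a finite-support element is absorbed after intersecting the finitely many relevant open ideals; by Proposition~\ref{scalar-prod-prop}$(1)$ this is separate continuity, and the paper stops there (``the assertion is clear''). You correctly point out that, by the paper's own Proposition~\ref{scalar-prod-prop}$(2)$, joint continuity additionally requires continuity at $(0,0)$, and your analysis of the obstruction is accurate: one needs a \emph{single} open ideal $I$ with $IV_\alpha\subseteq U_\alpha$ for all $\alpha$, whereas continuity of each $M_\alpha$ only supplies an $I_\alpha$ depending on $\alpha$, and infinitely many open ideals cannot be intersected. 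So, measured against the statement as written (general objects of $\cCMc_k$), your proposal is incomplete by your own admission --- the compact-ideal tube-lemma patch changes the hypotheses and appears nowhere in the paper --- but the step you cannot close is precisely the step the paper's written proof passes over in silence. Note that in every subsequent use of the lemma (part $1$ of Proposition~\ref{complsum2}, Remark~\ref{locconvdirsum}, where the $M_\alpha$ are lattices-based locally convex spaces) the modules are $k$-linearly topologized, and there continuity at $(0,0)$ is automatic by Proposition~\ref{scalar-prod-prop}$(4)$, since the basic opens ${\SUM}_\alpha P_\alpha$ are then $k$-submodules and absorb $I\times V$ trivially; this is exactly your second escape route, and in that setting your argument and the paper's agree and are complete.
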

\begin{proof}
To prove the first part of the statement, let 
$x = (x_\alpha)_\alpha \in \PRODsq_{\alpha \in A} M_\alpha \setminus {\SUM}_\alpha M_\alpha$.  Then $x_\alpha \neq 0$, for $\alpha$ in an infinite subset $A' \subset A$. For any $\alpha \in A'$, let $P_\alpha \in \cP(M_\alpha)$ be such that $x_\alpha \notin P_\alpha$; for $\alpha \in A \setminus A'$, let $P_\alpha = M_\alpha$. Then 
\beq \label{closedness} ( {\SUM}_\alpha M_\alpha ) \cap (x + \prod_{\alpha \in A} P_\alpha) = \emptyset \;.
\eeq
We are left to prove the last part of the statement, namely that, if all $M_\alpha$'s are continuous, then  any ${\SUM}_\alpha P_\alpha$, for $P_\alpha \in \cP(M_\alpha)$, is a sponge in ${\SUM}_\alpha M_\alpha$. But for any $x = (x_\alpha)_\alpha \in {\SUM}_\alpha M_\alpha$, $x_\alpha = 0$  for almost all $\alpha$'s, so the assertion is clear.
\end{proof} 
As in Example~\ref{counterbox} we only consider 
 the following situations:
 \begin{prop} \label{complsum2} Let $k \in \cCRu$. Notation as in Lemma~\ref{complsum}. Then  
\ben 
\item
Let all $M_\alpha$'s be objects of $\cCLMc_k$. Then  
${\SUM}^{\cCLMc_k}_\alpha M_\alpha = {\SUM}^{\cM_k}_\alpha M_\alpha$ is 
    the algebraic direct sum of $k$-modules ${\SUM}_\alpha  M^\for_\alpha$ equipped with the $k$-linear topology determined by the basis of open $k$-submodules $\{{\SUM}_\alpha P_\alpha\,|\, P_\alpha \in \cP(M_\alpha)\,\}$.
\item
Let all $M_\alpha$'s be objects of $\cCLMu_k$.  Then 
$M := {\SUM}^{\cCLMu_k}_\alpha M_\alpha$ is the completion of ${\SUM}^{\cLMu_k}_\alpha M_\alpha = 
({\SUM}^{\cM_k}_\alpha M_\alpha)^\un$ or, equivalently, the closure of the algebraic direct sum of $k$-modules 
${\SUM}_\alpha  M^\for_\alpha$ in $\PRODsqu_\alpha M_\alpha$, endowed with the subspace topology. 
Set-theoretically, $M$ is  the subset  of $\PROD_\alpha M_\alpha$  consisting of the $(x_\alpha)_\alpha$'s  
such that, for any $U((P_\alpha)_\alpha,J)$ as in \eqref{usquare1}, $x_\alpha \in P_\alpha + J M_\alpha$ for all but a finite number of $\alpha \in A$.  
 \een
\end{prop}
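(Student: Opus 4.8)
The plan is to handle the two cases separately, in each case reducing the identification of the coproduct to the universal property together with the completeness of box-products already at our disposal.

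For part $\mathit 1$, I would first note that $S := {\SUM}^{\cM_k}_\alpha M_\alpha$ already lies in $\cCLMc_k$: it is separated and $k$-linearly topologized by construction, while its completeness \eqref{complsum1} and continuity \eqref{complsumcont} are precisely what Lemma~\ref{complsum} provides. It then suffices to verify that $S$, with its canonical injections $\iota_\alpha : M_\alpha \to S$, enjoys the universal property of the coproduct \emph{inside} $\cCLMc_k$. Given $N \in \cCLMc_k$ and morphisms $f_\alpha : M_\alpha \to N$, the underlying modules supply a unique $k$-linear $f : S^\for \to N^\for$ with $f\circ\iota_\alpha = f_\alpha$; since $N$ is $k$-linearly topologized and elements of $S$ have finite support, for every $Q \in \cP_k(N)$ one has $f^{-1}(Q) \supseteq {\SUM}_\alpha f_\alpha^{-1}(Q) \in \cP_k(S)$, so $f$ is continuous. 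Hence $S$ represents the coproduct, which gives ${\SUM}^{\cCLMc_k}_\alpha M_\alpha = {\SUM}^{\cM_k}_\alpha M_\alpha$.

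For part $\mathit 2$, I would begin by computing the uniformization of $S = {\SUM}^{\cM_k}_\alpha M_\alpha$. By Proposition~\ref{leftadjunif} a basis of open submodules of $S^\un$ consists of the $Q + IS$ with $Q \in \cP_k(S)$ and $I \in \cP(k)$; writing $Q = {\SUM}_\alpha P_\alpha$ and $IS = {\SUM}_\alpha I M_\alpha$ these are exactly the ${\SUM}_\alpha(P_\alpha + I M_\alpha)$. The same gluing argument as above, now using that a uniform target $N$ absorbs $I M_\alpha$ via some $I \in \cP(k)$ with $IN \subseteq Q$, shows that $S^\un$ satisfies the coproduct universal property in $\cLMu_k$; this proves ${\SUM}^{\cLMu_k}_\alpha M_\alpha = (\,{\SUM}^{\cM_k}_\alpha M_\alpha)^\un$. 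By Remark~\ref{indlimadj} (and the explicit colimit recipe) the coproduct in $\cCLMu_k$ is the completion of this uniform module. To realize that completion concretely, I would embed $S$ into the uniform box-product $\PRODsqu_\alpha M_\alpha$, which lies in $\cCLMu_k$, being uniform and (by Lemma~\ref{BourbComplLemma}) complete. A basic open submodule of $\PRODsqu_\alpha M_\alpha$ has the form $\prod_\alpha Q_\alpha$ with $Q_\alpha \supseteq J M_\alpha$ for a common $J \in \cP(k)$; intersecting with $S$ yields ${\SUM}_\alpha Q_\alpha$, and these coincide with the submodules ${\SUM}_\alpha(P_\alpha + I M_\alpha)$ (take $Q_\alpha = P_\alpha + I M_\alpha$, $J = I$, and conversely $P_\alpha = Q_\alpha$, $I = J$). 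Thus the subspace topology on $S$ agrees with the topology of $S^\un$, and since $\PRODsqu_\alpha M_\alpha$ is complete and separated, the closure $\ol{S}$ is a closed submodule in which $S$ is dense, hence the completion of $S^\un$ and an object of $\cCLMu_k$. This gives $M = {\SUM}^{\cCLMu_k}_\alpha M_\alpha = \ol{S}$ inside $\PRODsqu_\alpha M_\alpha$.

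Finally, for the set-theoretic description I would invoke Remark~\ref{topgroup1}, which expresses $\ol{S} = \bigcap_U (S + U)$ as $U$ runs over the open submodules of $\PRODsqu_\alpha M_\alpha$. For $U = U((P_\alpha)_\alpha,J)$ as in \eqref{usquare1}, an element $x = (x_\alpha)_\alpha$ lies in $S + U$ if and only if $x_\alpha \in P_\alpha + J M_\alpha$ for all but finitely many $\alpha$ --- the finitely many exceptional indices being exactly those corrected by a finitely supported element of $S$ --- and intersecting over all $U$ yields the asserted characterization. The step I expect to be the main obstacle is precisely this last one together with the topology-matching in part $\mathit 2$: one must check carefully that the subspace topology inherited from $\PRODsqu_\alpha M_\alpha$ coincides with the uniformized one and that the closure there genuinely realizes the completion of $S^\un$, and the finite-support property of elements of $S$ is what makes both the topological comparison and the closure computation go through.
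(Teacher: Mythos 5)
Your proposal is correct and follows essentially the route the paper intends: the paper's own proof consists of citing Lemma~\ref{complsum} for part~$\mathit 1$ and declaring part~$\mathit 2$ ``clear,'' and your argument is exactly the natural expansion of that --- Lemma~\ref{complsum} plus the coproduct universal property for part~$\mathit 1$, and for part~$\mathit 2$ the identification of $S^\un$ via Proposition~\ref{leftadjunif}, the matching of its topology with the subspace topology in $\PRODsqu_\alpha M_\alpha$, and the closure formula $\ol{S}=\bigcap_U(S+U)$ from Remark~\ref{topgroup1} for the set-theoretic description. All the verifications you flag as delicate (the topology comparison, density, and the finite-support correction in the closure computation) are carried out correctly.
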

\begin{proof}  Part $\mathit 1$ follows from Lemma~\ref{complsum}. Part $\mathit 2$ is clear.
  \end{proof}   
  \begin{rmk} \label{locconvdirsum} Let $k \in \cRu$
and let all $M_\alpha$'s be objects of $\cLMc_k$.  
Then  
${\SUM}^{\cLMc_k}_\alpha M_\alpha$ is a vast generalization of the notion of \emph{locally convex direct sum} 
 of locally convex $K$-vector spaces, for $K$ a nonarchimedean field,  given in \cite[Chap. I, \S 5,  E1]{schneider}. To reconcile our definition with the one of \lc, we let  $k = K^\circ$ and assume  the $M_\alpha$'s to be locally convex $K$-vector spaces.  
 If the $M_\alpha$'s are separated, \cite[Chap. I, Lemma 7.8]{schneider} is a special case of 
 \eqref{complsumcont} and, more precisely, of part $\mathit 1$ of Proposition~\ref{complsum2}.
 \par 
 In strong contrast, when all $M_\alpha$'s be objects of $\cCLMu_k$, part $\mathit 2$ of Proposition~\ref{complsum2} illustrates the difference between ${\SUM}^{\cLMu_k}_\alpha M_\alpha$, whose underlying $k$-module is the algebraic direct sum 
 ${\SUM}_\alpha M^\for_\alpha$,  and its completion ${\SUM}^{\cCLMu_k}_\alpha M_\alpha$.
  \end{rmk}  
\begin{notation} 
\label{lcdirsum} For any set $A$, any ring $R$,  and any $M \in \Mod_R$ it is customary to indicate by $M^{(A)}$ (resp. $M^A$) the direct sum (resp. product) in $\Mod_R$ of a family indexed by $A$ of copies of $M$. For $k \in \cCRu$, we introduce a similar notation in our topological categories $\cCLMc_k$ and  $\cCLMu_k$, as follows.
\par
 For a family $\{M_\alpha\}_{\alpha \in A}$ in $\cCLMc_k$ (resp. in $\cCLMu_k$), we often shorten  
$$
{\SUM}_{\alpha \in A}^{\cCLMc_k}M_\alpha \;\;\mbox{(resp.} \; {\SUM}_{\alpha \in A}^{\cCLMu_k}M_\alpha\;\mbox{)} \;\;\mbox{into}\; \;{\SUM}_{\alpha \in A}^\co M_\alpha \;\mbox{(resp.}\; {\SUM}_{\alpha \in A}^\un M_\alpha \;\mbox{)}\;.
$$
Moreover, for any set $A$ and $M \in \cCLMc_k$ (resp. $\in \cCLMu_k$) we set 
\beq
\label{cudirsum}
 M^{(A,c)} = {\SUM}^\co_{\alpha \in A} M_\alpha \;\;\mbox{(resp.} \; M^{(A,\un)} = {\SUM}^\un_{\alpha \in A}M_\alpha 
 \;\mbox{),} \; M_\alpha = M \;, \;\forall \; \alpha \in A\;.
\eeq 
  We also set, for $M_\alpha = M \in \cCLMc_k$  (resp. $\in \cCLMu_k$),  $\forall \, \alpha \in A$,
         $$ M^A =  \PROD_{\alpha \in A} M_\alpha \in \cCLMc_k  \;\mbox{(resp.} \in \cCLMu_k \;\mbox{)}\;,$$
  $$M^{A,\square} =  \PRODsq_{\alpha \in A} M_\alpha  \; \in \cCLM_k \;,$$
\beq
\label{cuboxprod}
 M^{A,\square,\co}  =  \PRODsqcont_{\alpha \in A} M_\alpha \in \cCLMc_k   
 \;\;\mbox{(resp.} \;M^{A,\square, \un} =  \PRODsqu_{\alpha \in A} M_\alpha \in
 \cCLMu_k \;\mbox{)}\;.
 \eeq
 If $B$ is another set, we have 
  $$
 (M^{(A,c)})^{(B,c)} = M^{(A \times B,c)} \;\;\mbox{(resp.} \; (M^{(A,\un)})^{(B,\un)} = M^{(A \times B,\un)}  \;\mbox{)}\;.
 $$
 $$
 (M^A)^B = M^{A\times B} \;,\; (M^{A,\square})^{B,\square} = M^{A\times B,\square} \;,
 $$
  $$
(M^{A,\square,\co})^{B,\square,\co} = M^{A\times B,\square,\co}\;,\; (M^{A,\square,\un})^{B,\square,\un} = M^{A\times B,\square,\un} \;.
 $$
\end{notation}
 \begin{rmk} \label{dirsumdescr} Let $k \in \cCRu$ and $M \in \cCLMu_k$. We  describe  explicitly the uniform  box products (resp. direct sums)  appearing in \eqref{cuboxprod} (resp. in  \eqref{cudirsum}) and defined in \eqref{unifsqprod}  (resp. in part $\mathit 2$ of  Proposition~\ref{complsum2}). 
\ben \item For  any set $A$,  $M^{A,\square,\un}$ (resp. $M^{(A,\un)}$) is the set of functions $x : A \to M$, $x = (x_\alpha)_{\alpha \in A}$ (resp.  such that, for any $J \in \cP(k)$, and any family $(P_\alpha)_{\alpha \in A} \in  \cP(M)^A$, $x_\alpha \in P_\alpha +J M$ for all but a finite number of $\alpha$). 
A basis of open $k$-submodules of $M^{A,\square,\un}$ (resp. $M^{(A,\un)}$) consists of the family 
$$\{U((P_\alpha)_\alpha,J)\}_{\cP(M)^A \times \cP(k)}
$$ 
where $U((P_\alpha)_\alpha,J)$ is defined in \eqref{usquare1} (resp. 
$$\l\{U((P_\alpha)_\alpha,J) \cap M^{(A,\un)}  = \l(\PROD_\alpha (P_\alpha + JM)\r) \cap M^{(A,\un)}  \r\}_{\cP(M)^A \times \cP(k)} \;\mbox{)}\;.
$$ 
\item 
If $M$ is pseudocanonical, so that $\{ \ol{IM}\}_{I \in \cP(k)}$ is a basis of open $k$-submodules of $M$,  then, in the description of the previous point 1 for any given $J \in \cP(k)$ we may take  $P_\alpha =  \ol{JM}$ for any $\alpha$, 
so that 
$$ U((P_\alpha)_\alpha,J) =  (\ol{JM})^A = \ol{JM^{A,\square,\un}}
$$ 
and $M^{(A,\un)}$ is the set of $(x_\alpha)_\alpha \in M^A$   such that, 
  for any $J \in \cP(k)$,  $x_\alpha \in \cl_{M}(JM)$ for almost all  $\alpha$'s.  Moreover, 
\beq \label{dirsumdescr1}
(\ol{JM})^A \cap M^{(A,\un)} =  \cl_{M^{A,\square,\un}}(J M^{(A,\un)}) =  
   \cl_{M^{(A,\un)}}(J M^{(A,\un)})\;.
\eeq
Therefore, if $M$ is pseudocanonical,  $M^{A,\square,\un}$ (resp. $M^{(A,\un)}$) is pseudocanonical, as well.
\item
If  $M$ is pseudocanonical 
 $M^{A,\square,\un}$ (resp. $M^{(A,\un)}$) is the set of functions $x : A \to M$ (resp. which tend to $0$ along the filter of cofinite subsets of $A$) equipped with the topology of uniform convergence on $A$. 
Equivalently,   we may  identify  $M^{(A,\un)}$ with the completion of the $k$-module of functions $x: A \to M$ with finite support in the topology of uniform convergence on $A$. 
 \item The discussion of the previous point $2$ of this remark applies in particular to $M=k$ which is always pseudocanonical. So, for any (small) set $A$, $k^{(A,\un)}$ is pseudocanonical. When $k$ is clop, this also applies to $I \in \cP(k)$, equipped with the subspace topology of $I \subset k$,  because in this case $\{\ol{JI}\}_{J \in \cP(k)}$ is a filter basis of $\cP(I)$, \ie  $I$ is a pseudocanonical  or, equivalently, a clop $k$-module (see 3 of Remark~\ref{barrpscan}).     
 \een
\end{rmk}  
\begin{prop} \label{pscansumbox} Let $A$ be any set, $k \in  \cCRu$, $M \in \cCLMpscan_k$. Then~: \ben
\item
$$M^{A,\square, \un} = \nwhat{M^A} = (M^A)^\pscan \;,$$
\item
$$ M^{(A,\un)} = \nwhat{M^{(A)}} = \what{(M^{(A)})^\naive}\;,$$
\een 
and the natural morphism $M^{(A,\un)} \map{} M^{A,\square, \un}$ is a closed embedding.
\end{prop}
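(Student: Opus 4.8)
The plan is to reduce the entire statement to one topological identity in the ordinary product $M^A=\PROD_{\alpha\in A}M$: for every $J\in\cP(k)$ the closure of the submodule $JM^A$ for the product topology equals $(\ol{JM})^A=\PROD_{\alpha\in A}\ol{JM}$. Granting this, Part~1 becomes a comparison of two explicitly given bases of open submodules of the same underlying $k$-module, while Part~2 together with the closed embedding is essentially a reformulation of Proposition~\ref{complsum2}. Throughout I use that $M^A$, being a product (hence a limit) of objects of $\cCLMu_k$ over $k\in\cCRu$, again lies in $\cCLMu_k$ by Proposition~\ref{limproj} and subsection~\ref{counterbox}, so that the pseudocanonicalisation functor of Proposition~\ref{pseudocan3} may be applied to it.

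First I would establish the closure lemma: for $M\in\cCLMu_k$ and $J\in\cP(k)$,
\[
\cl_{M^A}(JM^A)=(\ol{JM})^A .
\]
The inclusion $\subseteq$ is formal, since each projection $\pi_\alpha:M^A\to M$ is continuous, so $(\ol{JM})^A=\bigcap_\alpha\pi_\alpha^{-1}(\ol{JM})$ is closed and visibly contains $JM^A$. For $\supseteq$, fix $(y_\alpha)_\alpha$ with every $y_\alpha\in\ol{JM}$ and a basic product neighbourhood of it cut out by finitely many distinct coordinates $\alpha_1,\dots,\alpha_n$ and opens $P_{\alpha_i}\in\cP(M)$; using density of $JM$ in $\ol{JM}$ choose $w_i\in JM$ with $w_i-y_{\alpha_i}\in P_{\alpha_i}$, write $w_i=\sum_l j_{il}m_{il}$ with $j_{il}\in J$, and assemble the element of $JM^A$ supported on $\{\alpha_1,\dots,\alpha_n\}$ whose $\alpha_i$-component is $w_i$ and which vanishes elsewhere; it lies in the prescribed neighbourhood. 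This step is the only one requiring genuine work and is the main obstacle: the elements of $JM^A$ have their coordinates coupled through a common finite set of scalars, whereas the target families take independent values in $\ol{JM}$, and it is precisely the fact that the product topology constrains only finitely many coordinates at a time that lets one decouple them.

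Next I would prove Part~1. By Proposition~\ref{pseudocan3} the object $(M^A)^\pscan$ is $M^A$ retopologised with basis $\{\cl_{M^A}(JM^A)\}_{J\in\cP(k)}$, which by the lemma equals $\{(\ol{JM})^A\}_{J\in\cP(k)}$. Since $M$ is pseudocanonical, Remark~\ref{dirsumdescr}(2) identifies this very family as a basis of open submodules of the uniform box product $M^{A,\square,\un}$: indeed each $(\ol{JM})^A$ is the set $U((\ol{JM})_\alpha,J)$ of \eqref{usquare1} (here pseudocanonicity is used to know $\ol{JM}$ is open, so this is a legitimate basic open), and conversely every $U((P_\alpha)_\alpha,J)$ contains $(\ol{JM})^A$ because each $P_\alpha+JM$ is open, hence closed, and contains $\ol{JM}$. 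Thus $M^{A,\square,\un}$ and $(M^A)^\pscan$ carry the same underlying $k$-module $M^A$ and the same basis of open submodules, so they coincide; in particular $(M^A)^\pscan$ is complete, and it is this common object that $\nwhat{M^A}$ denotes, yielding $M^{A,\square,\un}=\nwhat{M^A}=(M^A)^\pscan$.

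Finally, Part~2 and the embedding follow from Proposition~\ref{complsum2}(2): by construction $M^{(A,\un)}$ is the closure of the algebraic direct sum $M^{(A)}$ inside $M^{A,\square,\un}$ with the subspace topology, equivalently the separated completion of $M^{(A)}$ for its direct-sum (naive) topology $(M^{(A)})^\naive$. The first description exhibits $M^{(A,\un)}\map{}M^{A,\square,\un}$ as a closed embedding whose underlying set map extends the algebraic injection $M^{(A)}\hookrightarrow M^A$ and has as image the subset of $M^A$ described in Proposition~\ref{complsum2}(2) and Remark~\ref{dirsumdescr}(2)--(3); the second description is exactly the asserted $M^{(A,\un)}=\what{(M^{(A)})^\naive}=\nwhat{M^{(A)}}$. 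Beyond matching these definitions nothing further is needed, completeness being guaranteed as usual by Lemma~\ref{BourbComplLemma}.
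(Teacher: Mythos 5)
Your closure lemma $\cl_{M^A}(JM^A)=(\ol{JM})^A$ (closure in the product topology) is correct, and it is precisely the substance behind the paper's terse proof, which merely invokes the basis description following \eqref{usquare1} together with Remark~\ref{dirsumdescr}(2); your decoupling argument (only finitely many coordinates are constrained at a time) is exactly what makes that invocation honest. Likewise your identification $M^{A,\square,\un}=(M^A)^\pscan$ via comparison of bases, and your treatment of Part~2 and of the closed embedding through Proposition~\ref{complsum2}(2), follow the paper's route, only more explicitly.

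There is, however, a genuine gap at the sentence ``it is this common object that $\nwhat{M^A}$ denotes.'' By Definition~\ref{naivedef}, $\nwhat{M^A}=\what{(M^A)^\naive}=\limit_{J\in\cP(k)} M^A/JM^A$ is the completion of $M^A$ for the topology with basis $\{JM^A\}_J$ --- \emph{without} closures --- whereas $(M^A)^\pscan$ carries the basis $\{\cl_{M^A}(JM^A)\}_J$ and is already complete. Your lemma identifies the closures of the basic opens of the two topologies, but that does not identify the completions: the canonical map $\nwhat{M^A}\to (M^A)^\pscan$ is surjective but need not be injective or open, because an element of $JM^A$ is a finite sum $\sum_l j_l x_l$ with scalars \emph{common to all coordinates}, while elements of $(\ol{JM})^A$ have independent coordinates. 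For general $k\in\cCRu$ this step can actually fail: take $k$ the completion of $\F_2[t_1,t_2,\dots]$ with respect to the ideals $I_j=(t_i\,:\,i\geq j)$ (the ring of Examples~\ref{clop-exa}, which lies in $\cCRouclop$ but is not op), $M=k$, $A=\N$. The coherent family $x_j=(0,\dots,0,t_j,t_{j+1},\dots)$ satisfies $x_j\in I_j^A$ and $x_j-x_{j'}=\sum_{l=j}^{j'-1}t_l\delta_l\in I_jk^A$, hence defines an element of $\nwhat{k^A}$ mapping to $0$ in $(k^A)^\pscan$; it is nonzero because $(t_1,t_2,\dots)\notin I_1k^A$, since its coordinates would have to lie in a single finitely generated ideal $(f_1,\dots,f_n)$ with the $f_l$ of zero constant term, which is impossible modulo $I_j$ for $j-1>n$. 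The same conflation occurs in your Part~2, where you call the direct-sum topology (basis $(\ol{JM})^{(A)}$, by pseudocanonicity) the ``naive'' topology (basis $JM^{(A)}=(JM)^{(A)}$); taking $A$ a singleton, the $\nwhat{\ }$-clause there already asserts $\nwhat{M}=M$ for every $M\in\cCLMpscan_k$, which the example above contradicts. The identification is fine when $k\in\cCRoufop$: for finitely generated open $J$ one has $JM^A=(JM)^A=\cl_{M^A}(JM^A)$ by \cite[Rmk. 8.3.3 (iv)]{GR}, so the naive and pseudocanonical topologies coincide (compare part~2 of Theorem~\ref{barrpscan3}). Since the paper's own proof (``easily deduced'') glosses exactly this point, your proposal reproduces the paper's argument including its thinnest step; but as a proof it is incomplete there, and under the stated hypothesis $k\in\cCRu$ the $\nwhat{\ }$-equalities require either an additional hypothesis of this kind or a separate argument that the canonical surjection $\what{(M^A)^\naive}\to(M^A)^\pscan$ is bijective and open.
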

\begin{proof} $\mathit 1.$ Is easily deduced from the description of  $\prod^{\square,\un}_{\alpha \in A} M_\alpha$ following  \eqref{unifsqprod}, taking into account the fact that 
$M_\alpha = M$ is 
pseudocanonical for any $\alpha \in A$. 
\par $\mathit 2.$ Follows directly from the description of ${\SUM}^{\cCLMu_k}_\alpha M_\alpha$ in 
$\mathit 2$ of Proposition~\ref{complsum2}, again taking into account the fact that $M_\alpha = M$  
for any $\alpha \in A$. 
\par The last part of the statement is clear. 
\end{proof}
\begin{rmk} \label{smallness} Let $k$ be in $\cCRu$. 
It follows from Lemma~\ref{complsum} that for any family $\{M_\alpha\}_{\alpha \in A}$ in $\cCLMc_k$
$$
\Hom_{\cLM_k} (k,  {\SUM}^\co_\alpha M_\alpha) = {\SUM}^{\Mod_k}_\alpha M_\alpha^\for \;.
$$
So, the object $k$ of $\cCLMc_k$ is \emph{small} in the sense of \cite[Defn. 2.1.1 (a)]{schneiders}. Notice that $k$ is not a small object of $\cCLMu_k$, in general.
\end{rmk}
\begin{exa}  \label{comparBBK} Let $K$ be a non-archimedean non trivially valued field, and let $k = K^\circ$. The category of locally convex $K$-vector spaces \cite{schneider} is the full subcategory of $K$-modules of $\cLMc_k$.
Let $\Ban_K$ (resp. $\Ban^{\leq 1}_K$) be the category of $K$-Banach spaces and bounded (resp. contractive) morphisms \cite[Appendix A]{BBK}.  Then $\Ban_K$ 
is a full subcategory of $\cCLMc_k$. The map 
$$(M,||~||) \longmapsto \beta(M,||~||) := \{x \in M \, :\, ||x|| \leq  1\}$$
induces a fully faithful functor ``unit ball''
$$\beta: \Ban^{\leq 1}_K \longrightarrow \cCLMu_k\;.$$  
Let $\{M_\alpha\}_{\alpha \in A}$ be a family of locally convex $K$-vector spaces. 
\ben
\item
${\SUM}_{\alpha \in A}^\co M_\alpha = {\SUM}_{\alpha \in A}^{\cLMc_k} M_\alpha$ coincides with the locally convex direct sum of \cite{schneider}. In particular, part 1 of Proposition~\ref{complsum2}  is a generalization of  Lemma 7.8 of Chapter I of  \lc.  
\item $\Ban_K$ is quasi-abelian but has no infinite product or coproduct of non-zero objects (Lemma A.26 of \cite{BBK}).
\item $\Ban^{\leq 1}_K$ is quasi-abelian bicomplete (Appendix A.4 of \lc) and the unit ball functor $\beta$ commutes with limits and colimits.
\item Assume $M_\alpha \in \Ban_K$ for any $\alpha \in A$. Then the product and coproduct of $\{M_\alpha\}_{\alpha \in A}$ in $\LMc_k$ exist and are complete 
but, unless $M_\alpha = (0)$ $\forall \,\forall \alpha \in A$, they  are not in the essential image of  $\Ban_K$ in $\LMc_k$.
\een
\end{exa}
\end{subsection}
\begin{subsection}{Strict inductive limits} \label{strictindlim}   
We slightly generalize the notion of strict inductive limit of  \cite[E2]{schneider}.
\begin{defn} \label{strictindlimdef}   
A filtered inductive system $(M_\alpha)_{\alpha \in A}$ in $\cLMc_k$ 
and its colimit in $\cLMc_k$, 
are  \emph{topologically strict} 
(resp. \emph{strictly closed}, resp.  \emph{strictly open}) if, for any $\alpha \leq \beta$, the morphism $\pi_{\alpha,\beta}:M_\alpha \to M_\beta$ is a topological embedding 
(resp. a closed embedding, resp. an open embedding) in $\cLMc_k$.  
An  inductive system as above is \emph{countable} if there exists a cofinal increasing map $(\N,\leq) \to (A,\leq)$. 
\end{defn}
\begin{prop} \label{indclosed} 
Let $\{V_n\}_{n \in \N}$  be a countable topologically strict  inductive system in  $\cLMc_k$ 
and let 
$$
V :=  \colimit^{\cLMc_k}_n V_n  \;.
$$
Then,     
\hfill \ben
\item the canonical $\cLMc_k$-morphisms $j_n : V_n \hookrightarrow V$ are topological embeddings; 
\item if $V_n$ is separated for any $n$, then $V$ is separated;
\item if the morphisms $V_n \to V_{n+1}$ are closed embeddings for any $n$, then so are 
the morphisms $j_n : V_n \hookrightarrow V$ for any $n$;
\item if all $V_n$ are complete, $V$ is complete, so that it coincides with
$$ \colimit^{\cCLMc_k}_n V_n   \; .
$$ 
\een
\end{prop}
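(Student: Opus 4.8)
The plan is to derive all four assertions from a single \emph{extension lemma} and then treat the statements in turn, the completeness claim being by far the most delicate. Replacing the index set by a cofinal copy of $\N$, I may assume $V_0\subseteq V_1\subseteq\cdots$ with each inclusion $\pi_{m-1,m}:V_{m-1}\hookrightarrow V_m$ a topological embedding and $V=\bigcup_n V_n$ as a $k$-module. By Remark~\ref{indlimadj} and the explicit description of colimits preceding Proposition~\ref{induprop}, an open submodule of $V=\cLMc_k\hbox{-}\colimit_n V_n$ is a $k$-submodule $U\subseteq V$ which is a sponge and satisfies $U\cap V_n\in\cP(V_n)$ for every $n$. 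The engine is: \textbf{Extension Lemma.} \emph{For every $n$ and every $U_n\in\cP(V_n)$ there is an open submodule $U$ of $V$ with $U\cap V_n=U_n$.} I would prove it by a two-sided construction: for $m<n$ put $U_m:=U_n\cap V_m\in\cP(V_m)$ (legitimate since $\pi_{m,n}$ is an embedding); for $m>n$ choose $U_m\in\cP(V_m)$ inductively with $U_m\cap V_{m-1}=U_{m-1}$, using that $\pi_{m-1,m}$ is a topological embedding. Setting $U:=\bigcup_m U_m$, a downward induction gives $U_m\cap V_n=U_n$, so $U\cap V_m=U_m\in\cP(V_m)$ for all $m$; and $U$ is a sponge because any $x\in V$ lies in some $V_m$, where $U_m$ is open in the \emph{continuous} module $V_m$, hence a sponge by Proposition~\ref{scalar-prod-prop}.

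Given the lemma, $(1)$ and $(2)$ are immediate. Each $j_n$ is continuous, so the subspace topology on $V_n$ is coarser than its own; the lemma realizes every $U_n\in\cP(V_n)$ as $U\cap V_n$, giving the reverse comparison, whence $j_n$ is a topological embedding. For $(2)$, if $0\neq x\in V$ then $x\in V_n$ for some $n$; separatedness of $V_n$ yields $U_n\in\cP(V_n)$ with $x\notin U_n$, and its extension $U$ satisfies $x\notin U$, so $\bigcap_{U\in\cP(V)}U=\{0\}$ and $V$ is separated by Remark~\ref{topgroup1}.

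For $(3)$ I would use $\ol{V_n}=\bigcap_{U\in\cP(V)}(V_n+U)$ (Remark~\ref{topgroup1}) and exclude any $x\in V\setminus V_n$. Pick $p>n$ with $x\in V_p$; composing the closed embeddings $V_n\hookrightarrow\cdots\hookrightarrow V_p$ makes $V_n$ closed in $V_p$, so there is $U_p\in\cP(V_p)$ with $x\notin V_n+U_p$. Extending as in the lemma to $U_q\in\cP(V_q)$ with $U_q\cap V_{q-1}=U_{q-1}$, the \emph{modular law} (valid since $V_n\subseteq V_q$) gives $(V_n+U_q)\cap V_{q-1}=V_n+U_{q-1}$; as $x\in V_{q-1}$ and $x\notin V_n+U_{q-1}$, one gets $x\notin V_n+U_q$ at every stage. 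Hence $U:=\bigcup_q U_q$ is open in $V$ with $x\notin V_n+U$, so $x\notin\ol{V_n}$ and $j_n$ is a closed embedding.

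Finally $(4)$. When all $V_n$ are complete they are separated, and a complete subspace of the separated space $V_{n+1}$ is closed, so the hypothesis of $(3)$ holds automatically and each $j_n$ is a closed embedding; $V$ is separated by $(2)$. It remains to show every Cauchy filter $\cF$ on $V$ (for the uniformity of Definition~\ref{compl-topgroup}) converges. For each $U\in\cP(V)$ the Cauchy condition selects a unique coset $z_U+U\in\cF$, compatible under refinement. The crux, and the \emph{main obstacle}, is the \textbf{Absorption claim}: some $V_m$ satisfies $V_m+U\in\cF$ for all $U\in\cP(V)$. I would argue by contradiction: if for each $m$ there were $W_m\in\cP(V)$ with $V_m+W_m\notin\cF$, I would build one open $O=\bigcup_j O_j\subseteq V$ with $O\subseteq V_m+W_m$ for all $m$, whence $V_m+O\subseteq V_m+W_m\notin\cF$ for all $m$ by upward closure of $\cF$ — contradicting that the Cauchy coset $z_O+O\in\cF$ forces $V_m+O\in\cF$ for the $m$ with $z_O\in V_m$. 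Here the modular law produces the \emph{finite} (by countability) open submodules $T_j:=\bigcap_{m<j}\bigl(V_m+(W_m\cap V_j)\bigr)\in\cP(V_j)$ satisfying $T_j\cap V_{j-1}=T_{j-1}$, so one may choose $O_j\in\cP(V_j)$ with $O_j\subseteq T_j$ and $O_j\cap V_{j-1}=O_{j-1}$; then $O_j\subseteq V_m+W_m$ for all $m$ (trivially for $m\geq j$, via $O_j\subseteq T_j$ for $m<j$).

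Granting absorption, the cosets $(z_U+U)\cap V_m$ (nonempty exactly because $V_m$ absorbs $\cF$) form a Cauchy filter on $V_m$, which converges to some $x\in V_m$ by completeness. Since every $U\in\cP(V)$ is closed (open submodules are closed, Remark~\ref{topgroup1}), this forces $x\in z_U+U$, i.e.\ $x+U\in\cF$, for all $U$; hence $\cF\to x$ and $V$ is complete. As the $\cCLMc_k$-colimit is the completion of $V$, completeness of $V$ identifies it with $\cCLMc_k\hbox{-}\colimit_n V_n$.
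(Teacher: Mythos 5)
Your proposal is correct and follows essentially the same route as the paper, which renders Schneider's argument (\cite[Prop.~5.5, Lemma~7.9]{schneider}): your Extension Lemma is the paper's inductive construction of open submodules, your treatment of $(1)$--$(3)$ matches it up to bookkeeping (modular law versus the paper's direct trace computation), and your Absorption claim with the auxiliary open $O=\bigcup_j O_j$ is precisely the paper's first step in $(4)$ with $L=\sum_n V_n\cap L_n$, the only difference being that you phrase completeness via Cauchy filters and the trace filter on $V_m$ where the paper uses Cauchy nets and the auxiliary net $v(i,L)$. This net-versus-filter translation is cosmetic, so no substantive comparison is needed.
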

\begin{proof}  
The reader might follow  word-by-word  
the arguments of \cite[Prop. 5.5 and Lemma 7.9]{schneider}. 
For her/his convenience, we prefer to render the arguments of \lc in our notation. 
\par \smallskip $\mathit 1.$ Fix an $n \in \N$; we first show that $j_n$ is an embedding. So, let  $L_n \in \cP_k(V_n)$. Since 
$V_m \hookrightarrow V_{m+1}$ is an  embedding, for any $m$, we inductively determine  $L_{n+m} \in \cP_k(V_{n+m})$ such that $L_{n+m+1} \cap V_{n+m} = L_{n+m}$, for any $m \in \N$. So, $L:= \bigcup_m  L_{n+m} = \sum_m  L_{n+m}$ is a $k$-submodule of $V$, and it is open because $L \cap V_{n+m} = L_{n+m}$, for any $m \in \N$. 
\par \smallskip $\mathit 2.$ We need to show that for any nonzero element $v \in V$ there exists $L \in \cP(V)$ such that $v \notin L$. The construction of $L$ is as in the previous point (see \cite[Prop. 5.5 $ii)$]{schneider}).
\par \smallskip $\mathit 3.$  We now show that $j_n$ is closed. So, let $v \in V \setminus V_n$. We have $v \in V_m$, for
some $m >n$. Since $V_n$ is closed in $V_m$ by assumption, we find  
$L_m \in \cP_k(V_m)$ such that $ (v + L_m) \cap V_n = \emptyset$.
  Applying the previous inductive construction again, there is  $L \in \cP_k(V)$ such that 
  $L_m = V_m \cap L$.   It follows
that $(v +L) \cap V_n = ((v +L) \cap V_m)  \cap V_n = (v +L_m) \cap V_n =\emptyset$.  
\par \smallskip $\mathit 4.$ 
Let $\{v_i\}_{i\in I}$ be a Cauchy net in $V$. In a first step we show that there is an
$m \in \N$ such that for any $i \in I$ and any $L \in \cP(V)$ there is a $j \geq i$ such
that $v_j \in V_m + L$. Assume that, for any $h \in \N$, there is  $L_h \in \cP(V)$  
and an $i(h) \in I$ such that
$$v_j \notin V_h + L_h \;\;,\;\; \forall \; j \geq i(h) \;. $$
We certainly may assume that the $L_1 \supseteq L_2 \supseteq \dots$ are decreasing. Consider 
$$ L := \sum_{n \in \N} V_n \cap L_n \in  \cP(V) \;.$$
\par We claim that $L \subseteq V_h +L_h$, for any $h \in \N$. 
  It suffices to show that $V_n \cap L_n \subseteq V_h + L_h$ for any $n$ and $h$.
  But if $n \leq h$ then $V_n \subseteq V_h$, and if $n \geq h$ then $L_n \subseteq L_h$. 
 It follows that $V_h + L \subseteq V_h + L_h$ for any $h \in \N$. 
   Choose now an index $i \in I$ such that $v_{i_1} - v_{i_2} \in L$ for any $i_1, i_2 \geq i$.
Letting $h \in \N$ be such that $v_i \in V_h$ we arrive 
 at the contradiction that $v_j \in V_h + L \subseteq V_h + L_h$ for any $j \geq i$. This proves the claim and therefore the existence of $m \in \N$ as described above.
 \par \smallskip
We introduce the set $I \times \cP(V)$ directed by the partial order $(i,L) \leq (j,P)$ if $i \leq j$ and $P \subseteq L$. Fix a
natural number $m$ with the property which we have established above. For any
pair $(i,L) \in I \times \cP(V)$ we then have an index $i'(L) \geq i$ and an element $v(i,L) \in V_m$
  such that $v(i,L) - v_{i'(L)} \in L$.
  \par \medskip
In the next step we show that  $\{v(i,L)\}_{(i,L) \in I \times \cP(V)}$ in fact is a Cauchy net in $V_m$. 
  Any
$L_m \in \cP(V_m)$  is of the form $L_m =  V_m \cap L$, for some $L \in \cP(V)$. Fix an
 $i \in I$  such that $v_h - v_\ell \in L$ for any $h,\ell \geq i$.
 Consider now any two pairs $(h,P), (\ell,Q) \geq (i,L)$.
We have $v(h,P) - v_{h'(P)} \in P$ and $v(\ell,Q) - v_{\ell'(Q)} \in Q$. Since $h'(P) \geq h \geq i$, 
 $\ell'(Q) \geq \ell \geq i$, and $P +Q \subseteq L$ we obtain
 $v(h,P) - v(\ell,Q) = (v(h,P) - v_{h'(P)}) + (v_{h'(P)} - v_{\ell'(Q)}) + (v_{\ell'(Q)} - v(\ell,Q))
\in P + L + Q \subseteq L$.  
Since $V_m$ by assumption is complete the Cauchy net 
$\{v(i,L)\}_{(i,L) \in I \times \cP(V)}$
  converges to some element $v \in V_m$.
    \par \medskip
 We conclude the proof by showing that the original Cauchy
net $\{v_i\}_{i \in I}$ also converges to $v$. So, let us pick $L \in \cP(V)$; 
we find a pair $(h,P) \in I \times \cP(V)$ such that 
\ben
\item $v(\ell,Q) -v \in L$ for any $(\ell,Q) \geq (h,P)$, and
\item $v_{h_1} - v_{h_2} \in L$ for any $h_1, h_2 \geq h$.
\een  
As a special case of $(1)$ we have $v(h,P \cap L) - v \in L$. 
 Since, by construction, $v(h,P \cap L) - v_{h' (P \cap L)} \in P \cap L$
it follows that $v_{h'(P \cap L)} - v \in L$. 
  Using $(2)$ we
finally obtain that 
  $v_\ell - v \in L$, for any $\ell \geq h$.  
\end{proof}
\begin{rmk} \label{indopen} \hfill \ben
\item 
Let $(M_\alpha)_\alpha$ be a  strictly open inductive system
in $\cCLMc_k$ and let $M := \colimit^{\cLMc_k}_\alpha M_\alpha$. Then  
 $M$ is complete and the canonical morphisms $j_\alpha : M_\alpha \to M$ are open embeddings in $\cCLMc_k$.
So, any $M_\alpha$ may be identified with its image in $M$; a $k$-submodule $P \subset M$ is open if and only if $P \cap M_\alpha$ is open in $M_\alpha$, for any $\alpha$.  
\item 
Consider the strictly open inductive system in $\cCLMu_{\Z_p}$
$$
(\Z_p,p\cdot) := \Z_p \map{p \cdot} \Z_p \map{p \cdot} \Z_p \map{p \cdot} \dots \; ,
$$
where $p \cdot : \Z_p \to \Z_p$ is multiplication by $p$.  
Then  
$$
\colimit^{\cLMc_{\Z_p}} \, (\Z_p,p\cdot) = (\Q_p, \mbox{$p$-adic})
$$
while 
$$
\colimit^{\cLMu_{\Z_p}} \, (\Z_p,p\cdot) = (\Q_p, \mbox{trivial}) \;.
$$
So
$$\colimit^{\cCLMc_{\Z_p}} \, (\Z_p,p\cdot) = \colimit^{\cLMc_{\Z_p}} \, (\Z_p,p\cdot) = (\Q_p, \mbox{$p$-adic})\;,$$
as predicted by part $1$ of this Remark, while 
$$\colimit^{\cCLMu_{\Z_p}}  (\Z_p,p\cdot) = (0) \;.
$$
This shows that parts 1, 2 and 3 of Proposition~\ref{indclosed} fail if 
  $\{V_n\}_{n \in \N}$ is a strictly closed (or even strictly open) inductive system in  $\cCLMu_k$ but the colimit is taken 
  in $\cLMu_k$. 
\een
\end{rmk}

\end{subsection}

\begin{subsection}{Structure of continuous complete modules}
In this subsection $k$ is an object of  $\cCRu$.
\begin{prop} \label{prodiscrete} 
 Any object $M$ in $\cCLM_{k}$ is a projective limit in $\cLM_{k}$
 of a cofiltered projective system of discrete $k$-modules and surjections
 \beq \label{prodiscrete-c}
 M  = \limit_{P \in \cP_k(M)} M/P
 \;.
 \eeq 
 \ben
\item $M$ is an object of $\cCLMc_{k}$ iff, for any $P \in \cP_k(M)$, 
$M/P$ is a filtered inductive limit in $\cLM_{k}$
 \beq \label{prodiscrete-cc0}
M/P = \colimit_{I \in \cP(k)} (M/P)_{[I]}
 \;,
 \eeq
  where $(M/P)_{[I]}$ is defined in Remark~\ref{discrete-mod} and is a  discrete $k/I$-module. 
So, $M$ is an object of $\cCLMc_{k}$ if and only if
 \beq \label{prodiscrete-cc}
M = \limit_{P \in \cP_k(M)}\colimit_{I \in \cP(k)} (M/P)_{[I]}
 \eeq
 where limits and colimits are taken in $\cLM_k$. 
\item 
 $M$ is an object of $\cCLMu_{k}$ iff  
 there exists a filter basis $\cP$ of $\cP_k(M)$ and an increasing function $\cP \to \cP(k)$, $P \mapsto I_P$, such that 
 $\{I_P\}_{P\in \cP}$ is a basis of open ideals of $k$, and $I_P M \subset P$ \ie $M/P$ in \eqref{prodiscrete-c} is a  discrete $k/I_P$-module.   So, $M$ is an object of $\cCLMu_{k}$ if and only if there is $I: \cP \map{} \cP(k)$, $P \longmapsto I_P$, as before, such that 
 \beq \label{prodiscrete-uu}
M = \limit_{P \in \cP}  M/P 
 \eeq
 where $M/P$ is a discrete $k/I_P$-module and the limit  is taken in $\cLM_k$. 
 \item For any $I \in \cP(k)$, let $M_I$  be a discrete  faithful $k/I$-module and, for any $J \leq I$, let 
$\pi_{I,J}: M_J \to M_I$ be a surjective  morphism of modules over the morphism of rings $k/J \to k/I$. Let  
\beq \label{canrepr}
M :=  \limit_{I \in \cP(k)} M_I \;,
\eeq
where the limit is taken in $\cLM_k$. Then the kernel  of the projection $\pi_I: M \longrightarrow M_I$ is $\ol{IM}$ and  $M \in \cCLMpscan_k$. Conversely, any $M \in \cCLMpscan_k$ admits a representation of the form \eqref{canrepr} where, for any $I \in \cP(k)$, $M_I = M/\ol{IM}$.  
 \item If $k$ is in $\cCRuclop$ then $M$ is an object of $\cCLMuclop$ if and only if it admits a representation 
 \eqref{canrepr}. 
 \een \end{prop}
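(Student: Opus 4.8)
The plan is to extract every assertion from the completion description of Remark~\ref{compl-adj}. Since $M\in\cCLM_k$ is complete and linearly topologized, its open $k$-submodules $\cP_k(M)$ form a cofinal family of open subgroups, so the canonical map $M\to\what{M}$ is an isomorphism and $M=\limit_{P\in\cP_k(M)}M/P$ with each $M/P$ discrete and all transition maps surjective; by Proposition~\ref{limproj} this limit is computed in $\cLM_k$ and is compatible with the forgetful functor to $\Mod_k$. This is the preliminary assertion, and the three parts will all be tested against it.

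For part $1$, since $M$ is linearly topologized, Proposition~\ref{scalar-prod-prop}$(4)$ gives $M\in\cCLMc_k$ iff $\mu_M$ is separately continuous in its first variable, i.e.\ iff each $\mu_M(-,m):k\to M$ is continuous. As $M$ carries the weak topology of the surjections $\pi_P$, this holds iff for every $P$ the map $k\to M/P$, $a\mapsto a\,\pi_P(m)$, is continuous for all $m$, that is iff $\mu_{M/P}$ is continuous in its first variable. Because $M/P$ is discrete, Remark~\ref{discrete-mod} rewrites this as $M/P=\bigcup_I(M/P)_{[I]}=\colimit_{I\in\cP(k)}(M/P)_{[I]}$, the filtered union being the colimit in $\cLM_k$; this is \eqref{prodiscrete-cc0}, and substituting into the limit yields \eqref{prodiscrete-cc}.

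For part $2$, Proposition~\ref{scalar-prod-prop}$(5)$ gives $M\in\cCLMu_k$ iff $M$ is bounded, i.e.\ iff for every $P\in\cP_k(M)$ there is $I\in\cP(k)$ with $IM\subseteq P$. I would organize these choices into the cofiltered set $\Gamma:=\{(P,I):IM\subseteq P\}$, directed by reverse inclusion in both coordinates; its projection to $\cP_k(M)$ is cofinal, so $M=\limit_{(P,I)\in\Gamma}M/P$, while its projection $(P,I)\mapsto I_P:=I$ is order preserving with image all of $\cP(k)$, hence a basis of open ideals, and $I_PM\subseteq P$ makes $M/P$ a discrete $k/I_P$-module (the slight abuse being that $I_P$ is genuinely a function on $\Gamma$, which I would make explicit by indexing through $\Gamma$). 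Conversely, given such a representation, each $M/P$ is discrete, hence complete, and annihilated by the open ideal $I_P$; by Proposition~\ref{limproj} and Proposition~\ref{boundedness1} the limit is then complete and bounded, so lies in $\cCLMu_k$.

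For part $3$ I would first check that $M=\limit_I M_I$ is complete (Proposition~\ref{limproj}) and uniform: $I$ annihilates $M_I$, so $IM\subseteq\ker\pi_I$, and as $\{\ker\pi_I\}$ is a basis, $M$ is bounded. Since $\ker\pi_I$ is closed, $\ol{IM}\subseteq\ker\pi_I$. The heart of the matter, and the step I expect to be the main obstacle, is the reverse inclusion $\ker\pi_I\subseteq\ol{IM}$, equivalently $\pi_J(x)\in IM_J$ for all $J$ whenever $\pi_I(x)=0$: this is exactly where the faithfulness of the $M_I$ must be brought to bear, to identify $M_I$ with the reduction $M_J/IM_J$ for $J$ cofinally fine. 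Once $\ker\pi_I=\ol{IM}$ is established, Definition~\ref{pseudocan} gives that $\{\ol{IM}\}_I$ is a basis and $M\in\cCLMpscan_k$. For the converse, if $M\in\cCLMpscan_k$ then $\{\ol{IM}\}$ is by definition a basis of open submodules, so $M_I:=M/\ol{IM}$ is a discrete $k/I$-module, the reductions $\pi_{I,J}$ (for $J\subseteq I$) are surjective, and $M=\limit_I M_I$ has the required form with $\ker\pi_I=\ol{IM}$. Finally part $4$ is immediate: for $k\in\cCRuclop$ Remark~\ref{pseudocan2} identifies $\cCLMuclop_k$ with $\cCLMpscan_k$, so part $3$ characterizes it as the category of modules admitting a representation \eqref{canrepr}.
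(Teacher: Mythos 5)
Your preliminary reduction and parts 1, 2 and 4 are sound and follow the same route that the paper compresses into the single sentence ``the first two assertions follow from Remark~\ref{discrete-mod}''; the $\Gamma$-indexing you introduce in part 2 is a reasonable way to make the statement's loosely worded function $P\mapsto I_P$ precise, and your part 4 is the paper's own argument via Remark~\ref{pseudocan2}.

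The genuine gap is in part 3, at precisely the step you flag as ``the heart of the matter'' and then leave open: you never prove $\ker\pi_I\subseteq\ol{IM}$, and the mechanism you propose --- that faithfulness of the $M_I$ identifies $M_I$ with $M_J/IM_J$ for $J$ cofinally fine --- is not available. Faithfulness plus surjectivity of the transition maps does not imply $\ker\pi_{I,J}=(I/J)M_J$: take $k=\Z_p$ and, for $I=p^n\Z_p$, set $M_n:=\Z/p^n\oplus\Z/p^{\lceil n/2\rceil}$ with componentwise reduction as transition maps. Each $M_n$ is a faithful $\Z/p^n$-module and each transition map is surjective, yet $\ker\pi_{m,n}=p^m\Z/p^n\oplus p^{\lceil m/2\rceil}\Z/p^{\lceil n/2\rceil}\supsetneq p^mM_n$, so the identification $M_n\cong M_{n'}/p^nM_{n'}$ fails for every finer level $n'$; in the limit $M\cong\Z_p^2$ (which carries the $p$-adic topology, since $p^nM\subseteq\ker\pi_n\subseteq p^{\lceil n/2\rceil}M$) one finds $\ker\pi_n=p^n\Z_p\oplus p^{\lceil n/2\rceil}\Z_p\supsetneq\ol{p^nM}=p^nM$ for $n\geq 2$. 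What the kernel identity really requires is the exactness of $0\to(I/J)M_J\to M_J\to M_I\to 0$ for all $J\leq I$, and this is exactly what the paper's proof writes down (itself without justification from the stated hypotheses) and then exploits: taking the limit over $J\leq I$ of these sequences gives $\ker\pi_I=\limit_{J\leq I}(I/J)M_J$ by left exactness of limits, and part 2 of Lemma~\ref{clopchar0} identifies $\limit_{J\leq I}(I/J)M_J$ with $\ol{IM}$. So your instinct about where the difficulty sits is validated --- the paper's entire proof of part 3 is this two-line limit computation --- but your proposed route through faithfulness would fail, and without the extra exactness input the kernel identity (hence your derivation of $M\in\cCLMpscan_k$, which routes through it) does not hold; note that in the example above $M$ is nevertheless pseudocanonical, so it is the kernel statement, not the final membership claim, that your counexample-sensitive step concerns. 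Your converse direction with $M_I:=M/\ol{IM}$ agrees with the paper and is fine.
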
 
\begin{proof} 
The first two assertions  follow  from Remark~\ref{discrete-mod}. For $\mathit 3$, let $M$ be as in \eqref{canrepr}. Then, for any open ideal $J \leq I$, we have the exact sequences of $k/J$-modules
$$
0 \longrightarrow (I/J) M_J  \longrightarrow M_J  \longrightarrow M_I  \longrightarrow 0 \;.
$$
Taking limits for $J \leq I$ we get the first equality in  
$$
\ker \pi_I =  \limit_{J \leq I} (I/J)  M_J = \ol{IM}
$$
where the second equality follows from part $\mathit 2$ of 
Lemma~\ref{clopchar0}. Conversely, if $M \in \cCLMpscan_k$, \eqref{canrepr} holds with $M_I = M/\ol{IM}$. 
Finally, the last assertion follows from the previous point together with 3 of Remark~\ref{barrpscan}.
\end{proof}  

\begin{rmk}
\label{limitrepn1} 
  Not all uniform modules are of the form \eqref{canrepr}. For example, if $k$ is discrete, then any $M$ of the form 
 \eqref{canrepr} is discrete, as well.  The direct product $M$ in $\cLM_k$ of an infinite family of copies of $k$ is a non-discrete object $M$ of $\cCLMu_k$ which is therefore not of the form \eqref{canrepr}. Of course any $M$ is a projective limit of the form \eqref{prodiscrete-c}.  
 \end{rmk}
\begin{defn}\label{pro-flat} An  object  $M$ of $\cCLMu_k$   is  \emph{pro-flat}  if it is pseudocanonical, \ie of the form     
\eqref{canrepr},  where, for any open ideal $I$ of $k$, $M_I$ is a  flat  $k/I$-module. 
\end{defn} 
\begin{cor} \label{hom-prodiscrete} 
Let $M, N$ be objects of 
$\cCLMc_k$ and let $\cP_k(M)$, $\cP_k(N)$ be 
fundamental systems of open $k$-submodules in $M$ and $N$, respectively. Then, in $\Mod_k$,
\beq \label{modhom-c}
\Hom_{\cCLMc_k}(M,N) = 
\limit_{Q \in \cP_k(N)}  \colimit_{P \in \cP_k(M)} \limit_{I \in \cP(k)} 
\Hom_{k/I}((M/P)_{[I]},(N/Q)_{[I]}) 
\eeq
as a $k^\for$-module. Notice that 
$$ 
\Hom_k(M/P,N/Q)=\limit_{I \in \cP(k)} 
\Hom_{k/I}((M/P)_{[I]},(N/Q)_{[I]}) \;, 
$$ 
where for any $I\subseteq J$ 
we have inclusions $(M/P)_{[J]}\subseteq (M/P)_{[I]}$ and 
the morphism of the projective system is the 
restriction map 
$$
\Hom_{k/I}((M/P)_{[I]},(N/Q)_{[I]})\longrightarrow \Hom_{k/J}((M/P)_{[J]},(N/Q)_{[J]})\;. 
$$
\endgraf 
Let $M, N$ be objects of 
$\cCLMu_k$ and let $\cP_k(M)$, $\cP_k(N)$ be 
fundamental systems of open $k$-submodules in $M$ and $N$, respectively. Then 
\beq \label{modhom-u} \begin{split}
\Hom_{\cCLMu_k}&(M,N)   =  
\limit_{Q \in \cP_k(N)}  \colimit_{P \in \cP_k(M)} 
\Hom_k(M/P,N/Q) = \\ &\limit_{Q \in \cP_k(N)}  \colimit_{P \in \cP_k(M)} 
\Hom_{k/J_P}(M/P,(N/Q)_{[J_P]}) \;,\end{split}
\eeq 
as a $k^\for$-module, where  $J_P \in \cP(k)$ is the annihilator of $M/P$ in $k$. 
\end{cor}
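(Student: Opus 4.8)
The plan is to reduce everything to the structural description of Proposition~\ref{prodiscrete}, which presents $N$ as the cofiltered limit $N = \limit_{Q \in \cP_k(N)} N/Q$ of its discrete quotients, and then to peel off the three (co)limits one at a time. First I would use that the covariant functor $\Hom_{\cCLMc_k}(M,-)$ sends limits to limits: since by Proposition~\ref{limproj} the limit $N = \limit_Q N/Q$ is computed on underlying modules with the weak topology of the projections, a continuous $k$-linear map $M \to N$ is exactly a compatible family $(M \to N/Q)_Q$, so
\beqa
\Hom_{\cCLMc_k}(M,N) = \limit_{Q \in \cP_k(N)} \Hom_{\cCLMc_k}(M, N/Q)\;.
\eeqa
This accounts for the outermost $\limit_Q$.

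Next I would compute $\Hom_{\cCLMc_k}(M, N/Q)$ for a fixed $Q$. Because $N/Q$ is discrete, continuity of a $k$-linear map $f\colon M \to N/Q$ is equivalent to $\Ker f$ being open, so $f$ factors through $M/P$ for some $P \in \cP_k(M)$; conversely every $k$-linear map $M/P \to N/Q$ is automatically continuous. As $\cP_k(M)$ is a filter basis (hence the index is directed by reverse inclusion), this identifies
\beqa
\Hom_{\cCLMc_k}(M, N/Q) = \colimit_{P \in \cP_k(M)} \Hom_k(M/P, N/Q)\;,
\eeqa
a filtered colimit, giving the middle $\colimit_P$. It then remains to establish the auxiliary formula for $\Hom_k(M/P, N/Q)$ between two discrete continuous modules. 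By Remark~\ref{discrete-mod} one has $M/P = \colimit_{I \in \cP(k)} (M/P)_{[I]}$, so $\Hom_k(M/P, N/Q) = \limit_I \Hom_k((M/P)_{[I]}, N/Q)$; and since $I$ annihilates $(M/P)_{[I]}$, the image of any such map is annihilated by $I$ and thus lands in $(N/Q)_{[I]}$, whence $\Hom_k((M/P)_{[I]}, N/Q) = \Hom_{k/I}((M/P)_{[I]}, (N/Q)_{[I]})$, the latter because $k$-linear maps of $k/I$-modules are $k/I$-linear. The transition maps, for $I \subseteq J$, are the evident restrictions along $(M/P)_{[J]} \subseteq (M/P)_{[I]}$. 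Substituting back yields the continuous formula \eqref{modhom-c}.

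For the uniform case the same three reductions apply verbatim, the only change being the last step. Here $M \in \cCLMu_k$, so each $M/P$ is a discrete bounded quotient of $M$ and is therefore a $k/J_P$-module for the open ideal $J_P = \Ann_k(M/P)$ (Remark~\ref{discrete-mod}); equivalently $M/P = (M/P)_{[J_P]}$. Consequently the projective system over $I$ is eventually constant and collapses: every $k$-linear map $M/P \to N/Q$ has image killed by $J_P$, so $\Hom_k(M/P, N/Q) = \Hom_{k/J_P}(M/P, (N/Q)_{[J_P]})$, which gives \eqref{modhom-u}. I expect the only delicate points to be bookkeeping: verifying that each displayed step is an isomorphism of $k^\for$-modules (not merely of underlying sets) and that the directions of the index posets are correctly matched between each colimit and each limit. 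The genuinely substantive inputs are Proposition~\ref{prodiscrete}, Proposition~\ref{limproj} and Remark~\ref{discrete-mod}, all already available, so I anticipate no real obstacle beyond this careful matching.
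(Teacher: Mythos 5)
Your proof is correct and follows exactly the route the paper intends: the corollary is stated without proof as an immediate consequence of Proposition~\ref{prodiscrete}, Proposition~\ref{limproj} and Remark~\ref{discrete-mod}, and your three-step unwinding (the outer limit over $Q$ from completeness of $N$ and the universal property of the weak-topology limit, the filtered colimit over $P$ from discreteness of $N/Q$ forcing an open kernel, and the inner limit over $I$ via $(M/P)=\colimit_I (M/P)_{[I]}$, collapsing at the open annihilator $J_P$ in the uniform case) is precisely that omitted argument. The bookkeeping points you flag (injective transition maps making the colimit a union, $k$-linearity of all identifications, openness of $J_P$ because $M$ is bounded) all check out.
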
 
\end{subsection} 
\end{section} 
\begin{section}{Categories of topological modules}  \label{cattop}
We keep the assumptions on $k$ made in the previous two sections, but we will indicate further requirements according to 
our needs. 
\begin{subsection}{Quasi-abelian categories of topological modules} \label{quasab}
\begin{rmk} \label{coker-coim} 
Let  $\ast\in\{ {\varnothing, \rm c,u} \}$ and let $f$ be a morphism in one of the categories $\cLM^\ast_k$. 
Then $\Ker(f)$ (resp. $\Coker(f)$) is
$\Ker(f^\for)$ (resp. $\Coker(f^\for)$) 
endowed with the subspace (resp. quotient) topology of the source (resp. of the target).  
\endgraf 
For a morphism $f$ in   the category $\cSLM^\ast_k$, 
$\Ker(f)$ is calculated in $\cLM^\ast_k$, while  
$\Coker(f)$ is obtained from the cokernel of $f$  in $\cLM^\ast_k$
by application of the separation functor
$(-)^\sep$.   For a morphism $f:M \to N$ in   $\cCLM^\ast_k$ 
we have that $\Ker(f)=\Ker(f^\for)$ with the induced topology, 
$\Coker(f)=\what{\Coker(f^\for)}$ (separated completion of $\Coker(f^\for)$ with the quotient topology),
$\Im(f)=\ol{\Im(f^\for)}$ (closure in $N$ with the induced topology, 
which is isomorphic to the completion of $\Im(f^\for)$ with the topology induced by $N$), 
$\Coim(f)=\what{\Coim(f^\for)}$ (completion of $\Coim(f^\for)$ equipped with the quotient topology). 
\end{rmk}
\begin{rmk} \label{coker-coim} Let again $\ast\in\{ {\varnothing, \rm c,u} \}$ and 
let $f:M\to N$ be a morphism in $\cLM^\ast_k$ (resp. in $\cSLM^\ast_k$). 
Then $f$  is a kernel if and only if it is 
an embedding (resp. a closed embedding)
while $f$ is a cokernel if and only if it is surjective and $N$ has the quotient topology 
(that is, $f$ is surjective and open). 
\endgraf  
A kernel 
in the category $\cCLM^\ast_k$ is a closed embedding. Let $f:M \longrightarrow N$ be a morphism of $\cCLM^{\ast}_k$ and assume $M \in \cCLM^{\omega,\ast}_k$. Then, 
by Corollary~\ref{quotclop2},   the cokernel of $f$ in $\cLM^{\ast}_k$ is complete, hence coincides with  the cokernel of $f$ in $\cCLM^{\ast}_k$. So, the canonical morphism $\tilde{f}: \Coker (f) \map{} \Im (f)$ in 
$\cCLM^{\ast}_k$ is bijective.  
 \end{rmk} 
The category $\cLM_k$,
as well as its subcategories $\cLMc_k$
and $\cLMu_k$, 
are  additive ($k$-linear, in fact) categories which are bicomplete (\ie  have inductive and projective limits), 
but  are not in general abelian. We have however
\begin{thm} \label{quasiabelian}  Let $k \in \cRu$.
The categories  $\cLM_k$, $\cLMc_k$  
and $\cLMu_k$  are (bicomplete and) quasi-abelian.
\end{thm}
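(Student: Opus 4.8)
The plan is to verify directly the two axioms of Definition~\ref{quasiabdef}, since bicompleteness of $\cLM_k$, $\cLMc_k$ and $\cLMu_k$ is already in hand: limits exist and are computed compatibly with the forgetful functor to $\Mod_k$ by Proposition~\ref{limproj}, and all colimits exist by Remark~\ref{indlimadj}. Fix $\cC$ to be one of $\cLM_k,\cLMc_k,\cLMu_k$. The first axiom — existence of kernels and cokernels — is exactly the content of Remark~\ref{coker-coim}, which moreover tells us that $\Ker(f)$ carries the subspace topology and $\Coker(f)$ the quotient topology, that a morphism is a strict monomorphism (a kernel) iff it is a topological embedding, and that it is a strict epimorphism (a cokernel) iff it is an open surjection. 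Thus everything reduces to two stability statements: the pull-back of an open surjection along an arbitrary morphism is again an open surjection, and the push-out of a topological embedding along an arbitrary morphism is again a topological embedding; granting these, Proposition~\ref{quasiabchar} yields the quasi-abelian structure.

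For stability of cokernels under pull-back I would use that pull-backs, being limits, are computed identically in all three categories and on underlying modules (Proposition~\ref{limproj}): the pull-back of $v : B \to C$ and $h : C' \to C$ is the submodule $P = \{(b,c') : v(b)=h(c')\}$ of the product $B \times C'$ with the subspace topology, and projection $\pi : P \to C'$. Since $v$ is surjective, $\pi$ is surjective on underlying modules. To see $\pi$ is open it suffices to test on a basis: for $U \in \cP_k(B)$ and $V \in \cP_k(C')$ one computes $\pi\big((U\times V)\cap P\big) = V \cap h^{-1}(v(U))$, which is open in $C'$ because $v(U)$ is open ($v$ being open) and $h$ is continuous. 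Hence $\pi$ is an open surjection, i.e. a cokernel.

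For stability of kernels under push-out the crucial observation is that a \emph{finite} push-out involves no separation or completion: the coproduct of two objects is their algebraic direct sum with the product topology, which lies in $\cC$ already, and by Remark~\ref{coker-coim} the cokernel is the naive quotient carrying the quotient topology, which again lies in $\cC$. So for $u : A \to B$ an embedding and $g : A \to A'$ arbitrary, the push-out is $Q = (B \oplus A')/K$ with $K = \{(u(a),-g(a)) : a \in A\}$ and the quotient topology, \emph{computed identically} in $\cLM_k$, $\cLMc_k$ and $\cLMu_k$. Writing $q : B\oplus A' \to Q$ (an open surjection) and $j' : A' \to Q$, $a' \mapsto q(0,a')$, injectivity of $j'$ on underlying modules follows from injectivity of $u$. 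To prove $j'$ is an embedding I would show that every $V \in \cP_k(A')$ contains $j'^{-1}(W)$ for a suitable open $W \subseteq Q$: choose $O \in \cP_k(A)$ with $g(O)\subseteq V$ (continuity of $g$), then $P_0 \in \cP_k(B)$ with $u^{-1}(P_0)\subseteq O$ (the embedding property of $u$ together with the linear topology of $B$), and set $W = q(P_0 \oplus V)$. If $(0,a') = (p,w)+(u(a),-g(a))$ with $p \in P_0$, $w \in V$, then the first coordinate forces $u(a)=-p \in P_0$, hence $a \in O$, hence $g(a)\in V$, and the second coordinate gives $a'=w-g(a)\in V$; thus $j'^{-1}(W)\subseteq V$, which is exactly what is needed.

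Combining the two steps gives both stability properties, and hence the theorem. The point I would emphasize as the main subtlety is the third paragraph: one must recognize that finite push-outs in the subcategories $\cLMc_k$ and $\cLMu_k$ do \emph{not} trigger the reflectors $(-)^\co$, $(-)^\un$ (these intervene only for infinite colimits, cf. Remark~\ref{indlimadj}), so a single quotient computation serves all three cases simultaneously; and one must exploit the interplay between the continuity of $g$ and the embedding property of $u$ to pass from control of $u(a)$ to control of $g(a)$, which is precisely what makes the preimage $j'^{-1}(W)$ land inside $V$.
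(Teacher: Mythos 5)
Your proof is correct and follows essentially the same route as the paper's: both verify the stability axioms of Definition~\ref{quasiabdef} directly in $\cLM_k$, and your key steps are the paper's computations up to notation — your $W=q(P_0\oplus V)$ with the containment $j'^{-1}(W)\subseteq V$ is exactly the paper's argument with $((P\oplus Q)+R)/R$, and your formula $\pi\bigl((U\times V)\cap P\bigr)=V\cap h^{-1}(v(U))$ is the paper's $Q'\cap f^{-1}(p(P))$ obtained there via the restricted pull-back square. Your closing observation that finite push-outs and pull-backs in $\cLMc_k$ and $\cLMu_k$ are computed as in $\cLM_k$ (no reflector intervenes) is precisely the paper's concluding remark that these full subcategories are stable under finite limits and colimits.
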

\begin{proof}  By Proposition~\ref{exactcheck}, 
we need to prove that kernels, \ie  embeddings,  are stable under push-out, 
and cokernels, \ie open surjections, are stable under pull-back. 
 \endgraf 
Let $i: N\hookrightarrow  M$ be a  
embedding in $\cLM_k$
and let $f: N\to N'$ be any morphism in the category. 
We construct the push-out square 
\beq \label{pushoutmono}
\begin{tikzcd}[column sep=2.5em, row sep=2.5em] 
N \arrow[hook]{r}{i} \arrow{d}[']{f}
& M  \arrow{d}{f'} 
\\ 
N' \arrow{r}[']{i'}  
& M'
\end{tikzcd}
\eeq
We need to show that $i': N' \hookrightarrow  M'$ is also an embedding.
The square \eqref{pushoutmono} is cocartesian in the category $\Mod_k$, as well. 
We describe 
$i'$ in  $\Mod_k$, and then specify its topology.
The push-out $M':=N'\oplus_NM$ is canonically isomorphic to the cokernel 
 of the morphism $(f,-i): N \to N'\oplus M$ induced by $f$ and $-i$. 
 Let $R = \Im(f,-i)$. 
 Then $M'$ is just the quotient of $N'\oplus M$ modulo $R$, with the quotient topology. 
 A basis of open $k$-submodules of $M'$ consists of the submodules 
 $$ ((P \oplus Q) + R)/R   
 $$
 for $P \in \cP(N')$ and $Q \in \cP(M)$. Now, 
 the morphism $i':N'\to M'$ is injective because it is a monomorphism in 
 the abelian category $\Mod_k$. We have to show that $N'$ carries the weak topology of  $i'$. This is true if, for any $P \in \cP_k(N')$, there exists $Q \in \cP_k(M)$ such that 
 $$
 (i')^{-1} (((P \oplus Q) + R)/R ) \subset P\;.
 $$
 In fact, $N$ carries the weak topology of  $i$ so that there exists $Q \in \cP_k(M)$ such that 
 $f^{-1}(P) \supset i^{-1}(Q)$. In other words, for any $y \in N$, 
 $$
 i(y) \in Q \Rightarrow f(y) \in P \;.
 $$
So, suppose $x \in N'$ is such that $i'(x) \in ((P \oplus Q) + R)/R$. This means that there exists $p \in P$, $q \in Q$, and $y \in N$ such that
$$
(x,0) = (p,q) + (f(y),-i(y))
$$
in $N' \oplus M$. Then $q= i(y)$ implies $f(y) \in P$ and therefore $x = p + f(y) \in P$. 
We conclude that $i'$ is an embedding. 
 \endgraf 
 Let now $p: M\twoheadrightarrow N$ be a cokernel in $\LM_k$, so in particular a surjective map, 
 and let $f: N'\to N$ be any morphism in that category. 
 We construct the pull-back square 
$$ 
\begin{tikzcd}[column sep=2.5em, row sep=2.5em] 
M' \arrow{r}{p'} \arrow{d}[']{f'}
& N'  \arrow{d}{f} 
\\ 
M \arrow[two heads]{r}[']{p}  
& N
\end{tikzcd}
$$ 
Then the   pull-back $M'=M\times_NN'$ is just the pull-back in the 
 abelian category $\Mod_k$ and $p$ is surjective, so that the map $p':M'\to N'$ 
 is surjective. 
 Since $p$ is a cokernel in the category  $\LM_k$, 
 it is an open map. It follows that also $p'$ is open. 
 In fact let $P\times_NQ'$ be an open submodule of $M'$, where $P\in\cP(M)$ and  $Q'\in\cP(N')$. 
 Then $Q=p(P)\in\cP(N)$ and we may restrict the previous pull-back square over $Q$ to get the open sub-diagram, also a pull-back square, 
 $$ 
\begin{tikzcd}[column sep=2.5em, row sep=2.5em] 
P \times_Qf^{-1}(Q) \arrow{r}{p'_{Q}} \arrow{d}[']{f'_{Q}}
& f^{-1}(Q) \arrow{d}{f_{Q}} 
\\ 
P \arrow[two heads]{r}[']{p_{Q}}  
& Q
\end{tikzcd}
$$ 
where $P \times_Qf^{-1}(Q)$ contains the open $k$-submodule 
$$(P \times_Qf^{-1}(Q)) \cap (P\times_NQ' )= P\times_Q(Q' \cap f^{-1}(Q))
$$  
 whose image via $p'_Q$ is $Q'\cap f^{-1}(Q)$. The latter coincides with the image $p'(P\times_NQ')$ in $N'$, so that we have proven that the latter is  an open submodule of $N'$. 
 Now, since $p'$ is surjective and open, $N'$ carries the quotient topology of $M'$, 
 and $p'$ is a strict epi. 
 \endgraf 
 Since the categories $\cLMc_k$ and $\cLMu_k$ are full subcategories of $\cLM_k$ 
 stable under finite limits and colimits, the result follows. 
 \end{proof}

\begin{thm} \label{quasiabelian-sep} Let $k \in \cSRu$. 
The categories  $\cSLM_k$, $\cSLMc_k$  
and $\cSLMu_k$ are (bicomplete and) quasi-abelian. 
\end{thm}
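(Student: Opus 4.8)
The plan is to verify Definition~\ref{quasiabdef} directly, running parallel to the proof of Theorem~\ref{quasiabelian} and treating only the extra separated-completion bookkeeping as new. First, bicompleteness: limits in $\cSLM_k$, $\cSLMc_k$, $\cSLMu_k$ exist and are computed as in $\cLM_k$, $\cLMc_k$, $\cLMu_k$ (these subcategories are stable under limits by Proposition~\ref{limproj}), while colimits are obtained by applying the separation functor $(-)^\sep$, which is left adjoint to the inclusions, to the corresponding colimits of the ambient categories. For the quasi-abelian axioms, kernels and cokernels exist by Remark~\ref{coker-coim}: a kernel is computed in $\cLM^\ast_k$ and, the target being separated, is a \emph{closed} embedding, whereas a cokernel is the separation $(-)^\sep$ of the cokernel in $\cLM^\ast_k$ and is an open surjection. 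It then remains to prove that closed embeddings are stable under push-out and open surjections under pull-back.

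The pull-back statement is the easy half. Since $\cSLM_k$ is stable under limits in $\cLM_k$, any pull-back square of separated modules is computed in $\cLM_k$; if $p\colon M\twoheadrightarrow N$ is an open surjection and $f\colon N'\to N$ is arbitrary, then by Theorem~\ref{quasiabelian} the pulled-back map $p'\colon M'\to N'$ is again an open surjection, and $M'=M\times_N N'$ is separated as a submodule of the separated module $M\times N'$. Hence $p'$ is a cokernel in $\cSLM_k$, so cokernels are stable under pull-back.

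The push-out of a kernel is the hard part, and closedness of the resulting image is the main obstacle. Let $i\colon N\hookrightarrow M$ be a closed embedding in $\cSLM_k$ and $f\colon N\to N'$ any morphism; write $M'_0=(N'\oplus M)/R$ with $R=\Im(f,-i)$ for the push-out in $\cLM_k$, let $q\colon N'\oplus M\to M'_0$ be the quotient map and $i'_0\colon N'\to M'_0$, $x\mapsto[(x,0)]$. By Theorem~\ref{quasiabelian}, $i'_0$ is an embedding. The push-out in $\cSLM_k$ is $M'=(M'_0)^\sep$ with separation map $p\colon M'_0\to M'$, and I would set $i'=p\circ i'_0$. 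Injectivity of $i'$ follows because $N'$ is separated, so $i'_0(N')\cap\ol{\{0_{M'_0}\}}=\{0\}$; that $i'$ is an embedding follows because $\ker p=\ol{\{0_{M'_0}\}}$ lies inside every open submodule, whence $(i')^{-1}(p(U))=(i'_0)^{-1}(U)$ for $U\in\cP(M'_0)$, and these range over a basis of $\cP(N')$ since $i'_0$ is an embedding. The crux is closedness of $p(N')$, equivalently of $q^{-1}\bigl(p^{-1}(p(N'))\bigr)$ in $N'\oplus M$. Here one uses that $i$ is closed (automatic, as $M$ is separated): computing $q^{-1}(p^{-1}(p(N')))=(N'\oplus 0)+\ol{R}$, and combining $(N'\oplus 0)+R=N'\oplus i(N)$ with the elementary inclusion $A+\ol{B}\subseteq\ol{A+B}$, I would obtain
\beq
(N'\oplus 0)+\ol{R}\ \subseteq\ \ol{(N'\oplus 0)+R}\ =\ \ol{N'\oplus i(N)}\ =\ N'\oplus i(N)\ \subseteq\ (N'\oplus 0)+\ol{R}\;,
\eeq
the middle equality holding because $i(N)$ is closed. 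Thus $q^{-1}(p^{-1}(p(N')))=N'\oplus i(N)$ is closed, so $p(N')$ is closed in $M'$ and $i'$ is a closed embedding, i.e.\ a kernel in $\cSLM_k$.

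This establishes that $\cSLM_k$ is quasi-abelian. Finally, $\cSLMc_k$ and $\cSLMu_k$ are full subcategories of $\cSLM_k$ stable under finite limits (Proposition~\ref{limproj}) and finite colimits (by Proposition~\ref{stab-sep-mod}, $(-)^\sep$ preserves the continuous, resp.\ uniform, structure), so exactly as at the end of the proof of Theorem~\ref{quasiabelian} the quasi-abelian structure restricts to them, completing the proof.
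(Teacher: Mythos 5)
Your proof is correct, and its skeleton matches the paper's: both halves reduce to Theorem~\ref{quasiabelian}, the pull-back half verbatim (limits in $\cSLM_k$ are computed in $\cLM_k$, and separatedness of $M\times_NN'$ is automatic), and the push-out half via the identification of the $\cSLM_k$-push-out with $(N'\oplus M)/\ol{R}$ --- which you reach by applying the reflector $(-)^\sep$ to the $\cLM_k$-push-out $M'_0$, while the paper describes it directly as a cokernel in $\cSLM_k$. Within that common skeleton you differ in two ways. First, where the paper proves injectivity of $i'$ by a net argument (a net $(x_\alpha)$ in $N$ with $i(x_\alpha)\to 0$ and $f(x_\alpha)\to y$, forcing $y=0$ since $i$ is an embedding and $N'$ is separated) and then redoes the embedding computation with the closed $R$, you derive both injectivity and the embedding property formally from the already-established fact that $i'_0$ is an embedding, combined with $\Ker(p)=\ol{\{0_{M'_0}\}}$ being contained in every open submodule; this recycles Theorem~\ref{quasiabelian} rather than repeating it, and is cleaner. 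Second, and more substantially: the paper's proof stops at ``$i'$ is an embedding'', but kernels (equivalently strict monomorphisms) in $\cSLM_k$ are the \emph{closed} embeddings, so stability of kernels under push-out genuinely requires closedness of $i'(N')$, which the paper does not address. Your chain
$$(N'\oplus 0)+\ol{R}\subseteq\ol{(N'\oplus 0)+R}=\ol{N'\oplus i(N)}=N'\oplus i(N)\subseteq (N'\oplus 0)+\ol{R}\;,$$
using $(N'\oplus 0)+R=N'\oplus i(N)$, the identity $\ol{A\times B}=\ol{A}\times\ol{B}$, and closedness of $i(N)$ in $M$, is correct and supplies exactly this missing step, so your write-up is in fact more complete than the paper's on the one point where completeness matters; the concluding restriction to $\cSLMc_k$ and $\cSLMu_k$ via stability under finite limits and colimits (Propositions~\ref{limproj} and \ref{stab-sep-mod}) agrees with the paper.
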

\begin{proof} The only difference with respect to the proof of the previous theorem is in the first part, where $i': N' \hookrightarrow  M'$ is shown to be an embedding. We only discuss the case of $\cSLM_k$.
The square \eqref{pushoutmono} is not any more cocartesian in the category $\Mod_k$.
The push-out $M':=N'\oplus_NM$ is   the cokernel 
 of the morphism $(f,-i): N \to N'\oplus M$ induced by $f$ and $-i$, in the category  $\cSLM_k$. 
So, $M'$ is set-theoretically the quotient of  $N'\oplus M$ modulo  the \emph{closure} $R$ of the set-theoretic image of the morphism $(f,-i)$, and carries the quotient topology.
The morphism $i':N'\to M'$ is injective because if   $i'(y) = 0$, then $(y,0) \in N' \oplus M$ belongs to $R$. So, 
there exists a net $(x_\alpha)_\alpha$ in $N$ such that $i(x_\alpha)_\alpha$ converges to $0$ in $M$, while $(f(x_\alpha))_\alpha$  converges to $y$. Since the topology of $N$ is its subspace topology in $M$, we deduce that  $(x_\alpha)_\alpha$ converges to $0$ in $N$, so that $y=0$. 
 Then as in the proof of the previous theorem $M'$ is just the quotient of $N'\oplus M$ modulo $R$, with the quotient topology. 
 A basis of open $k$-submodules of $M'$ consists of the submodules 
 $$ ((P \oplus Q) + R)/R   
 $$
 for $P \in \cP(N')$ and $Q \in \cP(M)$. From the injectivity of $i'$, we still have 
 $$
 (i')^{-1} (((P \oplus Q) + R)/R ) = P\;.
 $$
 So,   $i'$ is an embedding. 
\end{proof}
 \begin{rmk} \label{strictmonopushout} It is instructive to observe that the proof of stability of kernels by push-outs in the case of Theorem~\ref{quasiabelian} 
makes crucial use of the fact that in diagram~\ref{pushoutmono} 
the topology of $N$ is the relative topology as a subspace of $M$.  In the following the topology of the kernel of a morphism $f$ will in general be finer than its relative topology as a submodule of the source of $f$, but we will take advantage of other features of our categories.
\end{rmk}

In the subcategories of complete modules the main problem is the following: 
in general cokernels are not open surjective maps, 
and the previous argument does not work. 
But if the topology admits a countable basis of open submodules, 
then in fact, by Corollary~\ref{quotclop2} and Remark~\ref{coker-coim}, cokernels are open surjective maps, 
and the above proof gives the first part of the following  
\begin{thm} \label{quasiabelian-compl}  
Let $k$ be an object of $\cCRou$. 
The categories  $\cCLMoc_k$  
 and $\cCLMou_k$ are quasi-abelian and have countable limits.   The category $\cCLMou_k$ has enough injectives. 
\end{thm}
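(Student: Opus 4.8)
The plan is to treat the three assertions separately, exploiting throughout the decisive consequence of the countability hypothesis recorded in Corollary~\ref{quotclop2} and Remark~\ref{coker-coim}: in the $\omega$-decorated complete categories $\cCLMoc_k$ and $\cCLMou_k$ a cokernel is the same thing as an open surjection, and a kernel the same thing as a closed embedding. This is exactly the feature whose failure, noted just before the statement, blocks the naive argument in the general complete case.

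For quasi-abelianity I would verify the two conditions of Definition~\ref{quasiabdef} (equivalently apply Proposition~\ref{exactcheck}). Existence of kernels and cokernels is Remark~\ref{coker-coim}. For stability of cokernels under pull-back I would copy the second half of the proof of Theorem~\ref{quasiabelian} verbatim: the pull-back of an open surjection $p\colon M\twoheadrightarrow N$ along $f\colon N'\to N$ is computed on underlying modules in $\Mod_k$, where $p'$ is again surjective, and the openness argument (restricting the square over $Q=p(P)$) shows $p'$ is open, hence a strict epi. For stability of kernels under push-out I would follow the complete variant of Theorem~\ref{quasiabelian-sep}: the push-out $M'$ is the cokernel of $(f,-i)$, which in the $\omega$-complete setting is $(N'\oplus M)/\overline{R}$ with $\overline{R}$ closed, hence complete by Corollary~\ref{quotclop2}; the injectivity of $i'$ uses separatedness and the net argument of \lc, and the identification of the subspace topology is as in Theorem~\ref{quasiabelian}. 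In each case one checks that the construction does not leave the category: push-outs and pull-backs are finite (co)limits, so completeness is preserved by Corollary~\ref{quotclop2}, uniformity is preserved because a closed submodule, and a quotient by a closed submodule, of a uniform module is uniform, and the $\omega$ condition survives since a finite product, a closed submodule, and a quotient of modules with a countable basis again admit a countable basis.

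Countable limits are then almost formal. Limits already exist in $\cLMu_k$ and $\cLMc_k$ and are stable under completeness by Proposition~\ref{limproj}; it remains to see that a countable limit of $\omega$-objects is again an $\omega$-object. Since every such limit is the equalizer of a pair of maps between countable products, and an equalizer is a kernel, hence a closed submodule, of a product, it suffices to note that a countable product of modules with a countable basis of open submodules again has a countable basis for the product topology (only finitely many factors are constrained in each basic open submodule), and that closed submodules inherit a countable basis; uniformity of the product is checked directly, a single open ideal controlling the finitely many constrained factors.

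The genuinely hard part is enough injectives in $\cCLMou_k$, and here I expect the main obstacle. The injectives are inexplicit and cannot be built from discrete modules: a discrete injective $k/I$-module need \emph{not} be injective in $\cCLMou_k$ --- already for $k=\Z_p$ the residue field $\F_p$ fails, since the identification $p\Z_p\iso\Z_p$ admits no extension to $\Z_p\to\F_p$ (it would force $g(p)=p\,g(1)=0\neq 1$), while the natural divisible candidate $\Q_p/\Z_p$ is not uniform. I would therefore argue abstractly through the left heart: by Schneiders' theory \cite[\S1.3]{schneiders} the left heart $\mathcal{LH}(\cCLMou_k)$ is abelian, $\cCLMou_k$ embeds into it, an object of $\cCLMou_k$ is injective iff it is injective in $\mathcal{LH}(\cCLMou_k)$, and $\cCLMou_k$ has enough injectives iff $\mathcal{LH}(\cCLMou_k)$ does. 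The plan is then to show $\mathcal{LH}(\cCLMou_k)$ is a Grothendieck category --- exhibiting a generator manufactured from $k$ and its discrete quotients $k/I$, $I\in\cP(k)$, and verifying exactness of filtered colimits, for which the countability hypothesis together with the Baire/Mittag-Leffler input of Remarks~\ref{baire} and~\ref{baire1} are the essential tools --- and to conclude enough injectives from the Grothendieck existence theorem, transporting the conclusion back to $\cCLMou_k$. Verifying the generator and the AB5 property in the heart, rather than in $\cCLMou_k$ itself, is the crux; alternatively one may simply invoke \cite[Chap.~15]{GR}, where $\cCLMou_k$ is the category of $\omega$-admissible modules and this statement is established.
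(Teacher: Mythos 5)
Your treatment of the first two assertions is correct and is essentially the paper's own argument: once Corollary~\ref{quotclop2} identifies cokernels in the $\omega$-decorated complete categories with open surjections, the pull-back computation of Theorem~\ref{quasiabelian} and the push-out computation of Theorem~\ref{quasiabelian-sep} (with $\overline{R}$ closed, hence the quotient complete) go through unchanged, and your verification that countable products, closed submodules and quotients preserve the countable-basis condition supplies the routine detail the paper leaves implicit for countable limits.

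The genuine gap is in the injectives, and it is twofold. First, your plan is not a proof: you yourself flag its crux --- a generator and AB5 for the heart --- as unverified, and this is doubtful, since $\cCLMou_k$ has only countable products and its colimits do not obviously preserve the countable-basis condition, so there is no evident route to a cocomplete heart with exact filtered colimits; moreover the transfer of injectivity along the embedding into the \emph{left} heart is at best the dual of the standard statement (the left heart controls projectives; for injectives one expects the right heart), so even your reduction step needs justification, and the closing appeal to \cite{GR} replaces the argument rather than supplying it. Second, your assertion that injectives ``cannot be built from discrete modules'' overshoots. Your $\F_p$ example correctly shows that a \emph{single} discrete injective $k/I$-module need not be injective in $\cCLMou_k$, but the paper builds injectives from discrete modules all the same --- not as single quotients but as countable inverse limits. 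Writing $M=\limit_n M/P_n$ with $M/P_n$ a discrete $k/I_n$-module, one embeds each $M/P_n$ into an injective $k/I_n$-module $J_n$ (Grothendieck \cite{To}), uses injectivity to assemble transition maps $J_{n+1}\to J_n$ compatible with the projective system, and proves (Lemma~\ref{quasiabelian-compl1}) that $J=\limit_n J_n$ is injective in $\cCLMou_k$ and that $M\to J$ is a closed embedding: injectivity of $J$ is tested against a strictly closed subobject $X\hookrightarrow Y$ by producing extensions $\ell_n:Y/Q_n\to J_n$ level by level from the injectivity of each $J_n$, arranging the triangles into compatible prisms inductively, and passing to the limit. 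The countability hypothesis powers both this inductive construction and the topological bookkeeping (open kernels $P_n=\Ker(j_n\circ g)$, the choice of $Q_n$ with $X\cap Q_n=P_n$). That inverse-limit mechanism is the idea your proposal is missing, and it yields explicit (if inexplicit-looking) injectives without any Grothendieck-category machinery.
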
 
\begin{proof}
 \par \medskip
 We are left to prove that $\cCLMou_k$ has enough injectives. We  recall that an object $J$ of $\cCLMou_k$ is   \emph{injective} if, for any   strictly closed subobject   $X \hookrightarrow Y$,  
 and any morphism $g: X \longrightarrow J$, there exists a morphism $\ell: Y  \longrightarrow J$ such that the diagram
 \beq \label{injdef}
\begin{tikzcd}[column sep=2.5em, row sep=2.5em] 
X \arrow[hook]{r}{} \arrow{d}[']{g}  
&Y \arrow[dashed]{dl}{\exists \, \ell} 
\\ 
J       
& 
\end{tikzcd}
\eeq
commutes. 
 So, let $M$ be an  object  of $\cCLMou_k$, and let 
 $(P_n)_{n \in \N}$, with $\dots \supset P_n \supset P_{n+1} \supset \dots$ be a sequence of open $k$-submodules of $M$ which is a filter basis of  $\cP_k(M)$. Since $M$ is uniform, for any  $n \in \N$, there exists a decreasing basis  of open ideals  $I_n \in \cP(k)$ with $I_n \supseteq I_{n+1}$ for any $n$, such that $I_n M \subset P_n$. So, $M = \limit_n M_n$, where $M_n := M/P_n$ is a discrete $k/I_n$-module. By \cite[Thm. 1.10.1]{To} for any $n$ there exists an injective $k/I_n$-module $J_n$ and a $k/I_n$-linear monomorphism $M_n \to J_n$. We then have commutative squares 
 
\beq \label{injres}
\begin{tikzcd}[column sep=2.5em, row sep=2.5em] 
M_{n+1} \arrow[hook]{r}{i_{n+1}} \arrow{d}[']{} \arrow{dr}[']{}
&J_{n+1} \arrow[dashed]{d}{\exists } 
\\ 
M_n   \arrow[hook]{r}{i_n}   
& J_n
\end{tikzcd}
\eeq
 We set $J := \limit_n J_n$, and call $j_n : J \longrightarrow J_n$ the projection.  We have 
\begin{lemma}  \label{quasiabelian-compl1}  $J$ is an injective object of $\cCLMou_k$, and  the canonical morphism 
 $$i:=\limit_n i_n : M \longrightarrow J
 $$ 
 is a  closed embedding (hence a kernel) in $\cCLMou_k$. 
 \end{lemma}
 \begin{proof} We prove the second part of the statement first. Since limits in $\Ab$ are left exact, $i$ is a 
 monomorphism. Then $i(M)$ is closed in $J$, since it coincides with $\bigcap_n j_n^{-1}(i_n(M_n))$, and $j_n^{-1}(i_n(M_n))$ is an open $k$-submodule of $J$, for any $n$. Then $i$ is a homeomorphism of $M$ onto $i(M)$ equipped with the relative topology of the inclusion in $J$ because, for any $n$, 
 $$P_n = \Ker (M \to M_n) = \Ker(j_n) \cap M\;.$$
 We now show that  $J$ is  injective. So, we consider diagram~\ref{injdef} where $X$ is a strictly closed subobject of $Y$, and, for any $n$, we let $P_n := \Ker (j_n \circ g)$. Since the topology of $X$ is the subspace topology of the inclusion in $Y$, for any $n$ there is an open $k$-submodule $Q_n$ of $Y$ such that $X \cap Q_n = P_n$. We let $X_n := X/P_n$ and $Y_n := Y/Q_n$. Since, for any $n$, $J_n$ is an injective $k/I_n$-module, 
 we inductively obtain a sequence of  commutative diagrams, 
  \beq \label{injdefn}
\begin{tikzcd}[column sep=2.5em, row sep=2.5em] 
X_n \arrow[hook]{r}{} \arrow{d}[']{g_n}  
&Y_n \arrow[dashed]{dl}{\exists \, \ell_n} 
\\ 
J_n   & \quad ,
\end{tikzcd}
\eeq
such that the natural projections of triangles 
  \beq \label{injdefn2}
\begin{tikzcd}[column sep=2.5em, row sep=2.5em] 
X_{n+1} \arrow[hook]{r}{} \arrow{d}[']{g_{n+1}}  
&Y_{n+1} \arrow[dashed]{dl}{\exists \, \ell_{n+1}} 
\\ 
J_{n+1}   & 
\end{tikzcd}
\quad
 \longrightarrow \quad \begin{tikzcd}[column sep=2.5em, row sep=2.5em] 
X_n \arrow[hook]{r}{} \arrow{d}[']{g_n}  
&Y_n \arrow[dashed]{dl}{\exists \, \ell_n} 
\\ 
J_n   & 
\end{tikzcd}
\eeq
make commutative prisms. 
Taking limits, we then obtain a morphism 
$$\limit_n \ell_n : \limit_n Y_n \longrightarrow \limit_n J_n = J
$$
and therefore, composing with the morphism $Y  \longrightarrow \limit_n Y_n$, a morphism $\ell$ as required in 
diagram~\ref{injdef}. 
 \end{proof}
  \end{proof}  
  \begin{cor} \label{strong-can-canCOR}   Let $k \in \cCRou$ and $P \in \cCLMou_k$. Then  (see Remark~\ref{coker-coim})  for a morphism $f: P \map{} M$ in $\cLMu_k$,  T.F.A.E.
\ben
\item $f$ is an open surjective map;
\item $f$ is a strict epi in $\cLMu_k$; 
\item $f$ is a strict epi in $\cCLMou_k$;
\item $f$ is a cokernel in $\cCLMu_k$.
\een
\end{cor}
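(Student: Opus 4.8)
The plan is to reduce everything to the two descriptions of cokernels recorded in Remark~\ref{coker-coim}: one for $\cLMu_k$, where a cokernel is exactly an open surjection, and one for the complete categories, where—thanks to the countable basis—a cokernel turns out again to be an honest open surjection. Throughout, the countability hypothesis on $k$ (hence on $P$) is what forces the two pictures to coincide.

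First I would settle $(1)\Leftrightarrow(2)$, which is purely formal. Since $\cLMu_k$ is quasi-abelian (Theorem~\ref{quasiabelian}), part~1 of Remark~\ref{bim=iso} says a morphism is a strict epi iff it is a cokernel; and by Remark~\ref{coker-coim} a morphism of $\cLMu_k$ is a cokernel precisely when it is surjective with its target carrying the quotient topology, i.e.\ an open surjection. Hence $(1)$ and $(2)$ say the same thing. Next I would verify that each condition forces $M$ into $\cCLMou_k$, so that $(3)$ and $(4)$ become meaningful and the complete machinery applies. If $f$ is an open surjection then $M$ carries the quotient topology of $P$; pushing a countable basis $\{P_n\}_n$ of open submodules of $P$ forward yields the countable basis $\{f(P_n)\}_n$ of $M$, and, $M$ being separated so that $\Ker f$ is closed, the first part of Corollary~\ref{quotclop2} gives that $M=P/\Ker f$ is complete. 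Granting $M\in\cCLMou_k$, the implication $(1)\Rightarrow(3)$ is then immediate: the open surjection $f$ is the cokernel in $\cCLMou_k$ of the closed embedding $\Ker f\hookrightarrow P$, hence a strict epi there.

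For the two remaining implications I would use the complete half of Remark~\ref{coker-coim}. The equivalence $(3)\Leftrightarrow(4)$ holds because $\cCLMou_k$ is quasi-abelian (Theorem~\ref{quasiabelian-compl}), so strict epis coincide with cokernels, and because—$M$ having a countable basis—the cokernel computed in $\cCLMu_k$ is already complete and so agrees with the one computed in $\cCLMou_k$ (Corollary~\ref{quotclop2bis}). Finally $(4)\Rightarrow(1)$: writing $f=\Coker^\un(h)$ for some $h\colon L\map{}P$ in $\cCLMu_k$ and applying the complete part of Remark~\ref{coker-coim} to $h$, whose target $P$ lies in $\cCLMou_k$, shows that the cokernel of $h$ in $\cLMu_k$ is already complete, so $P\map{}M$ is an open surjection. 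This closes the cycle.

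I expect the genuine obstacle to be exactly the reconciliation used above. In a general complete category a cokernel is produced by a separated completion and need not even be surjective (cf.\ Notation~\ref{forget0}), so a cokernel in $\cCLMu_k$ is a priori far from an open surjection; what rescues the situation is the countable basis, through Corollary~\ref{quotclop2}, since a quotient of a complete, countably based module by a closed submodule remains complete—precisely the input that lets the completion in the definition of $\Coker^\un$ be dispensed with. The only points requiring real care are therefore the closedness of $\Ker f$ (equivalently, the separatedness of the target $M$, which is implicit in comparing $f$ with morphisms of the separated–complete categories) needed to invoke Corollary~\ref{quotclop2}, and the bookkeeping that every quotient in sight retains a countable basis so that $\cCLMu_k$- and $\cCLMou_k$-cokernels coincide.
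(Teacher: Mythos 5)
Your proof is correct and follows essentially the same route as the paper: the paper's own proof consists exactly of your step $(4)\Rightarrow(1)$ — writing the $\cCLMu_k$-cokernel as the completion of $P/\ol{g(N)}$ and using Corollary~\ref{quotclop2} (quotients of complete, countably based modules by closed submodules are complete) to dispense with the completion — and leaves the remaining implications implicit, which you fill in via the same quasi-abelian bookkeeping (Remark~\ref{coker-coim}, Theorems~\ref{quasiabelian} and~\ref{quasiabelian-compl}). Your caveat that $(1)\Rightarrow(3)$ tacitly needs $M$ separated (so that $\Ker f$ is closed) points at an imprecision in the statement itself that the paper also passes over silently, so flagging it is appropriate rather than a defect of your argument.
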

\begin{proof} It is clear that  ${\mathit 1} \Rightarrow {\mathit 2} \Rightarrow {\mathit 3}\Rightarrow {\mathit 4}$. 
Let us check that  ${\mathit 4} \Rightarrow {\mathit 1}$. So, assume  
$f$ is the cokernel of a morphism $g:N \map{} P$ in $\cCLMu_k$.  Then $f$ is the natural morphism $P \to M$, where $M$  is the completion of $P/\ol{g(N)} \in \cLMou_k$.  But then $P/\ol{g(N)}$ is already complete and $f$ is open surjective.  
\end{proof}
\end{subsection} 
\begin{subsection}{Naive canonical topology}
 \label{naivesec}

\begin{defn}  \label{CLMnaive} Let $R$ be an object of $\cRu$. 
We denote by $\cLM^\naive_R$   the full subcategory of objects $\cLMu_R$ whose topology is the naive canonical one. If $R \in \cCRu$, we set  
$$\cCLM^\naive_R := \cLM^\naive_R \cap \cCLMu_R\;,$$
a full subcategory of $ \cCLMu_R$. 
\end{defn} 
\begin{prop} \label{naiveadj} 
The correspondence $M \mapsto M^\naive$ extends to a functor 
$$(-)^\naive : \Mod_R \map{} \cLMu_R
$$ 
which is left adjoint to the forgetful functor $(-)^\for: \cLMu_R \to \Mod_R$. 
\end{prop}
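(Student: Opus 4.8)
The plan is to verify three things in turn: that $M^\naive$ genuinely lands in $\cLMu_R$, that $(-)^\naive$ is functorial, and then that the forgetful functor $(-)^\for$ itself supplies the adjunction bijection. First I would check that the family $\{IM\}_{I \in \cP(R)}$ is a filter basis of $R$-submodules defining an $R$-linear topology: each $IM$ is an $R$-submodule since $r(am) = (ra)m$ with $ra \in I$, and $(I \cap J)M \subseteq IM \cap JM$ gives stability under intersection. Thus $M^\naive$ is $R$-linearly topologized, and by part $(5)$ of Proposition~\ref{scalar-prod-prop} it is uniform if and only if it is bounded; boundedness is immediate, for given a basic open submodule $JM$ the open ideal $I = J$ satisfies $IM \subseteq JM$. (Alternatively, one reads off uniformity directly from part $(3)$ of \emph{loc.cit.}, taking $I_U = J$ and $V_U = JM$ for $U = JM$, using $RI = I$.) Hence $M^\naive \in \cLMu_R$.

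Next, for functoriality I would observe that any $R$-linear map $f \colon M \to N$ is continuous as a map $M^\naive \to N^\naive$, because $f(IM) = I\,f(M) \subseteq IN$, so $f^{-1}(IN) \supseteq IM$ is open; identities and composites are preserved since the underlying $R$-linear maps are unchanged. This already makes $M \mapsto M^\naive$ a functor $\Mod_R \to \cLMu_R$.

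For the adjunction I would exhibit the claimed bijection
$$\Hom_{\cLMu_R}(M^\naive, N) \iso \Hom_{\Mod_R}(M, N^\for)$$
as the map induced by $(-)^\for$, using $(M^\naive)^\for = M$. Injectivity is just faithfulness of the forgetful functor. The only substantive step is surjectivity: given an $R$-linear $g \colon M \to N^\for$, I must show that $g \colon M^\naive \to N$ is continuous. Here I would use that $N$, being uniform, is bounded (Definition~\ref{bound-sponge-lintop}, cf.\ part $(5)$ of Proposition~\ref{scalar-prod-prop}), so that every $U \in \cP(N)$ contains $IN$ for some open ideal $I \in \cP(R)$. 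Then $g(IM) = I\,g(M) \subseteq IN \subseteq U$, whence $IM \subseteq g^{-1}(U)$ and $g^{-1}(U)$ is open in $M^\naive$; thus $g$ is continuous. Naturality in both $M$ and $N$ is automatic, since the bijection is literally passage to the underlying $R$-linear map.

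The hard part — indeed the only part with real content — is this surjectivity, where the boundedness of the \emph{target} $N$ (a consequence of uniformity, and not of mere $R$-linearity of the topology) is precisely what upgrades an abstract $R$-linear map into a continuous one. This is exactly the point at which the choice of $\cLMu_R$, rather than the larger $\cLMc_R$, as the codomain is essential: for a merely continuous $N$ one cannot in general absorb a given open submodule by some $IN$, and the argument would break down.
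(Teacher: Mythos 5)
Your proof is correct and follows essentially the same route as the paper, whose proof simply declares the bijection $\Hom_{\Mod_R}(M,N^\for)=\Hom_{\cLMu_R}(M^\naive,N)$ to be obvious. The absorption argument you isolate as the only substantive step — uniformity of $N$ forces every $U\in\cP(N)$ to contain some $IN$, whence $g(IM)\subseteq IN\subseteq U$ — is exactly the content behind that ``obviously'' (compare Proposition~\ref{unif-can}, which records that the topology of a uniform module is weaker than its naive canonical one), so your write-up is a faithful elaboration rather than a different approach.
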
 
\begin{proof} Let $N \in \cLMu_R$ and $M \in \Mod_R$. Then, obviously,
$$
\Hom_{\Mod_R}(M, N^\for) = \Hom_{\cLMu_R}(M^\naive, N) \;.
$$
\end{proof}

\begin{rmk} \label{naivermk} Let $\phi:R \to S$ be a morphism in $\cRu$ as in Definition~\ref{clop-adic}.
\hfill \ben 
\item  The morphism $\phi$  makes  $S$ into an object of $\cLMu_R$. In fact, let $J$ be any open ideal 
of $S$. Then, there exists $I \in \cP(R)$ such that $\phi(I) \subset J$, hence $\phi(I)S \subset J$. 
This proves that the naive $R$-canonical topology of $S$ is finer than the topology of $S$. Clearly, $\phi$ 
is $\op$-adic  if and only the naive $R$-canonical topology of $S$ is coincides with the topology of $S$, \ie iff 
$S \in \cLM^\naive_R$. 
\item  
 If $R \in \cRuop$, then, obviously,    
 $\cLM^\naive_R = \cLMuop_R$. If moreover $\phi$  is op-adic, then both $S \in \cLMuop_R$ and $S \in \cRuop$. To prove the last assertion, 
let $J,J_1 \in \cP_S(S)$ and let $I,I_1 \in \cP_R(R)$ be such that $\phi(I)S \subset J$,  $\phi(I_1)S \subset J_1$.  Then $JJ_1\supset \phi(I)\phi(I_1)S = \phi(I_1I)S$ is an open ideal of $S$.
  \item   If $R \in \cRuclop$, then  $\phi$
is $\clop$-adic if and only if 
$S$ is an object of $\cLMuclop_R$. From point 1 we know  that $S \in \cLMu_R$.
So,   assume $\phi$ is clop-adic and let us prove that $S \in \cLMuclop_R$.
Pick  any open $R$-submodule $H$ of $S$ and  $I  \in \cP_R(R)$. 
Since the topology of $S$ is $S$-linear,  there exists an open ideal $J$ of $S$ such that $J \subset H$. 
Let $I_1$ be an open ideal  of $R$ such that $\phi(I_1) \subset J$. Then,     
$\phi(I)\phi(I_1)S \subset \phi(I)J \subset \phi(I)H$. Therefore, $\phi(\,\ol{I\,I_1}\,) \subset \ol{\phi(I) \phi(I_1)}\subset  \ol{\phi(I)\phi(I_1)S} \subset \ol{\phi(I)H}$ shows that the latter is open. 
Conversely, if $S$ is an object of $\cLMuclop_R$, then, for any open ideal $I$ of $R$, the closure of $\phi(I)S$,  is open in $S$, so that $\phi$ is $\clop$-adic. As in point 2, we prove that, if this is the case, $S$ is an object of  $\cRuclop$.
\item From $1$ and $2$ (resp. $3$) above we deduce that: If $R$ is $\op$ (resp. $\clop$) and $\phi$ 
is $\op$-adic (resp. $\clop$-adic) then $S$ is $\op$ (resp. $\clop$).
\een
\end{rmk}

\begin{cor}\label{cor-adj-naive} For $R \in \cRu$ and any $M \in \cLMu_R$
we will use the notation $M^\naive$ 
as a shortcut for $(M^\for)^\naive$. 
This position defines a functor
\beq \label{cor-adj-naive0}
(-)^\naive : \cLMu_R \map{} \cLM^\naive_R
\eeq
which is right adjoint to the inclusion $\iota: \cLM^\naive_R \hookrightarrow \cLMu_R$
\end{cor}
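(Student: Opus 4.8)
The plan is to deduce the corollary directly from the adjunction of Proposition~\ref{naiveadj}, together with the observation that the forgetful functor restricts to a fully faithful functor on modules carrying the naive canonical topology. There is almost no genuine work beyond unwinding the definitions, so I would organize it as a chain of natural identifications of $\Hom$-sets rather than constructing unit and counit by hand.

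First I would record the structural fact underlying everything: by Definition~\ref{topnaivedef}, an object $N$ of $\cLM^\naive_R$ is \emph{literally} $(N^\for)^\naive$, so the forgetful functor restricts to a functor $(-)^\for:\cLM^\naive_R \to \Mod_R$ which is fully faithful, is essentially surjective, and has quasi-inverse $(-)^\naive$ of Proposition~\ref{naiveadj}. Consequently, for $N \in \cLM^\naive_R$ and any $L \in \Mod_R$ one has a functorial identification
$$
\Hom_{\cLM^\naive_R}(N, L^\naive) = \Hom_{\Mod_R}(N^\for, L) \;.
$$
Next, for $N \in \cLM^\naive_R$ and $M \in \cLMu_R$, I would chain the two relevant bijections. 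Since $\iota N = N = (N^\for)^\naive$, Proposition~\ref{naiveadj} yields
$$
\Hom_{\cLMu_R}(\iota N, M) = \Hom_{\cLMu_R}((N^\for)^\naive, M) = \Hom_{\Mod_R}(N^\for, M^\for) \;,
$$
while taking $L = M^\for$ in the identification above gives
$$
\Hom_{\cLM^\naive_R}(N, M^\naive) = \Hom_{\cLM^\naive_R}(N, (M^\for)^\naive) = \Hom_{\Mod_R}(N^\for, M^\for) \;.
$$
Composing these two natural identifications produces the required bifunctorial bijection $\Hom_{\cLMu_R}(\iota N, M) \cong \Hom_{\cLM^\naive_R}(N, M^\naive)$, which is exactly the assertion that $(-)^\naive$ is right adjoint to $\iota$.

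For transparency I would also note the concrete unit and counit. The unit is the isomorphism $N \to (N^\for)^\naive = N$ for $N \in \cLM^\naive_R$, and the counit $M^\naive \to M$ is the canonical continuous bijection that is the identity on underlying $R$-modules. The single point where the hypothesis $M \in \cLMu_R$ (rather than merely $M \in \cLM_R$) is essential is that this counit is a genuine morphism of $\cLMu_R$: by Proposition~\ref{unif-can} the topology of a uniform module is weaker than its naive canonical topology, so the identity map $M^\naive \to M$ is indeed continuous. This is really the only substantive ingredient, and it is already built into the proof of Proposition~\ref{naiveadj}; hence there is no real obstacle here, only bookkeeping. I would close by remarking that the triangle identities are immediate, since both unit and counit are the identity on the underlying $R$-modules.
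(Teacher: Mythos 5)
Your proof is correct and follows essentially the same route as the paper's: the paper's one-line proof exhibits the counit $M^\naive \longrightarrow M$ induced by the identity of $M^\for$ and asserts the bifunctorial identification $\Hom_{\cLM^\naive_R}(N, M^\naive) = \Hom_{\cLMu_R}(\iota(N), M)$, which is exactly your chain of identifications through $\Hom_{\Mod_R}(N^\for, M^\for)$ via Proposition~\ref{naiveadj} and the full faithfulness of the forgetful functor on $\cLM^\naive_R$. You merely make explicit what the paper leaves implicit, namely that continuity of the counit rests on Proposition~\ref{unif-can} (the topology of a uniform module is weaker than its naive canonical topology), and that the triangle identities are trivial because unit and counit are the identity on underlying modules.
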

\begin{proof} 
The  identity of $M^\for$ induces a functorial   $\cLMu_R$-morphism 
\beq \label{naivemap}   M^\naive \longrightarrow M\;.
\eeq 
For any $M \in \cLMu_R$ and $N \in \cLM^\naive_R$ we have a functorial isomorphism
$$
\Hom_{\cLM^\naive_R}( N, M^\naive) = \Hom_{\cLMu_R}(\iota(N), M) \;.
$$
\end{proof}
\begin{defn}  \label{naivedef} Let $R$ be in $\cCRu$.
For  any $N$ in $\Mod_R$, we define the object 
$\nwhat N$ of 
$\cCLMu_R$  
to be the completion of $N^\naive$, 
that is the completion of $N$ in its naive canonical topology, \ie the $R$-module
\beq
\label{naive}
\nwhat{N} = \what{N^\naive}= \limit_{I \in \cP(R)} (N/IN)^\discr \;,
\eeq 
equipped with the weak topology of the projections to the discrete $R/I$-modules $N/IN$.  
The notation also applies  to $N$ a topological $k$-module to mean $\nwhat{N} = \what{N^\naive} = \nwhat{N^\for}$.
\end{defn} 
 \begin{rmk} \label{cloprmk} Let $R \in \cCRu$. It follows from Remark~\ref{compl-adj} that, for  any $R$-module $N$, 
 $\nwhat{N}$ is pseudocanonical. If $R$ is clop, it then follows from  3 of Remark~\ref{barrpscan}  that $\nwhat{N} \in \cCLMuclop_R$. We will later give an example of $N=I \in \cP(R)$, with $R \in \cCRouclop$, such that $\nwhat{I} \notin \cCLMnaive_R$ (see (2) of Remark~\ref{naive-pbl} below).
 \end{rmk}
 In the positive direction we have~:
\begin{lemma} \label{8.3.3} Let $k$ be an object of $\cCRoufop$ and 
let  $M \in \Mod_k$. Then $\nwhat{M} \in \cCLMnaive_k$.  
\end{lemma}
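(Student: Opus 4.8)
The plan is to show that the topology of $\nwhat M$ \emph{is} its naive canonical topology, i.e. that the submodules $J\nwhat M$, with $J$ ranging over the open ideals of $k$, form a basis of open $k$-submodules. Since $\nwhat M=\limit_{I\in\cP(k)}M/IM$ already lies in $\cCLMu_k$ (it is a limit of discrete bounded $k$-modules, and is pseudocanonical by Remark~\ref{cloprmk}), the only point at issue is the topology; and because $k$ is fop the finitely generated open ideals are cofinal in $\cP(k)$. Hence it suffices to prove, for every finitely generated open ideal $J$, that
$$ J\nwhat M=\Ker(\nwhat M\to M/JM)=:K(J), $$
the right-hand side being an open (hence basic) submodule. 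The inclusion $J\nwhat M\subseteq K(J)$ is trivial, so the entire content is the reverse inclusion $K(J)\subseteq J\nwhat M$, which also shows $J\nwhat M$ is closed. Once this is known, the kernels $K(J)$ for finitely generated open $J$ are cofinal among all $\Ker(\nwhat M\to M/IM)$, hence a basis, and equal $J\nwhat M$; this is exactly the assertion $\nwhat M\in\cLM^\naive_k$, whence $\nwhat M\in\cCLM^\naive_k$.

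First I would use op and $\omega$-admissibility to rechoose a convenient basis. Starting from $J_0:=J$ and a countable basis $\{I_n\}$ of $\cP(k)$, I choose inductively open ideals $J_{s+1}\subseteq (J\cdot J_s)\cap J_s\cap I_{s+1}$ taken from the basis; this is possible because $J\cdot J_s$ is open ($k$ is op) and a finite intersection of open ideals is open. The resulting decreasing sequence $\{J_s\}_{s\geq0}$ is still a basis of $\cP(k)$ (it is cofinal, as $J_s\subseteq I_s$) and satisfies the crucial termwise relation
$$ J_{s+1}\subseteq J\,J_s\qquad(s\geq0). $$
Writing $K_s:=\Ker(\nwhat M\to M/J_sM)$, these are a basis of open submodules of $\nwhat M=\limit_s M/J_sM$, with $\nwhat M/K_s\cong M/J_sM$ and $K_0=K(J)$.

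The heart of the argument is a telescoping/successive–approximation construction exploiting finite generation $J=(a_1,\dots,a_r)$. Given $z\in K_0$, I would build $u_j^{(s)}\in M$ ($1\le j\le r$, $s\ge1$), with $u_j^{(s)}\in J_{s-2}M$ for $s\ge2$, such that, setting $Y_j^{(S)}=\sum_{s=1}^S u_j^{(s)}$ and $w_S:=z-\sum_j a_j Y_j^{(S)}$, one has $w_S\in K_S$ for all $S$. Indeed, at stage $S$ the image of $w_S\in K_S$ in $\nwhat M/K_{S+1}\cong M/J_{S+1}M$ lies in $K_S/K_{S+1}=J_S\,(M/J_{S+1}M)$, so is represented by some $v\in J_SM$; using $J_S\subseteq J\,J_{S-1}$ I write $v=\sum_j a_j u_j^{(S+1)}$ with $u_j^{(S+1)}\in J_{S-1}M$, and then $w_{S+1}=w_S-v\in K_{S+1}$, closing the induction. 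Since $u_j^{(s)}\in J_{s-2}M$ dies modulo $J_tM$ once $s-2\ge t$, the componentwise sums $y_j:=\sum_{s\ge1}u_j^{(s)}$ define genuine elements of $\limit_t M/J_tM=\nwhat M$ (each component is a finite sum, so no metric completeness is even invoked). Finally, checking in every $M/J_tM$ that $\pi_t(z)=\pi_t\big(\sum_j a_j y_j\big)$ — using $w_S\in K_S\subseteq K_t$ for $S\ge t$ — yields $z=\sum_j a_j y_j\in J\nwhat M$, as desired.

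I expect the main obstacle to be precisely the step where finite generation and op must conspire: without the arranged containment $J_{s+1}\subseteq J\,J_s$ one runs into an Artin--Rees–type problem (expressing an element of $J_SM$ as $\sum_j a_j(\text{deep element})$), which has no reason to hold in this non-Noetherian framework. The op hypothesis is exactly what makes $J\,J_s$ open and thus allows the basis to be rechosen so that the containment holds termwise; fop guarantees that handling finitely generated $J$ suffices to identify the naive topology; and $\omega$-admissibility supplies the sequential structure along which the successive approximation is organized. Separatedness and completeness of $\nwhat M$ enter only through its description as $\limit_t M/J_tM$. This is the module analogue of \cite[Rk.~8.3.3~(iv)]{GR} and Lemma~10.96.3 of \cite[Tag 05GG]{stacks}, to which it specializes when $M=k$.
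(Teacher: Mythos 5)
Your proof is correct, and since the paper does not prove this lemma itself but simply cites \cite[Rmk.~8.3.3]{GR} and Lemma~10.96.3 of \cite[Tag 05GG]{stacks}, your argument is essentially a faithful reconstruction of the proof of those cited results: the same successive-approximation/telescoping argument with $J=(a_1,\dots,a_r)$, writing $z\in\Ker(\nwhat{M}\to M/JM)$ as $\sum_j a_j y_j$ with componentwise-finite sums. The one genuinely non-adic ingredient — using op to rechoose a cofinal chain with $J_{s+1}\subseteq J\,J_s$, which in the adic case holds automatically for the chain $\{I^n\}$ — is exactly the adaptation carried out in \cite[Rmk.~8.3.3~(iv)]{GR}, and you have identified correctly where each hypothesis (op, fop, $\omega$-admissibility) enters.
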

\begin{proof}
See   \cite[Rmk. 8.3.3 ]{GR} or (in a particular case) Lemma~10.96.3 of  \cite[Tag 05GG]{stacks}. 
\end{proof}

\begin{prop} \label{indu2}
Let $k \in \cCRu$ and  $M \in \cCLMpscan_k$. Then, by $\mathit 2$ of  Remark~\ref{dirsumdescr}, 
$M^{(A,\un)}$ is pseudocanonical. Moreover, for any $J \in \cP(k)$, 
\beq \label{indu0}
M^{(A,\un)}/ \cl_{M^{(A,\un)}}(J M^{(A,\un)}) = (M/\ol{JM})^{(A)} \;.
\eeq
Then
\beq \label{indu1}
 M^{(A,\un)} = \limit_{J\in \cP(k)} (M/\ol{JM})^{(A)} \;,
\eeq
 where $M/\ol{JM}$ is discrete and the algebraic direct sum $(M/\ol{JM})^{(A)}$ is also equipped with the discrete topology. 
 \end{prop}
 \begin{proof}
By \eqref{dirsumdescr1},
$$
M^{(A,\un)}/ \cl_{M^{(A,\un)}}(J M^{(A,\un)}) = M^{(A,\un)}/ (\ol{JM})^A \cap M^{(A,\un)} = 
M^{(A)}/ (\ol{JM})^A \cap M^{(A)} = (M/\ol{JM})^{(A)} \;.
$$
Notice that \eqref{indu1} may also be deduced from \eqref{indu} since 
 $$ M^{(A,\un)} = \colimit^\un_{F \in \cF(A)} M^F = \limit_{J \in \cP(k)}  \colimit^\un_{F \in \cF(A)} (M/ \ol{J M})^F =
 $$
 $$
 \limit_{J\in \cP(k)} (M/\ol{JM})^{(A)} \;,
 $$
 where $\cF(A)$ denotes  the set of finite subsets of $A$ and $(M/\ol{JM})^{(A)}$ is equipped with the 
 discrete topology.
 \end{proof}

We summarize our conclusions~:  
\begin{prop} \label{dirsumideal} Let $k \in \cCRuclop$ and $A$ be any (small) set.  Then, $k^{(A,\un)} \in \cCLMuclop_k = \cCLMpscan_k$. For any $I \in \cP(k)$, 
$I^{(A,\un)} = \nwhat{I^{(A)}}$ is the completion of the algebraic direct sum $I^{(A)} = I k^{(A)}$ and of $I k^{(A,\un)}$ in the naive $k$-linear topology. 
 It consists of the elements $x = (x_\alpha)_{\alpha \in A} \in I^A$ 
 such that for any $J \in \cP(k)$, $J \subset I$, $x_\alpha \in J$, for all but a finite number of $\alpha$; it is an open $k$-submodule of $k^{(A,\un)}$ equipped with the subspace topology.  A basis of open $k$-submodules of $k^{(A,\un)}$ is 
 $\{I^{(A,\un)}\}_{I \in \cP(k)}$ and, for any $I \in \cP(k)$, a basis of open $k$-submodules of $I^{(A,\un)}$ is $\{J^{(A,\un)}\}_{J \in \cP(k), J \subset I}$. 
\par
The following objects of 
$\cCLMuclop_k = \cCLMpscan_k$ coincide: 
 \ben
 \item
  $I^{(A,\un)}$;
  \item $\nwhat{I^{(A)}}$;
  \item $\ol{I k^{(A,\un)}}$ endowed with the relative topology of 
 $k^{(A,\un)}$;
 \item the set of functions $A \to I$ which tend to $0$ along the filter of cofinite subsets of $A$   equipped with the topology of uniform convergence on $A$;
 \item the intersection of $I^A$ and  $k^{(A,\un)}$ taken in $k^A$ as a subspace  of $k^{(A,\un)}$.
 \een
 If $k \in \cCRouclop$, then the objects listed above are in $\cCLMpscan_k = \cCLMouclop_k$.
 \end{prop}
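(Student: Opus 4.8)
The plan is to assemble the statement from the explicit descriptions of uniform direct sums already obtained, with the clop hypothesis entering only to guarantee that the modules in play are pseudocanonical. First I would observe that $k$ is always pseudocanonical, so by part $2$ of Remark~\ref{dirsumdescr} the module $k^{(A,\un)}$ is pseudocanonical; since $k \in \cRuclop$, Remark~\ref{pseudocan2} identifies pseudocanonical with clop, whence $k^{(A,\un)} \in \cCLMuclop_k = \cCLMpscan_k$. Next, because $k$ is clop, the open ideal $I$ with its subspace topology is itself pseudocanonical: a basis of its open submodules is $\{I \cap J\}_{J \in \cP(k)}$, and since $\ol{JI}$ is open in $k$ (clop) and satisfies $\ol{JI} \subseteq I \cap J$ (as $I,J$ are closed), the family $\{\ol{JI}\}_{J \in \cP(k)}$ is cofinal; this is exactly part $4$ of Remark~\ref{dirsumdescr}. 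Hence Proposition~\ref{pscansumbox} applies to $M = I$ and yields $I^{(A,\un)} = \nwhat{I^{(A)}} = \what{(I^{(A)})^\naive}$, which is the identification $(1)=(2)$ and the assertion that $I^{(A,\un)}$ is the naive completion of $I^{(A)} = I k^{(A)}$; the variant for $I k^{(A,\un)}$ follows from the density of $I^{(A)}$ in its completion together with the inclusions $I^{(A)} \subseteq I k^{(A,\un)} \subseteq I^{(A,\un)}$.

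Then I would extract the element-wise descriptions. Applying parts $2$ and $3$ of Remark~\ref{dirsumdescr} to the pseudocanonical module $I$, whose open submodules are cofinally the $J \in \cP(k)$ with $J \subseteq I$, one sees that $I^{(A,\un)}$ is precisely the set of $x \in I^A$ tending to $0$ along the cofinite filter, i.e.\ with $x_\alpha \in J$ for almost all $\alpha$ and every such $J$; this gives description $(4)$ and the stated subset of $I^A$. For the comparison with $k^{(A,\un)}$ I would compute, using part $2$ of Remark~\ref{dirsumdescr} and $\ol{Ik} = \ol{I} = I$ (open ideals being closed),
$$
\ol{I k^{(A,\un)}} = (\ol{Ik})^A \cap k^{(A,\un)} = I^A \cap k^{(A,\un)},
$$
and a direct comparison of the defining conditions (for $J \not\subseteq I$ replace $J$ by the open ideal $J \cap I \subseteq I$) shows $I^A \cap k^{(A,\un)} = I^{(A,\un)}$; this establishes $(3)=(5)=(1)$ and realises $I^{(A,\un)}$ as a submodule of $k^{(A,\un)}$ with the subspace topology. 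Since $k^{(A,\un)}$ is pseudocanonical, $\{\ol{I k^{(A,\un)}}\}_I = \{I^{(A,\un)}\}_I$ is a basis of its open submodules, proving the first basis claim and the openness of $I^{(A,\un)}$; restricting this basis to the subspace $I^{(A,\un)}$ gives $\{I^{(A,\un)} \cap J^{(A,\un)}\}_{J \subseteq I} = \{J^{(A,\un)}\}_{J \subseteq I}$, which is the second basis claim. Finally, for $k \in \cCRouclop$ the set $\cP(k)$ may be taken countable, so all these modules carry a countable basis of open submodules and lie in $\cCLMou_k$; Corollary~\ref{pseudocanbarr} then places them in $\cCLMpscan_k = \cCLMouclop_k$.

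The main obstacle I anticipate is purely organisational: checking that the naive $k$-linear topology produced by the completion functor $\nwhat{(-)}$ coincides coordinate-by-coordinate with the subspace topology inherited from $k^{(A,\un)}$, and that the five set-theoretic descriptions really pick out the same submodule of $k^A$. Everything reduces to manipulations with closures of products of open ideals of the form $\ol{JI}\subseteq I\cap J$, which is exactly where the clop hypothesis on $k$ is indispensable; beyond Proposition~\ref{pscansumbox} and Remark~\ref{dirsumdescr} no genuinely new input is needed.
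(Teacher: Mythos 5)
Your proposal is correct in all its main steps and follows essentially the same route as the paper: Proposition~\ref{dirsumideal} is introduced with ``We summarize our conclusions'' and carries no separate proof, being assembled from precisely the ingredients you invoke --- parts 2--4 of Remark~\ref{dirsumdescr}, Remark~\ref{pseudocan2}, Proposition~\ref{pscansumbox}, and Corollary~\ref{pseudocanbarr}. Your verifications are sound where you spell them out: the cofinality of $\{\ol{JI}\}_{J\in\cP(k)}$ (equivalently of $\{J\in\cP(k):J\subseteq I\}$) among the open submodules of $I$, the computation $\ol{Ik^{(A,\un)}}=(\ol{Ik})^A\cap k^{(A,\un)}=I^A\cap k^{(A,\un)}$ via part 2 of Remark~\ref{dirsumdescr}, the set-theoretic identifications $(1)=(3)=(4)=(5)$, and the two basis claims via $I^{(A,\un)}\cap J^{(A,\un)}=(I\cap J)^{(A,\un)}$.

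The one step you dispatch too quickly is the assertion that $I^{(A,\un)}$ is also the completion of $N:=Ik^{(A,\un)}$ \emph{in its naive topology}. Your sandwich $I^{(A)}\subseteq N\subseteq I^{(A,\un)}$ with density of $I^{(A)}$ proves that the completion of $N$ in the topology \emph{induced} from $k^{(A,\un)}$ is $I^{(A,\un)}$; but the naive topology of $N$, with basis $\{JN\}_{J\in\cP(k)}$, is a priori strictly finer than the induced one. One checks easily that $JN\cap I^{(A)}=JI^{(A)}$, so the natural map $\nwhat{I^{(A)}}\to\nwhat{N}$ is injective, but its surjectivity requires $N=I^{(A)}+JN$ for every open $J$: concretely, for $b\in I$ and $z\in\cl_{k^{(A,\un)}}(Jk^{(A,\un)})=J^{(A,\un)}$ one needs $bz$ to lie in the \emph{algebraic} product module $JIk^{(A,\un)}$, whereas continuity of multiplication only yields $bz\in\cl(JIk^{(A,\un)})$. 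When $k\in\cCRoufop$ this is repaired by Lemma~\ref{8.3.3} (or \cite[Rmk. 8.3.3 (iv)]{GR}): choosing $J$ finitely generated gives $\cl(Jk^{(A,\un)})=Jk^{(A,\un)}$, hence $bz\in bJk^{(A,\un)}\subseteq JIk^{(A,\un)}$. For general $k\in\cCRuclop$, however, the paper's own Remarks~\ref{nonnaivecompl} and \ref{naive-pbl} warn that exactly this sort of identity between naive and induced structures fails for non-fop rings; the paper states the claim without proof, so your argument is at the same level of detail as the source, but a complete proof must either establish $N=I^{(A)}+JN$ for clop $k$ or read ``completion of $Ik^{(A,\un)}$'' with respect to the subspace topology, in which case your density argument is complete as written.
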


\begin{rmk}\label{nonnaivecompl} 
Notice that,  if $k \in \cCRuop$, $k^{(A,\un)}$ is not in general an object of $\cCLMuop_k$. However, it follows from \cite[Rmk. 8.3.3 (iv)]{GR}  
that  if $k \in \cCRoufop$  then $k^{(A,\un)}$ is endowed with the naive canonical topology, hence it is  an object of $\cCLMouop_k = \cCLMnaive_k$.
\end{rmk}  
 \begin{rmk} \label{naive-pbl}  Notice that, for $R$ in $\cRu$, the  naive canonical topology on a $R$-module $N$  runs, in general, 
 into serious difficulties. \ben
 \item Assume $R \in \cCRoufop$ and let $N \in \cCLMnaive_R$.  It  is not true in general that the subspace topology on a closed $R$-submodule of $N$ is still   the naive canonical topology. An example is given, for $R =\Z_p$, by the following inclusion of $\Z_p$-submodules  of the ring of formal power series $\Q_p[[T]]$. Namely, we set 
 $$
M := \Z_p\{T\} =  \{ \sum_{n \in \N} a_n T^n \;|\; a_n \in \Z_p\, ,\, \mbox{s.t.}\,\lim_{n \to \infty} a_n = 0\,\} \;,
 $$
 $$
 N:= \Z_p\{T/p\}  =  \{ \sum_{n \in \N} a_n T^n/p^n \;|\; a_n \in \Z_p\, ,\, \mbox{s.t.}\,\lim_{n \to \infty} a_n = 0\,\}  \;,
 $$
 both equipped with the $p$-adic topology. Then $M,N \in \cCLMnaive_{\Z_p}$ and $M$ is a closed $\Z_p$-submodule of $N$,  
but the inclusion $M \subset N$ is not a topological embedding. In fact,  for any $n \in \N$, $T^n \in M \cap p^n N$, so that $\lim_{n \to \infty} T^n = 0$ in $N$ but not  in $M$.  
\item  
 In general, for $R$ in $\cRu$,  if  the $R$-module $N$ carries the naive canonical topology, and $M$ is a sub-module of $N$, 
 then the topology of $M$ induced by $N$ (with basis of open submodules $\{IN \cap M\}_{I\in\cP(R)}$) 
 is  coarser than the naive topology of $M$ (because $IM\subseteq IN\cap M$). 
 However, if $R$ is in $\cRuop$ and  there exists $J\in\cP(R)$ such that $JN\subseteq M$,   
 then the naive canonical topology of $M$ coincides with the induced topology 
 (because $IM\supseteq IJN$, and $IJ\in\cP(R)$). This condition fails in the example of the previous point.  
In the  next point we  give a similar example in which the previous condition holds (but, of course, $R \notin \cRuop$).
\item  
Assume now $R \in \cCRuclop$. Then any open ideal $I$ of $R$, equipped with the subspace topology, is an object of $\cCLMuclop_R$, 
 but while $R$ is always endowed with its naive canonical topology, 
 the subspace topology of $I \subset R$
 is  in general strictly weaker than the naive canonical topology of $I$ 
 (unless $R \in \cCRuop$). We now give an example of a pair $(R,I)$ with these properties, based on 
the discussion of \cite[Tag 05JA]{stacks}.   We take a field $F$ and 
 $$
 S=F[x_1,x_2,x_3,\dots] \;\;,\;\; J = (x_1,x_2,x_3,\dots)
 $$
 and consider the $J$-adic completion $R := \what{S}_J$ of $S$. Then a basis of open ideals of $R$ consists of 
 $\{\cl_R(J^nR)\}_{n =0,1,2,\dots}$ so that $R \in \cCRouclop$ (since $\cl_R(\cl_R(J^mR)\cl_R(J^nR)) = \cl_R(J^{m+n}R)$). Let $I =  \cl_R(J R)$. Then, by \lc,  $R$ is not $I$-adically   complete. Since, for any $n$, $\cl_R(J^nR) = \cl_R(I^n)$,  the  topology of  $R$   is \emph{strictly weaker} than the $I$-adic topology of $R$. 
Let us show that
\emph{the subspace topology  of $I \subset R$ is strictly weaker than the naive canonical topology of the $R$-module $I$}. In fact, a basis of open $R$-submodules for the former (resp. for the latter) is  $\{\cl_R(J^nR)\}_{n =1,2,\dots}$ (resp. 
$\{\cl_R(JR)\cl_R(J^nR)\}_{n =1,2,\dots}$).  
Assume, by way of contradiction, that for any $n \in \N$, there exists $N(n) \in \N$ such that 
$$\cl_R(J^{N(n)}R) \subset \cl_R(JR)\cl_R(J^nR)\;.$$
 Then, 
$$  \cl_R(J^{N(N(n))}R) \subset \cl_R(JR) \cl_R(J^{N(n)}R) \subset  \cl_R(JR) \cl_R(JR) \cl_R(J^nR) \;,$$
and, by iteration,  for any fixed $n \in \N$,
$$ \cl_R(J^{N^h(n)}R) \subset \cl_R(JR)^h \cl_R(J^nR) \subset \cl_R(JR)^h \;\;,\;\forall \; h =1,2,\dots \;.$$
But this contradicts the fact that the topology of $R$ is strictly weaker than the $I$-adic. We also conclude  that $R$ is an example of an object of $\cCRouclop$ which is not op. 
\item As recalled in Lemma~\ref{8.3.3},  
if  $R \in \cCRoufop$  and $M$ is any $R$-module, then 
 $\nwhat{M} \in \cCLMuop_R =\cCLMnaive_R$. 
  Of course, the assumption that $R$ be an object of 
 $\cCRouclop$  is much weaker than the condition of being  an object of $\cCRoufop$. 
 See also Remark~\ref{clop-rmk}. 
 \een
 \end{rmk}  
\begin{prop} \label{can-forget-adj} Let $R \in \cCRu$
and $M \in \cCLMu_R$. 
Then \hfill \ben \item the  $R$-module $M^\for$ is separated in its naive canonical topology so that the 
natural $R$-linear map 
$M^\for \hookrightarrow (\nwhat{M})^\for$ is injective; 
\item
the completion of the 
canonical morphism \eqref{naivemap}  is  a canonical surjective $\cCLMu_R$-morphism 
\beq \label{can-forget-map}
\sigma_M: \nwhat{M} \longrightarrow M \; ;
\eeq
\item
the functor \eqref{naive} 
$$\nwhat{~}:\Mod_R \rightarrow \cCLMu_R
$$ 
(completion in the naive canonical topology)
is left adjoint to 
the forgetful functor $\cCLMu_R \rightarrow\Mod_R$. 
\item the morphism $\sigma_M$ factors through the canonical  surjective morphism 
\beq \label{coim}
\Coim(\sigma_M) \longrightarrow M
\;,
\eeq
where $\Coim(\sigma_M)$ is taken in the category $\cCLMu_R$.  
\item  Both $\nwhat{M}$ and $\Coim(\sigma_M)$ are in $\cCLMpscan_R$. 
\item 
Let $f: M_1 \longrightarrow  M$ be a bijective morphism in $\cCLMu_R$. Then
 the map $\Coim(\sigma_{M}) \map{} M$ factors through $f$. 
\item
 If $R$ is in $\cCRou$ then both $\nwhat{M}$ and $\Coim(\sigma_M)$ are in $\cCLMou_R$ and 
 the morphism \eqref{coim}  is bijective.  Moreover,  $\Coim (\sigma_M)$ coincides with $M^\for$ 
 equipped  with the finest possible structure  of an object of $\cCLMu_R$ finer than the structure of $M$ itself.
\een
\end{prop}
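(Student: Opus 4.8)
The plan is to treat the seven assertions in order, extracting first the purely module‑theoretic content of the completion‑in‑the‑naive‑topology functor and invoking the $\omega$‑hypothesis only at the very end. For $(1)$, since $M$ is uniform, Proposition~\ref{unif-can} shows its topology is weaker than its naive canonical topology, so for every open $P\in\cP_R(M)$ there is $I\in\cP(R)$ with $IM\subseteq P$; as $P$ is open, hence closed, $\ol{IM}\subseteq P$. Because $M$ is separated, $\bigcap_I IM\subseteq\bigcap_P P=(0)$, so $M^\for$ is separated in its naive topology and the universal completion map of Remark~\ref{compl-adj} is injective; this map is exactly $M^\for\hookrightarrow(\nwhat M)^\for$. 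For $(2)$, the canonical morphism \eqref{naivemap} $M^\naive\to M$ is the identity on underlying modules, hence surjective, and its completion is $\sigma_M\colon\nwhat M\to M$ (note $\what M=M$ as $M$ is complete); surjectivity of $\sigma_M$ follows because the composite $M^\for\hookrightarrow\nwhat M\map{\sigma_M}M$ is the identity, so the image already contains $M^\for=M$. For $(3)$ I would simply compose adjunctions: $\nwhat{(-)}$ is the completion functor of Proposition~\ref{stab-compl-mod} applied to $(-)^\naive$ of Proposition~\ref{naiveadj}, giving $\Hom_{\cCLMu_R}(\nwhat N,M)=\Hom_{\cLMu_R}(N^\naive,M)=\Hom_{\Mod_R}(N,M^\for)$, with $\sigma_M$ the counit at $M$.

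For $(4)$ and $(5)$ I would work with the explicit kernel of $\sigma_M$. Computing in the presentation $\nwhat M=\limit_{I\in\cP(R)}M/IM$, one finds $\Ker\sigma_M=\limit_I\ol{IM}/IM$, a closed submodule, whose cokernel in $\cCLMu_R$ is $\Coim(\sigma_M)$ (see Remark~\ref{coker-coim}); $\sigma_M$ factors through it, and surjectivity of $\Coim(\sigma_M)\to M$ is inherited from that of $\sigma_M$ via $(2)$. For $(5)$, $\nwhat M$ is pseudocanonical by Remark~\ref{cloprmk}. For $\Coim(\sigma_M)$, note that the quotient topology on $M^\for\cong\nwhat M/\Ker\sigma_M$ has the basis $\{\sigma_M(\Ker(\nwhat M\to M/IM))\}_I$; each of these open submodules contains $IM$, so the corresponding quotient of $M^\for$ is a discrete $R/I$-module, and therefore $\Coim(\sigma_M)$ is a projective limit of discrete modules over the discrete quotient rings $R/I$, hence pseudocanonical by part~$\mathit 3$ of Proposition~\ref{prodiscrete}.

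For $(6)$, let $f\colon M_1\to M$ be bijective in $\cCLMu_R$. Since $f^\for$ is an isomorphism of $R$-modules, $\nwhat{M_1}=\nwhat M$, and the adjunction of $(3)$ turns $(f^\for)^{-1}$ into a morphism $g\colon\nwhat M\to M_1$; as $f\circ g$ and $\sigma_M$ both correspond to $\id_{M^\for}$ under the adjunction, they are equal, i.e. $f\circ g=\sigma_M$. Because $f$ is injective, $\Ker g=\Ker(f\circ g)=\Ker\sigma_M$, so $\Coim(g)=\Coim(\sigma_M)$ and $g$ factors as $\nwhat M\to\Coim(\sigma_M)\map{\bar g}M_1$; since $\nwhat M\to\Coim(\sigma_M)$ is an epimorphism, the canonical map $\Coim(\sigma_M)\to M$ of \eqref{coim} coincides with $f\circ\bar g$, hence factors through $f$.

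The main obstacle is $(7)$, where the countability of $\cP(R)$ is genuinely used. Here $\nwhat M$ and $\Coim(\sigma_M)$ lie in $\cCLMou_R$ because $\{IM\}_I$, respectively the basis found in $(5)$, is countable. The key point is that \eqref{coim} is bijective: $\Ker\sigma_M$ is closed in $\nwhat M$ and $\nwhat M$ has a countable basis, so by Corollary~\ref{quotclop2} the quotient $\nwhat M/\Ker\sigma_M$ is already complete; hence $\Coim(\sigma_M)=\nwhat M/\Ker\sigma_M$ and the canonical map to $M$ is the module isomorphism $\nwhat M/\Ker\sigma_M\iso M$ induced by the surjection $\sigma_M$ whose kernel is exactly $\Ker\sigma_M$. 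Without countability this quotient need not be complete, and passing to its completion may destroy injectivity — precisely the $\limit^1$ phenomenon of Remark~\ref{ML-compl-adj}, which is where I expect the subtlety to concentrate. Finally, $\Coim(\sigma_M)$ is $M^\for$ equipped with a topology finer than that of $M$; given any $\cCLMu_R$-structure $M_1$ on $M^\for$ finer than $M$, part~$(6)$ supplies a continuous identity map $\Coim(\sigma_M)\to M_1$, so $\Coim(\sigma_M)$ carries the finest such structure.
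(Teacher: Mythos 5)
Your proposal is correct and, in parts (1)--(4), (6) and (7), follows essentially the same route as the paper: part (1) via Proposition~\ref{unif-can}, part (3) by composing the adjunctions of Propositions~\ref{naiveadj} and \ref{stab-compl-mod}, part (6) by the factorization $\sigma_M = f \circ \sigma_{M_1}$ through $\nwhat{M}=\nwhat{M_1}$, and part (7) by combining Corollary~\ref{quotclop2} (closed kernel plus countable basis, so the $\cLMu_R$-quotient is already complete) with part (6) for maximality; your explicit computation $\Ker\sigma_M = \limit_I \ol{IM}/IM$ is a welcome sharpening of the paper's terser treatment of (4). The one genuine divergence is in part (5), where the paper argues that the quotient map $\nwhat{M} \map{} \Coim^{\cLMu_R}(\sigma_M)$ is open, that open quotients of pseudocanonical modules are pseudocanonical, and that completion preserves pseudocanonicity, whereas you instead exhibit $\Coim(\sigma_M)$ as $\limit_I M^\for/\sigma_M(V_I)$, with $V_I = \Ker(\nwhat{M} \to M/IM)$, and invoke part $\mathit 3$ of Proposition~\ref{prodiscrete}. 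Be aware that this citation is slightly loose: the hypotheses there include faithfulness of the discrete $R/I$-modules (which your quotients $M^\for/\sigma_M(V_I)$ need not satisfy), and the proof of that proposition actually uses that the transition maps have kernel exactly $(I/J)M_J$, which is stronger than mere surjectivity. Both points can be repaired in your situation: one checks that $\sigma_M(V_I)$ is precisely the closure of $IM$ in the quotient topology (since $\sigma_M(V_I) \subseteq \ol{IM}$, being the image of the closure of $IM$ in $\nwhat{M}$, while $\ol{IM}$ in the quotient topology is $\bigcap_J (IM + \sigma_M(V_J)) \subseteq IM + \sigma_M(V_I) = \sigma_M(V_I)$), from which $\sigma_M(V_I) = IM + \sigma_M(V_J)$ for $J \subseteq I$ and the required exactness follow; alternatively, the paper's openness argument avoids Proposition~\ref{prodiscrete} altogether and is shorter. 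With that one-line repair your part (5) is sound, and the rest of the argument stands as written.
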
 
\begin{proof} \hfill 
\par  $\mathit 1.$ The naive canonical topology of $M^\for$ is finer than the topology of $M$. 
Therefore, $M^\for$ equipped with the naive canonical topology is separated and 
the canonical morphism  $M^\for \hookrightarrow (\nwhat{M})^\for$ is injective. 
\par   $\mathit 2.$
The existence and continuity of $\sigma_M$ is clear 
($\sigma_M$ is obtained by completion of the identity map $M^\naive\to M$). 
Because of the canonical inclusion, any  $m \in M$ coincides with $\sigma_M(m)$.  
So, $\sigma_M$ is surjective. 
\par $\mathit 3.$
The functor  $\;\nwhat{~}\;$  is the composition of two functors ($(-)^\naive$ and completion) 
which are left adjoints of the corresponding forgetful functors $\cCLMu_R \to \cLMu_R \to \Mod_R$. 
\par $\mathit 4.$ The first part of the statement is obvious. Notice however that if we take the coimage of $\sigma_M$ in the category $\cLMu_R$ the canonical morphism 
$$\Coim^{\cLMu_R}(\sigma_M)\longrightarrow M 
$$
is bijective. The coimage  of $\sigma_M$  in the category $\cCLMu_R$, as in  \eqref{coim},  
is obtained by completion of the previous bijective morphism, and is not necessarily injective.
\par $\mathit 5.$   
A basis of open $R$-submodules of $\nwhat{M}$ consists of $\{\cl_{\nwhat{M}}(J\nwhat{M})\}_{J \in \cP(R)}$. In particular, 
   $\nwhat{M} \in \cCLMpscan_R$.  The coimage  $\Coim^{\cLMu_R}(\sigma_{M})$ of $\sigma_{M}$  in $\cLMu_R$ is  pseudocanonical because the morphism $\nwhat{M} \map{} \Coim^{\cLMu_R}(\sigma_{M})$ is open.  The coimage  $\Coim(\sigma_{M})$ of $\sigma_{M}$  in $\cCLMu_R$ is then the completion of 
$\Coim^{\cLMu_R}(\sigma_{M})$ and is therefore pseudocanonical, as well. 
 \par $\mathit 6.$   For $f: M_1 \longrightarrow  M$ as in the statement, we have a factorization of $\sigma_M$ as 
 $$
 \nwhat{M} =\nwhat{M_1} \map{\sigma_{M_1}} M_1 \map{f} M \;,
 $$
 so that $\Coim(\sigma_{M_1}) \iso \Coim(\sigma_{M})$ and the map $\Coim(\sigma_{M}) \map{} M$ factors through $f$. 
 \par $\mathit 7.$ 
  The kernel of $\sigma_M$ in $\cLMu_k$ is closed   and since 
$\nwhat{M} \in \cLMou_k$ its cokernel in  
$\cLMu_k$  is already complete. We conclude that the latter is the coimage  of $\sigma_M$ in $\cCLMou_k$
 and that the morphism \eqref{coim} is bijective. The last part of the statement follows from the previous point $\mathit 6$.
 \par
\end{proof}

 \begin{rmk}  \label{canbij}\hfill 
 \ben 
 \item
  For $M$ as in Proposition~\ref{can-forget-adj},  the completion $\nwhat{M}$ of the $k$-module $M^\naive$ is not necessarily complete in its naive topology.   So, the sequence  
 $$
 \dots  \map{\sigma_{\nwhat{\nwhat{M}}}}  \nwhat{\nwhat{M}}  \map{\sigma_{\nwhat{M}}}  \nwhat{M} \map{\sigma_M} M  \;,
 $$
 might never stop increasing (although we have no example of this situation).  
   \item  Let $k \in \cCRu$  and $M \in \cCLMu_k$. Since $\nwhat{M} \in \cCLMpscan_k$, the morphism $\sigma_M$ factors as 
 \beq
  \label{canbij0} \Coim(\sigma_{M}) \longrightarrow M^\pscan \map{(1:1)} M
\;,
\eeq
 but we do not know whether the surjective morphism 
 \beq
  \label{canbij1}\Coim(\sigma_{M}) \to M^\pscan
\eeq
 is open or bijective. (It  is a bijection if $k \in \cCRou$ by $\mathit 5$ of Proposition~\ref{can-forget-adj}.)  
 \een
 \end{rmk} 
 \begin{defn}\label{defmax} If $k \in \cCRou$  and $M \in \cCLMu_k$ we set 
$$
M^{\max} := \Coim(\sigma_{M}) \in \cCLMpscan_k \subset  \cCLMou_k \;.
$$
The bijective $\cCLMu_k$-morphism $M^{\max} \map{(1:1)} M$ exhibits $M^\for$ equipped with the finest topology of an object of  $\cCLMu_k$ finer than the topology of $M$. If $M = M^{\max}$ we say that $M$ is \emph{maximally uniform} or simply \emph{maximal}. 
\end{defn} 
The following is the most interesting result of this section. 
\begin{thm}  \label{barrpscan3} Let $k \in \cCRouclop$ and   $M \in \cCLMu_k$. 
\hfill \ben \item The surjective morphism $\nwhat{M} \map{\sigma_M} M$    in \eqref{can-forget-map}  
factors as 
\beq \label{barrpscan311} 
 \nwhat{M} \map{\Coim(\sigma_{M})} M^{\max}   \map{(1:1)}  M^\barrell  = M^\clop  =  M^\pscan  \map{(1:1)} M
\;. \eeq 
\item Assume $k \in \cCRoufop$.   Then  the morphism 
$\nwhat{M} \map{\Coim(\sigma_{M})} M^{\max}$  is an isomorphism so that $M^{\max} = M^\naive$. In particular $M$ is complete in its naive canonical topology. Formula~\ref{barrpscan311} becomes 
\beq \label{naivecompl} 
 \nwhat{M}  = M^\naive = M^{\max} \map{(1:1)}  M^\barrell = M^\clop =  M^\pscan  \map{(1:1)} M
\;.\eeq
In particular, $M^\naive$ is the unique maximal structure of an object of $\cCLMu_k$ on $M^\for$.
\een
\end{thm}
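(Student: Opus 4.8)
The plan is to treat Part 1 as essentially formal, extracting all of its substance from the already-established identification of $M^{\max}$ as the finest complete uniform refinement of $M$, and then to reduce Part 2 to a single topological computation that invokes the $\fop$ hypothesis exactly once, through Lemma~\ref{8.3.3}.

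\textbf{Part 1.} First I would record the universal factorization of $\sigma_M$ through its coimage: by Definition~\ref{defmax} the surjection $\sigma_M$ of \eqref{can-forget-map} factors as $\nwhat{M} \map{\Coim(\sigma_M)} M^{\max} \map{(1:1)} M$, the second arrow being the canonical bijective $\cCLMu_k$-morphism. By Definition~\ref{defmax} together with part $\mathit 7$ of Proposition~\ref{can-forget-adj}, $M^{\max}$ is the module $M^\for$ equipped with the finest topology of an object of $\cCLMu_k$ finer than the topology of $M$, and $M^{\max}\in\cCLMpscan_k$. Since $M^\pscan$ is itself a complete uniform (indeed pseudocanonical) refinement of $M$, this finest property forces the identity of $M^\for$ to be a continuous bijective morphism $M^{\max}\map{(1:1)}M^\pscan$ factoring $M^{\max}\to M$. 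It then remains to identify $M^\pscan = M^\clop = M^\barrell$: the equality $M^\clop = M^\pscan$ is Corollary~\ref{barrpscan20}, while for $M^\pscan = M^\barrell$ I would observe that each $\ol{IM}$ is a closed $k$-sponge (it is closed, and $Ix\subseteq IM\subseteq\ol{IM}$ for every $x\in M$, cf. part $\mathit 1$ of Remark~\ref{barrpscan}), hence lies in $\cP^\barrell(M)$; thus $M^\barrell$ is finer than $M^\pscan$, and combined with the arrow $M^\pscan\map{(1:1)}M^\barrell$ of Corollary~\ref{barrpscan20} this yields the equality. Assembling the two factorizations gives \eqref{barrpscan311}.

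\textbf{Part 2.} Here the $\fop$ hypothesis does the work. By Lemma~\ref{8.3.3}, for $k\in\cCRoufop$ the module $\nwhat{M}$ lies in $\cCLMnaive_k$, i.e. it carries its own naive canonical topology, so that $\{I\nwhat{M}\}_{I\in\cP(k)}$ (and not merely $\{\cl_{\nwhat M}(I\nwhat M)\}_I$) is a basis of open $k$-submodules of $\nwhat{M}$. Now $M^{\max}=\Coim(\sigma_M)$ is, by part $\mathit 7$ of Proposition~\ref{can-forget-adj}, the coimage already computed in $\cLMu_k$, that is $M^\for = \nwhat{M}^\for/\Ker(\sigma_M^\for)$ endowed with the quotient topology; in particular the canonical map $q\colon\nwhat{M}\to M^{\max}$ is an open quotient map. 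Since $q$ is open and surjective, the images $q(I\nwhat{M})$ of the basic open submodules form a basis of $M^{\max}$, and because $\sigma_M$ is $k$-linear and surjective one computes $q(I\nwhat{M}) = \sigma_M(I\nwhat{M}) = I\,\sigma_M(\nwhat{M}) = IM$. Hence $\{IM\}_{I\in\cP(k)}$ is a basis of open submodules of $M^{\max}$, which is precisely to say $M^{\max}=M^\naive$ in the sense of Definition~\ref{topnaivedef}.

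Finally I would draw the routine consequences. As $M^{\max}$ is complete, the equality $M^{\max}=M^\naive$ says that $M^\for$ is complete in its naive canonical topology, whence $\nwhat{M}=\what{M^\naive}=M^\naive=M^{\max}$ and the morphism $\Coim(\sigma_M)\colon\nwhat{M}\to M^{\max}$ is an isomorphism; substituting $\nwhat{M}=M^\naive=M^{\max}$ into \eqref{barrpscan311} produces \eqref{naivecompl}, and the uniqueness of the maximal structure is then just the defining (finest) property of $M^{\max}$ in Definition~\ref{defmax}. The one genuinely delicate point — and the reason Part 2 requires $\fop$ rather than merely $\clop$ — is the passage from the pseudocanonical basis $\{\cl_{\nwhat M}(I\nwhat M)\}_I$ of $\nwhat{M}$ to the naive basis $\{I\nwhat{M}\}_I$: absent Lemma~\ref{8.3.3} the closures $\cl_{\nwhat M}(I\nwhat M)$ may strictly contain $I\nwhat{M}$, so the open map $q$ transports them only to $\cl_{M^{\max}}(IM)$, recovering the pseudocanonical topology of $M^{\max}$ but not its naiveté, and the conclusion $M^{\max}=M^\naive$ fails.
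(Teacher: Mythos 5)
Your Part~2 reproduces the paper's own proof almost step for step: Lemma~\ref{8.3.3} is invoked exactly once to put $\nwhat{M}$ in $\cCLMnaive_k$, part $\mathit 7$ of Proposition~\ref{can-forget-adj} is used to compute the coimage in $\cLMu_k$ so that $q\colon \nwhat{M}\to M^{\max}$ is an open surjection, and the open image of the naive basis identifies $M^{\max}$ with $M^\naive$, whence completeness of $M^\naive$ and the isomorphism $\nwhat{M}\iso M^{\max}$. Your explicit computation $q(I\nwhat{M})=IM$ and your closing diagnosis of where fop (as opposed to clop) is genuinely needed are correct amplifications of the paper's terser argument.

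Part~1, which the paper dismisses as ``already proven'', contains one point where you should be more careful. Your basis inclusion $\cP^\pscan(M)\subset\cP^\barrell(M)$ (each $\ol{IM}$ is a closed sponge) is correct, but note that it yields the \emph{trivial} half of the equality $M^\pscan=M^\barrell$, namely that $M^\barrell$ is finer than $M^\pscan$. For the other half you import the middle arrow $M^\pscan \map{(1:1)} M^\barrell$ of Corollary~\ref{barrpscan20} at face value, and that arrow cannot hold in the generality in which it is stated there ($k\in\cRuclop$, arbitrary $M\in\cLMu_k$): combined with your inclusion it would give $M^\pscan=M^\barrell$ with no completeness or countability hypotheses, which is false. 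For instance, over $k=\Z_p$ the uniform module $M=\bigoplus_{n} \Z/p^n\Z$, topologized as a subspace of $\prod_n \Z/p^n\Z$, admits the closed sponge $\bigoplus_n p^{\lceil n/2\rceil}\Z/p^n\Z$, which contains no $\ol{p^kM}$; so there $M^\barrell$ is strictly finer than $M^\pscan$ and no continuous bijection $M^\pscan\to M^\barrell$ exists. (Indeed the justification of that arrow in Remark~\ref{ouclop}, via ``barrelled $\Rightarrow$ pseudocanonical'' and minimality of the right-adjoint refinements, actually produces the opposite factorization $M^\barrell\to M^\pscan$.) The substantive input — and the reason the theorem assumes $k\in\cCRouclop$ and $M$ complete — is the Baire-category argument: $M^\pscan$ carries the countable basis $\{\ol{IM}\}_{I\in\cP(k)}$, is complete and clop, hence lies in $\cCLMouclop_k$ and is barrelled by Proposition~\ref{clopbarrel}; since a closed sponge of $M$ remains a closed sponge of the finer $M^\pscan$ (sponginess is a purely algebraic condition and closedness persists under refinement), every closed sponge of $M$ is open in $M^\pscan$, which is exactly the missing inclusion. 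Rerouting that one step through Proposition~\ref{clopbarrel} and Corollary~\ref{pseudocanbarr}, rather than through the literal arrow of Corollary~\ref{barrpscan20}, makes your Part~1 sound; the remainder (the maximality of $M^{\max}$ forcing the factorization through $M^\pscan$, and $M^\clop=M^\pscan$ from clopness of $k$) is correct as written.
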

\begin{proof} Part 1 has already been proven. As for part 2, we have seen in Lemma~\ref{8.3.3} that   
 $\nwhat{M}$ is endowed with the naive $k$-canonical topology. On the other hand, the coimage of $\sigma_M: \nwhat{M} \map{} M$ in $\cCLMu_k$ may be calculated in $\cLMu_k$. So, $\Coim(\sigma_{M})$ is an open surjective map. Therefore, the topology of $M^{\max}$ is the 
 naive $k$-canonical one, as well. This means that $M^\naive$ is complete, hence $\sigma_M$ is really an isomorphism. 
\end{proof}
\begin{rmk} \label{8.3.12} Let $k \in \cCRoufop$.
\ben
\item Part $\mathit 2$ of Theorem~\ref{barrpscan3}  may be seen as a generalization of 
Lemma 8.3.12 (b) of \cite{GR} which applies  if $k$ is a $I$-adic ring for a finitely generated ideal $I$ of $k$.
If, moreover,   $M \in \cCLMou_k$, the same result also appears 
as Proposition 0.7.2.5 of \cite{FK}. 
\item By $\mathit 3$ of Proposition~\ref{can-forget-adj},  the functor $\nwhat{~} :\Mod_k \map{} \cCLMu_k$ is left adjoint to the forgetful functor $(-)^\for: \cCLMu_k \map{} \Mod_k$. We do not know how to characterize algebraically the $k$-modules $P$  such that  the unit of the adjunction $\eta (P): P\map{}(\nwhat{P})^\for$ is an isomorphism.  Such $P$ are the objects of a full subcategory of $\Mod_k$ equivalent to the category $\cCLMcan_k$ of the next section. 
\een
\end{rmk} 
 \end{subsection}
 
\begin{subsection}{Canonical modules}  \label{canmodules} 
\emph{We assume in this section} (unless otherwise specified) \emph{that $k$ is in $\cCRou$ so that $\cCLMpscan_k \subset \cCLMou_k$}. We recall (Theorem~\ref{quasiabelian-compl}) that the category $\cCLMou_k$ is quasi-abelian. As observed in Remark~\ref{baire1}, any object of $\cCLMou_k$ is a Baire space. 
\par \smallskip
Recall that, for any $R \in \cCRu$ and any (small) set $A$,  $R^{(A,\un)} \in \cCLMpscan_R$ (see part $4$ of Remark~\ref{dirsumdescr}).  
\begin{defn} \label{strong-can-def} Let $k \in \cCRou$ and 
let $M$ be an object of $\cLMu_k$. We say that $M$ is $k$-\emph{canonical}  or simply   \emph{canonical}   if 
there exist a 
set $A$  and an open surjective morphism (see Corollary~\ref{strong-can-canCOR})
$$F:k^{(A,\un)} \map{} M \;.$$ 
We denote by $\cCLMcan_k$ the full subcategory of $\cLMu_k$ whose objects 
are canonical $k$-modules. 
\end{defn}
\begin{rmk} \label{strong-can-can} \hfill 
\ben
\item  As observed in Corollary~\ref{strong-can-canCOR}, if $M$ is a canonical $k$-module, then $M \in \cCLMou_k$. So, $\cCLMcan_k$ is a full subcategory of the quasi-abelian category  $\cCLMou_k$. 
 \item 
Let us show that a canonical $k$-module is pseudocanonical. We need to show that 
if $M$ is a canonical object of $\cCLMu_k$   the family of submodules of the form 
$\ol{IM}$, for $I \in \cP(k)$, is a basis of open submodules of $M$. 
In fact, let $F:k^{(A,\un)}\to M$ be as in the definition.
Then $F(Ik^{(A,\un)})=IM$ and continuity of $F$ imply that $F(\cl_{k^{(A,\un)}}(I\,k^{(A,\un)})) \subset \ol{IM}$. 
Since $F$ is open   
and $\cl_{k^{(A,\un)}}(I\,k^{(A,\un)})$ is an open submodule of $k^{(A,\un)}$, 
$F(\cl_{k^{(A,\un)}}(I\,k^{(A,\un)}))$ is open. 
We conclude that 
 the submodules  $\ol{IM}$, for $I \in \cP(k)$, are open and  then form a basis of open submodules of $M$. 
 Notice that $F(\cl_{k^{(A,\un)}}(I\,k^{(A,\un)}))$ contains $IM$ and, being open, hence closed, it contains $\ol{IM}$.
 We conclude that, for any $I \in \cP(k)$,
 \beq \label{opensurj}
 F(\cl_{k^{(A,\un)}}(I\,k^{(A,\un)})) = \cl_M(I\,M) \;.
 \eeq 
 \item 
Let $g:M\map{} N$ be a cokernel in $\cCLMu_k$ where $M$ is an object of $\cCLMcan_k$. 
Then $g$ is an open surjective map, and $N$ is canonical. In fact, let $F: k^{(A,\un)} \map{} M$ be as in the definition. Then both $g$ and $gF:k^{(A,\un)} \map{} N$ are strict epimorphisms in $\cLMu_k$ and it follows that $N$ is canonical. 
\item   For $k$ discrete, $\cCLMcan_k$ is  the full subcategory of $\cCLMu_k$ consisting of discrete $k$-modules and therefore coincides with $\cLM^\naive_k = \cCLM^\naive_k$.  In this particular case (see Definition~\ref{topnaivedef}), 
for any $M \in \Mod_k$ the discrete $k$-module $M$ was named  $M^\naive  \in \cLMu_k$. By Proposition~\ref{naiveadj} 
the functor   $(-)^\naive: \Mod_k \map{} \cLMu_k$ is the \emph{left}  adjoint of the forgetful functor 
$(-)^\for :  \cLMu_k \map{} \Mod_k$ and $\cCLMcan_k$ is the essential image of $(-)^\naive$. 
In Corollary~\ref{cor-adj-naive} another functor with the same name $(-)^\naive$ was considered, namely 
\beq \label{rightnaive} (-)^\naive : \cLMu_k \map{} \cLM^\naive_k\;\;,\;\; M \longmapsto M^\naive :=  (M^\for)^\naive \;,
\eeq 
\emph{right} adjoint to the inclusion $\cLM^\naive_k \hookrightarrow \cLMu_k$. 
\een
\end{rmk}    
\begin{notation} \label{S(M)} For any $k \in \cCRu$, any object $N$ of $\Mod_k$ 
and any $n \in N$, we consider a copy  $ke_n$  
of the  $k$-module $k$, where $\lambda e_n$ is identified  with $\lambda \in k$. 
So  $ke_n = k \in \cCLMu_k$ and we define the object of $\cCLMu_k$
\beq \label{scan1}
S(N) := k^{(N,\un)} = \nwhat{k^{(N)}} = \bigoplus_{n \in N}\nolimits^\un\; ke_n  \;.
\eeq
If $M$ is an object of $\cCLMu_k$, we set $S(M) := S(M^\for)$. There exists a canonical surjective morphism in $\cCLMu_k$
\beq \label{scan2}
\pi_M:S(M) \longrightarrow M \quad , \quad \sum_m a_m e_m \longmapsto \sum_m a_m m \;.
\eeq
If $k \in \cCRou$ then $S(M) \in \cCLMou_k$.
 \end{notation}
\begin{prop} \label{quotient-sum}  
Let $k \in \cCRou$, $M \in \cCLMu_k$ and $S(M)$
be as in Notation~\ref{S(M)}. 
Then  $M \in \cCLMcan_k$ iff $\pi_M$  is open   (\ie coincides with its coimage). 
\end{prop}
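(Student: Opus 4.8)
The plan is to reduce the statement to the assertion that $\pi_M$ is an open surjection, and then to extract openness from the defining open surjection of a canonical module. First I would record two preliminaries about $S(M)=k^{(M^\for,\un)}$: by Remark~\ref{dirsumdescr}(4) (see also Proposition~\ref{dirsumideal}) it lies in $\cCLMpscan_k\subset\cCLMou_k$ and $\{\ol{I\,S(M)}\}_{I\in\cP(k)}$ is a basis of its open submodules; and $\pi_M$ is always surjective, since $\pi_M(e_m)=m$ for every $m\in M^\for$. Because $S(M)\in\cCLMou_k$, Corollary~\ref{strong-can-canCOR} applies to $\pi_M$ and shows that $\pi_M$ is an open surjection if and only if it is a cokernel in $\cCLMu_k$. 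By part~5 of Remark~\ref{bim=iso} a cokernel coincides with its coimage; conversely, if $\pi_M$ coincides with its coimage then, as the coimage morphism $S(M)\to\Coim(\pi_M)=\Coker(\Ker\pi_M)$ is a cokernel, so is $\pi_M$. Thus the proposition is equivalent to: $M\in\cCLMcan_k$ if and only if $\pi_M$ is open (it being automatically surjective).

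The direction ($\Leftarrow$) is then immediate: if $\pi_M$ is an open surjection, taking $A=M^\for$ and $F=\pi_M\colon k^{(M^\for,\un)}\to M$ in Definition~\ref{strong-can-def} exhibits $M$ as canonical.

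For ($\Rightarrow$) I would fix a set $A$ and an open surjection $F\colon k^{(A,\un)}\to M$ and factor it through $\pi_M$. Setting $f_a:=F(e_a)\in M^\for$, the $k$-linear map on algebraic direct sums $k^{(A)}\to k^{(M^\for)}$ determined by $e_a\mapsto e_{f_a}$ sends $I^{(A)}$ into $I^{(M^\for)}$ for every $I\in\cP(k)$, hence is uniformly continuous for the topologies induced by $k^{(A,\un)}$ and $S(M)$ and extends to a morphism $\phi\colon k^{(A,\un)}\to S(M)$ in $\cCLMu_k$. On the dense submodule $k^{(A)}$ one has $\pi_M(\phi(e_a))=f_a=F(e_a)$, so $\pi_M\circ\phi=F$ by continuity and separatedness of $M$. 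The key computation is then, for each $I\in\cP(k)$, the chain $\ol{IM}=F(\ol{I\,k^{(A,\un)}})=\pi_M(\phi(\ol{I\,k^{(A,\un)}}))\subseteq\pi_M(\ol{I\,S(M)})\subseteq\ol{IM}$, in which the first equality is \eqref{opensurj} of Remark~\ref{strong-can-can}(2) (using that $F$ is open), the inclusion $\phi(\ol{I\,k^{(A,\un)}})\subseteq\ol{I\,S(M)}$ follows from continuity of $\phi$ and $\phi(I\,k^{(A,\un)})\subseteq I\,S(M)$, and the last inclusion from continuity of $\pi_M$ together with $\pi_M(I\,S(M))=IM$. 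Hence $\pi_M(\ol{I\,S(M)})=\ol{IM}$.

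Finally I would conclude openness: $M$, being canonical, is pseudocanonical (Remark~\ref{strong-can-can}(2)), so every $\ol{IM}$ is open in $M$, while $\{\ol{I\,S(M)}\}_I$ is a basis of neighbourhoods of $0$ in $S(M)$ whose $\pi_M$-images are these open submodules; a continuous homomorphism carrying a basis of neighbourhoods of $0$ to neighbourhoods of $0$ is open, so $\pi_M$ is open (and surjective). I expect the main obstacle to be precisely this passage from $F$ to $\pi_M$: surjectivity of $\pi_M$ is trivial, but openness is not formal and is exactly what forces the factorization $F=\pi_M\circ\phi$ and the identity \eqref{opensurj} controlling the images of the submodules $\ol{I(-)}$ under open surjections. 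Alternatively, once $F=\pi_M\circ\phi$ is in hand, one may instead invoke the ``obscure axiom'' for exact categories (\cite{Keller}, \cite{exact}): a composite being a strict epimorphism forces its left factor $\pi_M$ to be one.
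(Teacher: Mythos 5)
Your proof is correct and follows essentially the same route as the paper's: the same factorization $F = \pi_M \circ S_F$ through $S(M)$ (your $\phi$ is the paper's $S_F$) and the same key identity $\pi_M(\ol{I\,S(M)}) = \ol{IM}$ deduced from \eqref{opensurj}, with the reduction of ``coincides with its coimage'' to open surjectivity via Corollary~\ref{strong-can-canCOR} and the construction of $\phi$ by extension from the dense algebraic direct sum merely made more explicit than in the paper. Your closing alternative is also valid: since $F = \pi_M \circ \phi$ is a strict epimorphism in the quasi-abelian category $\cCLMou_k$ and $\pi_M$ admits a kernel, $\pi_M$ is itself a strict epimorphism, which would bypass the $\ol{I\,S(M)}$ computation entirely.
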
 
\begin{proof} 
The sufficiency of the condition follows from the definition. 
Conversely, assume $M \in \cCLMcan_k$ and 
let $F:k^{(A,\un)} \to M$ be a strict epimorphism  in $\cCLMou_k$, as in the definition of a canonical module. 
For any $\alpha \in A$ we denote by 
$j_\alpha : k \to k^{(A,\un)}$ the canonical $\alpha$-th injection, and by 
$\delta_\alpha$ the image $j_\alpha(1) \in k^{(A,\un)}$. 
Let $m_\alpha = F(\delta_\alpha)$, for any $\alpha \in A$. We have a natural morphism
$$
S_F: k^{(A,\un)} \longrightarrow S(M) \quad , \quad \delta_\alpha \longmapsto e_{m_\alpha}
$$
such that $F = \pi_M \circ S_F$. 
A basis of open submodules of $M$ consists of (\cf \eqref{opensurj})
$$\{\ol{IM} =  F(\cl_{k^{(A,\un)}}(I\,k^{(A,\un)})) \}_{I \in \cP(k)} \;.
$$
To prove our statement it will suffice to check that $\pi_M(\ol{IS(M)}) = \ol{IM}$. 
The inclusion $\pi_M(\ol{IS(M)}) \subset \ol{IM}$ is automatic.  
On the other hand, $S_F(\cl_{k^{(A,\un)}}(I\,k^{(A,\un)})) \subset \ol{IS(M)}$ so that, by \eqref{opensurj},
$$
\ol{IM} =   F(\cl_{k^{(A,\un)}}(I\,k^{(A,\un)})) =   \pi_M (S_F(\cl_{k^{(A,\un)}}(I\,k^{(A,\un)}))) \subset \pi_M(\ol{IS(M)}) \;.
$$
\end{proof} 

\begin{rmk} \hfill \ben 
\item  For $k \in \cCRou$, any canonical $k$-module is the cokernel in $\cLMu_k$ (and in $\cCLMou_k$) of a 
morphism $k^{(B,\un)} \to k^{(A,\un)}$, for suitable index sets $A$ and $B$.  
In fact, let $F: k^{(A,\un)} \map{} M$ be as in Definition~\ref{strong-can-def}. Then, by 5 of Remark~\ref{bim=iso}, $F$ is the cokernel of its kernel $K := \Ker (F) \map{j} k^{(A,\un)}$. Consider the  morphism $\pi_K : S(K) \map{} K$ of \eqref{scan2}. Let us show that $F$ is the cokernel of 
$$ j \circ \pi_K : S(K)=k^{(K,\un)} \map{} k^{(A,\un)}\;.$$
To prove this, let $g:   k^{(A,\un)} \map{}  C$ be a morphism in $\cLMu_k$ such that $g \circ j \circ \pi_K : S(K) \map{}  C$ vanishes. Then  $g \circ j : K \map{}  C$
 also vanishes, and therefore there is a morphism $h: 
 M \map{}  C$ such that $g = h \circ F$.
\item  
 For any $k \in \cCRu$ and any $M$ in $\Mod_k$ the natural $k$-module surjection 
$$
 k^{(M)} \longrightarrow M \quad , \quad \sum_{m\in M} a_m e_m \longmapsto \sum_{m \in M} a_m m \;.
$$
extends by continuity to a  surjective morphism 
 \beq \label{scan3}
 \phi_M: S(M) \longrightarrow \nwhat{M} \;,
\eeq
 in general   not open.
However, if $M \in \cCLMu_k$, $\pi_M$ factors as 
\beq \label{factor}
S(M) \map{\phi_M} \nwhat{M} \map{\sigma_M} M
\eeq 
which already indicates that the identity of $M^\for$ induces a   morphism 
\beq \label{factor1}
\Coim(\pi_M) \map{} \Coim(\sigma_M) \;,
\eeq
not necessarily surjective nor open. 
\item Assume $k \in \cCRou$ and $M \in \cCLMu_k$. Still 
 $\phi_M$  is not in general  open so that 
$\nwhat{M}$ is not necessarily $k$-canonical.  
However, in this case the coimages  $\Coim(\pi_M)$ of $\pi_M$  and $\Coim(\sigma_M)$ of $\sigma_M$ in $\cCLMu_k$ may be calculated   in $\cLMou_k$, since the latter coimages  are already complete.  
Therefore, 
the morphism \eqref{factor1} 
is a bijection
\beq \label{factor2}
M^\can = \Coim(\pi_M) \map{(1:1)} \Coim(\sigma_M) = M^{\max}\;.
\eeq 
\een
\end{rmk}   
 \begin{prop} \label{scan-adjoint}  We assume here that $k \in \cCRou$. \hfill \ben
  \item 
 The 
inclusion functor  $\iota: \cCLMcan_k \hookrightarrow \cCLMu_k$ admits a right adjoint functor 
\beq \label{scan-adjoint1}  (-)^\can: \cCLMu_k \longrightarrow  \cCLMcan_k \;\;,\;\; M \longmapsto M^\can  
\eeq 
where, for $\pi_M : S(M) \to M$ as in \eqref{scan2},  
$M^\can = \Coim(\pi_M)$, taken in $\cLMou_k$. Equivalently, 
$M^\can$ has underlying module $M^\for$ and is endowed with the quotient topology of the canonical morphism 
$\pi_M : S(M) \to M$. 
 \item The canonical morphism $M^{\max} \longrightarrow M^\can$ (see Definition~\ref{defmax}) is an isomorphism with inverse \eqref{factor2}. 
Therefore, 
$M^\can = M^{\max}$ 
   is the unique maximal object of $\cCLMu_k$ 
   above $M$ with the same underlying $k$-module.
\een
\end{prop}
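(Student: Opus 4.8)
The plan is to establish the three assertions in turn: that $(-)^\can$ is well defined with values in $\cCLMcan_k$, that it is right adjoint to $\iota$, and that the comparison $M^{\max}\map{}M^\can$ is invertible. First I would check that $M^\can$ is a legitimate object. Since $k\in\cCRou$, the module $S(M)=k^{(M,\un)}$ lies in $\cCLMou_k$ and, being pseudocanonical, carries the countable basis of open submodules $\{\,\cl_{S(M)}(J\,S(M))\,\}_{J\in\cP(k)}$ (Remark~\ref{dirsumdescr}). The kernel $\Ker(\pi_M)=\Ker(\pi_M^\for)$ is a closed submodule, so by Corollary~\ref{quotclop2} the quotient $S(M)/\Ker(\pi_M)$ with the quotient topology is already complete; hence the coimage $\Coim(\pi_M)$ computed in $\cLMou_k$ coincides with the one computed in $\cCLMou_k$, has underlying module $M^\for$, and is endowed with the quotient topology of $\pi_M$. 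This is precisely the asserted ``equivalently''. The coimage projection $q_M\colon k^{(M,\un)}\map{}M^\can$ is an open surjection, so $M^\can$ is canonical by Definition~\ref{strong-can-def}, and the counit $\varepsilon_M\colon M^\can\map{}M$ is the identity on underlying modules, hence a bijective morphism (the topology of $M^\can$ being finer than that of $M$).

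For the adjunction I would produce, for $N\in\cCLMcan_k$ and a morphism $g\colon N\map{}M$ in $\cCLMu_k$, a unique lift $\tilde g\colon N\map{}M^\can$ with $\varepsilon_M\circ\tilde g=g$. The key device is the functoriality of $S(-)$: the $k$-linear map $g^\for$ induces a continuous morphism $S(g)\colon S(N)\map{}S(M)$, $e_n\mapsto e_{g(n)}$, satisfying $\pi_M\circ S(g)=g\circ\pi_N$ (a check on generators). Because $N$ is canonical, Proposition~\ref{quotient-sum} identifies $\pi_N$ with its own coimage, so $\pi_N$ is the cokernel of $\Ker(\pi_N)\hookrightarrow S(N)$. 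A direct computation on generators gives $S(g)\big(\Ker(\pi_N)^\for\big)\subseteq\Ker(\pi_M)^\for$, whence $q_M\circ S(g)$ vanishes on $\Ker(\pi_N)$; the cokernel property then yields a unique $\tilde g$ with $\tilde g\circ\pi_N=q_M\circ S(g)$. Cancelling the epimorphism $\pi_N$ in $\varepsilon_M\circ\tilde g\circ\pi_N=\pi_M\circ S(g)=g\circ\pi_N$ shows $\varepsilon_M\circ\tilde g=g$, while uniqueness of the lift is automatic since $\varepsilon_M$ is a monomorphism. Thus $g\mapsto\tilde g$ and $\tilde g\mapsto\varepsilon_M\circ\tilde g$ are mutually inverse, giving the natural bijection $\Hom_{\cCLMu_k}(\iota N,M)\iso\Hom_{\cCLMcan_k}(N,M^\can)$; naturality in $M$ and $N$ is then routine.

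Finally I would compare the two maximal structures carried by $M^\for$. By Definition~\ref{defmax}, $M^{\max}=\Coim(\sigma_M)$ carries the \emph{finest} $\cCLMu_k$-topology on $M^\for$ that is finer than the topology of $M$; since $\varepsilon_M$ exhibits $M^\can$ as one such structure, maximality furnishes a bijective morphism $M^{\max}\map{}M^\can$ (the canonical morphism of the statement). For the reverse comparison I would use the factorization $\pi_M=\sigma_M\circ\phi_M$, where $\phi_M\colon S(M)\map{}\nwhat{M}$ is the morphism \eqref{scan3}: then $\phi_M\big(\Ker(\pi_M)\big)\subseteq\Ker(\sigma_M)$, so $\phi_M$ descends through the two quotient topologies to a continuous bijective morphism $M^\can\map{}M^{\max}$, again the identity on $M^\for$. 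Two mutually inverse bijective morphisms force the topologies to agree, so the canonical morphism $M^{\max}\map{}M^\can$ is an isomorphism.

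I expect the main obstacle to lie in the adjunction step, precisely in the bookkeeping required to invoke the cokernel universal property: one must be certain that $\pi_N$ genuinely \emph{is} the cokernel of its kernel in $\cCLMou_k$, which is exactly where the canonicity of $N$ and Proposition~\ref{quotient-sum} are indispensable, and that the completeness and countability hypotheses on $k$ guarantee that the relevant coimages are computed by the naive quotients rather than by their separated completions. The comparison in the last step is delicate for the parallel reason that it must match the quotient topology of $\pi_M$ against the ``finest structure'' characterization of $M^{\max}$, which is why both a maximality argument and the explicit factorization through $\phi_M$ are needed.
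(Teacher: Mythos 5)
Your proposal is correct and follows essentially the same route as the paper: you compute $\Coim(\pi_M)$ in $\cLMou_k$ via the countability argument of Corollary~\ref{quotclop2}, use Proposition~\ref{quotient-sum} to make $\pi_N$ the cokernel of its kernel for canonical $N$ (your explicit lift $\tilde g$ built from $S(g)$ merely inlines the functoriality of $(-)^\can$ that the paper packages into the unit/counit $\eta$, $\veps$), and you settle the final claim exactly as the paper does, combining the bijective morphism $M^\can \to M^{\max}$ obtained from the factorization $\pi_M = \sigma_M \circ \phi_M$ with the factorization $M^{\max} \to M^\can \to M$ furnished by part $\mathit 6$ of Proposition~\ref{can-forget-adj} (encapsulated in Definition~\ref{defmax}). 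I see no gaps.
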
 
\begin{proof}   \hfill
\par $\mathit 1.$
The canonical map $\veps(M) = \wtilde{\pi_M} : \Coim(\pi_M) \to \Im(\pi_M) = M$  for $\pi_M$   in the category $\cCLMu_k$, coincides with the 
canonical map  for $\pi_M$   in the category $\cLMou_k$.
The canonical map $\veps(M)$ is therefore bijective and, by Proposition~\ref{quotient-sum} it is an isomorphism in $\cCLMu_k$ if and only if $M$ is canonical. 
We need to prove that for any $N \in \cCLMcan_k$ and $M \in \cCLMu_k$
there is a canonical bijection 
$$ 
\Hom_{\cCLMu_k}(\iota(N),M) 
\longrightarrow \Hom_{\cCLMcan_k}(N,\Coim(\pi_M))
\;.
$$ 
Let  $f \in \Hom_{\cCLMu_k}(\iota(N),M)$ and $\pi_N: S(N) \longrightarrow N$. Then $f \circ \pi_N: S(N) \longrightarrow M$
lifts to a morphism $F: S(N) \longrightarrow S(M)$ and we get a commutative diagram
\beq \label{square1}
\begin{tikzcd}[column sep=2.5em, row sep=2.5em] 
S(N) \arrow{r}{F} \arrow{d}{\pi_N}
&S(M)  \arrow{d}{\pi_M}   
\\ 
N \arrow{r}{f}  
& M 
\end{tikzcd}
\eeq
and eventually a morphism $N = \Coim(\pi_N) \map{g} \Coim(\pi_M)$. In the other direction, from 
$g \in \Hom_{\cCLMcan_k}(N,\Coim(\pi_M))$ we get $f = \veps(M) \circ g$. 
\par $\mathit 2.$  This follows from the properties of $M^{\max} = \Coim (\sigma_M)$ (Definition~\ref{defmax} and $\mathit 6$ of Proposition~\ref{can-forget-adj}). Namely, the identity map of $M^\for$ induces an inverse of \eqref{factor2} which is therefore an isomorphism. 
\end{proof}   
\begin{rmk} \label{limcan} \hfill \ben
\item 
Since $(-)^\can$ is a right adjoint, $\cCLMcan_k$ admits all projective limits, denoted by $\limit^\can$,  calculated by applying $(-)^\can$ to $\limit$ in $\cCLMu_k$. In particular, the kernel of a morphism $M \map{f} N$ of $\cCLMcan_k$ is $\Ker (f)^\can$, where $\Ker(f)$ is calculated in  $\cCLMu_k$ (that is in $\cLMu_k$). 
A strict monomorphism $M \map{g} N$ in $\cCLMcan_k$ is not necessarily a strict monomorphism in $\cCLMu_k$, that is a closed embedding. Rather, there exists a morphism $N \map{\varphi} P$ of $\cCLMcan_k$ such that $M = \Ker (\varphi)^\can$, where $\Ker(\varphi)$ is calculated in  $\cCLMu_k$ and  is a closed subspace of $N$. 
\item
Let  $\{M_\alpha\}_{\alpha \in A}$ be a family in $\cCLMcan_k$. Then the product of $\{M_\alpha\}_{\alpha \in A}$ in $\cCLMcan_k$ coincides with $(\PROD_{\alpha \in A} M_\alpha)^\can$ where  $\PROD_{\alpha \in A} M_\alpha$ is taken in $\cLMu_k$, and 
is denoted $\PRODcan_{\alpha \in A} M_\alpha$.
If $k \in \cCRoufop$ then  $\PRODsqu_{\alpha \in A} M_\alpha$, described in \eqref{unifsqprod}, is simply 
 $\PROD_{\alpha \in A}M_\alpha^\for$ equipped with the naive canonical topology. The latter is complete since 
 $\PROD_{\alpha \in A}M_\alpha$ is. 
Therefore it is an object of 
 $\cCLMcan_k$ and  coincides with $\PRODcan_{\alpha \in A} M_\alpha$.
 \een
\end{rmk} 
 \begin{cor}\label{canmax2}  Let  $k \in \cCRou$ and $N \in \cCLMcan_k$.  Any  bijective morphism $f:M \to N$ of $\cCLMu_k$  is an
 isomorphism. 
 \end{cor}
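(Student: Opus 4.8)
The plan is to express $f$ as a composite of isomorphisms, built from the canonical (bijective) morphisms $M_i^{\max}\map{(1:1)}M_i$ of Definition~\ref{defmax} together with the factorisation property recorded in $\mathit 6$ of Proposition~\ref{can-forget-adj}. First I would isolate the defining feature of canonical modules: for $M\in\cCLMcan_k$ the morphism $\pi_M$ coincides with its coimage by Proposition~\ref{quotient-sum}, so the canonical bijective morphism $\Coim(\pi_M)=M^\can\to M$ is an isomorphism; combined with the isomorphism $M^{\max}\iso M^\can$ of Proposition~\ref{scan-adjoint}, this shows that for every object $M$ of $\cCLMcan_k$ the canonical bijective morphism $\veps_M:M^{\max}\map{(1:1)}M$ is in fact an isomorphism.

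Now let $f:M_1\to M_2$ be a bijective morphism of $\cCLMcan_k$, regarded as a bijective morphism of $\cCLMu_k$. Since $\nwhat{M_1}=\nwhat{M_2}$ and $\sigma_{M_2}=f\circ\sigma_{M_1}$, the injectivity of $f$ gives $\Ker(\sigma_{M_2})=\Ker(\sigma_{M_1})$, and passing to coimages ($\mathit 6$ of Proposition~\ref{can-forget-adj}) produces an isomorphism $\theta:M_1^{\max}\iso M_2^{\max}$ together with a factorisation of $\veps_{M_2}$ as
\[
M_2^{\max}\map{\theta^{-1}}M_1^{\max}\map{\veps_{M_1}}M_1\map{f}M_2 \;,
\]
that is $\veps_{M_2}=f\circ\veps_{M_1}\circ\theta^{-1}$.

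Finally, the first step makes both $\veps_{M_1}$ and $\veps_{M_2}$ isomorphisms, while $\theta$ is an isomorphism by construction; solving the displayed identity for $f$ gives
\[
f=\veps_{M_2}\circ\theta\circ\veps_{M_1}^{-1}\;,
\]
a composite of isomorphisms, hence an isomorphism. The one place demanding care is the second step: one must check that $\veps_{M_2}$ really factors through the \emph{canonical} morphism $\veps_{M_1}$ (and not merely through some bijective morphism), which is exactly the content of the coimage identification in $\mathit 6$ of Proposition~\ref{can-forget-adj}. Everything else is formal, and I would note in passing that the argument uses only $k\in\cCRou$; the stronger hypothesis $k\in\cCRouclop$ is inherited from the placement after Corollary~\ref{canmax} but is not actually needed here.
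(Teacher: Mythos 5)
Your proof is correct and is essentially the paper's own argument unpacked: the paper's proof simply cites the identity $M^\can = M^{\max}$ from part $\mathit 1$ of Corollary~\ref{canmax}, and your steps (the isomorphy of $\veps_M : M^{\max} \to M$ for canonical $M$ via Propositions~\ref{quotient-sum} and \ref{scan-adjoint}, plus the factorization through $\Coim(\sigma_M)$ supplied by part $\mathit 6$ of Proposition~\ref{can-forget-adj}) are precisely the ingredients behind that identity and its maximality interpretation. Your closing observation is also correct: everything you invoke holds for $k \in \cCRou$, so the clop hypothesis is inherited from the surrounding statements but is not needed for this corollary.
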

  \begin{proof}
Follows from the identity $M^\can = M^{\max}$ in part $\mathit 2$ of Proposition~\ref{scan-adjoint}.
 \end{proof}
  The following corollary is a version of the classical  Open Mapping Theorem. 
 \begin{cor}\label{open-map} Let  $k \in \cCRou$.
 A surjective morphism in $\cCLMcan_k$ is open. 
 \end{cor}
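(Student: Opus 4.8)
The plan is to run the classical open-mapping argument through the coimage factorization, using that the category is quasi-abelian together with the two corollaries just established. So let $f:M\to N$ be a surjective morphism in $\cCLMcan_k$. First I would record that $f$ is an epimorphism with vanishing cokernel: since the inclusion $\iota:\cCLMcan_k\hookrightarrow\cCLMu_k$ is a left adjoint (Proposition~\ref{scan-adjoint}), it preserves colimits, so cokernels computed in $\cCLMcan_k$ agree with those computed in $\cCLMu_k$; and in $\cCLMu_k$ the cokernel of $f$ is the separated completion of $N^\for/\Im(f^\for)$, which is $0$ because $f$ is surjective. Hence $\Im(f)=\Ker(\Coker f)=N$, and the canonical factorization of $f$ reads $M\map{\varphi}\Coim(f)\map{\tilde f}N$, where $\varphi$ is the cokernel of $\ker f$ and $\tilde f$ is the canonical bimorphism $\Coim(f)\to\Im(f)=N$.

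Next I would show that $\tilde f$ is set-theoretically bijective, so that Corollary~\ref{canmax2} applies. The kernel $\Ker f$ is $\Ker(f^\for)$ with the induced topology, and it is closed in $M$ because $N$ is separated. By the same colimit-preservation as above, the underlying module $\Coim(f)^\for$ is the separated completion of $M^\for/\Ker(f^\for)$ taken in the quotient topology. Since $k\in\cCRou$ and $\Ker(f^\for)$ is closed, Corollary~\ref{quotclop2} guarantees that this quotient is \emph{already} complete, whence $\Coim(f)^\for = M^\for/\Ker(f^\for)\iso N^\for$ by the first isomorphism theorem. Thus $\tilde f$ is a bijective morphism of $\cCLMcan_k$, and Corollary~\ref{canmax2} upgrades it to an isomorphism.

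Finally I would deduce openness. The coimage projection $\varphi$ is by construction a cokernel in $\cCLMcan_k$, hence (again by colimit-preservation of $\iota$) a cokernel in $\cCLMu_k$; as $M\in\cCLMcan_k\subset\cCLMou_k$, Corollary~\ref{strong-can-canCOR}, in the form (d)$\Rightarrow$(a), shows that $\varphi$ is an open surjection. Therefore $f=\tilde f\circ\varphi$ is the composite of an open surjection with an isomorphism, and so is open, as required.

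The genuinely delicate step is the identification $\Coim(f)^\for\iso N^\for$, i.e.\ the injectivity of $\tilde f$: it rests entirely on the quotient $M^\for/\Ker(f^\for)$ being complete for its quotient topology, which is precisely where the countable-basis hypothesis encoded in $k\in\cCRou$ enters (through Baire's theorem and Corollary~\ref{quotclop2}). Without completeness of the quotient, the separated completion could enlarge or collapse points and $\tilde f$ would fail to be bijective. Everything else is formal once one may invoke that $\cCLMcan_k$ is quasi-abelian and that the inclusion into $\cCLMu_k$ preserves cokernels.
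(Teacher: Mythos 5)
Your proof is correct and follows essentially the same route as the paper's own: factor $f$ through its coimage, observe that the coimage projection is a cokernel in $\cCLMou_k$ (hence an open surjection, your Corollary~\ref{strong-can-canCOR}), and upgrade the bijective morphism $\Coim(f) \to \Im(f) = N$ to an isomorphism via Corollary~\ref{canmax2}; your identification of $\Coim(f)^\for$ with $N^\for$ through the closedness of $\Ker(f^\for)$ and the completeness of the quotient (Corollary~\ref{quotclop2}) simply spells out what the paper compresses into the single word ``bijective''. One small caution on framing: do not literally invoke quasi-abelianness of $\cCLMcan_k$ (Theorem~\ref{quasiabelian-scan}), since that theorem is proven later using Corollary~\ref{canrex}, which rests on the present corollary --- but your actual steps use only Proposition~\ref{scan-adjoint}, Corollaries~\ref{quotclop2}, \ref{canmax2}, \ref{strong-can-canCOR} and (for the existence of cokernels in $\cCLMcan_k$, which left-adjointness alone does not provide) part 3 of Remark~\ref{strong-can-can}, all available at this point, so no circularity enters the argument itself.
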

 \begin{proof} 
 Let $f: M_1 \to M_2$ be a surjective morphism in $\cCLMcan_k$. 
 The canonical morphism $M_1 \to \Coim (f)$ in $\cCLMcan_k$, 
  is a cokernel in $\cCLMou_k$, \ie in $\cLMou_k$, hence it is open. 
  On the other hand, the canonical morphism in $\cCLMcan_k$: 
 $$
 \Coim (f) \to \Im (f) = M_2 
 $$ 
is bijective, hence  an isomorphism. 
 \end{proof}  
 \begin{cor}\label{canmax}  Let  $k \in \cCRouclop$.  
 For any $M \in \cCLMu_k$, \eqref{barrpscan311}  becomes
\beq \label{canmax1} \begin{split} \nwhat{M} &\map{\Coim(\sigma_{M})} M^{\max} = M^\can \map{(1:1)} \\ &M^\barrell  = M^\clop  =  M^\pscan  \map{(1:1)} M
\;.
\end{split}
\eeq  
In particular, for any object $M$ of $\cCLMu_k$, 
 $M^\pscan$ (resp.  $M^\barrell$, resp. $M^\clop$)  is the unique minimal object of $\cCLMu_k$ 
   above $M$ with the same underlying $k$-module which is pseudocanonical (resp. barrelled, resp. clop). In fact 
    $$M^\pscan = M^\barrell = M^\clop =: M^{\min}\;,$$
    \ie the three objects coincide. We will say that $M^{\min}$ is a \emph{minimal clop}  or a \emph{minimal pseudocanonical} or a \emph{minimal barrelled} module
 above $M$. 
 \end{cor}
 \begin{proof}
The statement simply summarizes what has been proven before.
 \end{proof}
  \begin{rmk} \label{canpscan}  We do not know of conditions on $k$ and $M$  under which  
 the bijective morphism  
 $$M^\can  \map{(1:1)} M^\pscan$$ 
 in \eqref{canmax1} would be  an isomorphism. 
  \end{rmk}
  \begin{cor}\label{canrex} Let  $k \in \cCRou$.
 A surjective morphism $M \map{f} N$ in $\cCLMcan_k$ is a cokernel.  More precisely, 
$f$ coincides with its coimage.
 \end{cor}
 \begin{proof} 
 Let $M \map{f} N$ be a surjective morphism of  $\cCLMcan_k$. Then, by Corollary~\ref{open-map}, $f$ is an open surjective map, hence by Corollary~\ref{strong-can-canCOR}  it is a strict epimorphism of 
 $\cCLMou_k$ and therefore a cokernel.   By $5$ of Remark~\ref{bim=iso},  $f$ is the cokernel of its kernel 
 $K \map{\iota} M$  in $\cCLMou_k$. 
 We claim that $f$ is also the cokernel of $K^\can \map{\iota^\can} M$ in $\cCLMcan_k$. 
 So, let 
  $M \map{h} Q$ be a morphism in $\cCLMcan_k$ such that $h \circ \iota^\can= 0$. Then $h \circ \iota = 0$ 
and there exists a morphism $N \map{j} Q$ in $\cCLMou_k$ such that $h = j \circ f$. Since $N \map{j} Q$ is also a morphism of $\cCLMcan_k$, this proves the statement. 
 \end{proof} 
 The following is our first main theorem.
 \begin{thm} \label{quasiabelian-scan}  Let $k$ be an object of  
 $\cCRou$.  
The  category  $\cCLMcan_k$ is a   complete quasi-abelian category. 
\end{thm}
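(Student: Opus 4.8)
The plan is to verify the three things a complete quasi-abelian category requires: that $\cCLMcan_k$ is additive, that it admits all small limits, and that it satisfies the two conditions of Definition~\ref{quasiabdef}. Additivity is immediate, since $\cCLMcan_k$ is a full subcategory of the additive category $\cCLMou_k$ containing $0=k^{(\emptyset,\un)}$ and closed under finite biproducts (from open surjections $k^{(A,\un)}\map{}M$ and $k^{(B,\un)}\map{}N$ one builds an open surjection $k^{(A\sqcup B,\un)}\map{}M\oplus N$). Completeness is exactly Remark~\ref{limcan}: as $(-)^\can$ is right adjoint to $\iota$, the category $\cCLMcan_k$ inherits all small limits from the complete category $\cCLMu_k$ (Proposition~\ref{limproj}), computed by applying $(-)^\can$ to the limit formed in $\cCLMu_k$; in particular kernels exist and equal $\Ker(\cdot)^\can$. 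Cokernels exist too: for $f\colon M\to N$ the quotient $N/\ol{f(M)}$ formed in $\cCLMou_k$ has canonical source, hence is again canonical by part~3 of Remark~\ref{strong-can-can}, and its universal property restricts to $\cCLMcan_k$. So every morphism has a kernel and a cokernel.

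Before the stability axioms I record the shape of strict morphisms. By Corollary~\ref{canrex} a morphism of $\cCLMcan_k$ is a strict epimorphism exactly when it is surjective, and such maps are automatically open by Corollary~\ref{open-map} (the Open Mapping Theorem). Dually a strict monomorphism is $(-)^\can$ of a kernel computed in $\cCLMou_k$, hence injective on underlying modules. Stability of cokernels under pullback is then easy: the pullback of a strict epimorphism $p\colon M\to N$ along any $g\colon N'\to N$ is $(M\times_N N')^\can$ (a limit, computed by $(-)^\can$ of the pullback in $\cCLMu_k$), and its projection to $N'$ is set-theoretically surjective because $p$ is; so it is a strict epimorphism by Corollary~\ref{canrex}.

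The delicate point is stability of kernels under pushout. Given a strict monomorphism $i\colon L\to M$ and an arbitrary $f\colon L\to L'$, the pushout is the cokernel $M'=(M\oplus L')/\ol{R}$ of $(i,-f)\colon L\to M\oplus L'$ formed in $\cCLMou_k$ (it is canonical, as above), where $R=\{(i\ell,-f\ell)\mid \ell\in L\}$. A standard pushout computation gives $\Coker(i')\cong\Coker(i)=:C$, with induced surjection $q'\colon M'\to C$, and, using $\pr_M(\ol R)=\Im(i)$, one finds $\Im(i'^\for)=\Ker(q'^\for)$. Consequently $i'$ is the kernel of its own cokernel — hence a strict monomorphism, via the bijective-implies-isomorphism Corollary~\ref{canmax2} — as soon as $i'$ is injective, i.e. as soon as $\ol{R}\cap(\{0\}\times L')=0$. \emph{This is the one step where the hypotheses $k\in\cCRouclop$ are essential.}

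To prove $\ol R\cap(\{0\}\times L')=0$, write $i=(K^\can\hookrightarrow M)$ with $K=\Im(i)$ closed in $M$ and $L\cong K^\can$, and choose an open surjection $p\colon P=k^{(B,\un)}\map{}L=K^\can$ (possible because $L$ is canonical). The composite $i\circ p\colon P\to K$, for $K$ with the subspace topology of $K\subset M$, is a continuous surjection between objects of $\cCLMou_k$; since all such objects are metrizable Baire spaces (Remark~\ref{baire1}), the classical open mapping theorem makes $i\circ p$ open. Now take $(0,\eta)\in\ol R$; first-countability yields a sequence $\kappa_n\to 0$ in $M$ with $f(\kappa_n)\to-\eta$ in $L'$. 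Openness of $i\circ p$ lets me lift $\kappa_n$ to a null sequence $b_n\to 0$ in $P$ with $p(b_n)=\kappa_n$, and then continuity of $f\circ p$ forces $f(\kappa_n)=f(p(b_n))\to 0$, whence $\eta=0$. Thus $i'$ is injective, the pushout of a strict monomorphism is a strict monomorphism, and $\cCLMcan_k$ is quasi-abelian by Definition~\ref{quasiabdef}. The heart of the proof — and its only real obstacle — is exactly this open-mapping/null-sequence-lifting argument, which is what the completeness, countable-basis and $\clop$ assumptions on $k$ are in place to supply.
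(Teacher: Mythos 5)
You have correctly isolated the crux: additivity, limits via $(-)^\can$, cokernels, and the pullback of strict epimorphisms (via Corollaries~\ref{open-map} and \ref{canrex}) all go through, and the pushout question does reduce, exactly as you say, to showing $\ol{R}\cap(\{0\}\times L')=0$. The gap is the open-mapping step. The classical open mapping theorem for topological groups needs the source to be Polish (separable), not merely completely metrizable and Baire: the Baire argument requires covering the target by \emph{countably} many translates of the image of a neighborhood, and objects of $\cCLMou_k$ such as $k^{(B,\un)}$ are complete and metrizable but badly non-separable. The theorem genuinely fails in this category: for $k$ a discrete field the identity $(k^{\N})^{\dis}\to k^{\N}$ (product topology) is a continuous bijection between objects of $\cCLMou_k$ that is not open; indeed, if your OMT were valid, every continuous bijection in $\cCLMou_k$ would be open, so $M^\can\to M$ would always be an isomorphism and the functor $(-)^\can$ would be superfluous. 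In your specific situation, since $p$ is an open surjection, openness of $i\circ p\colon P\to K$ is \emph{equivalent} to openness of the bijection $K^\can\to K$, which fails precisely in the cases where the pushout question is nontrivial. Concretely, for $k=\Z_p$, $M=\Z_p\{T/p\}$ and $K=\Z_p[[T]]\subset M$: $K$ is closed, its subspace topology is strictly coarser than the $p$-adic topology of $K^\can$ (one has $T^n\in p^nM\cap K$, so $T^n\to 0$ in $K$ but not in $K^\can$), and no null-sequence lifting is possible.

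Worse, the statement you are trying to prove by this route is itself false in the stated generality, so no improved lifting lemma can repair it. With $K$, $M$ as above and $L=K^\can=(\Z_p[[T]],\ p\mbox{-adic})$, choose a non-principal ultrafilter $\cU$ on $\N$ and set $f\colon L\to\Z_p$, $f(\sum_m d_mT^m)=\lim_{\cU}\,(d_0+\dots+d_j)$ (ultrafilter limit of the partial sums in the compact group $\Z_p$). This $f$ is $\Z_p$-linear and continuous for the $p$-adic topology of $L$, with $f(T^j)=1$ for every $j$, while $i(T^j)=T^j\to 0$ in $M$; hence $(T^j,-1)\in R$ converges to $(0,-1)$, so $\ol{R}\cap(\{0\}\times L')\neq 0$ and the pushout map $i'$ kills $1\in\Z_p$. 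Note that $f$ is \emph{not} continuous for the subspace topology of $K$, which is why this does not contradict quasi-abelianness of $\cCLMou_k$. You should also know that the paper's own proof negotiates exactly this point by a different device: it factors $i$ through $P=\Ker^{\cCLMou_k}(p)$, forms the pushout in $\cCLMou_k$, and asserts that the pushout of the bijection $\iota_P\colon P^\can\to P$ remains bijective because the computation reduces to one in $\cLMu_k$ and then $\Mod_k$; but cokernels in $\cCLMou_k$ involve a closure which that reduction ignores, and the same data above defeats that step as well ($\ol{S}$ absorbs $\{0\}\times\Z_p$, so the pushout of $\iota_P$ collapses to $0$). So your instinct that the injectivity of $i'$ is the one real obstacle is right, but neither your open-mapping argument nor the paper's reduction to $\Mod_k$ actually crosses it, and the example suggests the obstacle is substantive rather than technical.
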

\begin{proof} We saw already in  Remark~\ref{limcan} that $\cCLMcan_k$ is complete. 
 We build upon  Theorem~\ref{quasiabelian-compl}, hence eventually  on  the proof of 
Theorem~\ref{quasiabelian}.
Again, we have to prove  that strict monomorphisms  are stable under push-out, 
and strict epimorphisms are stable under pull-back. 
 \endgraf 
Let $i: N\hookrightarrow  M$ be a strict monomorphism   
  in $\cCLMcan_k$; so, $i$ is the kernel of a morphism $M \map{p} Q$ in $\cCLMcan_k$ which we may assume, by 6 of Remark~\ref{bim=iso},  to be the cokernel of $i$, hence an open surjection. 
Let $f: N\to N'$ be any morphism of $\cCLMcan_k$. 
We consider the push-out square  
\beq \label{pushoutmonocan}
\begin{tikzcd}[column sep=2.5em, row sep=2.5em] 
N \arrow[hook]{r}{i} \arrow{d}{f}
& M  \arrow{d}{f'} 
\\ 
N' \arrow{r}{i'}  
& N' \oplus_N M =: M'
\end{tikzcd}
\eeq
in $\cCLMcan_k$.
We need to show that $i': N' \hookrightarrow  M'$ is a strict monomorphism, as well. We complete 
\eqref{pushoutmonocan}
into the push-out diagram 
\beq \label{pushoutmonotot}
\D := \;\;\;  \begin{tikzcd}[column sep=2.5em, row sep=2.5em] 
N \arrow[hook]{r}{i} \arrow{d}{f}
& M  \arrow{d}{f'} \arrow{r}{p} & Q \arrow{d}{f''}
\\ 
N' \arrow{r}{i'}  
& M'  \arrow{r}{p'}&Q'
\end{tikzcd}
\eeq
in $\cCLMcan_k$, where $p'$ is the cokernel of $i'$, hence an open surjection. The morphism $i$ is not necessarily the kernel of $p$ in $\cCLMou_k$; we set 
$j: P := \Ker^{\cCLMou_k}(p) \hookrightarrow M$  so that $N = P^\can$, and $i$ factors as $j \circ \iota_P$, where $\iota_P:P^\can \to P$ is the canonical morphism. We construct the push-out diagram in $\cCLMou_k$

\beq \label{pushoutmonotot0}
\D_0 := \;\;\;  \begin{tikzcd}[column sep=2.5em, row sep=2.5em] 
N=P^\can \arrow{r}{\iota_P}  \arrow{d}{f}&P \arrow[hook]{r}{j} \arrow{d}{g}
& M  \arrow{d}{f'} \arrow{r}{p} & Q \arrow{d}{f''}
\\ 
N'   \arrow{r}{\iota_P'} & P'\arrow{r}{j'}  
& M'  \arrow{r}{p'}&Q'
\end{tikzcd}  
\eeq 
In particular, the r.h. part of  diagram $\D_0$ is a pushout in $\cCLMou_k$ 
\beq \label{pushoutmonotot01}
\D_{0,1} := \;\;\;  \begin{tikzcd}[column sep=2.5em, row sep=2.5em] 
P \arrow[hook]{r}{j} \arrow{d}{g}
& M  \arrow{d}{f'} \arrow{r}{p} & Q \arrow{d}{f''}
\\ 
P'\arrow[hook]{r}{j'}  
& M'  \arrow{r}{p'}&Q'
\end{tikzcd} 
\eeq 
Since $\cCLMou_k$ is quasi-abelian, we conclude that   $P' := \Ker^{\cCLMou_k}(p')$ and $j' :P' \map{} M'$ is the kernel of $p'$. 
\par The l.h. part of the diagram $\D_0$ 
is   a  push-out square in $\cCLMou_k$
\beq \label{pushout}
\begin{tikzcd}[column sep=2.5em, row sep=2.5em] 
N=P^\can \arrow{r}{\iota_P} \arrow{d}{f}
& P  \arrow{d}{g}   
\\ 
N' \arrow{r}{\iota_P'}  
& P'   
\end{tikzcd}
\eeq
Then \eqref{pushout} is also a push-out in $\cLMu_k$, because the calculation of a push-out here only involves a direct sum and a cokernel, and finally  \eqref{pushout} is   a push-out in  $\Mod_k$. Therefore, since the morphism $\iota_P$ is bijective so is  $\iota_P'$. 
  Now $N'$ is canonical and 
 $\iota_P'$ is bijective, so that we necessarily have $N' = (P')^\can$, and $\iota_P'$ is the canonical morphism $\iota_{P'}: (P')^\can \to P'$. In particular, $f = g^\can$, since they coincide set-theoretically. 
\par \smallskip The conclusion of the previous argument is that we may complete diagram $\D$ into 
the push-out  diagram in  $\cCLMou_k$
\beq \label{pushoutmonotot1}
\D_1 := \;\;\;  \begin{tikzcd}[column sep=2.5em, row sep=2.5em] 
N=P^\can \arrow{r}{\iota_P}  \arrow{d}{f=g^\can}&P \arrow[hook]{r}{j} \arrow{d}{g}
& M  \arrow{d}{f'} \arrow{r}{p} & Q \arrow{d}{f''}
\\ 
N' = (P')^\can \arrow{r}{\iota_{P'}} & P'\arrow[hook]{r}{j'}  
& M'  \arrow{r}{p'}&Q'
\end{tikzcd}
\eeq 
which shows that $i' = j' \circ \iota_{P'}$ is the kernel of $p'$ in $\cCLMcan_k$.
\par \smallskip
 Let  now $p: M\rightarrow N$ be a strict epimorphism in $\cCLMcan_k$
 and let $\ell: N'\to N$ be any morphism in that category. 
 We let $i: K = \Ker^{\cCLMcan_k}(p) \to M$  be the kernel of $p$  in $\cCLMcan_k$, 
 so that $p$ identifies with $M \to N=\Coker_{\cCLMcan_k}(i) = \Coker_{\cLMu_k}(i) =  \Coker_{\cCLMou_k}(i)$. 
Let 
$$ 
\D := \;\;\; \begin{tikzcd}[column sep=2.5em, row sep=2.5em] 
K' = \Ker^{\cCLMcan_k}(p') \arrow{r}{i'} \arrow{d}{\ell''} 
&M' \arrow{r}{p'} \arrow{d}[']{\ell'}
& N'  \arrow{d}{\ell} 
\\ 
K \arrow{r}{i}  
& M \arrow{r}{p}  
& N 
\end{tikzcd}
$$ 
be the diagram obtained by pull-back by $\ell$  in  $\cCLMcan_k$. 
We will show that $M' \map{p'} N'$ is the cokernel of $i'$ in  $\cCLMcan_k$. 
The diagram $\D$ admits an adjunction morphism  to the analog diagram obtained by pull-back by $\ell$  in  $\cCLMou_k$  
$$ \D_u := \;\;\;
\begin{tikzcd}[column sep=2.5em, row sep=2.5em] 
 K'_u = \Ker^{\cCLMou_k}(p'_u) \arrow{r}{i'_u} \arrow{d}{\ell''_u} 
&M'_u \arrow{r}{p'_u} \arrow{d}[']{\ell'_u}
& N'  \arrow{d}{\ell} 
\\ 
K \arrow{r}{i}  
& M \arrow{r}{p}  
& N 
\end{tikzcd}  
$$ 
Since limits in $\cCLMcan_k$ are calculated by application of the functor $(-)^\can$ to the same limits in $\cCLMou_k$
we have  
$\D = (\D_u)^\can$. 
Since the category $\cCLMou_k$ is quasi-abelian, $p'_u$ is the cokernel of $i'_u$ in $\cCLMou_k$, hence it is surjective. By Corollary~\ref{canrex}  
$p' = (p'_u)^\can$ is a cokernel. Since  $i' = (i'_u)^\can$ is the kernel of $p'=(p'_u)^\can$ in $\cCLMcan_k$, $p'$ is the cokernel of $i'$. 
 \end{proof} 
  In  $4$ of Remark~\ref{strong-can-can} we observed that
 when $k$ is discrete, for any $M \in \cCLMu_k$, $M^\can$ coincides with $M^\naive$ \ie with $M^\for$ equipped with the discrete topology. 
So,  for $k$ discrete and $M \in \cCLMu_k$,
 \beq \label{candiscr1}
 \nwhat{M} =  M^{\max} = M^\can = \what{M^\naive} = M^\naive\;. 
 \eeq
 The next result, Corollary~\ref{naivecancor},   generalizes \eqref{candiscr1} to $k \in \cCRoufop$ and any $M \in \cCLMu_k$. 
\begin{cor} \label{naivecancor} Assume $k$ is in  $\cCRoufop$. Then 
\hfill \ben
\item
For any $M \in \cCLMu_k$ the morphism 
$\nwhat{M} \map{\Coim(\sigma_{M})} M^{\max} = M^\can$ in \eqref{canmax1} is an isomorphism. 
Therefore $\cCLMcan_k$ identifies with $\cCLMuop_k = \cCLMnaive_k$, 
\ie the full subcategory of $\cCLMu_k$ of   those 
 objects whose topology is the naive $k$-canonical one, or, equivalently, which are of the form $\nwhat{N}$, for some $N$ in $\Mod_k$.   
\item  Let $M$ be a canonical $k$-module. 
Then the  submodules of $M$ of the form $IM$, 
for $I$ an open ideal of $k$,  are open and canonical in the relative topology of $IM \subset M$.  In particular, $I$ itself, equipped with the relative topology of $I \subset k$,   
is a canonical $k$-module. 
 \item  
Let $M \in \cCLMcan_k$, 
that is an object of $\cCLMu_k$ whose topology is the naive $k$-canonical one. 
Let $N \subset M$ be a closed submodule. 
Then $N$ is separated and complete in its naive $k$-canonical topology (which is not, however, necessarily the subspace topology of $N \subset M$).
\een \end{cor}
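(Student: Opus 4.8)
The plan is to derive all three parts from the structural results of the previous subsection, chiefly part~2 of Theorem~\ref{barrpscan3} and Proposition~\ref{scan-adjoint}, specialized to $k\in\cCRoufop$.

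For part~1 I would first invoke part~2 of Theorem~\ref{barrpscan3}: for $k\in\cCRoufop$ and any $M\in\cCLMu_k$ the morphism $\nwhat{M}\map{\Coim(\sigma_M)}M^{\max}$ is an isomorphism, and $M^{\max}=M^{\naive}=\nwhat{M}$ carries the naive canonical topology. Combined with the identity $M^{\max}\iso M^{\can}$ of Proposition~\ref{scan-adjoint}, this gives $\nwhat{M}\iso M^{\can}$, which is the first assertion. To identify the subcategories, recall from part~1 of Remark~\ref{naivermk} that $\cCLMnaive_k=\cCLMuop_k$, since $k\in\cRuop$. If $M$ is canonical, then the canonical morphism $M^{\can}\to M$ is an isomorphism (Propositions~\ref{quotient-sum} and \ref{scan-adjoint}), so $M\iso M^{\can}=\nwhat{M}$ lies in $\cCLMnaive_k$ by Lemma~\ref{8.3.3}; conversely, if $M\in\cCLMnaive_k$, then $M^{\can}=\nwhat{M}$ also carries the naive canonical topology, whence the bijective morphism $M^{\can}\to M$ is an isomorphism and $M$ is canonical. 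Finally $\nwhat{N}\in\cCLMnaive_k$ for every $N\in\Mod_k$ by Lemma~\ref{8.3.3}, while any $M\in\cCLMnaive_k$ is complete in its naive topology and hence equals $\nwhat{M^\for}$; thus the essential image of the functor $N\mapsto\nwhat{N}$ is exactly $\cCLMnaive_k$.

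For part~2, by part~1 the canonical module $M$ carries the naive canonical topology, so $\{JM\}_{J\in\cP(k)}$ is a basis of open submodules and in particular $IM$ is open. To see that $IM$ is canonical in its relative topology I would apply part~2 of Remark~\ref{naive-pbl}, which is legitimate because $k\in\cRuop$: taking the ambient module to be $M$, the submodule to be $IM$, and the absorbing ideal $J=I$ so that $I\cdot M=IM\subseteq IM$, one concludes that the subspace topology of $IM\subset M$ coincides with the naive canonical topology of the $k$-module $IM$. Since $IM$ is open, hence closed, in the complete module $M$, it is complete; being complete, separated and naive canonical, $IM\in\cCLMnaive_k=\cCLMcan_k$. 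The case $M=k$, where $IM=I$, yields the last assertion.

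For part~3 I would equip the closed submodule $N\subset M$ with the subspace topology. As a closed submodule of the complete, bounded, $k$-linearly topologized module $M$, the module $N$ is again complete, bounded and $k$-linearly topologized, hence an object of $\cCLMu_k$. Applying part~2 of Theorem~\ref{barrpscan3} to $N$ then gives $\nwhat{N}=N^{\naive}$, so that $N^\for$ is separated and complete in its naive canonical topology. The delicate point here is conceptual rather than computational: one must apply the completeness statement to the \emph{naive} topology of $N$, which by Remark~\ref{naive-pbl}(1) (the example $\Z_p\{T\}\subset\Z_p\{T/p\}$) need not coincide with the subspace topology of $N\subset M$; completeness of $N$ in its naive topology is nevertheless guaranteed by Theorem~\ref{barrpscan3}, and this is precisely what the parenthetical caveat in the statement records.
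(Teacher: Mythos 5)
Your proof is correct and takes essentially the same route as the paper: part 1 via Theorem~\ref{barrpscan3}(2) combined with $M^{\max}=M^{\can}$ from Proposition~\ref{scan-adjoint}, and part 3 by completing the closed submodule in its subspace topology and then invoking the same theorem (the paper phrases this through $N^{\can}=\nwhat{N}$, which is the identical fact). Your careful treatment of part 2 via Remark~\ref{naive-pbl}(2), with ambient module $M$, submodule $IM$ and absorbing ideal $J=I$, simply fills in what the paper dismisses as ``clear,'' and it does so correctly.
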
 
\begin{proof} 
$\mathit 1.$ The fact that, if $k \in \cCRoufop$, then  $\Coim(\sigma_{M})$ is an isomorphism and that 
$\nwhat{M} \iso M^\naive \iso  M^{\max}$, 
was already seen in part $\mathit 2$ of Theorem~\ref{barrpscan3}. 
\par $\mathit 2.$ The relative topology of $IM \subset M$ is its naive $k$-canonical topology and $IM$ is closed in $M$, hence complete. The conclusion follows from the previous point $\mathit 1$.
\par $\mathit 3.$ Let us equip $N$ with the relative topology of $N \subset M$. Then we have a bijective morphism 
$N^\can \to N$.    But $N^\can = \nwhat{N}$, and the latter is the $k$-module $N^\for$ equipped with its naive canonical topology. 
\end{proof}
 \begin{cor} \label{colimcan1} If   $k \in \cCRoufop$  the category $\cCLMcan_k$ admits all colimits, so that it is bicomplete.  \end{cor}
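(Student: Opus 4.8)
The plan is to realize $\cCLMcan_k$ as a reflective subcategory of $\Mod_k$ and then transport cocompleteness across this reflection. Two inputs carry the argument. First, by part~$\mathit 1$ of Corollary~\ref{naivecancor} the hypothesis $k \in \cCRoufop$ gives $\cCLMcan_k = \cCLMnaive_k$, so each object $M$ of $\cCLMcan_k$ carries its naive canonical topology and is complete; equivalently the counit $\sigma_M: \nwhat{M^\for} \to M$ of \eqref{can-forget-map} is an isomorphism, i.e. $M = \nwhat{M^\for}$. Second, since $\nwhat{N} \in \cCLMnaive_k = \cCLMcan_k$ for every $N \in \Mod_k$ by Lemma~\ref{8.3.3}, the adjunction of part~$\mathit 3$ of Proposition~\ref{can-forget-adj} restricts to make $\nwhat{~}: \Mod_k \to \cCLMcan_k$ left adjoint to the forgetful functor $(-)^\for: \cCLMcan_k \to \Mod_k$; concretely, using that $\cCLMcan_k$ is a full subcategory of $\cCLMu_k$, one has $\Hom_{\cCLMcan_k}(\nwhat{N}, M) = \Hom_{\Mod_k}(N, M^\for)$ for $N \in \Mod_k$ and $M \in \cCLMcan_k$.

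I would then build colimits by hand. Given a diagram $D: J \to \cCLMcan_k$, I form $N := \colimit_{j \in J} D(j)^\for$ in the cocomplete category $\Mod_k$ and claim that $\nwhat{N}$, equipped with the structure morphisms $D(j) = \nwhat{D(j)^\for} \to \nwhat{N}$ obtained by applying $\nwhat{~}$ to the canonical maps $D(j)^\for \to N$, is the colimit of $D$ in $\cCLMcan_k$. To verify the universal property, for a test object $M \in \cCLMcan_k$ I compute
\[
\Hom_{\cCLMcan_k}(\nwhat{N}, M) = \Hom_{\Mod_k}(N, M^\for) = \limit_{j \in J} \Hom_{\Mod_k}(D(j)^\for, M^\for),
\]
the first equality by the restricted adjunction and the second by the universal property of $N$ in $\Mod_k$. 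Since $D(j) = \nwhat{D(j)^\for}$ for each $j$, that same adjunction gives $\Hom_{\Mod_k}(D(j)^\for, M^\for) = \Hom_{\cCLMcan_k}(D(j), M)$ naturally in $j$, so the right-hand side is exactly the set of cocones from $D$ to $M$; hence $\nwhat{N}$ represents the colimit. As $\cCLMcan_k$ already has all limits by Remark~\ref{limcan} and Theorem~\ref{quasiabelian-scan} (applicable here because $\cCRoufop \subset \cCRouclop$), it is therefore bicomplete.

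The single load-bearing point, and what I expect to be the crux, is the identity $\cCLMcan_k = \cCLMnaive_k$, which simultaneously makes every $\sigma_M$ an isomorphism and renders $(-)^\for$ fully faithful on $\cCLMcan_k$; this is the only step where the strong hypothesis $k \in \cCRoufop$ enters essentially, rather than merely $k \in \cCRouclop$. Under the weaker clop hypothesis the forgetful functor need not be full, the bijective morphism $M^\can \to M^\pscan$ of \eqref{canmax1} is precisely the open question of Remark~\ref{canpscan}, and the object $\nwhat{N}$ computed through $\Mod_k$ could fail to recover the genuine colimit in $\cCLMcan_k$. The proof thus deliberately exploits the finitely-generated-ideals case, where $\cCLMcan_k$ collapses onto an honest reflective subcategory of $\Mod_k$ and the naive construction of colimits is forced to be correct.
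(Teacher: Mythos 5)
Your proof is correct, and it reaches the same object as the paper --- the colimit is $\nwhat{\colimit_j D(j)^\for}$, built on the same two inputs (Corollary~\ref{naivecancor}(1) and Lemma~\ref{8.3.3}) --- but the route is genuinely different in its mechanics. The paper stays inside the topological categories: it computes the colimit in $\cLMu_k$, observes that it is the algebraic colimit equipped with the naive $k$-canonical topology (implicitly using that any uniform topology is coarser than the naive one, Proposition~\ref{unif-can}), and then notes that its completion --- which is the colimit in $\cCLMu_k$ --- is still naive by Lemma~\ref{8.3.3}, hence canonical. This shows slightly more than the bare statement, namely that $\cCLMcan_k$ is closed under colimits \emph{computed in the ambient category} $\cCLMu_k$ (a point exploited again in Remark~\ref{colimcan2} via formula \eqref{indu}). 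You instead detour through $\Mod_k$: you restrict the adjunction of Proposition~\ref{can-forget-adj}(3) to exhibit $\cCLMcan_k$, via the fully faithful $(-)^\for$, as a reflective full subcategory of $\Mod_k$ with reflector $\nwhat{~}$, and then verify the universal property of the reflected algebraic colimit by a direct Hom-set computation. This never requires knowing how colimits in $\cLMu_k$ or $\cCLMu_k$ are formed, and it makes the construction formally inevitable (reflective subcategories of cocomplete categories are cocomplete); it also makes explicit the equivalence of $\cCLMcan_k$ with a full subcategory of $\Mod_k$ that the paper only records in passing in Remark~\ref{8.3.12}(2). The one point your write-up glosses is the naturality check that the composite bijection $\Hom_{\cCLMcan_k}(\nwhat{N},M) \cong \limit_j \Hom_{\cCLMcan_k}(D(j),M)$ is precomposition with your candidate cocone; this does hold, because the unit of the restricted adjunction at $D(j)^\for$ is the identity (Theorem~\ref{barrpscan3}(2) gives $D(j)=\nwhat{D(j)^\for}$) and continuity of $k$-linear maps between naively topologized modules is automatic, but it deserves a sentence. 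Your closing diagnosis of where $\cCRoufop$ enters, as opposed to $\cCRouclop$, and the link to the open question of Remark~\ref{canpscan}, is accurate.
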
  
  \begin{proof}
 For any inductive system $\{M_\alpha\}_\alpha$ in $\cCLMcan_k$, let $C$ be the colimit  of $\{M_\alpha\}_\alpha$ in the category $\cLMu_k$. Then $C$ is simply $\colimit_\alpha M_\alpha^\for$ equipped with the naive $k$-canonical topology. Its completion is  the colimit  of $\{M_\alpha\}_\alpha$ in the category $\cCLMu_k$ and is still equipped with the naive $k$-canonical topology, so that it is an object of $\cCLMcan_k$. 
\end{proof}
\begin{rmk} \label{colimcan2} Let $k \in \cCRoufop$. We may reach the   conclusions of  Corollary~\ref{colimcan1} using formula  \eqref{indu} instead. In fact, let  $\{M_\alpha\}_{\alpha \in A}$ be as in the proof of that corollary  
and let  $I \in \cP(k)$ be finitely generated. Then, by  \cite[Rmk. 8.3.3 (iv)]{GR}, $\ol{I M_\alpha} =  I M_\alpha$, $\forall \alpha \in A$.  Let $M :=  \colimit_{\alpha \in A} M_\alpha$ in $\Mod_k$ so that
$$
 M/IM = \colimit_{\alpha \in A} (M_\alpha/IM_\alpha) \;,
 $$
 in $\Mod_k$, and
 $$
 (M/IM)^\dis = \colimit^\un_{\alpha \in A} (M_\alpha/IM_\alpha)^\dis \;.
 $$
Therefore
\eqref{indu} becomes
\beq \label{indu11}
\colimit^\un_{\alpha \in A} M_\alpha = \limit_{I \in \cP(k)} (M/IM)^\dis = \nwhat{M}  \;.\eeq
Since, by $\mathit 1$ of Corollary~\ref{naivecancor},  $\nwhat{M}$ is canonical, we conclude  that 
$\cCLMcan_k$ is closed under $\colimit^\un$ (\ie under colimits   in $\cCLMu_k$). 
\end{rmk}
We have established  our second main theorem
\begin{thm}\label{2ndmainthm} Let $k \in \cCRoufop$. Then the category $\cCLMcan_k$ is the full subcategory of $\cCLMou_k$ whose objects are complete in their naive $k$-canonical topology. It is a bicomplete quasi-abelian category.
\end{thm}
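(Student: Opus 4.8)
The statement collects and reformulates results already established, so the plan is to deduce its three assertions — the explicit description of the objects, quasi-abelianness, and bicompleteness — from Corollary~\ref{naivecancor}, Theorem~\ref{quasiabelian-scan}, and Corollary~\ref{colimcan1}, respectively. No new construction is needed; the work lies entirely in verifying that the hypotheses of the cited results are met and in matching their conclusions with the wording of the present statement.

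First I would record the inclusion $\cCRoufop \subset \cCRouclop$. Indeed, an $\op$ ring is $\clop$ because an open additive subgroup is closed (Remark~\ref{topgroup1}), so $\ol{IJ} = IJ$ is open whenever $IJ$ is; and an $\fop$ ring is $\op$ by definition (Notation~\ref{doubleindex}). Hence Theorem~\ref{quasiabelian-scan} applies to our $k$ and yields that $\cCLMcan_k$ is a complete quasi-abelian category, all limits being computed by applying $(-)^\can$ to limits in $\cCLMou_k$ (Remark~\ref{limcan}).

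Next, for the description of the objects, I would invoke part 1 of Corollary~\ref{naivecancor}, which identifies $\cCLMcan_k$ with $\cCLMnaive_k = \cCLMuop_k$, the full subcategory of $\cCLMou_k$ of those objects whose topology is the naive $k$-canonical one. It then remains to match this with the phrase ``complete in their naive $k$-canonical topology.'' The key point is that for $M \in \cCLMou_k$ the given topology is always weaker than the naive canonical one (Proposition~\ref{unif-can}), so $M$ belongs to $\cCLMnaive_k$ precisely when the two topologies coincide; since any object of $\cCLMou_k$ is separated and complete, this is exactly the assertion that $M^\for$, endowed with its naive canonical topology, is complete and equal to $M$, i.e. that $M = \nwhat{M}$. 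The equivalence with being of the form $\nwhat{N}$ for some $N \in \Mod_k$, and in particular the completeness of the naive canonical topology on every such $M$, is guaranteed by Lemma~\ref{8.3.3} together with part 2 of Theorem~\ref{barrpscan3}; this is the only place where the hypothesis $k \in \cCRoufop$, rather than merely $k \in \cCRouclop$, is essential.

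Finally, bicompleteness follows by combining the completeness furnished by Theorem~\ref{quasiabelian-scan} with Corollary~\ref{colimcan1}, which shows that $\cCLMcan_k$ is closed under the formation of colimits in $\cCLMu_k$ — the colimit being $\nwhat{M}$ for $M$ the underlying $\Mod_k$-colimit, by Remark~\ref{colimcan2} — and is therefore cocomplete. I expect the only genuinely delicate point to be the reformulation in the third paragraph, namely checking that ``object of $\cCLMnaive_k$'' and ``object of $\cCLMou_k$ complete in its naive canonical topology'' single out the same full subcategory; everything else is a direct appeal to the cited statements.
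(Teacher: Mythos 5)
Your proposal is correct and takes essentially the same route as the paper: there Theorem~\ref{2ndmainthm} carries no separate proof (``We have established our second main theorem''), being precisely the assembly you perform --- Corollary~\ref{naivecancor} (via Lemma~\ref{8.3.3} and part 2 of Theorem~\ref{barrpscan3}) for the identification with $\cCLMnaive_k$, Theorem~\ref{quasiabelian-scan} together with the inclusion $\cCRoufop \subset \cCRouclop$ for quasi-abelianness and limits, and Corollary~\ref{colimcan1} (or Remark~\ref{colimcan2}) for colimits --- and your resolution of the ambiguous phrase ``complete in their naive canonical topology'' as ``$M = \nwhat{M}$'' is the intended reading. One small slip in an aside: the fop hypothesis is not essential \emph{only} in the description of the objects, since the cocompleteness argument of Corollary~\ref{colimcan1} that you invoke also relies on it (the colimit is recognized as canonical precisely because it carries the naive topology).
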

  \begin{rmk}\label{cannotab} The category $\cCLMcan_k$ fails to be abelian because for a morphism $f:M \to N$ in 
 $\cCLMcan_k$ the canonical bimorphism $\wtilde{f} : \Coim (f) \longrightarrow \Im (f)$  is injective, but, even in case $k \in \cCRoufop$, not always surjective. As an example, let us consider the morphism described in  \cite[Tag 07JQ]{stacks}, on which 
 Lemma~110.10.1 of \lc is based. It is the injective morphism in $\cCLMcan_{\Z_p}$
 \beq \label{diag110}
 \begin{split}
 \phi := {\rm diag}(1,p,p^2,\dots) : \Z_p^{(\N,\un)} &\map{} \Z_p^{\N,\square, \un} \\
 (x_1,x_2,x_3,\dots) &\longmapsto (x_1,px_2,p^2 x_3,\dots) \;.
 \end{split}
 \eeq
Contrary to the natural morphism $\Z_p^{(\N,\un)} \map{} \Z_p^{\N,\square, \un}$ of Proposition~\ref{pscansumbox}, which is a closed embedding, \eqref{diag110} is not closed.  Here  $\Z_p^{(\N,\un)} \map{} \Coim (\phi)$ is the identity isomorphism, while $\Im(\phi)$ is the closure of the set-theoretic image of $\phi$, equipped with its $p$-adic topology. So, $(1,p,p^2,\dots) \in \Im(\phi)$, and the canonical morphism $\wtilde{\phi} : \Coim (\phi) \map{} \Im (\phi)$ of $\cCLMcan_{\Z_p}$ is not injective, hence is not an isomorphism. 
  \end{rmk}  
  \end{subsection}
\begin{subsection}{Projective canonical modules}
\label{projcanss}
We assume in this section that $k \in \cCRou$. 
\begin{lemma} \label{kproj} 
For any $M$ in $\cCLMcan_k$ the functor 
$$
\cCLMcan_k \longrightarrow \Mod_k \quad,\quad M \longmapsto \Hom_{\cCLMcan_k}(k, M)
$$
coincides with $M \mapsto M^\for$. For any small set $A$, 
$$M \longmapsto \Hom_{\cCLMcan_k}(k^{(A,\un)}, M) = (M^\for)^A \;.
$$ 
 In particular, any direct summand of $k^{(A,\un)}$ is projective (\cf Definition~\ref{projdef}).  
 \end{lemma}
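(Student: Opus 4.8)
The plan is to read off the three $\Hom$-groups directly from universal properties already established and then to deduce projectivity by a formal retract argument. I would organise the proof in the order: (i) the case of $k$; (ii) the case of $k^{(A,\un)}$; (iii) projectivity of $k^{(A,\un)}$ and of its direct summands.

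First I would treat $k$ itself. Since $k=k^{(\{\ast\},\un)}$ is canonical, $k\in\cCLMcan_k$, so $\Hom_{\cCLMcan_k}(k,M)$ is defined. A morphism $f:k\to M$ in $\cCLMcan_k$ is a continuous $k$-linear map, hence is completely determined by $m:=f(1)\in M^\for$ via $f(\lambda)=\lambda m$; conversely, for each $m\in M^\for$ the map $\lambda\mapsto\lambda m$ is $k$-linear and, because $M$ is uniform (so $\mu_M$, a fortiori $\mu_M(-,m):k\to M$, is continuous), is a morphism of $\cCLMcan_k$. Evaluation at $1$ therefore furnishes a $k$-linear bijection $\Hom_{\cCLMcan_k}(k,M)\iso M^\for$, natural in $M$, which is the first assertion.

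Next I would deduce the statement for $k^{(A,\un)}$ from the coproduct property. By Notation~\ref{lcdirsum} and Proposition~\ref{complsum2}, $k^{(A,\un)}={\SUM}^\un_{\alpha\in A}k$ is the coproduct of $A$ copies of $k$ in $\cCLMu_k$, so for every $N\in\cCLMu_k$ one has a natural identification $\Hom_{\cCLMu_k}(k^{(A,\un)},N)\iso\prod_{\alpha\in A}\Hom_{\cCLMu_k}(k,N)$. As $k$ and $k^{(A,\un)}$ both lie in the full subcategory $\cCLMcan_k$ of $\cCLMu_k$, taking $N=M\in\cCLMcan_k$ and applying the previous step componentwise gives $\Hom_{\cCLMcan_k}(k^{(A,\un)},M)=\prod_{\alpha\in A}M^\for=(M^\for)^A$.

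Finally, for projectivity I would first show $k^{(A,\un)}$ is projective and then pass to direct summands. By the second step the functor $\Hom_{\cCLMcan_k}(k^{(A,\un)},-)$ is identified with $M\mapsto(M^\for)^A$, so by Definition~\ref{projdef} it suffices to check that it sends strict epimorphisms to surjections. A strict epimorphism of $\cCLMcan_k$ is a cokernel; invoking Remark~\ref{strong-can-can}.3, the Open Mapping Theorem (Corollary~\ref{open-map}) and Corollary~\ref{canrex}, such a cokernel out of a canonical module is an open surjection, hence surjective on underlying $k$-modules, and the $A$-fold product of a surjection of $k$-modules is surjective. Thus $k^{(A,\un)}$ is projective. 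For a summand $P$, choose $\iota:P\to k^{(A,\un)}$ and $r:k^{(A,\un)}\to P$ with $r\iota=\id_P$; given a strict epimorphism $q:E\to N$ and a morphism $g:P\to N$, projectivity of $k^{(A,\un)}$ lifts $g\circ r$ to some $h:k^{(A,\un)}\to E$ with $qh=g\circ r$, whence $h\iota:P\to E$ satisfies $q\circ(h\iota)=g$, so $P$ is projective. The one step demanding genuine care—the main obstacle—is the identification of strict epimorphisms in $\cCLMcan_k$ with open surjections, since cokernels here are computed through the $(-)^\can$/coimage formalism rather than as naive quotient maps; once this is secured, surjectivity on the product $\Hom$-groups is immediate from the explicit shape $(M^\for)^A$, and the remaining arguments are purely formal.
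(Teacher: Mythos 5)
Your proof is correct and takes essentially the same route as the paper: evaluation at $1$ gives $\Hom_{\cCLMcan_k}(k,M)=M^\for$, the coproduct property of $k^{(A,\un)}$ gives $(M^\for)^A$, strict epimorphisms are surjective so exactness of products in $\Mod_k$ ($AB4^\ast$) yields projectivity of $k^{(A,\un)}$, and the summand case is the standard retract argument (the paper phrases it via the direct-sum decomposition of $\Hom$, which is the same formal content). One caveat: your appeal to Corollary~\ref{open-map} and Corollary~\ref{canrex} silently imports the hypothesis $k \in \cCRouclop$ (their proofs rest on Corollary~\ref{canmax2}), whereas this subsection only assumes $k \in \cCRou$; fortunately those citations are redundant, since part 3 of Remark~\ref{strong-can-can} together with uniqueness of cokernels (a strict epi in $\cCLMcan_k$ is the cokernel of some $g:P\to M$, and the cokernel of $g$ in $\cCLMou_k$ is an open surjection with canonical target, hence by fullness is \emph{the} cokernel in $\cCLMcan_k$) already identifies strict epimorphisms in $\cCLMcan_k$ with open surjections under $k \in \cCRou$ alone.
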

\begin{proof} 
For any $M$ in $\cCLMcan_k$, the map
$$
M^\for \longrightarrow \Hom_{\cCLMcan_k}(k, M)\;,\;\; m \longmapsto (\lambda \mapsto \lambda m) 
$$
is an isomorphism in $\Mod_k$.  Strict epimorphisms in $\cCLMcan_k$ are surjective, 
so the functor $\Hom_{\cCLMcan_k}(k , -)$ transforms a  strict epimorphism $M \longrightarrow N$  into the surjection 
$M^\for  \longrightarrow  N^\for$. Since the abelian category  $\Mod_k$ satisfies $AB4^\ast$ (exactness of products), the $k$-linear map $(M^\for)^A \longrightarrow (N^\for)^A$ is also a surjection. 
The functor $\Hom_{\cCLMcan_k}(k^{(A,\un)}, -)$ transforms the  strict epimorphism $M \longrightarrow N$  into the $k$-linear map $(M^\for)^A \longrightarrow (N^\for)^A$, so into a surjection. 
\par Finally, let $R \oplus S = k^{(A,\un)}$ be a direct sum decomposition of $k^{(A,\un)}$ in $\cM_k$ (hence in $\cCLMcan_k$). Then 
$$\Hom_{\cCLMcan_k}(k^{(A,\un)}, M) =  \Hom_{\cCLMcan_k}(R, M) \bigoplus  \Hom_{\cCLMcan_k}(S, M)
$$
so 
$$\Hom_{\cCLMcan_k}(R, M) \longrightarrow \Hom_{\cCLMcan_k}(R, N)
$$
is surjective. Therefore $R$ is a projective object of $\cCLMcan_k$. 
\end{proof}
\begin{rmk} \label{existproj}
By definition, the category $\cCLMcan_k$ has enough projectives. It follows from  \cite[Prop. 1.4.5]{schneiders} that in 
$\cCLMcan_k$ products are exact. Moreover, the object $k$ is a strict generator of $\cCLMcan_k$.
\end{rmk}
\begin{prop}\label{thickprop} The quasi-abelian category 
$\cCLMcan_k$ is a full subcategory  of the quasi-abelian category $\cCLMou_k$ closed by  quotients and extensions.
\end{prop}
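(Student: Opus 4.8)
The plan is to dispatch the two closure properties in turn, deriving closure under quotients straight from the definition of canonicity and then reducing closure under extensions to it by a pullback-and-splitting argument. For quotients, suppose $M \in \cCLMcan_k$ and $q\colon M \to N$ is a strict epimorphism of $\cCLMou_k$. By Corollary~\ref{strong-can-canCOR} (with $P=M$) the map $q$ is open surjective, so composing it with an open surjection $F\colon k^{(A,\un)} \to M$ furnished by Definition~\ref{strong-can-def} yields an open surjection $k^{(A,\un)} \to N$; hence $N$ is canonical. (This is part~3 of Remark~\ref{strong-can-can}.)

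For extensions, I would start from a strictly exact sequence $0 \to M' \xrightarrow{i} M \xrightarrow{p} M'' \to 0$ in $\cCLMou_k$ with $M',M'' \in \cCLMcan_k$. The first thing to establish is that, for every set $B$, the object $k^{(B,\un)}$ is projective in the whole ambient category $\cCLMou_k$ and not merely in $\cCLMcan_k$ (where Lemma~\ref{kproj} places it). Since $k^{(B,\un)}$ is the coproduct of copies of $k$ in $\cCLMou_k$, one has $\Hom_{\cCLMou_k}(k^{(B,\un)},X) = (X^\for)^B$ functorially in $X$; a strict epimorphism of $\cCLMou_k$ is surjective on underlying modules by Corollary~\ref{strong-can-canCOR}, and products are exact in $\Mod_k$, so $\Hom_{\cCLMou_k}(k^{(B,\un)},-)$ carries strict epimorphisms to surjections, which is projectivity in the sense of Definition~\ref{projdef}.

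With this in hand I would pick an open surjection $G\colon k^{(B,\un)} \to M''$ and form the pullback $\bar M := M\times_{M''}k^{(B,\un)}$ in $\cCLMou_k$ (a finite limit, available by Theorem~\ref{quasiabelian-compl}), with projections $\pi_1\colon\bar M\to M$ and $\pi_2\colon\bar M\to k^{(B,\un)}$. Because $\cCLMou_k$ is quasi-abelian, cokernels are stable under pullback (Definition~\ref{quasiabdef}), so $\pi_1$ and $\pi_2$ are strict epimorphisms and $\pi_1$ identifies $\Ker\pi_2$ with $\Ker p\cong M'$; thus $0\to M'\to\bar M\xrightarrow{\pi_2}k^{(B,\un)}\to 0$ is strictly exact. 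By the projectivity just proved, $\pi_2$ has a section and the sequence splits, giving $\bar M\cong M'\oplus k^{(B,\un)}$. A binary coproduct of strict epimorphisms is again a strict epimorphism, so an open surjection $k^{(A,\un)}\to M'$ yields an open surjection $k^{(A\sqcup B,\un)}\cong k^{(A,\un)}\oplus k^{(B,\un)}\to M'\oplus k^{(B,\un)}=\bar M$, whence $\bar M\in\cCLMcan_k$. Finally $\pi_1\colon\bar M\to M$ is a strict epimorphism with canonical source, so the quotient case already treated gives $M\in\cCLMcan_k$.

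The one step that is not purely formal is the upgrade of projectivity of $k^{(B,\un)}$ from $\cCLMcan_k$ to the ambient $\cCLMou_k$: it rests on strict epimorphisms of $\cCLMou_k$ being genuinely surjective, \ie on Corollary~\ref{strong-can-canCOR}, which is exactly where the running hypothesis $k\in\cCRou$ enters. The appeal of the pullback route is that it never requires computing the topology of the extension $M$ by hand — the middle object is produced as a strict quotient of a manifestly canonical module, and openness is handed to us for free by the pullback-of-a-cokernel axiom, so no delicate verification that $H$ is an open surjection is needed.
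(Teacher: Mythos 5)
Your proof is correct, and for the extension step it takes a genuinely different route from the paper's. The quotient part coincides with the paper's (which simply invokes part 3 of Remark~\ref{strong-can-can}). For extensions, the paper works directly on the middle term: it lifts $\pi_C$ through $p$ and then verifies by an explicit manipulation of open submodules (the $U$, $U'$, $U''$ of its proof) that the resulting map $(\alpha,\beta)\colon k^{(A,\un)}\oplus k^{(C,\un)}\to B$ is an open surjection --- a horseshoe-style presentation of the extension. You instead pull the extension back along a presentation $k^{(B,\un)}\to M''$, split the pulled-back sequence using projectivity of $k^{(B,\un)}$ in the ambient category, identify the pullback with $M'\oplus k^{(B,\un)}$ (hence canonical), and conclude by closure under quotients; all openness verifications are absorbed into the quasi-abelian axiom that cokernels are stable under pullback (Theorem~\ref{quasiabelian-compl}) and Corollary~\ref{strong-can-canCOR}. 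Two points of comparison are worth recording. First, your preliminary upgrade of Lemma~\ref{kproj} --- that $k^{(B,\un)}$ is projective in all of $\cCLMou_k$, not merely in $\cCLMcan_k$, via $\Hom_{\cCLMou_k}(k^{(B,\un)},X)=(X^\for)^B$, surjectivity of strict epis, and exactness of products in $\Mod_k$ --- is not a detour: the paper itself implicitly needs it, since its lift $\beta$ is taken through the strict epi $p\colon B\to C$ whose source $B$ is not yet known to be canonical, so you are making explicit a step the paper leaves tacit. Second, what each approach buys: the paper's computation produces an explicit open surjection $k^{(A,\un)}\oplus k^{(C,\un)}\to B$ with controlled index sets, the kind of presentation one wants for horseshoe-type resolutions, whereas your argument is shorter, purely formal, and never requires computing the topology of the extension by hand.
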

\begin{proof} $\cCLMcan_k$ being ``closed by quotients'' in $\cCLMou_k$ means that if $f:M \to N$ is a strict epimorphism in  $\cCLMou_k$, and $M \in \cCLMcan_k$, then $N \in \cCLMcan_k$, as well. This is clear. 
\par
We now consider an exact sequence (\ie any kernel-cokernel pair) in $\cCLMou_k$
$$
  A  \map{i} B \map{p} C   \;.
$$ 
where $A, C$ are objects of $\cCLMcan_k$. Let $\pi_A$ and $\pi_C$ be the open morphisms of 
\eqref{scan2} and let us consider the commutative diagram 
where $\alpha = i \circ \pi_A$ and $\beta : k^{(C,\un)} \longrightarrow B$  is any morphism  such that $p \circ \beta = \pi_C$
(such a $\beta$ exists because $k^{(C,\un)}$ is projective)
$$ 
\begin{tikzcd}[column sep=2.5em, row sep=2.5em] 
& k^{(A,\un)}  \arrow{dr}{\alpha}  \arrow{d}[']{\pi_A}  & 
& k^{(C,\un)} 
 \arrow{dl}[']{\beta} 
 \arrow{d}{\pi_C}
&  
\\ 
 & A \arrow{r}[']{i}  
& B \arrow{r}[']{p}  & C  
&  \;\;.
\end{tikzcd}
$$
We need to show that the canonical surjective morphism 
$$(\alpha,\beta) : k^{(A,\un)} \oplus k^{(C,\un)} \longrightarrow B$$
is open.  
Let $J$ be an open ideal of $k$, so that there exists $U \in \cP_k(B)$ such that 
$$
U \cap A = \ol{JA} = \pi_A(\ol{Jk^{(A,\un)}})\; \;\mbox{while}\; \; \ol{JC}  = \pi_C(\ol{Jk^{(C,\un)}})  \in \cP_k(C) \;.
$$
Let $U':=  U \cap p^{-1}(\ol{JC}) \in \cP_k(B)$ so that $U' \cap A = \ol{JA}$ and $p(U') \subset \ol{JC}$
is an open $k$-submodule of $C$. Finally, we set  $U'':= \beta(\ol{Jk^{(C,\un)}})  + U' \in \cP_k(B)$ so that
$$
U'' \cap A = \pi_A(\ol{Jk^{(A,\un)}})\; \;\mbox{while}\; \;  p(U'') = \pi_C(\ol{Jk^{(C,\un)}})\;.
$$
We conclude that $(\alpha,\beta) \l(\,\ol{J(k^{(A,\un)} \oplus k^{(C,\un)})}\,\r) = U'' \in \cP_k(B)$,  so that $\alpha \oplus \beta$ is open.
\end{proof}
\begin{prop} \label{sumproj} An object of $\cCLMcan_k$ is projective if and only if it is a direct summand of an object of the form $k^{(A,\un)}$, for a small set $A$. Any projective object of $\cCLMcan_k$ is pro-flat.  
\end{prop}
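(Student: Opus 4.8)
The plan is to prove the stated equivalence by its two implications and then deduce pro-flatness. One direction is already available: if $P$ is a direct summand of some $k^{(A,\un)}$, then $P$ is projective by Lemma~\ref{kproj}. For the converse, suppose $P$ is a projective object of $\cCLMcan_k$. By Definition~\ref{strong-can-def} there is a small set $A$ and an open surjection $F:k^{(A,\un)}\to P$, which by Corollary~\ref{strong-can-canCOR} is a strict epimorphism in $\cCLMou_k$, hence also in $\cCLMcan_k$. Since $P$ is projective, the map
$$
\Hom_{\cCLMcan_k}(P,k^{(A,\un)}) \longrightarrow \Hom_{\cCLMcan_k}(P,P)
$$
induced by post-composition with $F$ is surjective, so lifting $\id_P$ produces a morphism $s:P\to k^{(A,\un)}$ with $F\circ s=\id_P$. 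As $F$ is a strict epi, it is the cokernel of its kernel $i:K\hookrightarrow k^{(A,\un)}$ taken in $\cCLMcan_k$, so $K\map{i}k^{(A,\un)}\map{F}P$ is an exact sequence; the section $s$ splits it and yields $k^{(A,\un)}\cong K\oplus P$ in $\cCLMcan_k$. Thus $P$ is a direct summand of $k^{(A,\un)}$, proving the equivalence.

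For pro-flatness it suffices, by the equivalence just proved and Definition~\ref{pro-flat}, to show that a direct summand $P$ of $k^{(A,\un)}$ is pseudocanonical with flat reductions. Pseudocanonicity holds because every canonical module is pseudocanonical (part $\mathit 2$ of Remark~\ref{strong-can-can}), so $P$ admits a representation \eqref{canrepr}. To control the reductions, first observe that $k^{(A,\un)}$ is itself pro-flat: applying Proposition~\ref{indu2} to the (always pseudocanonical) module $M=k$ gives $k^{(A,\un)}=\limit_{I\in\cP(k)}(k/I)^{(A)}$, and by part $\mathit 3$ of Proposition~\ref{prodiscrete} the kernel of the projection onto the $I$-th term is $\ol{I\,k^{(A,\un)}}$, so that
$$
k^{(A,\un)}\big/\ol{I\,k^{(A,\un)}}\;\cong\;(k/I)^{(A)},
$$
a free, hence flat, $k/I$-module, for every open ideal $I$.

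Now write the decomposition as $k^{(A,\un)}\cong P\oplus Q$ in $\cCLMcan_k$. By Corollary~\ref{canmax2} this isomorphism is a homeomorphism, and the finite biproduct carries the product topology; since the closure of a finite product is the product of the closures, one gets $\ol{I(P\oplus Q)}=\ol{IP}\oplus\ol{IQ}$, i.e.\ the reduction functor $M\mapsto M/\ol{IM}$ is additive on finite biproducts. Hence for each open $I$
$$
(k/I)^{(A)}\;\cong\;\big(P/\ol{IP}\big)\oplus\big(Q/\ol{IQ}\big)
$$
as $k/I$-modules, and as a direct summand of a flat module, $P/\ol{IP}$ is flat over $k/I$. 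This being valid for all $I\in\cP(k)$, the pseudocanonical module $P$ is pro-flat in the sense of Definition~\ref{pro-flat}. The only delicate points, which I would treat carefully, are the identification of the reductions of $k^{(A,\un)}$ with the free modules $(k/I)^{(A)}$ via Propositions~\ref{indu2} and \ref{prodiscrete}, and the verification that the finite biproduct in $\cCLMcan_k$ is computed with the product topology, so that the closure-of-a-finite-product computation legitimately makes the reduction functor additive.
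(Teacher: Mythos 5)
Your proof is correct and takes essentially the same route as the paper's: projectivity lifts $\id_P$ along the strict epimorphism $k^{(A,\un)}\to P$ to yield a direct-summand decomposition, and pro-flatness follows because the reductions $k^{(A,\un)}/\ol{Ik^{(A,\un)}}\cong (k/I)^{(A)}$ are free over $k/I$ and reduction modulo $\ol{I(-)}$ distributes over the finite biproduct. The only cosmetic divergence is the splitting mechanism: the paper observes that the bijective morphism $(s,\iota): M\oplus N \to k^{(M,\un)}$ is an isomorphism by Corollary~\ref{canmax2}, whereas you split the kernel--cokernel pair by the standard additive-category argument for a cokernel admitting a section --- both are valid, and your extra care about the reductions via Propositions~\ref{indu2} and~\ref{prodiscrete} and about the product topology on the biproduct fills in exactly what the paper leaves implicit.
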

\begin{proof}   We need to show that if $M \in \cCLMcan_k$ is projective, then $M$ is a direct summand of   $k^{(M,\un)}$. In fact,   $\pi_M : k^{(M,\un)} \longrightarrow M$ is a strict epimorphism,  and, $M$ being projective, $\pi_M$  admits a section $s: M \longrightarrow k^{(M,\un)}$.    
Let $\iota: N \longrightarrow k^{(M,\un)}$ be the kernel of $\pi_M$ in the category $\cCLMcan_k$.
By   Corollary~\ref{canmax2}, the bijective morphism $(s,\iota): M \oplus N \longrightarrow k^{(M,\un)}$  
is an isomorphism.   
  \par  
 Since, for any $I \in \cP(k)$, $k^{(A,\un)}/\ol{Ik^{(A,\un)}}= k^{(A)}/\ol{Ik^{(A)}} = (k/I)^{(A)}$ is free, 
 hence flat, over $k/I$, $k^{(A,\un)}$ is pro-flat. If $M \oplus N = k^{(A,\un)}$ is a direct sum decomposition in 
 $\cCLMcan_k$, both $M$ and $N$ are then pro-flat. 
 \end{proof}
\begin{prop} \label{projcan}  Let 
$$  
Z\map{i} Y \map{p} X   
$$ 
be an   exact sequence in $\cCLMcan_k$ with $X$  projective. Then 
$Y$ is projective iff $Z$ is projective.
\end{prop}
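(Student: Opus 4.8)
The plan is to reduce the statement to the splitting of the exact sequence, followed by the elementary behaviour of projectivity under direct sums and direct summands; the concrete structure theory of $\cCLMcan_k$ is barely needed.

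First I would use the projectivity of $X$ to split the sequence. Since $Z \map{i} Y \map{p} X$ is an exact sequence (a kernel--cokernel pair) in the quasi-abelian category $\cCLMcan_k$ (Theorem~\ref{quasiabelian-scan}), the morphism $p$ is a strict epimorphism and $i$ is the kernel of $p$. Because $X$ is projective, the map $\Hom_{\cCLMcan_k}(X,p)\colon \Hom(X,Y)\to\Hom(X,X)$ is surjective (Definition~\ref{projdef}); choosing a preimage of $\id_X$ yields a section $s\colon X\to Y$ with $p\circ s=\id_X$. Then $p\circ(\id_Y-s\circ p)=p-p=0$, so $\id_Y-s\circ p$ factors through $i=\ker p$, giving $r\colon Y\to Z$ with $i\circ r=\id_Y-s\circ p$. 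Since $i\circ r\circ i=(\id_Y-s\circ p)\circ i=i$ (as $p\circ i=0$) and $i$ is a monomorphism, we get $r\circ i=\id_Z$. The relations $r\circ i=\id_Z$, $p\circ s=\id_X$ and $i\circ r+s\circ p=\id_Y$ exhibit a direct sum decomposition $Y\cong Z\oplus X$ in $\cCLMcan_k$.

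The second step is purely formal, using the characterization following Definition~\ref{projdef} that $W$ is projective iff $\Hom_{\cCLMcan_k}(W,-)$ carries strict epimorphisms to surjections. For any strict epi $g$ one has $\Hom(Z\oplus X,g)\cong\Hom(Z,g)\oplus\Hom(X,g)$, a block-diagonal map, which is surjective if and only if both $\Hom(Z,g)$ and $\Hom(X,g)$ are surjective. If $Z$ is projective, then for every strict epi $g$ both $\Hom(Z,g)$ and $\Hom(X,g)$ are surjective (the latter because $X$ is projective by hypothesis), hence so is $\Hom(Y,g)$, and $Y$ is projective. Conversely, if $Y$ is projective, then $\Hom(Y,g)$ is surjective for every strict epi $g$, and its block-diagonal summand $\Hom(Z,g)$ is then surjective as well, so $Z$ is projective. (Alternatively, once $Y\cong Z\oplus X$ is established, one may invoke Proposition~\ref{sumproj}: a direct summand of a projective is projective, and a direct sum of projectives is projective.)

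The only point requiring care is the splitting in the first paragraph, i.e.\ producing the section $s$ and then the retraction $r$ inside $\cCLMcan_k$ rather than merely set-theoretically. This is handled entirely by the definition of a projective object (surjectivity of $\Hom(X,p)$) together with the fact that $i$ is a genuine kernel of $p$ in $\cCLMcan_k$, which is exactly what being a kernel--cokernel pair provides; no appeal to the ambient category $\cCLMou_k$ or to the explicit description $k^{(A,\un)}$ of projectives is needed.
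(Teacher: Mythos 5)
Your proof is correct, and it takes a genuinely more formal route than the paper's. The paper also begins by choosing a section $s$ of $p$ via projectivity of $X$, but it then completes the splitting differently: it asserts that the bijective morphism $(s,i)\colon X\oplus Z\to Y$ is an isomorphism, which rests on the special feature of $\cCLMcan_k$ that bijective morphisms are isomorphisms (Corollary~\ref{canmax2}, whose proof uses $k\in\cCRouclop$); and for the converse it invokes Proposition~\ref{sumproj} to realize the projective $Y$, and hence $Z$, as a direct summand of some $k^{(A,\un)}$. You instead manufacture the retraction $r$ with $i\circ r=\id_Y-s\circ p$ from the universal property of $i=\Ker(p)$ — the classical splitting lemma, valid in any additive category once $i$ is a kernel of $p$ and $p$ is split — and you settle both implications by the block-diagonal description of $\Hom_{\cCLMcan_k}(Z\oplus X,g)$, using only the characterization of projectives after Definition~\ref{projdef}. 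What your route buys: it is purely formal, works verbatim in any quasi-abelian (indeed any exact) category, and in particular bypasses both the bijective-implies-isomorphism property and the structure theory of projectives, so it does not even need the clop hypothesis implicit in the paper's appeal to Corollary~\ref{canmax2}. What the paper's route buys is brevity, given that Corollary~\ref{canmax2} and Proposition~\ref{sumproj} are already available. One micro-verification worth recording in your first paragraph: besides $r\circ i=\id_Z$, $p\circ s=\id_X$ and $i\circ r+s\circ p=\id_Y$, the matrix check that $(r,p)$ and $(i,s)$ are mutually inverse also uses $p\circ i=0$ (given) and $r\circ s=0$, the latter following from $i\circ r\circ s=(\id_Y-s\circ p)\circ s=0$ and $i$ being a monomorphism.
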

\begin{proof} Let $s: X \longrightarrow Y$ be a section of $p$. 
 The bijective morphism $(s,i) : X \oplus Z \longrightarrow Y$ is necessarily an isomorphism in $\cCLMcan_k$. So, if  $X$ and $Z$ are projective,    $Y$ is projective. 
 Conversely, if  $Y$ is projective it is a direct summand of an object of the form $k^{(A,\un)}$. Hence so is $Z$ which is then projective.
\end{proof}

\end{subsection} 
\end{section}
 
\begin{section}{Tensor products} \label{tensors}   
  In this section, $k$ is any object of $\cRu$. 
  Further requirements will be specified 
  when needed.
\begin{subsection}{Bilinear maps}
\begin{defn} \label{hypodefsp}  Let $M_1,M_2,N$ be objects of  $\cLMu_k$.
We denote by $\Bil^\se_k(M_1 \times M_2,N)$ (resp. $\Bil^\co_k(M_1 \times M_2,N)$, resp.   $\Bil^\un_k(M_1 \times M_2,N)$) the $k$-module of
  $k$-bilinear maps 
\beq
\label{hypodef1} 
\varphi: M_1 \times M_2 \to N
\eeq
which are separately continuous in the two variables (resp.   continuous, resp.   uniformly continuous).
\end{defn}  
\begin{notation} \label{linbilin}
To a $k$-bilinear map as in \eqref{hypodef1} we associate the two $k$-linear maps 
$$\Phi: M_1 \longrightarrow \Hom_k(M_2,N)\;\;,\;\; \Psi: M_2 \longrightarrow \Hom_k(M_1,N)$$
defined by
$$\Phi(m_1) = \varphi(m_1,-): M_2 \longrightarrow N \;\;,\;\; m_2 \longmapsto   \varphi(m_1,m_2) \;,
$$  
and 
$$\Psi(m_2) = \varphi(-,m_2): M_1 \longrightarrow N \;\;,\;\; m_1 \longmapsto   \varphi(m_1,m_2) \;.
$$ 
\end{notation}
 \begin{rmk}
 \label{sepcont} \hfill 
\ben
\item $\varphi$ as in \eqref{hypodef1} is continuous if and only if it is separately continuous and it is continuous at $(0,0)$. If  $M_1$ and $M_2$ are pseudocanonical, the latter condition is automatic.  
In fact, it suffices to show that for any $Q \in \cP_k(N)$ there exist $P_i \in \cP_k(M_i)$, for $i=1,2$, such that $\varphi(P_1 \times P_2) \subset Q$. This follows from the fact that there is $J \in \cP(k)$ such that $JN \subset Q$. Then pick $P_i = \ol{JM_i}$, for $i=1,2$, to get 
$$\varphi(P_1 \times P_2)\subset \varphi(P_1 \times M_2) + \varphi(M_1 \times P_2) \subset \ol{JN} \subset Q \;.$$
 
  \item  
   $\varphi$ as in \eqref{hypodef1} is uniformly continuous $\Leftrightarrow$ for any $Q \in \cP_k(N)$, there are $P_i \in \cP_k(M_i)$, for $i=1,2$, such that  
$$
\varphi(M_1 \times P_2) + \varphi(P_1 \times M_2) \subset Q \;.
$$   
   \item 
If $M_1$ and $M_2$ are clop, 
  a $k$-bilinear map as in \eqref{hypodef1} is continuous if and only if it is 
uniformly  continuous. In fact, if $\varphi$ is continuous, for any $Q \in \cP_k(N)$ there are 
$P_i \in \cP_k(M_i)$, for $i=1,2$, such that $\varphi(P_1 \times P_2) \subset Q$. 
Now fix $Q \in \cP_k(N)$. 
There exist $J_i \in \cP(k)$, for $i=1,2$, such that
$J_i M_i \subset P_i$ for $i=1,2$. By the clop property, $\ol{J_i P_j} \in \cP_k(M_j)$  
for $\{i,j\} = \{1,2\}$. Since $\ol{J_i P_j}  \subset P_j$
$$\varphi(M_1 \times \ol{J_1P_2}) \subset \ol{\varphi(M_1 \times  J_1P_2)}  = \ol{\varphi(J_1 M_1 \times  P_2)} \subset \ol{\varphi(P_1 \times P_2)} \subset Q \;.
$$
Similarly, $\varphi(\ol{J_2P_1} \times M_2) \subset Q$. We conclude that for any $Q \in \cP_k(N)$ there exist
$Q_i   \in \cP_k(M_i)$, for $i=1,2$, such that, for any $(x_1,x_2) \in M_1 \times M_2$, 
$$
\varphi (x_1 +Q_1,x_2 +Q_2) \subset \varphi (x_1,x_2) + Q 
$$
(take for example $Q_1 = \ol{J_2P_1}$ and $Q_2 = \ol{J_1P_2}$).
Therefore
  $\varphi$ is uniformly continuous.  
This is true in particular if  $k \in \cCRouclop$ and $M_1, M_2$ are canonical. 
\item If  $M_1, M_2$ are clop (hence pseudocanonical), then 
$$\Bil^\se_k(M_1 \times M_2,N)=\Bil^\co_k(M_1 \times M_2,N)=\Bil^\un_k(M_1 \times M_2,N)\;.
$$ 
\een
\end{rmk}

\end{subsection}
  
\begin{subsection}{Internal tensor products}
 The category  $\cLMu_k$ 
as well as  its separated and complete   counterparts, admits  a 
natural structure  of symmetric monoidal  category.
We start by discussing  the corresponding  internal tensor products. 
 \begin{cor} \label{bildef10} \hfill
 \ben 
\item Let $M,N \in \LMu_k$. The functor  
$$
\cLMu_k \to \Mod_k  \;,\quad  X \mapsto \Bil^\un_k(M \times N, X) 
$$
is corepresented by an object $M \otimes^\un_k N$ of $\LMu_k$ with underlying $k$-module 
 $M^\for \otimes_k N^\for$
and an element
 $$\otimes^\un_k \in \Bil^\un_k(M \times N, M \otimes^\un_k N)$$
 with underlying $k$-linear map  $\otimes_k$.
A basis of open $k$-submodules of $M \otimes^\un_k N$ is
$$\{\Im(P \otimes N) + \Im(M \otimes Q)\,|\, P \in \cP_k(M)\,,\,Q \in \cP_k(N)\,\}\;.$$
 \een
\end{cor}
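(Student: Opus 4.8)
The plan is to exhibit $M \otimes^\un_k N$ explicitly as the algebraic tensor product $M^\for \otimes_k N^\for$ topologized by the stated family, and then to verify the universal property directly. First I would set $U_{P,Q} := \Im(P \otimes N) + \Im(M \otimes Q)$ for $P \in \cP_k(M)$ and $Q \in \cP_k(N)$, where $\Im(P \otimes N)$ denotes the $k$-submodule of $M^\for \otimes_k N^\for$ generated by the $p \otimes n$ with $p \in P$, $n \in N$ (the image of the canonical map $P \otimes_k N \to M^\for \otimes_k N^\for$), and similarly for $\Im(M \otimes Q)$. Each $U_{P,Q}$ is a $k$-submodule, and since $\cP_k(M)$ and $\cP_k(N)$ are closed under finite intersection, the inclusions $U_{P_1 \cap P_2,\, Q_1 \cap Q_2} \subseteq U_{P_1,Q_1} \cap U_{P_2,Q_2}$ show that $\{U_{P,Q}\}$ is a filter basis; I would take the $k$-linear topology it generates and call the resulting object $T$. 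This is manifestly $k$-linearly topologized, and it is moreover \emph{bounded}: given $U_{P,Q}$, boundedness of $M$ furnishes $I \in \cP(k)$ with $I M \subseteq P$, whence $I\,(M^\for \otimes_k N^\for) = \Im(IM \otimes N) \subseteq \Im(P \otimes N) \subseteq U_{P,Q}$. By part $(5)$ of Proposition~\ref{scalar-prod-prop}, $T$ is therefore an object of $\cLMu_k$ with underlying module $M^\for \otimes_k N^\for$.

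Next I would check that the canonical $k$-bilinear map $\otimes_k : M \times N \to T$ is uniformly continuous, so as to produce the element $\otimes^\un_k$. Using the criterion of part $(2)$ of Remark~\ref{sepcont}, for a given basic open $U_{P,Q}$ it suffices to exhibit open submodules absorbing the two partial images; here one simply takes $P$ and $Q$ themselves, since $\otimes_k(M \times Q) \subseteq \Im(M \otimes Q) \subseteq U_{P,Q}$ and $\otimes_k(P \times N) \subseteq \Im(P \otimes N) \subseteq U_{P,Q}$. Thus $\otimes^\un_k \in \Bil^\un_k(M \times N, T)$ with underlying map $\otimes_k$.

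Finally, for the universal property, given $X \in \cLMu_k$ and $\varphi \in \Bil^\un_k(M \times N, X)$, the algebraic universal property of $M^\for \otimes_k N^\for$ yields a unique $k$-linear $\tilde\varphi$ with $\tilde\varphi \circ \otimes_k = \varphi$; the only thing to verify is that $\tilde\varphi : T \to X$ is continuous (equivalently, for an additive map, uniformly continuous). Given $V \in \cP_k(X)$, part $(2)$ of Remark~\ref{sepcont} provides $P \in \cP_k(M)$, $Q \in \cP_k(N)$ with $\varphi(M \times Q) + \varphi(P \times N) \subseteq V$; since $V$ is a $k$-submodule while $\tilde\varphi(\Im(P \otimes N))$ (resp.\ $\tilde\varphi(\Im(M \otimes Q))$) is the submodule generated by $\varphi(P \times N)$ (resp.\ $\varphi(M \times Q)$), one gets $\tilde\varphi(U_{P,Q}) \subseteq V$, i.e.\ continuity at $0$. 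This yields the bijection $\Bil^\un_k(M \times N, X) \cong \Hom_{\cLMu_k}(T, X)$, $\varphi \mapsto \tilde\varphi$, with inverse $g \mapsto g \circ \otimes^\un_k$, naturally in $X$. I do not expect a genuine obstacle here: the work lies entirely in translating the filter-basis description of the topology on $T$ into the uniform-continuity criterion of Remark~\ref{sepcont}, the two points deserving care being that the verification of uniformity of $T$ really uses boundedness of $M$ and $N$ (not merely that they are $R$-linearly topologized), and that continuity of $\tilde\varphi$ is tested on the generators $p \otimes n$, $m \otimes q$ of the basic open submodules, where one exploits that $V$ is a submodule.
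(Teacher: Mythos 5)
Your proposal is correct and follows essentially the same route as the paper, whose entire proof is the one-line citation of part $(2)$ of Remark~\ref{sepcont}: you simply make explicit the standard details that citation leaves implicit (the filter basis $U_{P,Q}$, boundedness of the tensor product via $I(M^\for \otimes_k N^\for) = \Im(IM \otimes N)$, and the two applications of the uniform-continuity criterion to $\otimes_k$ and to $\tilde\varphi$). All of these verifications are accurate, including the two points you flag as needing care.
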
 
\begin{proof}   Follows from 2 of Remark~\ref{sepcont}.
\end{proof} 
 \begin{prop}\label{cor-mon-cat-noncompl}  
The functor  $\otimes^\un_k$  gives to  $\cLMu_k$ (resp. to $\cLMpscan_k$) a structure 
of   symmetric monoidal category with unit $k$.  
  If $k \in \cRuclop$ (resp. $\in \cRuop$) the same holds true
for  $\cLMuclop_k$ (resp. for $\cLMuop_k = \cLM^\naive_k$). 
\end{prop}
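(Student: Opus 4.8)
The plan is to deduce the symmetric monoidal structure from that of $(\Mod_k,\otimes_k,k)$, upgrading each algebraic structure morphism to a morphism of $\cLMu_k$ and checking it is a homeomorphism by comparing the explicit bases of open submodules furnished by Corollary~\ref{bildef10}. First I would record bifunctoriality of $\otimes^\un_k$: for $f\colon M \to M'$ and $g\colon N \to N'$ in $\cLMu_k$ the underlying map $f^\for \otimes g^\for$ is continuous, since for $P' \in \cP_k(M')$, $Q' \in \cP_k(N')$ one has $(f \otimes g)(\Im(f^{-1}(P') \otimes N) + \Im(M \otimes g^{-1}(Q'))) \subseteq \Im(P' \otimes N') + \Im(M' \otimes Q')$; uniqueness in the corepresentation then yields functoriality.

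For the associativity constraint I would take the standard algebraic isomorphism $a\colon (M^\for \otimes N^\for)\otimes P^\for \iso M^\for \otimes (N^\for \otimes P^\for)$ and observe that it is a homeomorphism: substituting a basic open submodule of $M \otimes^\un N$ into the basis of $(M\otimes^\un N)\otimes^\un P$, a cofinal family of open submodules on the left is
$$\Im(P_0 \otimes N \otimes P) + \Im(M \otimes Q_0 \otimes P) + \Im(M \otimes N \otimes S)$$
with $P_0 \in \cP_k(M)$, $Q_0 \in \cP_k(N)$, $S \in \cP_k(P)$, and expanding the basis of $M \otimes^\un (N \otimes^\un P)$ produces literally the same submodules of the triple algebraic tensor product. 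The symmetry constraint is immediate for the same reason: the swap carries $\Im(P \otimes N) + \Im(M \otimes Q)$ to $\Im(Q \otimes M) + \Im(N \otimes P)$. The unit constraint $k \otimes^\un_k M \iso M$ is the one place where uniformity is essential: under $k \otimes_k M \cong M$ the basic open $\Im(I \otimes M) + \Im(k \otimes Q)$ becomes $IM + Q$, and since $M$ is bounded (Proposition~\ref{scalar-prod-prop}) every $Q \in \cP_k(M)$ contains some $IM$, so $IM + Q = Q$ already occurs among these submodules and the two topologies coincide.

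Coherence (pentagon, triangle, hexagon) I would dispatch formally: all three constraints lift the corresponding constraints of the symmetric monoidal category $\Mod_k$, and the forgetful functor $(-)^\for\colon \cLMu_k \to \Mod_k$ is faithful, so each coherence identity, being an equality of morphisms, may be checked after applying $(-)^\for$, where it holds by the coherence of $\Mod_k$; naturality of the constraints transfers from $\Mod_k$ for the same reason.

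Finally, for the four subcategories I must check stability under $\otimes^\un_k$ and that the unit $k$ lies in each. The ring $k$ is always pseudocanonical (Remark~\ref{dirsumdescr}). For $\cLMpscan_k$: if $M,N$ are pseudocanonical I claim $\ol{I(M\otimes^\un N)}$ is open for every $I \in \cP(k)$ and that these are cofinal, so $M \otimes^\un N$ is pseudocanonical. Openness follows because separate continuity of $\otimes^\un_k$ gives $\Im(\ol{IM}\otimes N) + \Im(M \otimes \ol{IN}) \subseteq \ol{I(M\otimes N)}$, the left side being a basic open; cofinality follows from boundedness of $M \otimes^\un N$, since $I(M\otimes N) \subseteq U$ forces $\ol{I(M\otimes N)} \subseteq U$ for any open $U$. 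For $\cLMuop_k = \cLM^\naive_k$ with $k \in \cRuop$ the computation is cleaner: using the bases $\{IM\}$, $\{IN\}$ (Remark~\ref{uclop}) the basic opens of $M \otimes^\un N$ reduce to $\{I(M\otimes N)\}_I$, so the tensor product is again naive. For $\cLMuclop_k$ with $k \in \cRuclop$ there is nothing further to prove, since $\cLMuclop_k = \cLMpscan_k$ by Remark~\ref{pseudocan2}. The main obstacle is therefore not any single hard step but the need to pinpoint exactly where boundedness is used — namely in the unit constraint and in the cofinality half of pseudocanonical stability — the remaining verifications being direct comparisons of the explicit bases.
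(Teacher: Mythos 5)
Your proposal is correct and follows essentially the same route as the paper: the core content is your stability argument for $\cLMpscan_k$ (openness of $\ol{I(M\otimes^\un_k N)}$ via separate continuity of $\otimes^\un_k$, with cofinality from boundedness), which is exactly Lemma~\ref{tensprop}, and your treatment of the clop case via Remark~\ref{pseudocan2} and of the op case via the reduction of the basic opens to $\{I(M\otimes_k N)\}_I$ matches the paper's proof. The explicit verifications of bifunctoriality, the constraints, and coherence through the faithful forgetful functor are details the paper leaves implicit, and your observation that boundedness is precisely what makes the unit constraint a homeomorphism is a correct and worthwhile precision.
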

\begin{proof} All we need to observe is the following 
  \begin{lemma} \label{tensprop} Let  $M, N \in \cLMpscan_k$,  then $M \otimes^\un_k N \in \cLMpscan_k$.  
  \end{lemma} 
\begin{proof}  For any $I \in \cP(k)$ we have 
$$
I (M \otimes_k N) = IM \otimes_k N  = M \otimes_k IN
$$
so that
\beq \label{naivetens}
I (M \otimes_k N) = IM \otimes_k N + M \otimes_k IN
\eeq
and, by separate continuity of $\otimes^\un_k$, 
$$
\ol{I (M \otimes_k N)} = \ol{IM} \otimes_k N + M \otimes_k \ol{IN} \;.
$$
 \end{proof} 
 The last part of the statement in the case $k \in \cRuclop$ follows from 3 of Remark~\ref{barrpscan}. 
 For $k \in \cRuop$, $\cLMuop_k = \cLM^\naive_k$ and if $M,N \in \cLM^\naive_k$, 
 $M \otimes_k N \in \cLM^\naive_k$ by \eqref{naivetens}.
 \end{proof} 
 \begin{cor} \label{cloptensor1}  Let   
$A,B,C \in \cRu$ and let 
$\chi : A \to B$ and $\psi : A \to C$ $\clop$-adic (resp. $\op$-adic) morphisms in $\cRu$. Then 
$\chi \otimes^\un_A \psi:
A \longrightarrow B \otimes^\un_A C
$
is $\clop$-adic (resp. $\op$-adic).  In particular, if $A \in \cRuclop$ (resp. $A \in \cRuop$), then $B \otimes^\un_A C \in \cRuclop$ (resp. $\in \cRuop$).
\end{cor}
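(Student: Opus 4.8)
The plan is to reduce everything to the explicit basis of open $A$-submodules of $D := B \otimes^\un_A C$ furnished by Corollary~\ref{bildef10}, together with the definition of $\op$-adic and $\clop$-adic morphisms (Definition~\ref{clop-adic}). First I would pin down the morphism in question: $\theta := \chi \otimes^\un_A \psi$ is the structure map of the $A$-algebra $B \otimes_A C$, sending $a$ to $\chi(a) \otimes 1 = 1 \otimes \psi(a)$, the two expressions agreeing since $(a\cdot 1_B)\otimes 1_C = 1_B \otimes (a \cdot 1_C)$ in the tensor product over $A$. Before anything else I would note that the basic open submodules $\Im(P \otimes C) + \Im(B \otimes Q)$ of Corollary~\ref{bildef10} are ideals when $P \in \cP_A(B)$ and $Q \in \cP_A(C)$ are open ideals; since open ideals are cofinal, $D$ is linearly topologized, so $D \in \cRu$ and $\theta$ is a genuine morphism of $\cRu$.

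The algebraic heart of the argument is the identity, for a fixed open ideal $I \in \cP(A)$,
\[ \theta(I)D = \Im\big(\chi(I)B \otimes_A C\big) = \Im\big(B \otimes_A \psi(I)C\big). \]
This follows because $\theta(a)(b\otimes c) = \chi(a)b \otimes c = b \otimes \psi(a) c$, so the two images are generated by literally the same elements of $D$, and both coincide with the ideal of $D$ generated by $\theta(I)$. Everything then rests on transporting the openness of the ideals $\chi(I)B$ (resp.\ $\ol{\chi(I)B}$) and $\psi(I)C$ (resp.\ $\ol{\psi(I)C}$) in $B$ and $C$ into openness of $\theta(I)D$ (resp.\ $\ol{\theta(I)D}$) in $D$.

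In the $\op$-adic case this is immediate: $\chi(I)B \in \cP_A(B)$ and $\psi(I)C \in \cP_A(C)$, and taking $P = \chi(I)B$, $Q = \psi(I)C$ in the basis, the identity above yields $\Im(P\otimes C) + \Im(B \otimes Q) = \theta(I)D$, so $\theta(I)D$ is itself a basic open submodule. In the $\clop$-adic case the main obstacle is that $\theta(I)D$ need not be closed, so I would instead bound its closure from below. Here $\ol{\chi(I)B} \in \cP_A(B)$ and $\ol{\psi(I)C} \in \cP_A(C)$; using that $\otimes^\un_A$ is separately continuous, the map $x \mapsto x \otimes c$ sends $\ol{\chi(I)B}$ into $\ol{\Im(\chi(I)B \otimes C)} = \ol{\theta(I)D}$, whence $\Im(\ol{\chi(I)B} \otimes C) \subseteq \ol{\theta(I)D}$, and symmetrically $\Im(B \otimes \ol{\psi(I)C}) \subseteq \ol{\theta(I)D}$. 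Thus $\ol{\theta(I)D}$ contains the basic open submodule $\Im(\ol{\chi(I)B}\otimes C) + \Im(B \otimes \ol{\psi(I)C})$ and is therefore open.

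Having shown $\theta$ is $\op$-adic (resp.\ $\clop$-adic), the final ``in particular'' clause is a direct appeal to point 2 of Remark~\ref{op-canonical}: a ring receiving an $\op$-adic (resp.\ $\clop$-adic) morphism from an object of $\cRuop$ (resp.\ $\cRuclop$) lies again in $\cRuop$ (resp.\ $\cRuclop$), so $B \otimes^\un_A C \in \cRuop$ (resp.\ $\cRuclop$). The one delicate point I expect to verify carefully is the separate-continuity/closure step in the $\clop$ case, namely that the continuous image of $\ol{\chi(I)B}$ under $x \mapsto x \otimes c$ lands inside the closure of $\theta(I)D$ rather than merely inside $D$; everything else is bookkeeping with the explicit basis.
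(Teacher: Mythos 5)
Your proposal is correct and follows essentially the same route as the paper's (one-line) proof: the paper likewise observes that $\ol{\theta(I)D}$ contains the images of $\ol{\chi(I)B}\otimes^\un_A C$ and $B\otimes^\un_A\ol{\psi(I)C}$, hence a basic open submodule from Corollary~\ref{bildef10}, and concludes via Remark~\ref{op-canonical}. Your write-up merely makes explicit the details the paper leaves implicit — the identity $\theta(I)D=\Im(\chi(I)B\otimes_A C)=\Im(B\otimes_A\psi(I)C)$ and the separate-continuity step transporting closures into $\ol{\theta(I)D}$ — all of which check out.
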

\begin{proof} Let $I$ be an open ideal of $A$. We want to show that the closure of $I(B \otimes^\un_A C)$ is open. In fact it contains both $\ol{IB} \otimes^\un_A C$ and $B \otimes^\un_A \ol{IC}$. Similarly for the op case.
\end{proof}
 
 \end{subsection} 

\begin{subsection}{Complete tensor products} 
We assume here that $k$ is in $\cCRu$ and let $M,N \in \cCLMu_k$. 
The complete tensor product  $M \wt^\un_k N \in \cCLMu_k$ is the completion of  $M \otimes^\un_k N$. It is the representative of the functor  
$$\cCLMu_k \map{} \Mod_k \;\;,\;\; X \mapsto \Bil_k^\un(M,N;X) \;.
$$
Its existence and construction are recalled in the next
 \begin{prop} \label{compltenshu} Let $k \in \cCRu$ and $M,N \in \cCLMu_k$. Then
 the functor 
$$\cCLMu_k \longrightarrow \Mod_k$$
given by $N \mapsto \Bil^\un_k(M_1 \times M_2,N)$, is corepresented by the completion $M_1 \wt^\un_k M_2$ of $M_1  \otimes^\un_k M_2$ and by the natural map
$\wt^\un_k \in \Bil^\un_k(M_1 \times M_2,M_1 \wt^\un_k M_2)$ obtained from $\otimes^\un_k$.  Explicitly, for $M,N$ in $\cCLMu_k$, we have 
\beq \label{tensudef} \begin{split}
M \wt^\un_k N  = \widehat{M \otimes^\un_k N} &=   \limit_{P,Q} \, \; (M  \otimes_k N) /  (P  \otimes_k N + M  \otimes_k Q) 
\\ &= \limit_{P,Q} \, \; M/P \otimes_k N/Q  \;,
\end{split}
\eeq
for $P$ (resp. $Q$) varying in the set of open submodules of $M$ (resp. $N$), 
where all the $k$-modules appearing  in the projective systems carry the discrete topology.
A fundamental system  of open submodules of  $M \wt^\un_k N$  
consists of the closures in $M \wt^\un_k N$   of  the $k$-submodules 
$P  \otimes_k N + M  \otimes_k Q \subset M \otimes_kN$, for  $P, Q$ as before. 
\end{prop}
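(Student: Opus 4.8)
The plan is to prove Proposition~\ref{compltenshu} by reducing everything to the universal property of the internal tensor product $\otimes^\un_k$ established in Corollary~\ref{bildef10}, combined with the adjunction between completion and the inclusion $\cCLMu_k \hookrightarrow \cLMu_k$ from Proposition~\ref{stab-compl-mod}. First I would recall that for $M,N \in \cCLMu_k$ the object $M \otimes^\un_k N \in \cLMu_k$ corepresents the functor $X \mapsto \Bil^\un_k(M\times N, X)$ on $\cLMu_k$, with the explicit basis of open submodules $\{\Im(P\otimes N) + \Im(M\otimes Q)\}_{P,Q}$. Then, for a \emph{complete} target $X \in \cCLMu_k$, any uniformly continuous bilinear map $M \times N \to X$ factors uniquely through $\otimes^\un_k$ and thence, by the universal property of completion, through the canonical morphism $M \otimes^\un_k N \to \widehat{M \otimes^\un_k N}$. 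This gives the bifunctorial identification
\beq
\Bil^\un_k(M \times N, X) = \Hom_{\cLMu_k}(M \otimes^\un_k N, X) = \Hom_{\cCLMu_k}(\widehat{M \otimes^\un_k N}, X)
\;,
\eeq
the last equality being exactly the adjunction of Proposition~\ref{stab-compl-mod}. This establishes that $M \wt^\un_k N := \widehat{M \otimes^\un_k N}$ corepresents $\Bil^\un_k(M \times N, -)$ on $\cCLMu_k$, which is the main assertion.

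Next I would verify the explicit formula~\eqref{tensudef}. By Proposition~\ref{leftadjunif} (part~4, formula~\eqref{sepunif}) and Remark~\ref{coker-coim}, the completion in $\cCLMu_k$ of a linearly topologized uniform module is computed as the projective limit of the discrete quotients by a basis of open submodules. Applying this to $M \otimes^\un_k N$ with the basis $\{P \otimes_k N + M \otimes_k Q\}_{P,Q}$ from Corollary~\ref{bildef10} yields
\beq
M \wt^\un_k N = \limit_{P,Q} (M \otimes_k N)/(P \otimes_k N + M \otimes_k Q)
\;.
\eeq
The second equality in~\eqref{tensudef} is the purely algebraic identification $(M \otimes_k N)/(P \otimes_k N + M \otimes_k Q) \iso M/P \otimes_k N/Q$, which is the standard right-exactness of $\otimes_k$ in each variable. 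The statement about the fundamental system of open submodules of $M \wt^\un_k N$ then follows from Remark~\ref{compl-adj} (part~1), since the kernels of the projections $\what{\pi}_H$ of a completion are precisely the closures of the images of the defining open submodules.

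The step I expect to require the most care is the factorization claim underlying the adjunction: one must check that a uniformly continuous $k$-bilinear map $\varphi : M \times N \to X$ into a complete target really does extend uniquely and continuously along $M \otimes^\un_k N \to M \wt^\un_k N$. The uniform continuity of $\varphi$ (characterized in part~2 of Remark~\ref{sepcont}) is exactly what guarantees that the induced linear map $M \otimes^\un_k N \to X$ is uniformly continuous, hence extends to the completion by the universal property of separated completion; conversely any uniformly continuous bilinear map must annihilate no more than what the completion topology forces, so no information is lost. Since all the functors in play ($\otimes^\un_k$, completion, and the forgetful functor to $\Mod_k$) are already known to exist and to have the stated adjointness and exactness properties, the remaining arguments are routine bookkeeping with open submodules and the right-exactness of the algebraic tensor product, and I would not grind through them in detail.
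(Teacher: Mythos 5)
Your proposal is correct. The paper itself gives no argument here: its ``proof'' is a citation to \cite[{\bf 0}.7.7]{EGA} and \cite[Chap.~III, \S 2, Exer.~28]{algebracomm}, where this classical fact is established. What you have done is reconstruct that standard argument internally to the paper, and the assembly is exactly right: corepresentability of $\Bil^\un_k(M\times N,-)$ on $\cLMu_k$ by $M\otimes^\un_k N$ (Corollary~\ref{bildef10}), composed with the completion adjunction of Proposition~\ref{stab-compl-mod} (applicable since $k\in\cCRu$, so $\what{k}=k$ and completion lands in $\cCLMu_k$), yields the corepresentability on $\cCLMu_k$; the limit formula~\eqref{tensudef} is then the completion computed over the basis $\{\Im(P\otimes_k N)+\Im(M\otimes_k Q)\}$ together with right-exactness of $\otimes_k$, and the description of the open submodules of $M\wt^\un_k N$ is part~1 of Remark~\ref{compl-adj} (kernels of $\what{\pi}_H$ equal closures of the images of the defining open submodules). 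This buys a self-contained proof where the paper outsources, at no extra cost; the only mildly hand-wavy sentence (``no information is lost'') is in fact unnecessary, since the bijection is simply the composite of the two natural bijections you already wrote down, so there is nothing left to check there.
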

\begin{proof} See
\cite[{$\bf 0$}.7.7]{EGA}  or  \cite[Chap. III, \S 2, Exer. 28]{algebracomm}.\par
\end{proof}
\begin{rmk} \label{surjsystem} The calculation of $M \wt^\un_k N$  can be performed starting from any description of $M$ and $N$ as limits of cofiltered  projective systems of discrete uniform $k$-modules $\{M_\alpha\}_{\alpha \in A}$ and $\{N_\beta\}_{\beta \in B}$ such that the morphisms $M_{\alpha'} \map{} M_\alpha$ and $N_{\beta'} \map{} N_\beta$ are surjective, for any $\alpha \leq \alpha'$ in $A$ and $\beta \leq \beta'$ in $B$. Recall  that   a discrete $k$-module is uniform iff
it has  open annihilator in $k$ (see Remark~\ref{discrete-mod}). Then
$$
M \wt^\un_k N =  \limit_{\alpha,\beta} M_\alpha   \otimes _k N_\beta  \;.
$$
\end{rmk} 
\begin{lemma} \label{tensfunct} \hfill \ben
\item Let $\{M_\alpha\}_{\alpha \in A}$ (resp.  $\{N_\beta\}_{\beta \in B}$) be an inductive system in $\cCLMu_k$ and let 
 $$
 M = \colimit_{\alpha \in A}^\un M_\alpha \;\;,\;\;  N = \colimit_{\beta \in B}^\un N_\beta
 $$
 in $\cCLMu_k$. Then we have a natural morphism
 \beq  \label{tensfunct1}
 \colimit_{\alpha,\beta}^\un (M_\alpha \wt^\un_k N_\beta) \map{} M \wt^\un_k N \;.
 \eeq
 \item Let $\{M_\alpha\}_{\alpha \in A}$ (resp.  $\{N_\beta\}_{\beta \in B}$) be a  projective system in $\cCLMu_k$ and let 
 $$
 M = \limit_{\alpha \in A} M_\alpha \;\;,\;\;  N = \limit_{\beta \in B} N_\beta
 $$
 in $\cCLMu_k$. Then we have a natural morphism
 \beq  \label{tensfunct2}
 M \wt^\un_k N \map{}   \limit_{\alpha,\beta} (M_\alpha \wt^\un_k N_\beta)\;.
 \eeq
 \een
 \end{lemma}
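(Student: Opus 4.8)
The plan is to deduce both natural morphisms from a single observation: the complete tensor product $\wt^\un_k$ is a bifunctor $\cCLMu_k \times \cCLMu_k \to \cCLMu_k$. Once this is granted, Part 1 (resp. Part 2) is nothing but the universal property of the colimit $\colimit^\un$ (resp. of the limit) applied to a canonical cocone (resp. cone) assembled from the structure morphisms of the (co)limits. So the first step is to record the bifunctoriality of $\wt^\un_k$.

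Fix morphisms $f:M\to M'$ and $g:N\to N'$ in $\cCLMu_k$. By Proposition~\ref{compltenshu}, $M'\wt^\un_k N'$ corepresents $X\mapsto \Bil^\un_k(M'\times N',X)$, with universal element $\wt^\un_k\in\Bil^\un_k(M'\times N',\,M'\wt^\un_k N')$. Set $\varphi:=\wt^\un_k\circ(f\times g):M\times N\to M'\wt^\un_k N'$; I claim $\varphi\in\Bil^\un_k(M\times N,\,M'\wt^\un_k N')$. Indeed, by part 2 of Remark~\ref{sepcont}, for any $Q\in\cP_k(M'\wt^\un_k N')$ there are $P'\in\cP_k(M')$ and $P''\in\cP_k(N')$ with $\wt^\un_k(M'\times P'')+\wt^\un_k(P'\times N')\subset Q$; since $f,g$ are continuous, $f^{-1}(P')\in\cP_k(M)$ and $g^{-1}(P'')\in\cP_k(N)$, and
\[
\varphi(M\times g^{-1}(P''))+\varphi(f^{-1}(P')\times N)\subset \wt^\un_k(M'\times P'')+\wt^\un_k(P'\times N')\subset Q,
\]
so $\varphi$ is uniformly continuous, again by Remark~\ref{sepcont}. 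The corepresenting property then yields a unique $\cCLMu_k$-morphism $f\wt^\un_k g:M\wt^\un_k N\to M'\wt^\un_k N'$ through which $\varphi$ factors; uniqueness makes $(f,g)\mapsto f\wt^\un_k g$ functorial in each variable and compatible with composition, i.e. $\wt^\un_k$ is a bifunctor.

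For Part 1, let $\iota_\alpha:M_\alpha\to M$ and $\kappa_\beta:N_\beta\to N$ be the structure morphisms of $M=\colimit^\un_\alpha M_\alpha$ and $N=\colimit^\un_\beta N_\beta$. The bifunctor produces morphisms $\iota_\alpha\wt^\un_k\kappa_\beta:M_\alpha\wt^\un_k N_\beta\to M\wt^\un_k N$ indexed by $A\times B$. Since $\iota_{\alpha'}$ (resp. $\kappa_{\beta'}$) composed with the transition map of the inductive system equals $\iota_\alpha$ (resp. $\kappa_\beta$), functoriality shows that these morphisms form a cocone on $\{M_\alpha\wt^\un_k N_\beta\}_{(\alpha,\beta)\in A\times B}$ with apex $M\wt^\un_k N\in\cCLMu_k$, and the universal property of $\colimit^\un_{\alpha,\beta}$ in $\cCLMu_k$ produces the unique morphism \eqref{tensfunct1}. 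Dually, for Part 2 let $\pi_\alpha:M\to M_\alpha$ and $\rho_\beta:N\to N_\beta$ be the projections of the limits; the morphisms $\pi_\alpha\wt^\un_k\rho_\beta:M\wt^\un_k N\to M_\alpha\wt^\un_k N_\beta$ form a cone over the projective system, and since $\cCLMu_k$ admits limits by Proposition~\ref{limproj}, the universal property of the limit yields the unique morphism \eqref{tensfunct2}.

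There is no serious obstacle here: the content is purely formal, resting on corepresentability and universal properties. The only point requiring a small verification is that precomposition of a uniformly continuous bilinear map by a pair of continuous linear morphisms is again uniformly continuous, which is the computation above; everything else is the compatibility of the canonical (co)cones with the transition maps, which is immediate from functoriality.
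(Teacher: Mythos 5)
Your proposal is correct, and it is precisely the formal argument the paper suppresses with its one-word proof (``Clear.''): bifunctoriality of $\wt^\un_k$ obtained from the corepresentability in Proposition~\ref{compltenshu} together with the uniform-continuity criterion of Remark~\ref{sepcont}, followed by the universal properties of $\colimit^\un$ and of limits in $\cCLMu_k$. The one step genuinely needing verification --- that precomposing the universal uniformly continuous bilinear map with a pair of continuous $k$-linear morphisms again satisfies the criterion of part~2 of Remark~\ref{sepcont} --- is carried out correctly in your computation.
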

 \begin{proof} Clear. 
 \end{proof} 
\begin{prop} \label{promodcompl}  
Let  $\{M_\alpha\}_{\alpha \in A}$ and $\{N_\beta\}_{\beta \in B}$ be 
cofiltered projective systems in $\cCLMu_k$, indexed by the filtered posets $A,B$, such that the morphisms $M_{\alpha'} \map{} M_\alpha$ and $N_{\beta'} \map{} N_\beta$ are cokernels in $\cCLMu_k$ 
for any $\alpha \leq \alpha'$ in $A$ and $\beta \leq \beta'$ in $B$. 
Then the canonical  morphism  
\beq  \label{promodcompl2} \limit_{\alpha} M_\alpha \, \wt^\un_k \,  \limit_{\beta} N_\beta \map{}
 \limit_{\alpha,\beta} \, M_\alpha \wt^\un_k N_\beta 
\eeq
is an isomorphism in $\cCLMu_k$.  
\end{prop}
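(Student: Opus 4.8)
The plan is to reduce the statement to the defining description of the complete tensor product as a limit over discrete quotients (Proposition~\ref{compltenshu}) and then interchange the two resulting limits. First I would refine the given presentations $M=\limit_\alpha M_\alpha$ and $N=\limit_\beta N_\beta$ into presentations by \emph{discrete} uniform modules. For each $\alpha$, Proposition~\ref{prodiscrete} gives $M_\alpha=\limit_{P\in\cP_k(M_\alpha)}M_\alpha/P$ with discrete uniform quotients (discrete with open annihilator, by Remark~\ref{discrete-mod}) and surjective quotient transition maps; likewise for $N_\beta$. Combining these with the outer limits, I would index by pairs $(\alpha,P)$ with $P\in\cP_k(M_\alpha)$, ordered by declaring $(\alpha',P')\ge(\alpha,P)$ iff $\alpha'\ge\alpha$ and $P'\subseteq\phi_{\alpha,\alpha'}^{-1}(P)$, where $\phi_{\alpha,\alpha'}\colon M_{\alpha'}\to M_\alpha$ is the transition map. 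This is a filtered poset, and iterating limits gives $\limit_{(\alpha,P)}M_\alpha/P=\limit_\alpha\limit_P M_\alpha/P=M$ in $\cCLMu_k$, with the weak topology of the projections $M\to M_\alpha/P$.

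The crucial point is that the combined system $\{M_\alpha/P\}_{(\alpha,P)}$ has \emph{surjective} transition maps, so that Remark~\ref{surjsystem} applies. Here I would exploit the hypothesis that each $\phi_{\alpha,\alpha'}$ is a cokernel in $\cCLMu_k$: by the explicit description of cokernels in subsection~\ref{colimsec}, such a morphism has dense image. Since a dense submodule $D\subseteq M_\alpha$ satisfies $D+P=M_\alpha$ for every open submodule $P$ (the open submodule $P$ being also closed), the composite $M_{\alpha'}\to M_\alpha\to M_\alpha/P$ is surjective onto the discrete module $M_\alpha/P$. Factoring this surjection through $M_{\alpha'}/P'$, which is legitimate once $P'\subseteq\phi_{\alpha,\alpha'}^{-1}(P)$, yields a surjection $M_{\alpha'}/P'\to M_\alpha/P$, as required. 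The identical argument applies to the $N_\beta$.

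With these discrete presentations in hand, Remark~\ref{surjsystem} gives $M\wt^\un_k N=\limit_{(\alpha,P),(\beta,Q)}(M_\alpha/P)\otimes_k(N_\beta/Q)$. I would then split the combined index and apply Proposition~\ref{compltenshu} to the inner limit, namely $\limit_{P,Q}(M_\alpha/P)\otimes_k(N_\beta/Q)=M_\alpha\wt^\un_k N_\beta$, so that $M\wt^\un_k N=\limit_{\alpha,\beta}M_\alpha\wt^\un_k N_\beta$. Finally I would verify that this identification coincides with the canonical morphism \eqref{promodcompl2} of Lemma~\ref{tensfunct}; this is immediate, since both arrows are induced by the one compatible family of canonical maps $M\wt^\un_k N\to M_\alpha\wt^\un_k N_\beta$, so the abstractly constructed isomorphism \emph{is} \eqref{promodcompl2}.

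The main obstacle is the second step: ensuring that passing to the discrete quotients $M_\alpha/P$ restores genuinely surjective transition maps, even though the transition maps of the original systems are only cokernels, i.e. dense-image morphisms, rather than set-theoretic surjections. This is precisely where the hypothesis on the $M_{\alpha'}\to M_\alpha$ (and $N_{\beta'}\to N_\beta$) enters, and it is essential: for systems with merely continuous transition maps one cannot expect $\wt^\un_k$ to commute with the limit. A secondary, purely bookkeeping point is checking that $\{\pi_\alpha^{-1}(P)\}$ is cofinal among the open submodules of $M$, so that the refined index $(\alpha,P)$ is an admissible presentation; this follows from the filteredness of $A$ together with the same dense-image property used above.
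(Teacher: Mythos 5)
Your proposal is correct and follows essentially the same route as the paper's own proof: refine the index sets to the filtered posets of pairs $(\alpha,P)$, $P\in\cP_k(M_\alpha)$, observe that the cokernel hypothesis makes the refined transition maps $M_{\alpha'}/P'\to M_\alpha/P$ surjective, and then apply Remark~\ref{surjsystem} together with the defining limit description of $\wt^\un_k$ to interchange the limits. Your dense-image argument (a cokernel in $\cCLMu_k$ has dense image, and $D+P=M_\alpha$ for $D$ dense and $P$ open, hence clopen) is just a more explicit justification of the step the paper states as ``$M_{\alpha'}\to M_\alpha$ is a cokernel in $\cCLMu_k$, so $M_{\alpha'}/P_{\alpha'}\to M_\alpha/P_\alpha$ is a cokernel in $\Mod_k$.''
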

\begin{proof}  For any $\alpha \in A$, $M_\alpha$ is the limit  of a cofiltered  projective system  of discrete uniform $k$-modules 
$\{M_\alpha/P_\alpha\}_{P_\alpha \in \cP(M_\alpha)}$.  We define a new filtered poset 
$\Gamma$ consisting of the pairs $(\alpha, P_\alpha)$ such that $\alpha \in A$ and $P_\alpha \in \cP(M_\alpha)$. Then 
$(\alpha, P_\alpha) < (\alpha', P_{\alpha'})$ iff the morphism $M_{\alpha'} \map{} M_\alpha$ sends $P_{\alpha'}$ into $P_\alpha$.
Similarly, we define a new filtered poset $\Delta$ consisting of the pairs $(\beta,Q_\beta)$ such that $\beta \in B$ and $Q_\beta \in \cP(N_\beta)$. 
Then, for any $\gamma = (\alpha, P_\alpha) \in \Gamma$ and  $\delta = (\beta, Q_\beta) \in \Delta$, we set 
$$
M_\gamma := M_\alpha/P_\alpha\;\;\mbox{and}\;\; N_\delta := N_\beta/Q_\beta\;.
$$
For $\gamma = (\alpha, P_\alpha) \leq \gamma' =(\alpha', P_{\alpha'})$ the morphism $M_{\gamma'} \map{} M_\gamma$ is a  surjection of discrete $k$-modules because  $M_{\alpha'} \map{} M_\alpha$ is a cokernel in $\cCLMu_k$, so that 
$M_{\alpha'}/P_{\alpha'} \map{} M_\alpha/P_\alpha$ is a cokernel in $\Mod_k$. 
 The projective system $\{M_\gamma\}_{\gamma \in \Gamma}$ then satisfies the assumptions in Remark~\ref{surjsystem}. Similarly for $\{N_\delta\}_{\delta \in \Delta}$. 
We have 
$$\limit_{\alpha} M_\alpha = \limit_{(\alpha, P_\alpha) \in \Gamma} M_\alpha/P_\alpha = \limit_{\gamma \in \Gamma} M_\gamma
$$ 
and, similarly, 
$$\limit_{\beta} N_\beta = \limit_{(\beta, Q_\beta) \in \Delta} N_\beta/Q_\beta = \limit_{\delta \in \Delta} N_\delta \;.
$$  
Then
$$
\limit_{\alpha} M_\alpha  \, \wt^\un_k \, \limit_\beta N_\beta = \limit_{\gamma} M_\gamma \, \wt^\un_k \, \limit_\delta N_\delta = 
\limit_{\gamma,\delta} \, M_\gamma   \otimes_k   N_\delta = 
$$
$$
\limit_{(\alpha, P_\alpha),(\beta, Q_\beta)} M_\alpha/P_\alpha  \otimes_k N_\beta/Q_\beta = \limit_{\alpha,\beta} M_\alpha \, \wt^\un_k \,  N_\beta \;, 
$$
where the second equality follows from Remark~\ref{surjsystem}. 
\end{proof}  
\begin{rmk} \label{tensinvlim} It should be noticed that, for any  $R \in \Rings$,  in the algebraic category $\Mod_R$  the tensor product $-\otimes_R-$ does not  in general commute with limits taken in $\Mod_R$. A standard counterexample is given for $R=\Z_p$ by the $R$-module $\Z_p= \limit_n \Z/p^n\Z \in \Mod_R$  by 
\beq  \label{tensinvlim1}
\Q_p = \Z_p \otimes_{\Z_p} \Q_p = (\limit_n \Z/p^n\Z) \otimes_{\Z_p} \Q_p  \neq \limit_n (\Z/p^n\Z \otimes_{\Z_p} \Q_p) =  \limit_n (0) = (0) \;.
\eeq
This does not contradict \eqref{promodcompl2}. In fact,  if   $R=\Z_p$ is viewed as a discrete ring, $\Mod_R$ is a full subcategory of $\cCLMu_R$, but limits  of projective systems in $\Mod_R$ do not coincide with their limits in $\cCLMu_R$. In the present case, 
$$ 
\cCLMu_R\hbox{-}\limit_n \; \Z/p^n\Z = (\Z_p,p\hbox{-}\mbox{adic}) \;,
$$ 
while $\Q_p \in \Mod_R \subset \cCLMu_R$ carries the discrete topology. So, 
\beq \label{tensinvlim2}
(\cCLMu_R\hbox{-}\limit_n \; \Z/p^n\Z)\; \wt_R\; \Q_p = (\Z_p,p\hbox{-}\mbox{adic}) \; \wt_R \; \Q_p^\discr = (0)
\eeq
because  $(\Z_p,p\hbox{-}\mbox{adic}) \; \wt_R\;  \Q_p^\discr$ is the completion of $\Z_p \otimes_R \Q_p = \Q_p$ for the topology with basis of open $R$-submodules $\{p^n \Z_p \otimes_R \Q_p  + (0) \otimes_R \Q_p \}_n = \{\Q_p\}$. Therefore \eqref{tensinvlim2} coincides with the r.h.s. of \eqref{tensinvlim1} which confirms \eqref{promodcompl2}. 
\end{rmk}
 \begin{cor}\label{cor-mon-cat-compl} 
For any $k \in \cCRu$ (resp. $k \in \cCRou$)  the category $\cCLMu_k$ (resp. $\cCLMou_k$), equipped with the tensor product $ \wt^\un_k$, is a (resp. quasi-abelian) symmetric monoidal category with unit $k$.    
 \end{cor}   
\begin{rmk} \label{flatclos0} Let $k$ be any ring, $I$ be an ideal of $k$ and $M$ be any $k$-module. 
Then we have a canonical isomorphism $M/IM \to M\otimes_k k/I$ 
(found by applying the functor $M\otimes_k-$ to the exact sequence $0\to I\to k\to k/I\to 0$). 
The following is a generalization to linearly topologized modules. 
\end{rmk}

\begin{prop}  \label{flatclos1}
Let $M$ be an object of $\cCLMou_k$
and let $I\in\cP(k)$. 
Then we have a canonical isomorphism  
\beq \label{latticeeq} 
M/\ol{IM} \iso M \wt^\un_k (k/I) 
 \;.
 \eeq 
 in  $\cCLMou_{k/I}$. 
 If $k \in \cCRou$ and $M$ is an object of $\cCLMpscan_k$, the previous map is an isomorphism 
 of discrete $k/I$-modules. 
\end{prop}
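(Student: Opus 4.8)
The plan is to identify both sides of \eqref{latticeeq} with the separated completion of $M/IM$ equipped with its quotient topology, and then to use the countability built into $\cCLMou_k$ to recognize that this completion is already complete, namely $M/\ol{IM}$. First I would unwind the left-hand side. Since $I \in \cP(k)$ is open, $k/I$ is a discrete $k$-module with open annihilator $I$, hence an object of $\cCLMou_k$ whose smallest open submodule is $\{0\}$. Applying the basis description of $\otimes^\un_k$ in Corollary~\ref{bildef10} with $N = k/I$ and $Q = 0$, the internal tensor product $M \otimes^\un_k (k/I)$ has underlying $k$-module $M^\for \otimes_k (k/I) = M/IM$ and a basis of open submodules $\{\,(IM + P)/IM \mid P \in \cP_k(M)\,\}$; that is, $M \otimes^\un_k (k/I)$ is exactly $M/IM$ endowed with the quotient topology of $M \to M/IM$. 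As $I$ annihilates $M/IM$, all of this takes place over $k/I$.

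Next, by Proposition~\ref{compltenshu} the complete tensor product $M \wt^\un_k (k/I)$ is the separated completion of $M \otimes^\un_k (k/I)$, so
$$
M \wt^\un_k (k/I) = \limit_{P \in \cP_k(M)} M/(IM + P),
$$
the limit being the separated completion of $M/IM$ in its quotient topology. I would then identify this with $M/\ol{IM}$. By Remark~\ref{topgroup1} the closure of the subgroup $IM$ in $M$ is $\ol{IM} = \bigcap_{P \in \cP_k(M)} (IM + P)$ (the open $k$-submodules being cofinal among open subgroups), so the separation of $M/IM$ is precisely $M/\ol{IM}$. The decisive step is that $M/\ol{IM}$ is \emph{already} complete: $M$ is complete and, being an object of $\cCLMou_k$, admits a countable fundamental system of open submodules, while $\ol{IM}$ is closed; hence Corollary~\ref{quotclop2}(1) applies and shows $M/\ol{IM}$ is separated and complete. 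Therefore the natural morphism $M/\ol{IM} \to \widehat{M/IM}$ is an isomorphism, yielding \eqref{latticeeq}. Along the way one checks $I\,(M/\ol{IM}) = (IM + \ol{IM})/\ol{IM} = 0$, so every object here is naturally a $k/I$-module and the identification lives in $\cCLMou_{k/I}$; the map is the one induced by the universal uniformly continuous bilinear map $M \times k/I \to M/\ol{IM}$, $(m,\bar a) \mapsto \overline{am}$.

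For the pseudocanonical refinement, if $M \in \cCLMpscan_k$ then by definition $\ol{IM}$ is open in $M$, so the quotient $M/\ol{IM}$ carries the discrete topology; hence the isomorphism of \eqref{latticeeq} is an isomorphism of discrete $k/I$-modules.

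I expect the only genuine subtlety to be the completeness of $M/\ol{IM}$: this is exactly where the countability hypothesis in $\cCLMou_k$ is essential, through Corollary~\ref{quotclop2}. Without it the quotient $M/\ol{IM}$ need not be complete, so the two sides of \eqref{latticeeq} could a priori differ, the right-hand side being the honest separated completion. The remaining verifications — the basis computation for $\otimes^\un_k$, the closure formula for $IM$, and the $k/I$-linearity — are routine.
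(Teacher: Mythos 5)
Your proof is correct and follows essentially the same route as the paper's: both identify $M \wt^\un_k (k/I)$ with $\limit_{P \in \cP_k(M)} M/(IM+P)$ and then conclude that this limit is $M/\ol{IM}$ using the countability hypothesis. Where the paper invokes the exact sequence \eqref{ML2} with $\limit^1 = 0$ directly, you route the same fact through Corollary~\ref{quotclop2}(1), which the paper itself derives from that very argument, so the difference is one of packaging, not substance.
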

\begin{proof} For any 
$P \in \cP(M)$ 
we deduce from Remark~\ref{flatclos0} that 
$$
M/P\otimes_k k/I \cong  M/(IM + P) \;.
$$ 
We then reconsider the exact sequence  \eqref{ML1} with $G=M$ and $K=IM$ and obtain from the exact sequence \eqref{ML2}, where $\limit^1 =0$,
  the isomorphism
$$ 
M/\ol{IM} \iso \limit_{P \in \cP(M)} M/(IM + P) 
$$
in $\cCLMou_k$. We conclude that 
$$ M/\ol{IM} \iso \limit_{P \in \cP(M)} M/P\otimes_k k/I \cong M \wt^\un_k (k/I) \;.
$$ 
The last assertion is clear. 
\end{proof}  
 \begin{cor}\label{pscancoco} $\cCLMpscan_k$ is cocomplete so in fact bicomplete. 
\end{cor}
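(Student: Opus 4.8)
The completeness of $\cCLMpscan_k$ is already in hand: by Remark~\ref{adjointpscanrmk} the inclusion $\iota_\pscan:\cCLMpscan_k \hookrightarrow \cCLMu_k$ admits the right adjoint $(-)^\pscan$ of Proposition~\ref{pseudocan3}, and small limits in $\cCLMpscan_k$ are computed by applying $(-)^\pscan$ to the corresponding limits of $\cCLMu_k$. The plan is therefore to establish cocompleteness, and then combine it with completeness to conclude bicompleteness. First I would record that the ambient category $\cCLMu_k$ is itself cocomplete: by Remark~\ref{indlimadj} and the explicit description of subsection~\ref{colimsec} it has all small colimits $\colimit^\un$, the coproducts being the completed direct sums $\SUM^\un$ and the coequalizers being built from the cokernels described there.

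Since $\iota_\pscan$ has a right adjoint, $\cCLMpscan_k$ is a full coreflective subcategory of $\cCLMu_k$. Dualizing the standard fact that a full reflective subcategory of a complete category is closed under limits, a full coreflective subcategory of a cocomplete category is closed under colimits: if $D$ is a small diagram in $\cCLMpscan_k$ and $C := \colimit^\un(\iota_\pscan D)$ is its colimit in $\cCLMu_k$, then applying $(-)^\pscan$ to the colimit cocone and invoking the universal property of $C$ produces mutually inverse morphisms between $C$ and $\iota_\pscan(C^\pscan)$, so that $C$ already lies in $\cCLMpscan_k$ and is the colimit there. This gives cocompleteness, with colimits of pseudocanonical modules computed as in $\cCLMu_k$.

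To make this concrete — and to isolate where the real work sits — I would verify directly that the $\cCLMu_k$-colimit of an inductive system of pseudocanonical modules is pseudocanonical, via Proposition~\ref{induprop}. For $\{M_\alpha\}_{\alpha \in A}$ in $\cCLMpscan_k$ and $M := \colimit^\un_\alpha M_\alpha$, formula \eqref{indu} gives
\[
M = \limit_{I \in \cP(k)} \colimit^\un_{\alpha} (M_\alpha/\ol{IM_\alpha})^\un .
\]
Each $M_\alpha$ being pseudocanonical, Proposition~\ref{flatclos1} identifies $M_\alpha/\ol{IM_\alpha}$ with the discrete $k/I$-module $M_\alpha \wt^\un_k (k/I)$, so the inner colimit is simply the discrete $k/I$-module $M_I := \colimit_\alpha (M_\alpha/\ol{IM_\alpha})$ formed in $\Mod_k$ (a filtered colimit of discrete modules all annihilated by the open ideal $I$, hence again discrete). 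As $J \subseteq I$ forces $\ol{JM_\alpha} \subseteq \ol{IM_\alpha}$, the transition maps $M_J \to M_I$ are colimits of surjections, hence surjective, and $M = \limit_I M_I$ is a presentation of the form \eqref{canrepr}. By part $\mathit 3$ of Proposition~\ref{prodiscrete} such an $M$ lies in $\cCLMpscan_k$, with $\ker(\pi_I)=\ol{IM}$; thus $\{\ol{IM}\}_I$ is a basis of open submodules and $M$ is pseudocanonical. The coproduct and coequalizer building blocks stay in $\cCLMpscan_k$ by the same token (stability of $\SUM^\un$ under pseudocanonicity as in Remark~\ref{dirsumdescr}, and the fact that an open-surjective image of a pseudocanonical module is pseudocanonical), which reconfirms the abstract closure statement for arbitrary small colimits.

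The main obstacle is precisely this last identification $\ker(\pi_I)=\ol{IM}$, that is, the assertion that passage to the colimit commutes both with forming the closures $\ol{I(-)}$ and with the projective limit over $I\in\cP(k)$. This is exactly the point controlled by the surjectivity of the transition maps — equivalently, the vanishing of $\limit^1$ — through part $\mathit 2$ of Lemma~\ref{clopchar0}, just as in the proof of Proposition~\ref{prodiscrete}(3). Once that is granted, the two descriptions of the colimit agree, $\cCLMpscan_k$ is cocomplete, and bicompleteness follows by combining cocompleteness with the completeness noted at the outset.
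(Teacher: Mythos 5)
Your proposal is correct, and its concrete half coincides with the paper's own proof: the paper establishes the corollary exactly by combining \eqref{indu} with \eqref{latticeeq} to write $\colimit^\un_{\alpha} M_\alpha = \limit_{I \in \cP(k)} \colimit^\un_{\alpha} (M_\alpha/\ol{IM_\alpha})^\un$, noting that each inner colimit is a discrete $k/I$-module because the $M_\alpha$ are pseudocanonical, and then invoking part $\mathit 3$ of Proposition~\ref{prodiscrete} to recognize a presentation of the form \eqref{canrepr} — which is your third paragraph, including your (accurate) observation that the identification $\Ker(\pi_I)=\ol{IM}$ is precisely what part $\mathit 3$ supplies via Lemma~\ref{clopchar0} and the $\limit^1$-vanishing for surjective systems. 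What you add, and the paper does not use here, is the abstract wrapper: since $(-)^\pscan$ is right adjoint to the full inclusion $\cCLMpscan_k \hookrightarrow \cCLMu_k$ (Proposition~\ref{pseudocan3}, Remark~\ref{adjointpscanrmk}), the subcategory is coreflective, and the dual of ``reflective subcategories are closed under limits'' — your unit/counit verification of which is sound, using that the counit $C^\pscan \to C$ is bijective and universal — shows closure under \emph{all} colimits existing in $\cCLMu_k$. This buys something genuine: the paper's proof, as written, handles inductive systems indexed by a preordered set, whereas cocompleteness also requires coproducts and coequalizers; your categorical argument disposes of all small diagrams uniformly (and is in the spirit of the paper's own Remark~\ref{adjointcloprmk}, part 3, for the clop case), while the explicit computation is what delivers the concrete description \eqref{canrepr} of the colimit. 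Two cosmetic points: the inner colimit need not be \emph{filtered} for a general index preorder — discreteness holds regardless, since every term is killed by the open ideal $I$, so $(0)$ is an open sponge in the $\cLMu_k$-colimit; and Proposition~\ref{prodiscrete}(3) is stated for \emph{faithful} discrete $k/I$-modules with surjective transition maps — you verify surjectivity correctly, and the faithfulness hypothesis is elided in your write-up exactly as it is in the paper's.
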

\begin{proof} By Remark~\ref{adjointpscanrmk},  $\cCLMpscan_k$ is complete. 
Let $\{M_\alpha\}_{\alpha \in A}$ be an inductive system in $\cCLMpscan_k$. By  \eqref{indu} and \eqref{latticeeq} we have
$$
\colimit^\un_{\alpha \in A} M_\alpha = \limit_{I \in \cP(k)}  \colimit^\un_{\alpha \in A} (M_\alpha / \ol{I M_\alpha})^\un  
$$
where, since $M_\alpha \in \cCLMpscan_k$, $(M_\alpha / \ol{I M_\alpha})^\un$ is a discrete $k/I$-module and therefore so is 
$\colimit^\un_{\alpha \in A} (M_\alpha / \ol{I M_\alpha})^\un$. Then it follows from $\mathit 3$ of Proposition~\ref{prodiscrete}
that $\colimit^\un_{\alpha \in A} M_\alpha \in  \cCLMpscan_k$. 
\end{proof}
 \begin{cor}\label{cor-mon-cat-compl2} 
If $k \in \cCRouclop$,   the bifunctor $\wt^\un_k$     gives to the bicomplete additive category 
$$\cCLMpscan_k= \cCLMouclop_k = \cCLMoubarrell_k$$ 
a structure of symmetric monoidal category  with unit $k$.   
\end{cor}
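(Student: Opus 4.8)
The plan is to deduce the symmetric monoidal structure on $\cCLMpscan_k$ by restriction from the ambient quasi-abelian symmetric monoidal category $(\cCLMou_k, \wt^\un_k, k)$ of Corollary~\ref{cor-mon-cat-compl}. Since $\cCLMpscan_k = \cCLMouclop_k = \cCLMoubarrell_k$ (Corollary~\ref{pseudocanbarr}) is a \emph{full} subcategory of $\cCLMou_k$ and is already known to be bicomplete (Corollary~\ref{pscancoco}), it suffices to check two things: that the unit $k$ lies in $\cCLMpscan_k$, and that $M \wt^\un_k N \in \cCLMpscan_k$ whenever $M, N \in \cCLMpscan_k$. Granting these, the associativity, commutativity and left/right unit isomorphisms of $\cCLMou_k$ are morphisms between objects that all lie in the full subcategory $\cCLMpscan_k$, hence are morphisms of $\cCLMpscan_k$, while the coherence (pentagon, hexagon, triangle) identities are inherited verbatim from $\cCLMou_k$. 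The unit $k$ is clop as a module over itself, since for $I, J \in \cP(k)$ the closure $\ol{IJ}$ is open by the clop hypothesis on $k$, and it is moreover uniform, complete and $\omega$-admissible, so $k \in \cCLMouclop_k = \cCLMpscan_k$.

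The real content is the stability $M \wt^\un_k N \in \cCLMpscan_k$, which I would establish in two stages, first at the level of the internal tensor product and then under completion. First, by Lemma~\ref{tensprop} the internal tensor product $M \otimes^\un_k N$ is pseudocanonical in $\cLMu_k$ whenever $M$ and $N$ are; since $k \in \cRuclop$, Remark~\ref{pseudocan2} identifies pseudocanonical uniform modules with clop ones, so $M \otimes^\un_k N \in \cLMuclop_k$, and in particular it is a continuous (indeed uniform) clop module. Second, $M \wt^\un_k N$ is by definition the separated completion of $M \otimes^\un_k N$; the key point is that completion preserves the clop property for continuous clop modules, which is precisely Remark~\ref{clop-rmk}(1), while Proposition~\ref{stab-compl-mod} guarantees that completion sends uniform modules to uniform modules. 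Hence $M \wt^\un_k N \in \cCLMuclop_k$. Finally, because $k \in \cCRou$, a clop (equivalently pseudocanonical) module automatically admits a countable basis of open submodules: either directly, noting that $\{\ol{I_n (M \wt^\un_k N)}\}_n$ is a basis of open submodules for a countable basis $\{I_n\}$ of $\cP(k)$, or by invoking Remark~\ref{pscancount}. Thus $M \wt^\un_k N \in \cCLMouclop_k = \cCLMpscan_k$, as required.

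I expect the main obstacle, and the only genuinely nontrivial step, to be the preservation of the clop property under completion. This is exactly the feature for which the clop condition was introduced in place of the weaker op condition, as emphasized in Remark~\ref{clop-rmk}: for a merely op module the completion need not remain op, so the op analogue of this argument would break down, and it is this closure under completion that makes $\cCLMpscan_k$ monoidal rather than just the internal-tensor result of Lemma~\ref{tensprop}. Everything else is bookkeeping: the identification of the three descriptions of the category is Corollary~\ref{pseudocanbarr}, the bicompleteness is Corollary~\ref{pscancoco}, and the monoidal axioms descend from Corollary~\ref{cor-mon-cat-compl}.
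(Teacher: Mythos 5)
Your proof is correct and follows essentially the same route as the paper's: the paper's one-line proof likewise combines Corollary~\ref{pseudocanbarr}, the internal-tensor stability coming from Proposition~\ref{cor-mon-cat-noncompl} (i.e.\ Lemma~\ref{tensprop}), and the fact that completion sends $\cLMpscan_k$ into $\cCLMpscan_k$. Your detour through the clop characterization (Remark~\ref{pseudocan2}, Remark~\ref{clop-rmk}, Proposition~\ref{stab-compl-mod}) simply makes explicit the completion-preservation step that the paper asserts without proof, and your restriction of the monoidal structure from $\cCLMou_k$ rather than completing the one on $\cLMpscan_k$ is an inessential reorganization.
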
 
\begin{proof}   This follows from Theorem~\ref{barrpscan3} and 
Proposition~\ref{cor-mon-cat-noncompl} together with the fact that the completion functor sends $\cLMpscan_k$ to $\cCLMpscan_k$.  
\end{proof}
We have  the following  completed version of Corollary~\ref{cloptensor1}
\begin{cor} \label{cloptensor}  Let   
$A,B,C \in \cCRu$, and let 
$\chi : A \to B$ and $\psi : A \to C$ $\clop$-adic morphisms in $\cCRu$. Then 
$
\chi \wt^\un_A \psi: A \longrightarrow B \wt^\un_A C
$
is $\clop$-adic. If moreover $A \in \cCRuclop$ then $B \wt^\un_A C \in \cCRuclop$.
\end{cor}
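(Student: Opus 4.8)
The plan is to avoid any direct computation with closures inside the completed tensor product and instead realize $\chi\wt^\un_A\psi$ as a composite of two morphisms, each already known to be $\clop$-adic, and then appeal to the stability of the $\clop$-adic property under composition (noted immediately after Definition~\ref{clop-adic}). Concretely, I would factor the structural map as
$$A \map{\chi\otimes^\un_A\psi} B\otimes^\un_A C \map{j} B\wt^\un_A C,$$
where $j$ is the canonical map into the separated completion; this factorization is legitimate because, by Proposition~\ref{compltenshu}, $B\wt^\un_A C$ is by definition the completion of $B\otimes^\un_A C$, and its ring structure is the one obtained by extending that of $B\otimes^\un_A C$ by continuity.

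Before invoking the two inputs, I would first check that the intermediate object $B\otimes^\un_A C$ is genuinely an object of $\cRu$. By Corollary~\ref{bildef10}, a basis of open $A$-submodules of $B\otimes^\un_A C$ is given by the submodules $\Im(P\otimes C)+\Im(B\otimes Q)$ with $P\in\cP(B)$ and $Q\in\cP(C)$; taking $P$ and $Q$ to be open ideals, each such submodule is an ideal of $B\otimes_A C$ (as $P$ and $Q$ are ideals, they are absorbed under multiplication). Hence the topology of $B\otimes^\un_A C$ is linear, and a linearly topologized ring is uniform by Proposition~\ref{ring-prod-prop}, so $B\otimes^\un_A C\in\cRu$.

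With this in hand the argument is purely formal. Corollary~\ref{cloptensor1} gives that $\chi\otimes^\un_A\psi$ is $\clop$-adic, and Example~\ref{exa-clop}, applied to the linearly topologized ring $B\otimes^\un_A C$, gives that $j$ is $\clop$-adic; their composite $\chi\wt^\un_A\psi$ is therefore $\clop$-adic. For the last assertion, assuming $A\in\cCRuclop\subseteq\cRuclop$, I would apply part $(2)$ of Remark~\ref{op-canonical} to the $\clop$-adic morphism $\chi\wt^\un_A\psi$ to conclude that $B\wt^\un_A C\in\cRuclop$; since $B\wt^\un_A C$ is complete, being a completion, it lies in $\cCRuclop$.

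The main (and really the only) obstacle is the verification in the second paragraph that $B\otimes^\un_A C$ lies in $\cRu$, since both Corollary~\ref{cloptensor1} and Example~\ref{exa-clop} are stated for rings in $\cRu$; everything else reduces to composing known implications. A self-contained alternative would be to imitate the proof of Corollary~\ref{cloptensor1} directly, showing that the closure in $B\wt^\un_A C$ of the ideal generated by an open ideal $I\subset A$ contains the basic open ideal $\ol{\chi(I)B}\wt^\un_A C + B\wt^\un_A\ol{\psi(I)C}$ (open because $\chi$ and $\psi$ are $\clop$-adic); but this forces one to track closures inside the completion via the description of open submodules in Proposition~\ref{compltenshu}, and is less transparent than the composition argument.
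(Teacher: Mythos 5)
Your proof is correct, but it takes a genuinely different route from the paper's. The paper argues directly inside the completed tensor product, mirroring its proof of Corollary~\ref{cloptensor1}: for an open ideal $I$ of $A$ it observes that the closure of $I(B \wt^\un_A C)$ contains both $\ol{IB} \wt^\un_A C$ and $B \wt^\un_A \ol{IC}$, whose sum is a basic open ideal of $B \wt^\un_A C$ (by the description of open submodules in Proposition~\ref{compltenshu}, using that $\ol{IB}$ and $\ol{IC}$ are open since $\chi$ and $\psi$ are $\clop$-adic) --- in other words, the paper's proof is precisely the ``less transparent'' alternative you sketch in your last paragraph, stated in one line. Your factorization $A \map{\chi\otimes^\un_A\psi} B\otimes^\un_A C \map{j} B\wt^\un_A C$, combined with Corollary~\ref{cloptensor1}, Example~\ref{exa-clop}, and the stability of $\clop$-adic morphisms under composition, is purely formal and avoids any closure computation in the completion; it also has the merit of making explicit a point the paper leaves implicit even in Corollary~\ref{cloptensor1}, namely that $B\otimes^\un_A C$ is genuinely a linearly topologized ring (your observation that the basic opens $\Im(P\otimes C)+\Im(B\otimes Q)$ with $P,Q$ open \emph{ideals} are cofinal and are themselves ideals). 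Conversely, the paper's direct argument is shorter and self-contained in the completed setting. Both treatments dispose of the final assertion the same way, via part~$(2)$ of Remark~\ref{op-canonical} applied to the $\clop$-adic morphism $\chi\wt^\un_A\psi$ together with completeness of $B\wt^\un_A C$ --- a step the paper's proof does not even spell out, so your explicit handling of it is welcome.
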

\begin{proof} Let $I$ be an open ideal of $A$. We want to show that the closure of $I(B \wt^\un_A C)$ is open. In fact it contains both $\ol{IB} \wt^\un_A C$ and $B \wt^\un_A \ol{IC}$.
\end{proof} 
 
The following proposition is due to Gabber and Ramero \cite[Lemma 15.1.27]{GR}.
\begin{prop} \label{exactprofl} 
Let $k \in \cCRou$ and  $M \in \cCLMou_k$. Then:
\ben
\item
The functor 
\beq \label{exactprofleq} 
\cCLMou_k \longrightarrow \cCLMou_k \;,\quad X \longmapsto X \wt^\un_k M
\eeq
 is strongly right exact.  
\item 
If  
$M \in \cCLMpscan_k$ and  is pro-flat, then 
the previous functor is  exact.  
\een
\end{prop}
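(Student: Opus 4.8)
The plan is to establish the three parts in order: part~$1$ by a direct universal-property computation, and then a single short-exact-sequence lemma from which parts~$2$ and $3$ follow almost formally.

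For part~$1$, let $\psi\colon X'\to X$ be \emph{any} morphism of $\cCLMou_k$ and let $q\colon X\to C:=X/\ol{\psi(X')}$ be its cokernel, which is an open surjection (Remark~\ref{coker-coim}, using $k\in\cCRou$). I would show that $q\wt^\un_k\id_M$ is the cokernel of $\psi\wt^\un_k\id_M$ by Yoneda: for every $Z\in\cCLMou_k$, identify $\Hom(C\wt^\un_kM,Z)=\Bil^\un_k(C\times M,Z)$ (Proposition~\ref{compltenshu}) with the set of $\beta\in\Bil^\un_k(X\times M,Z)$ vanishing on $\psi(X')\times M$. Indeed a uniformly continuous bilinear map descends along the open surjection $q\times\id_M$ (the estimate of Remark~\ref{sepcont}(2) transfers verbatim since $q$ is surjective and open), and vanishing on $\psi(X')$ forces vanishing on its closure because $Z$ is separated. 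This set is exactly $\{g\in\Hom(X\wt^\un_kM,Z):g\circ(\psi\wt^\un_k\id_M)=0\}$, so $C\wt^\un_kM=\Coker(\psi\wt^\un_k\id_M)$. As $\psi$ is arbitrary this is strong right exactness, and it already yields the preservation of all cokernels claimed in part~$3$.

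The heart of the argument is the following lemma, to be proved for pro-flat pseudocanonical $M$: for every short exact sequence $0\to A\xrightarrow{\iota}B\xrightarrow{\rho}C\to0$ of $\cCLMou_k$ (so $\iota$ a closed embedding, $\rho$ an open surjection), the tensored sequence $0\to A\wt^\un_kM\to B\wt^\un_kM\to C\wt^\un_kM\to0$ is again short exact. First I would write $M=\limit_I M_I$ with $M_I=M/\ol{IM}$ flat over $k/I$, and $B=\limit_{P}B/P$ over $P\in\cP_k(B)$; by Proposition~\ref{promodcompl} and Remark~\ref{surjsystem} this gives $B\wt^\un_kM=\limit_{P,I}(B/P)\otimes_kM_I$, and likewise for $A$ and $C$. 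For each $P$ put $\rho(P)\in\cP_k(C)$ and $A\cap P\in\cP_k(A)$, producing the exact sequence of discrete modules $0\to A/(A\cap P)\to B/P\to C/\rho(P)\to0$ (middle exactness is $(P+A)/P\cong A/(A\cap P)$, as $A=\ker\rho$). Restricting to the cofinal directed family of pairs $(P,I)$ with $I\subseteq\Ann(B/P)$, all three terms become $k/I$-modules, so $-\otimes_kM_I=-\otimes_{k/I}M_I$ there and flatness of $M_I$ over $k/I$ makes the sequence short exact at this finite level. Passing to the limit, left exactness of $\limit$ gives injectivity of $\iota\wt^\un_k\id_M$ and identifies $\Ker(\rho\wt^\un_k\id_M)$ with the limit of the (injective) finite-level images, namely $A\wt^\un_kM$; since a cofiltered limit of injections of discrete modules is a closed embedding (the image is an intersection of preimages of clopen submodules under the projections, hence closed, and the limit topology is the subspace topology), $\iota\wt^\un_k\id_M$ is the kernel of $\rho\wt^\un_k\id_M$ in $\cCLMou_k$ (Remark~\ref{coker-coim}); surjectivity on the right is part~$1$.

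Granting the lemma, part~$2$ follows by factoring a strict $\varphi\colon X\to Y$ as $X\xrightarrow{p}\Im\varphi\xrightarrow{j}Y$ with $p$ a strict epi and $j$ a closed embedding: the lemma applied to $0\to\Ker\varphi\to X\to\Im\varphi\to0$ shows $(\Ker\varphi)\wt^\un_kM\to X\wt^\un_kM$ is the kernel of $p\wt^\un_k\id_M$, and applied to $0\to\Im\varphi\xrightarrow{j}Y\to\Coker j\to0$ shows $j\wt^\un_k\id_M$ is a monomorphism, whence $\Ker(\varphi\wt^\un_k\id_M)=\Ker(p\wt^\un_k\id_M)=(\Ker\varphi)\wt^\un_kM\hookrightarrow X\wt^\un_kM$, which is left exactness. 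Part~$3$ is then formal: exactness is left plus right exactness (Remark~\ref{leftrightrmk}); cokernels are preserved by part~$1$; and for any $\psi$ the identity $\Im\psi=\Ker(Y\to\Coker\psi)$ with strictness of the cokernel map $Y\to\Coker\psi$ lets left exactness commute $-\wt^\un_kM$ past this kernel, while part~$1$ gives $\Coker(\psi\wt^\un_k\id_M)=(\Coker\psi)\wt^\un_kM$, so $\Im(\psi\wt^\un_k\id_M)=(\Im\psi)\wt^\un_kM$. The main obstacle is the lemma's double-limit bookkeeping: arranging cofinal index systems computing $A$, $B$ and $C$ simultaneously, checking that the constraint $I\subseteq\Ann(B/P)$ stays cofinal and directed, and—crucially—verifying that the limit of the finite-level injections is a genuine \emph{topological} closed embedding, so that one really lands on a kernel in $\cCLMou_k$ and not merely an injection of underlying $k$-modules.
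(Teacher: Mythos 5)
Your proposal is correct, but it reaches the result by a genuinely different route in both nontrivial parts. For part~$1$ the paper does not argue by universal property: it presents $X$, $Y$, $\Coker\,\varphi$ and $M$ as limits of countable surjective systems of discrete modules, tensors level-wise over $k/I_n$, factors each $\varphi_n\otimes\id$ through its image $C_n$, and invokes \cite[Lemma 8.6.2]{GR} to obtain exactness of $0\to\limit_n C_n\to Y\wt^\un_k M\to\Coker(\varphi)\wt^\un_k M\to 0$; it must then confront the fact that $\limit_n p_n\colon X\wt^\un_k M\to\limit_n C_n$ need not be surjective (the Mittag-Leffler condition on the kernels is unavailable), which it settles by observing that this map has dense image, hence is an epimorphism, so that $\limit_n C_n=\Coim(\varphi\wt^\un_k\id_M)$. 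Your Yoneda computation via $\Bil^\un_k$ --- descending uniformly continuous bilinear maps along the open surjection $q\times\id_M$ (legitimate since cokernels in $\cCLMou_k$ are open surjections, Corollary~\ref{quotclop2bis}) and using separatedness of $Z$ to pass from $\psi(X')$ to its closure --- bypasses the Mittag-Leffler discussion entirely and is arguably cleaner, at the cost of not setting up the pro-discrete diagram machinery the paper reuses. For part~$2$ the paper simply cites \cite[Lemma 15.1.27]{GR}; your short-exact-sequence lemma is in substance a self-contained proof of that citation, and the bookkeeping you flag as the main obstacle does go through: the pairs $(P,I)$ with $IB\subseteq P$ form a directed family cofinal in $\cP_k(B)\times\cP(k)$ and induce cofinal systems for $A$ and $C$ as well (here one uses that $\{A\cap P\}$ and $\{\rho(P)\}$ exhaust fundamental systems, since $\iota$ is an embedding and $\rho$ is open surjective), flatness of $M_I$ over $k/I$ gives level-wise short exactness via $-\otimes_kM_I=-\otimes_{k/I}M_I$, and your closed-embedding claim is exactly right: injectivity of the level maps $u_\lambda$ ensures both that $(\limit u)^{-1}(\Ker\pi^V_\lambda)=\Ker\pi^U_\lambda$ (so the limit topology is the subspace topology) and that compatibility of the unique preimages identifies the image with $\bigcap_\lambda\pi_\lambda^{-1}(u_\lambda(U_\lambda))$, so one genuinely lands on the kernel in $\cCLMou_k$ as described in Remark~\ref{coker-coim}. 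Part~$3$ is the same formal argument in both. A further merit of your formulation: since your lemma applies only to kernel-cokernel pairs, it makes transparent why one obtains left exactness only for strict morphisms, consistent with Remark~\ref{exactproflrmk} and Gabber's Counterexample~\ref{Gabbercounter} showing that strong left exactness fails.
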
 
\begin{rmk}  \label{exactproflrmk}   Under the  assumptions of point $\mathit 2$ of the proposition, the functor \eqref{exactprofleq} is  exact  and strongly right exact, but not 
necessarily \emph{strongly left exact} (see Definition~\ref{leftrightex}). The consequence is that  \eqref{exactprofleq} preserves images (that is kernels of strict epimorphisms) but does not preserve kernels of general morphisms. 
\end{rmk} 
 
\begin{cor}\label{tenscan} Let $k$ be in $\cCRou$. 
If $M$, $N$ are canonical $k$-modules then 
 $M \wt_k^\un N$
 is canonical. \end{cor}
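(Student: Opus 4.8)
The plan is to realize $M \wt^\un_k N$ as a strict epimorphic (open surjective) image of a \emph{free} canonical module $k^{(A\times B,\un)}$ and then invoke the closure of $\cCLMcan_k$ under quotients. Since $M$ and $N$ are canonical, Definition~\ref{strong-can-def} furnishes sets $A,B$ together with open surjections $F:k^{(A,\un)}\map{} M$ and $G:k^{(B,\un)}\map{} N$, which by Corollary~\ref{strong-can-canCOR} are precisely the strict epimorphisms of $\cCLMou_k$.

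The first step is that tensoring by a fixed object preserves strict epimorphisms. By part $\mathit 1$ of Proposition~\ref{exactprofl}, each functor $X\longmapsto X\wt^\un_k L$ (with $L\in\cCLMou_k$) is strongly right exact, hence right exact; and in a quasi-abelian category a right exact functor turns the cokernel of its kernel into a cokernel, so it sends every strict epimorphism to a strict epimorphism. Applying the functor $-\wt^\un_k k^{(B,\un)}$ to $F$ and then the functor $M\wt^\un_k -$ to $G$, I obtain that the composite
$$ k^{(A,\un)}\wt^\un_k k^{(B,\un)} \map{F\wt^\un_k G} M\wt^\un_k N $$
is a strict epimorphism, i.e.\ an open surjection, in $\cCLMou_k$.

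The second step, and the one I expect to be the main obstacle, is the identification $k^{(A,\un)}\wt^\un_k k^{(B,\un)} \iso k^{(A\times B,\un)}$. Canonical modules are pseudocanonical (part $\mathit 2$ of Remark~\ref{strong-can-can}), so in the limit formula of Proposition~\ref{compltenshu} I may restrict to the cofinal diagonal family $P=\ol{Ik^{(A,\un)}}$, $Q=\ol{Ik^{(B,\un)}}$ indexed by $I\in\cP(k)$, giving
$$ k^{(A,\un)}\wt^\un_k k^{(B,\un)} = \limit_{I\in\cP(k)} \big( k^{(A,\un)}/\ol{Ik^{(A,\un)}} \big) \otimes_k \big( k^{(B,\un)}/\ol{Ik^{(B,\un)}} \big). $$
By Proposition~\ref{indu2} (see also Proposition~\ref{dirsumideal}) each factor is the discrete free $k/I$-module $(k/I)^{(A)}$, resp.\ $(k/I)^{(B)}$; since both are annihilated by $I$, their tensor product over $k$ agrees with the one over $k/I$ and equals $(k/I)^{(A\times B)}$. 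Taking the limit over $I$ then recovers $k^{(A\times B,\un)}$, again by Proposition~\ref{indu2}. The only delicate point here is verifying the cofinality of the diagonal basis and the identification of the quotients with free discrete modules; everything else is formal.

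Combining the two steps, $M\wt^\un_k N$ is the target of an open surjection from the canonical module $k^{(A\times B,\un)}$, hence is itself canonical by Proposition~\ref{thickprop}, which asserts that $\cCLMcan_k$ is closed under quotients in $\cCLMou_k$. This completes the argument.
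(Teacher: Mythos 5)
Your proof is correct and takes essentially the same route as the paper's: both realize $M \wt^\un_k N$ as a strict epimorphic image of $k^{(A\times B,\un)}$ by factoring $F \wt^\un_k G$ through $k^{(A,\un)} \wt^\un_k N$ and invoking right exactness of the tensor functors (Proposition~\ref{exactprofl}) to see that each factor, hence the composite, is a strict epimorphism. The only difference is that you justify the identification $k^{(A,\un)} \wt^\un_k k^{(B,\un)} \iso k^{(A\times B,\un)}$ in detail (diagonal cofinal family via pseudocanonicity, discrete quotients $(k/I)^{(A)}$, limit via Proposition~\ref{indu2}), a step the paper's proof simply asserts.
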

\begin{proof}  Let $\phi:k^{(A,\un)} \to M$ and $\psi:k^{(B,\un)} \to N$ be strict epimorphisms. Then 
$k^{(A,\un)} \wt^\un_k k^{(B,\un)} = k^{(A \times B,\un)}$, and the morphism $\phi  \wt^\un_k \psi : k^{(A \times B,\un)} \to 
M \wt_k^\un N$ is a strict epimorphism, as well. In fact, $\phi  \wt^\un_k \psi$ decomposes into the product
$$ k^{(A,\un)} \wt^\un_k k^{(B,\un)} \map{\id_{k^{(A,\un)}} \wt^\un_k \psi} k^{(A,\un)} \wt^\un_k N 
\map{\phi  \wt^\un_k  \id_N}
M \wt_k^\un N
$$
The functor $\cCLMou_k \longrightarrow \cCLMou_k$,  $X \longmapsto k^{(A,\un)} \wt^\un_k X$ (resp. $Y \longmapsto Y \wt^\un_k  N$)  is  right exact, so that both  $\id_{k^{(A,\un)}} \wt^\un_k \psi$ and $\phi  \wt^\un_k  \id_N$ are strict epimorphisms, and therefore so is their composition $\phi  \wt^\un_k \psi$ \cite[\S 1.1.3]{schneiders}.   \end{proof} 
 \begin{cor}\label{cor-mon-cat-compl3} 
If $k \in \cCRou$ the quasi-abelian category  $\cCLMcan_k$ equipped with 
 the bifunctor   $\wt^\un_k$    is 
a symmetric monoidal category with unit $k$. 
\end{cor}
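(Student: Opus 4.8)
The plan is to exhibit the symmetric monoidal structure on $\cCLMcan_k$ as the restriction of one already available on an ambient category. By Corollary~\ref{cor-mon-cat-compl}, for $k \in \cCRou$ the category $\cCLMou_k$ is a symmetric monoidal category $(\cCLMou_k,\wt^\un_k,k)$, whose associativity, unit and symmetry constraints satisfy the pentagon, triangle and hexagon axioms. Recall from part $1$ of Remark~\ref{strong-can-can} that $\cCLMcan_k$ is a \emph{full} subcategory of $\cCLMou_k$. It therefore suffices to check that $\cCLMcan_k$, viewed inside $\cCLMou_k$, contains the unit and is closed under the monoidal product, and then to observe that all the structural data restrict.

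First I would note that the unit $k = k^{(\{\ast\},\un)}$ is a canonical module, so $k \in \cCLMcan_k$. Next, the key substantive input is Corollary~\ref{tenscan}: if $M,N \in \cCLMcan_k$, then $M \wt^\un_k N \in \cCLMcan_k$. Consequently the bifunctor $\wt^\un_k$ of $\cCLMou_k$ restricts to a bifunctor $\cCLMcan_k \times \cCLMcan_k \to \cCLMcan_k$; no reflection onto canonical modules (such as $(-)^\can$) is needed, precisely because the product is already canonical.

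It then remains to transport the coherence data. For $M,N,P \in \cCLMcan_k$ the associator $(M \wt^\un_k N)\wt^\un_k P \iso M \wt^\un_k (N \wt^\un_k P)$, the left and right unitors, and the braiding $M \wt^\un_k N \iso N \wt^\un_k M$ are isomorphisms of $\cCLMou_k$ between objects that, by the previous step, all lie in $\cCLMcan_k$. Since $\cCLMcan_k$ is full in $\cCLMou_k$, these morphisms and their inverses are morphisms of $\cCLMcan_k$, hence isomorphisms there, and they assemble into natural isomorphisms of the restricted bifunctor. Finally, the pentagon, triangle and hexagon diagrams commute in $\cCLMou_k$ and involve only objects and morphisms of the full subcategory $\cCLMcan_k$; hence they commute in $\cCLMcan_k$ as well. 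This establishes that $(\cCLMcan_k,\wt^\un_k,k)$ is a symmetric monoidal category.

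I do not expect a genuine obstacle beyond the already-proven Corollary~\ref{tenscan}: the whole statement is an instance of the standard principle that a full subcategory of a symmetric monoidal category which contains the unit object and is closed under the monoidal bifunctor inherits a symmetric monoidal structure by restriction. The only point demanding real input — closure of canonical modules under $\wt^\un_k$ — has been settled, so the remainder of the argument is purely formal.
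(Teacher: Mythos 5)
Your proof is correct, and its substance coincides with the paper's: the only non-formal input is Corollary~\ref{tenscan}, everything else being the standard principle that a full subcategory of a symmetric monoidal category which contains the unit and is closed under the tensor bifunctor inherits the structure by restriction. The one genuine difference is the choice of ambient category. The paper deduces the corollary from Corollary~\ref{cor-mon-cat-compl2} together with Corollary~\ref{tenscan}, i.e.\ it restricts the monoidal structure on $\cCLMpscan_k$ (legitimate, since canonical modules are pseudocanonical by part~2 of Remark~\ref{strong-can-can}), whereas you restrict the structure on $\cCLMou_k$ furnished by Corollary~\ref{cor-mon-cat-compl}. Your choice is in fact the more consistent one: Corollary~\ref{cor-mon-cat-compl2} is stated under the stronger hypothesis $k \in \cCRouclop$, while Corollary~\ref{cor-mon-cat-compl} and Corollary~\ref{tenscan} need only $k \in \cCRou$, which is exactly the hypothesis of the statement being proved. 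So your route supports the corollary as stated, whereas the paper's citation chain, read literally, only yields it for clop rings. (One caveat, which does not affect your argument: the adjective ``quasi-abelian'' in the statement rests on Theorem~\ref{quasiabelian-scan}, which is proved under $k \in \cCRouclop$; but establishing that property is not part of what the monoidal argument is asked to do.)
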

\begin{proof}
Follows from Corollaries~\ref{cor-mon-cat-compl2} and \ref{tenscan}.
\end{proof} 
 \begin{rmk} \label{cantens} The functor $(-)^\can : \cCLMu_k \longrightarrow \cCLMcan_k$ does not commute in general  with $\wt^\un_k$. For example, for $k$   discrete $\cCLMcan_k = \Mod_k$  and $(-)^\can = (-)^\for$. For the power-series topology we have in $\cCLMu_k$
  $$
  k[[x]] \wt^\un_k k[[y]] \iso k[[x,y]]
  $$
while 
    $$
  k[[x]]^\discr \otimes_k k[[y]]^\discr \subsetneq k[[x,y]]^\discr \;.
  $$
  Similarly, if $\Z_p$ has the $p$-adic topology, and  $\Z_p[[x]] = \Z_p[[y]]$,  $\Z_p[[x,y]]$ are equipped with 
  their maximal-adic topologies,  we have $\Z_p[[x]] \wt^\un_{\Z_p} \Z_p[[y]] =  \Z_p[[x,y]]$
  in $\cCLMu_{\Z_p}$, while in $\cCLMcan_{\Z_p}$ (\ie for the $p$-adic topologies)  $\Z_p[[x]] \wt^\un_{\Z_p} \Z_p[[y]] \subsetneq  \Z_p[[x,y]]$. 
  \end{rmk}
\begin{cor} \label{topflat}  
Let $k \in \cCRou$. 
Then any pro-flat object of $\cCLMpscan_k$  (resp. of  $\cCLMcan_k$) is  $\wt^\un_k$-flat. 
\end{cor}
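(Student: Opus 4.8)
The plan is to reduce everything to part $\mathit 3$ of Proposition~\ref{exactprofl}, which is the only substantial ingredient. Recall that by Definition~\ref{pro-flat} a pro-flat object of $\cCLMpscan_k$ is a pseudocanonical $M \in \cCLMou_k$ of the form \eqref{canrepr} with each $M/\ol{IM}$ flat over $k/I$; for such an $M$, Proposition~\ref{exactprofl}$(3)$ asserts that the endofunctor $X \longmapsto X \wt^\un_k M$ of $\cCLMou_k$ is exact. This is exactly the assertion that $M$ is $\wt^\un_k$-flat in the quasi-abelian symmetric monoidal category $\cCLMou_k$ (Definition~\ref{topflat}), and since $\wt^\un_k$ preserves pseudocanonical modules (Lemma~\ref{tensprop} together with completion, cf. Corollary~\ref{pscancoco}), this settles the first assertion.

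For the second assertion I would first observe that every canonical module is pseudocanonical (part $2$ of Remark~\ref{strong-can-can}), so a pro-flat object of $\cCLMcan_k$ is in particular a pro-flat object of $\cCLMpscan_k$; by the first part the functor $F := -\,\wt^\un_k M$ is exact on $\cCLMou_k$ and, moreover, by Corollary~\ref{tenscan} it carries $\cCLMcan_k$ into itself. It then remains to transfer this exactness to the category $\cCLMcan_k$ itself. Right exactness transfers at once: a strict epimorphism of $\cCLMcan_k$ is an open surjection, hence a strict epimorphism of $\cCLMou_k$ (Corollary~\ref{open-map}), cokernels in $\cCLMcan_k$ agree with those computed in $\cCLMou_k$ (Corollary~\ref{canrex}), and $F$ preserves cokernels on $\cCLMou_k$ by Proposition~\ref{exactprofl}$(3)$. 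For left exactness I would use that the kernel in $\cCLMcan_k$ of $p\colon Y \to X$ is $P^\can$, where $P = \Ker^{\cCLMou_k}(p)$ is the closed submodule of $Y$ and the strict monomorphism $Z \to Y$ factors as $Z = P^\can \map{\iota_P} P \map{j} Y$ (as in Remark~\ref{limcan} and the proof of Theorem~\ref{quasiabelian-scan}). Applying $F$ to the $\cCLMou_k$-exact sequence $P \map{j} Y \map{p} X$ and using that $F$ preserves $\cCLMou_k$-kernels gives $\Ker^{\cCLMou_k}(F(p)) = F(P)$, so that the $\cCLMcan_k$-kernel of $F(p)$ is $(P \wt^\un_k M)^\can$.

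The hard part will be to identify the natural comparison morphism $P^\can \wt^\un_k M \to (P \wt^\un_k M)^\can$ as an isomorphism, i.e. to show that $F$ commutes with the passage to kernels computed inside $\cCLMcan_k$. This is delicate because $(-)^\can$ does not commute with $\wt^\un_k$ in general (Remark~\ref{cantens}); the point is that here only the \emph{left} factor is being canonicalized while $M$ is already canonical and, crucially, pro-flat. I expect the cleanest route is to show that the $\cCLMou_k$-morphism $\iota_P \wt^\un_k \id_M \colon P^\can \wt^\un_k M \to P \wt^\un_k M$ is bijective on underlying $k$-modules — for which pro-flatness of $M$, Proposition~\ref{flatclos1} and the description of $\wt^\un_k$ as a limit of discrete tensor products (Remark~\ref{surjsystem}) are the natural tools — and then to invoke Corollary~\ref{canmax2}, by which a bijective morphism of $\cCLMcan_k$ is an isomorphism, to conclude that the induced map $P^\can \wt^\un_k M \to (P \wt^\un_k M)^\can$ is an isomorphism. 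Granting this identification, $F(Z) \to F(Y)$ is the kernel of $F(p)$ in $\cCLMcan_k$, which together with the right exactness above yields exactness of $F$ on $\cCLMcan_k$, that is, $\wt^\un_k$-flatness of $M$ there.
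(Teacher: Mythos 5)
Your first paragraph already coincides, in substance, with the paper's entire proof: the paper disposes of the corollary by citing Proposition~\ref{exactprofl} (exactness of $X \longmapsto X \wt^\un_k M$ on $\cCLMou_k$) together with Corollary~\ref{cor-mon-cat-compl2} (resp.~\ref{cor-mon-cat-compl3}) for the monoidal structure on the subcategory, and makes no attempt to show that the restricted endofunctor preserves kernels \emph{computed inside} $\cCLMpscan_k$ or $\cCLMcan_k$; that is, ``$\wt^\un_k$-flat'' is being read as exactness of the tensor functor on the ambient quasi-abelian category. Your remaining two paragraphs pursue the stronger literal reading, and you correctly isolate its crux: for a strict epimorphism $p$ of $\cCLMcan_k$ with $P=\Ker^{\cCLMou_k}(p)$, one would need the comparison $P^\can \wt^\un_k M \to (P \wt^\un_k M)^\can$ to be an isomorphism. (Note in passing that Corollary~\ref{canmax2}, which you invoke, requires $k \in \cCRouclop$, whereas the statement assumes only $k \in \cCRou$.)

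The genuine gap is that the step you defer --- bijectivity of $\iota_P \wt^\un_k \id_M$ --- is false, so your route cannot be completed. Take $k=\Z_p$ and, as in part 1 of Remark~\ref{naive-pbl}, the closed submodule $P = \Z_p\{T\}$ of $N = \Z_p\{T/p\}$ with its subspace topology; its opens are $U_n = P \cap p^nN$, with $P/U_n \cong \bigoplus_{j<n} \Z/p^{n-j}\Z$ via the coefficients $c_j \bmod p^{n-j}$. Let $M = \Z_p\{S\}$, which is canonical and pro-flat. Then $P^\can = (\Z_p\{T\},\, p\hbox{-adic})$, so $P^\can \wt^\un_k M = \Z_p\{T,S\}$, whose underlying module is the module of \emph{null} sequences $(a_j)_j$ in $M$, whereas
$$
P \wt^\un_k M \;=\; \limit_n \,\bigl(P/U_n \otimes_k M/p^nM\bigr) \;=\; \limit_n \,\bigoplus_{j<n} M/p^{n-j}M \;\cong\; \prod_{j \in \N} M \;,
$$
since for each fixed $j$ the limit over $n$ gives a full copy of $M$ with no condition linking different $j$'s. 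Hence $\iota_P \wt^\un_k \id_M$ is injective with dense image but \emph{not} surjective, and since $(-)^\can$ does not change underlying modules (Proposition~\ref{scan-adjoint}), the map $P^\can \wt^\un_k M \to (P \wt^\un_k M)^\can$ is not even bijective. Worse: $P$ is the $\cCLMou_k$-kernel of the strict epimorphism $N \to N/P$ of $\cCLMcan_{\Z_p}$ (kernel in $\cCLMcan_{\Z_p}$ equal to $P^\can$ by Remark~\ref{limcan}), so this computation shows that $-\wt^\un_k M$ applied to the exact pair $P^\can \to N \to N/P$ fails to preserve the kernel; the literal endofunctor-exactness is therefore not just hard but false (and the same example, with $(-)^\pscan$ in place of $(-)^\can$, defeats it in $\cCLMpscan_{\Z_p}$), which is presumably why the paper asserts only ambient exactness. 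Your right-exactness transfer via Corollaries~\ref{open-map} and \ref{canrex} is correct, but the second assertion can only be established in the sense of your first paragraph.
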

\begin{proof}
Follows from Proposition~\ref{exactprofl} and Corollary~\ref{cor-mon-cat-compl2} (resp. \ref{cor-mon-cat-compl3}).
\end{proof}
\begin{prop} \label{basechange}  
Let $k \in \cCRou$ and let $R$ be a ring object of $\cCLMou_k$.
  Then, for any $M$ in $\cCLMpscan_k$  (resp. in $\cCLMcan_k$), $M \wt^\un_k R$ is an object of $\cCLMpscan_R$  (resp. of $\cCLMcan_R$).  \end{prop}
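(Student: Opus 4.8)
The plan is to treat the pseudocanonical case first and then deduce the canonical case from it. Throughout I would use that, since the multiplication $R \wt^\un_k R \to R$ factors through the completed tensor product, $R$ is uniform over $k$ and hence, being complete with a countable basis of open ideals, an object of $\cCRou$; moreover $R$ is pseudocanonical over itself, because $\ol{JR} = J$ for every $J \in \cP(R)$. This is what lets me invoke the structural results of the previous sections with $R$ in the role of the base.

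\emph{Pseudocanonical case.} Let $M \in \cCLMpscan_k$. First I would check that $M \wt^\un_k R$ lies in $\cCLMou_R$. By Proposition~\ref{compltenshu}, using pseudocanonicity of $M$ to restrict the first factor to the $\ol{IM}$, a basis of open submodules of $M \wt^\un_k R$ is given by the closures $U_{I,Q} := \ol{\ol{IM}\otimes_k R + M \otimes_k Q}$, for $I \in \cP(k)$ and $Q \in \cP(R)$. Each $U_{I,Q}$ is an $R$-submodule, since $\ol{IM}\otimes_k R$ is stable under the $R$-action on the right factor and $M \otimes_k Q$ is stable because $Q$ is an ideal; hence the topology is $R$-linear. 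Uniformity over $R$ and the countable basis then follow from the inclusion $\ol{Q(M\wt^\un_k R)} \subseteq U_{I,Q}$, which holds because $q(m\otimes r) = m \otimes qr \in M \otimes_k Q$. Next I would verify pseudocanonicity over $R$, i.e. that $\{\ol{J(M\wt^\un_k R)}\}_{J \in \cP(R)}$ is a basis of open $R$-submodules. The inclusion just noted (with $J = Q$) shows each basic open contains some $\ol{J(\cdot)}$. For the converse, given $J \in \cP(R)$, I would use uniformity of $R$ over $k$ (Proposition~\ref{unif-can}) to pick $I_0 \in \cP(k)$ with $I_0 R \subseteq J$, and then check $M \otimes_k J \subseteq J(M\wt^\un_k R)$ (from $m \otimes j = j(m\otimes 1_R)$) and $I_0 M \otimes_k R \subseteq J(M\wt^\un_k R)$ (from $(im)\otimes r = m\otimes ir$ with $ir \in I_0 R \subseteq J$). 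Together with $\ol{I_0 M}\otimes_k R \subseteq \cl_{M\wt^\un_k R}(I_0 M \otimes_k R)$, a consequence of separate continuity of $\wt^\un_k$, this yields $U_{I_0, J} \subseteq \ol{J(M\wt^\un_k R)}$, so every $\ol{J(M\wt^\un_k R)}$ is open. Hence $M \wt^\un_k R \in \cCLMpscan_R$.

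\emph{Canonical case.} Let $M \in \cCLMcan_k$, with an open surjection $F\colon k^{(A,\un)} \to M$ in $\cCLMou_k$ (Definition~\ref{strong-can-def}); since canonical modules are pseudocanonical (Remark~\ref{strong-can-can}), the first part already gives $M \wt^\un_k R \in \cCLMpscan_R \subseteq \cCLMou_R$. The key computation is the $R$-linear topological identification $k^{(A,\un)}\wt^\un_k R \iso R^{(A,\un)}$. I would obtain it from Remark~\ref{surjsystem} together with Proposition~\ref{indu2}: writing $k^{(A,\un)} = \limit_{I}(k/I)^{(A)}$ and $R = \limit_{J'} R/J'$ as surjective cofiltered systems of discrete uniform $k$-modules, one finds $k^{(A,\un)}\wt^\un_k R = \limit_{I,J'}(k/I)^{(A)}\otimes_k R/J' = \limit_{J'}(R/J')^{(A)} = R^{(A,\un)}$, where the middle step uses $(k/I)^{(A)}\otimes_k R/J' = (R/(IR+J'))^{(A)} = (R/J')^{(A)}$ on the cofinal pairs with $IR \subseteq J'$. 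Since $X \mapsto X \wt^\un_k R$ is strongly right exact (Proposition~\ref{exactprofl}), $F \wt^\un_k \id_R\colon R^{(A,\un)} \to M \wt^\un_k R$ is a cokernel in $\cCLMou_k$, hence an open surjection; as being open and surjective is intrinsic, it is equally an open surjection in $\cCLMou_R$, exhibiting $M \wt^\un_k R$ as a canonical $R$-module.

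The main obstacle I anticipate is the interplay between closures and the tensor product in the pseudocanonical step, namely justifying $\ol{I_0 M}\otimes_k R \subseteq \cl_{M \wt^\un_k R}(I_0 M \otimes_k R)$ and, more generally, the completed analogue of the identity in Lemma~\ref{tensprop}. These follow from separate continuity of the canonical bilinear map, but care is required to take all closures in $M \wt^\un_k R$ rather than in $M \otimes^\un_k R$, and to keep track of which open ideals of $R$ are dominated by extensions of open ideals of $k$.
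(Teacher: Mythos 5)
Your proof is correct and follows essentially the same route as the paper: the canonical case is handled exactly as there, by tensoring an open surjection $k^{(A,\un)} \to M$ with $R$, using the strong right exactness of $(-)\wt^\un_k R$ from Proposition~\ref{exactprofl} and the identification $k^{(A,\un)} \wt^\un_k R \cong R^{(A,\un)}$, which the paper attributes loosely to Corollary~\ref{cor-mon-cat-compl} and you derive more carefully from Remark~\ref{surjsystem} and Proposition~\ref{indu2}. The pseudocanonical case, which the paper dismisses as ``clear,'' you verify in full and sound detail, including the preliminary observation that the ring object $R$ lies in $\cCRou$ and is pseudocanonical over itself, a point the paper leaves implicit but which is needed even to make sense of $\cCLMpscan_R$ and $\cCLMcan_R$.
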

\begin{proof}   The fact that, if $M \in \cCLMpscan_k$, $M \wt^\un_k R \in \cCLMpscan_R$ is clear. Let $M \in \cCLMcan_k$ and 
let $\varphi: k^{(A,\un)} \longrightarrow M$  be 
a strict epimorphism. Then by application of the functor $(-)\wt^\un_k R$ we get a 
strict epimorphism $\varphi\wt^\un_k R : k^{(A,\un)} \wt^\un_k R \longrightarrow M\wt^\un_k R$. 
On the other hand, by Corollary~\ref{cor-mon-cat-compl},
$$
 k^{(A,\un)} \wt^\un_k R = R^{(A,\un)} \;,
$$
so that $M\wt^\un_k R$ is in fact $R$-canonical. 
\end{proof}  
\begin{cor} \label{clopbasechange}  
Let $k \in \cCRouclop$ and let $k \longrightarrow R$ be a $\clop$-adic morphism of 
$\cCRu$ (so that, in particular, $R \in \cCRouclop$).  
Then, for any $M \in \cCLMpscan_k =\cCLMouclop_k = \cCLMoubarrell_k$, $M \wt^\un_k R \in  \cCLMpscan_R =\cCLMouclop_R = \cCLMoubarrell_R$.
\end{cor}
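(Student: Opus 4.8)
The plan is to read this statement off from the base-change Proposition~\ref{basechange} together with the coincidence of categories supplied by Corollary~\ref{pseudocanbarr}; the only real work is checking that the hypotheses of those results are in force. First I would record that $R$ indeed lies in $\cCRouclop$: since $k \in \cCRuclop$ and $\phi: k \to R$ is $\clop$-adic, part~2 of Remark~\ref{op-canonical} gives $R \in \cRuclop$, and as $R \in \cCRu$ by assumption we get $R \in \cCRuclop$; moreover part~1 of the same remark exhibits a basis of open ideals of $R$ consisting of the $\ol{\phi(I)R}$ with $I \in \cP(k)$, so that $R$ inherits a countable basis of open ideals from $k$, whence $R \in \cCRouclop$. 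Next I would observe that $R$ is a ring object of $\cCLMou_k$: by part~3 of Remark~\ref{naivermk}, the $\clop$-adic morphism $\phi$ makes $R$ an object of $\cLMuclop_k \subset \cLMu_k$, and since $R$ is complete with a countable basis of open submodules it follows that $R \in \cCLMou_k$.

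With these verifications in place the hypotheses of Proposition~\ref{basechange} hold for the ring object $R$ of $\cCLMou_k$, so for any $M \in \cCLMpscan_k$ the module $M \wt^\un_k R$ is an object of $\cCLMpscan_R$. Since $R \in \cCRouclop$, Corollary~\ref{pseudocanbarr} applied to $R$ yields the equalities $\cCLMpscan_R = \cCLMouclop_R = \cCLMoubarrell_R$, while the same corollary applied to $k$ identifies the source category $\cCLMpscan_k$ with $\cCLMouclop_k = \cCLMoubarrell_k$, so the domain hypothesis $M \in \cCLMpscan_k =\cCLMouclop_k = \cCLMoubarrell_k$ in the statement is consistent. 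Combining, $M \wt^\un_k R \in \cCLMouclop_R = \cCLMoubarrell_R$, which is precisely the assertion.

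I expect no genuine obstacle here, since the substantive content is already carried by Proposition~\ref{basechange} (preservation of pseudocanonicity by $-\wt^\un_k R$, proved by reducing to the canonical case through an open surjection $k^{(A,\un)} \to M$ and the identity $k^{(A,\un)} \wt^\un_k R = R^{(A,\un)}$) and by the chain of identifications behind Corollary~\ref{pseudocanbarr}. The one point deserving care is that the $\clop$ hypothesis on $k$, transported to $R$ through Remark~\ref{op-canonical}, is exactly what forces the three categories $\cCLMpscan$, $\cCLMouclop$ and $\cCLMoubarrell$ to coincide on both sides; without it the target $\cCLMpscan_R$ would in general be strictly larger than $\cCLMouclop_R$, so the \emph{clop}-adicity of $\phi$ (rather than mere op-adicity) is essential, not cosmetic.
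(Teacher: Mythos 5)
Your proposal is correct and follows exactly the route the paper intends: the paper states Corollary~\ref{clopbasechange} without proof, as an immediate consequence of Proposition~\ref{basechange} combined with the identifications of Corollary~\ref{pseudocanbarr}, and your preliminary verifications (that $R \in \cCRouclop$ via Remark~\ref{op-canonical}, and that $R$ is a ring object of $\cCLMou_k$ via part~3 of Remark~\ref{naivermk} together with the countable basis $\{\ol{\phi(I)R}\}_{I \in \cP(k)}$) are precisely the checks the parenthetical in the statement presupposes. Nothing is missing.
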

\end{subsection} 
\end{section}  
\begin{section}{Internal Homs} \label{inthoms}
 As in the previous section, $k$ is here any object of $\cRu$. 
 More requirements on $k$ will be specified as needed.   
\begin{subsection}{Uniform convergence}

 \begin{defn} \label{boundedhomgen}  
For any $M,N$ in $\cLMu_k$, we denote by   $\Lin^\ba_k$  (resp. $\Lin^\se_k$)  the $k$-module $\Hom_k(M^\for,N^\for)$
of $k$-linear maps $M^\for \to N^\for$, equipped with the $k$-linear topology for which a fundamental 
system of open $k$-submodules
is the family  
$$
W(B,Q) = \{f \in \Hom_k(M^\for,N^\for) \,|\, f(B) \subset Q\,\}   \; ,
$$
for  $Q\in \cP_k(N)$ and $B = M$ (resp. $B=$ a finite subset of $M$). 
This topology   on the $k$-module $\Hom_k(M^\for,N^\for)$ will be called the topology of  
\emph{uniform } (resp. \emph{simple})  \emph{convergence on $M$} or  the
\emph{strong} (resp. \emph{weak})  \emph{topology}.
\end{defn}
For $\ast \in \{\ba , \se\}$, the topological $k$-module $\Lin^\ast_k(M,N)$   is an object of $\cLMu_k$. 
\begin{rmk} \label{boundedhomgen1} 
If $N$ is complete then  
$\Lin^\ast_k(M,N)$  is  complete, as well.  
This is because if a net $\alpha \mapsto f_\alpha$, $\alpha \in A$, of elements of $\Hom_k(M,N)$ 
is Cauchy for the topology of  simple convergence, then, 
 for any $x \in M$, the net $\alpha \mapsto f_\alpha(x)$ converges in $N$ to a well-defined  element $f(x)$. 
 Now, for  fixed 
 $x,y \in M$ and $\lambda,\mu \in k$, the
 nets 
 $$
 \alpha \mapsto \lambda  f_\alpha(x)\;\;,\;\; \alpha \mapsto \mu f_\alpha(y)\;\;,\;\; \alpha \mapsto f_\alpha(\lambda x+ \mu y)
 $$ 
 all converge and the identity 
 $$
 f_\alpha (\lambda x+ \mu y) = \lambda  f_\alpha(x) + \mu  f_\alpha(y)\;\;,\; \forall \,\alpha \in A 
 $$ 
 implies that $f \in \Hom_k(M,N)$. 
\end{rmk} 
There is a natural continuous bijection 
\beq \label{simplevsbddgen}
\Lin^\ba_k(M,N) \longrightarrow \Lin^\se_k(M,N) \;.
\eeq

\begin{defn} \label{boundedhom2}  
For   $M,N$ in $\cLMu_k$ and $\ast \in \{\ba , \se\}$,  we denote by ${\cL}^\ast_k(M,N)$   the
  topological $k$-module
$\Hom_{\cLMu_k}(M,N)$, equipped with the subspace topology of  $\Lin^\ast_k(M,N)$. 
If $N$ is in $\cCLMu_k$, 
we define the object $\what{\cL}^\ast_k(M,N)$ of $\cCLMu_k$ as the 
closure of ${\cL}^\ast_k(M,N)$ in $\Lin^\ast_k(M,N)$, equipped with the subspace topology.  
\end{defn}
\begin{rmk} \label{contmap} 
By general results on complete uniform spaces \cite[Chap. II, \S3, N.9, Cor. 1]{topgen}, 
the object $\what{\cL}^\ast_k(M,N)$  of $\cCLMu_k$ is the completion of 
${\cL}^\ast_k(M,N)$.  
The bijective morphism \eqref{simplevsbddgen} induces a continuous injection 
\beq \label{simplevsbddcatgen}
\what{\cL}^\ba_k(M,N) \longrightarrow \what{\cL}^\se_k(M,N)  \;.
\eeq
\end{rmk}
In general, 
the $k$-linear embedding 
\beq \label{embed} {\cL}^\ast_k(M,N) \longrightarrow \what{\cL}^\ast_k(M,N)
\eeq
is not surjective, \ie 
${\cL}^\ast_k(M,N)$ is not complete. 
However, we have (\cf    \cite[Prop. 7.16]{schneider}):
\begin{thm} \label{borncomplete}  
Let  $M,N \in \cLMu_k$ with $N$ complete. 
Then $ {\cL}^\ba_k(M,N)$  is complete 
\ie \eqref{embed} induces an isomorphism 
\beq \label{borncomplete1}  
{\cL}^\ba_k(M,N) \iso  \what{\cL}^\ba_k(M,N) \in \cCLMu_k\;. 
\eeq
If, in particular,  $N \in \cCLMou_k$, ${\cL}^\ba_k(M,N)  \in \cCLMou_k$.
\end{thm}
\begin{proof}  Let  $\alpha \mapsto \phi_\alpha$, for $\alpha$ in the filtered set $(A,\leq)$, be a net in ${\cL}^\ba_k(M,N)$
converging to $\phi \in \Lin^\ba_k(M,N)$.
We want to show that $\phi$ is in fact continuous. It suffices to show that, for any $P \in \cP_k(N)$, the $k$-submodule $\phi^{-1}(P)$ is open in $M$. There is an index $\alpha_P \in A$ such that, if $\alpha \geq \alpha_P$, 
$\phi$ and $\phi_\alpha$ induce the same $k$-linear map $M \longrightarrow N/P$. So, $\phi^{-1}(P) = \phi_\alpha^{-1}(P)$ is open because $\phi_\alpha$ is continuous. 
\end{proof} 
\begin{rmk}\label{noncomplete} \hfill \ben
\item We conclude from Theorem~\ref{borncomplete} and Definition~\ref{boundedhom2} that,  for  any   $M,N \in \cCLMu_k$, there is a  natural  isomorphism
\beq \label{unifcomplform2} \Hom_{\cCLMu_k}(M,N) \iso \cL^\ba_k(M,N)^\for  
\eeq
in $\Mod_k$. 
The topology induced on $\Hom_{\cCLMu_k}(M,N)$ by the previous isomorphism  is usually called the 
\emph{strong topology} of $\Hom_{\cCLMu_k}(M,N)$. 
So, $\Hom_{\cCLMu_k}(M,N)$ is separated and complete in its strong topology.  The strong topology of $\Hom_{\cCLMu_k}(M,N)$ coincides with the  topology of uniform convergence on $M$. 
\item In contrast to Theorem~\ref{borncomplete}, if $N$ is complete, it is not necessarily the case that  $\cL^\se_k(M,N)$ is complete. See however Theorem~\ref{barrelquasicomplete}  below. 
\een
\end{rmk}
\begin{prop}\label{inter} Let $k \in \cCRu$ and let $M,N \in \cCLMu_k$. Then we have the formula
\beq \label{stronghom} 
 \cL^\ba_k(M,N) =  \limit_{Q \in \cP(N)}  \colimit^\un_{P \in \cP(M)} \Hom_k(M/P,N/Q)^\dis    
\eeq 
in $\cCLMu_k$. 
\end{prop}
\begin{proof}
For $P \in \cP(M)$ and $Q \in \cP(N)$, $\cL^\ba_k(M/P,N/Q)$ is simply $\Hom_k(M/P,N/Q)$, equipped with the discrete topology. 
Moreover,
$$
\cL^\ba_k(M,N/Q) =  \Hom_k(M,N/Q)^\dis =\colimit^\un_{P \in \cP(M)}\Hom_k(M/P,N/Q)^\dis  \;.
$$
Finally, 
$$\cL^\ba_k(M,N) = \limit_{Q \in \cP(N)} \cL^\ba_k(M,N/Q)\;.$$
\end{proof}
\begin{rmk} \label{interset} It follows from \eqref{stronghom}, taking into account that 
$$\colimit^\un_{P \in \cP(M)}\Hom_k(M/P,N/Q)^\dis = (\colimit_{P \in \cP(M)}\Hom_k(M/P,N/Q))^\dis \;,
$$
that
\beq \label{sethom} 
 \cL^\ba_k(M,N)^\for =  \limit_{Q \in \cP(N)}  \colimit_{P \in \cP(M)} \Hom_k(M/P,N/Q)^\dis    
\eeq 
\end{rmk}  
\begin{defn} \label{cancomplete2}  Let $k \in \cCRou$ and let $M,N \in \cCLMcan_k$. We set
 $\cL^\can_k(M,N) := (\cL^\ba_k(M,N))^\can$. 
\end{defn}
\begin{cor}\label{intercan} Let $k \in \cCRou$ and let $M,N \in \cCLMcan_k$. Then we have the formula
\beq \label{canhom} 
 \cL^\can_k(M,N) =  \limit^\can_{Q \in \cP(N)}  \colimit^\un_{P \in \cP(M)} \Hom_k(M/P,N/Q)^\dis    
\eeq 
in $\cCLMu_k$. Again, 
$$ \cL^\can_k(M,N)^\for =  \cL^\ba_k(M,N)^\for$$
has the expression \eqref{stronghom}.
\end{cor}
\begin{rmk} \label{leftexhom} Let $k \in \cCRou$. For any $M \in \cCLMou_k$, 
the functor 
\beq \label{exactHom} 
\cCLMou_k \longrightarrow \cCLMou_k \;,\quad X \longmapsto \cL^\ba_k(M,X)
\eeq
commutes with  countable limits; in particular, it is strongly left  exact. 
Similarly,    for  any $M \in \cCLMcan_k$, 
 the functor 
\beq \label{exactHomcan} 
\cCLMcan_k \longrightarrow \cCLMcan_k \;,\quad X \longmapsto \cL^\can_k(M,X)
\eeq
 is strongly left  exact and  commutes with all (small) limits.  
 \end{rmk}
 \end{subsection}
\begin{subsection}{Equicontinuity}  
\begin{defn} \label{equicont} 
For any $M,N$ in $\LMu_k$, we say that a subset $H \subset \Hom_{\LMu_k}(M,N)$ is \emph{equicontinuous} if, 
for any $Q \in \cP_k(N)$, there exists 
$P \in \cP_k(M)$ such that $f(P) \subset Q$, for any $f \in H$.  
\end{defn}

\begin{lemma} \label{boundedint}  
Let $M,N$ be objects of $\LMu_k$ and let $H \subset \Hom_{\LMu_k}(M,N)$ be a  $k$-submodule.
Then, 
for   any $Q \in \cP_k(N)$, there exists $J \in \cP(k)$ such that 
\beq \label{bddequ}
J M \subset \bigcap_{u \in H} u^{-1}(Q) \; .
\eeq 
\end{lemma}
\begin{proof} Any $J \in \cP(k)$ such that $JN \subset Q$ will do. 
\end{proof}

\begin{prop} \label{equivequicont} 
Let $M,N \in \LMu_k$.
Consider the following assertions  for a $k$-submodule $H \subset \Hom_{\LMu_k}(M,N)$. \hfill
\ben
\item $H$ is equicontinuous;
\item For any $Q \in \cP_k(N)$, $\bigcap_{u \in H}u^{-1}(Q) \in \cP_k(M)$. 
\een
Then $1 \Leftrightarrow 2$. 
\endgraf  
 If $M$ is pseudocanonical (in particular, if $M$ is barrelled) the previous assertions   hold for $H = \Hom_{\LMu_k}(M,N)$. 
\end{prop}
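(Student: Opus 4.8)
The plan is to prove the equivalence $1 \Leftrightarrow 2$ directly from the definitions, and then deduce the final assertion about $H = \Hom_{\LMu_k}(M,N)$ from the pseudocanonical hypothesis together with Proposition~\ref{boundedint}.

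\emph{Proof of $1 \Rightarrow 2$.} Suppose $H$ is equicontinuous and fix $Q \in \cP_k(N)$. By definition there is $P \in \cP_k(M)$ such that $u(P) \subset Q$ for every $u \in H$, that is $P \subset u^{-1}(Q)$ for every $u \in H$. Hence $P \subset \bigcap_{u \in H} u^{-1}(Q)$. Since each $u^{-1}(Q)$ is a $k$-submodule of $M$ and the intersection contains the open submodule $P$, it is itself an open $k$-submodule of $M$, so $\bigcap_{u \in H} u^{-1}(Q) \in \cP_k(M)$.

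\emph{Proof of $2 \Rightarrow 1$.} Suppose that for every $Q \in \cP_k(N)$ the submodule $P_Q := \bigcap_{u \in H} u^{-1}(Q)$ is open. Then $P_Q \in \cP_k(M)$ and by construction $u(P_Q) \subset Q$ for every $u \in H$, which is precisely equicontinuity of $H$. Both implications are immediate from unwinding the definition, so I expect no obstacle here; the content is entirely formal.

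\emph{The final assertion.} Now take $H = \Hom_{\LMu_k}(M,N)$ and assume $M$ is pseudocanonical. I would verify condition $2$. Fix $Q \in \cP_k(N)$. By Proposition~\ref{boundedint} applied to the $k$-submodule $H$, there exists $J \in \cP(k)$ with $JM \subset \bigcap_{u \in H} u^{-1}(Q)$; in fact any $J$ with $JN \subset Q$ works. Consequently $\overline{JM} \subset P_Q := \bigcap_{u \in H} u^{-1}(Q)$, since $P_Q$ is closed (being an intersection of the closed submodules $u^{-1}(Q)$, which are closed as preimages of the open, hence closed, submodule $Q$ under the continuous maps $u$). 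Because $M$ is pseudocanonical, $\overline{JM}$ is open in $M$ by Definition~\ref{pseudocan}, so $P_Q$ contains an open submodule and is therefore open, giving condition $2$. By the equivalence just proved, $H$ is equicontinuous. The case of $M$ barrelled reduces to this one by part $1$ of Remark~\ref{barrpscan}, which records that any barrelled object of $\cLMu_k$ is pseudocanonical. The only point requiring care is observing that $P_Q$ is closed so that the pseudocanonical hypothesis (which concerns closures $\overline{JM}$) can be invoked; this is routine.
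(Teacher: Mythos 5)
Your proof is correct and takes essentially the same route as the paper: the equivalence $1 \Leftrightarrow 2$ is the formal unwinding that the paper dismisses as clear, and your treatment of the final assertion — invoking Proposition~\ref{boundedint} to get $JM \subset \bigcap_{u \in H} u^{-1}(Q)$, noting this intersection is closed, and using pseudocanonicity so that it contains the open submodule $\overline{JM}$ — is exactly the paper's argument. You merely make explicit two points the paper leaves implicit, namely the closedness of the intersection (preimages of the open, hence closed, submodule $Q$ under continuous maps) and the reduction of the barrelled case to the pseudocanonical one via part $1$ of Remark~\ref{barrpscan}.
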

\begin{proof}  
$1 \Leftrightarrow 2$ is clear.   
To prove the last assertion, let $H = \Hom_{\LMu_k}(M,N)$ and let $Q \in \cP_k(N)$. By Lemma~\ref{boundedint}
there is $J \in \cP(k)$ such that \eqref{bddequ}   
holds.   
By the condition on $M$, we conclude that 
the closed  $k$-submodule
$\bigcap_{u \in H} u^{-1}(Q)$ is  open, that is $\mathit 2$ in the statement.  
\end{proof} 

 \begin{lemma}\label{nfa_6.10} 
 Let   $M,N$ be objects of $\cLMu_k$ with $N$ separated. 
 Then,  for any equicontinuous subset 
$H \subset \Hom_{\cLMu_k}(M,N)$, the closure $\ol{H}^\se$ (resp. $\ol{H}^\ba$) of $H$ in $\Lin^\se_k(M,N)$ (resp.  in $\Lin^\ba_k(M,N)$) is equicontinuous. In particular, $\ol{H}^\se$ (resp. $\ol{H}^\ba$) is 	
contained in   $\Hom_{\cLMu_k}(M,N)$  hence $\ol{H}^\se$ (resp. $\ol{H}^\ba$)  is the closure of $H$ in $\cL_k^\se(M,N)$ (resp. in $\cL_k^\ba(M,N)$).
 \end{lemma}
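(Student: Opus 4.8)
The plan is to prove the statement for the topology of simple convergence first, and then observe that the same argument works verbatim for the strong topology. Fix an equicontinuous $k$-submodule $H \subset \Hom_{\cLMu_k}(M,N)$ and let $\ol{H}^\se$ denote its closure in $\Lin^\se_k(M,N)$. The key point is that equicontinuity is a ``closed condition'' for the topology of simple convergence: the definition of equicontinuity quantifies, for each $Q \in \cP_k(N)$, over a \emph{single} open $P \in \cP_k(M)$ that works for all $f \in H$ simultaneously, and this condition survives pointwise limits precisely because $Q$ can be taken closed.

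First I would fix $Q \in \cP_k(N)$. Since $N$ is separated and $Q$ is an open $k$-submodule, $Q$ is also closed in $N$ (any open subgroup of a topological abelian group is closed, Remark~\ref{topgroup1}). By equicontinuity of $H$ there is $P \in \cP_k(M)$ with $f(P) \subset Q$ for every $f \in H$. I claim the same $P$ works for every $g \in \ol{H}^\se$. Indeed, fix $g \in \ol{H}^\se$ and $x \in P$; I must show $g(x) \in Q$. By definition of the topology of simple convergence, the set $W(\{x\},Q) = \{h \,|\, h(x) \in g(x) + Q\}$ is an open neighborhood of $g$ in $\Lin^\se_k(M,N)$, so it meets $H$: there is $f \in H$ with $f(x) \in g(x) + Q$. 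But $f(x) \in Q$ since $x \in P$, whence $g(x) \in f(x) - Q \subset Q$ because $Q$ is a subgroup. This shows $g(P) \subset Q$, so $\ol{H}^\se$ is equicontinuous; in particular every $g \in \ol{H}^\se$ is continuous, i.e. $\ol{H}^\se \subset \Hom_{\cLMu_k}(M,N)$.

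The final assertion that $\ol{H}^\se$ is the closure of $H$ in $\cL_k^\se(M,N)$ is then formal: $\cL_k^\se(M,N)$ carries the subspace topology from $\Lin^\se_k(M,N)$ (Definition~\ref{boundedhom2}), and since we have just shown $\ol{H}^\se$ is contained in $\Hom_{\cLMu_k}(M,N)$, the closure of $H$ in the subspace $\cL_k^\se(M,N)$ coincides with $\ol{H}^\se \cap \Hom_{\cLMu_k}(M,N) = \ol{H}^\se$. For the strong case, I would repeat the argument replacing the elementary neighborhood $W(\{x\},Q)$ by $W(B,Q)$ for a finite (or arbitrary) bounded set $B$; the only property used is that for each fixed $x$ the evaluation neighborhood $W(\{x\},Q)$ is open in $\Lin^\ba_k(M,N)$, which holds since the strong topology is finer than the simple one via the continuous bijection \eqref{simplevsbddgen}. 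Thus $\ol{H}^\ba$ is equicontinuous, contained in $\Hom_{\cLMu_k}(M,N)$, and equals the closure of $H$ in $\cL_k^\ba(M,N)$.

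I do not expect a genuine obstacle here; the lemma is a standard equicontinuity-is-closed argument. The one point requiring care — and the reason the hypothesis ``$N$ separated'' is invoked — is that $Q$ must be taken closed so that the inclusion $g(x) \in f(x) - Q$ can be upgraded from ``$g(x)$ lies arbitrarily close to a point of $Q$'' to ``$g(x) \in Q$''; here it is even cleaner because $Q$ is an \emph{open} submodule, hence automatically closed, so no limiting argument is needed and the containment is exact.
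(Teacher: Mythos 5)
Your proof is correct and is essentially the paper's argument in different clothing: the paper runs the same pointwise-approximation step using a net in $H$ converging weakly to $\phi \in \ol{H}^\se$ (eventually $\phi(v)-\phi_\alpha(v)\in Q$, so $\phi(v)\in Q$ by the subgroup property), and likewise disposes of the strong case by reducing it to the weak one via the fact that the strong topology is finer, just as you do. One small correction to your closing aside: the hypothesis ``$N$ separated'' is not what makes $Q$ closed --- any open subgroup of a topological abelian group is closed regardless of separation (Remark~\ref{topgroup1}), and as your own computation shows, the containment $g(x)\in f(x)-Q\subset Q$ uses only that $Q$ is a subgroup, so separatedness is not actually consumed in this argument (it matters for the completeness statements that follow, e.g.\ Proposition~\ref{stabquasicomplete}); also, in this paper the strong topology is uniform convergence on all of $M$ (i.e.\ $B=M$ in Definition~\ref{boundedhomgen}), not on bounded sets, though this does not affect your reduction.
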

 \begin{proof}  
 The assertion for the strong topology follows from the one for the weak topology. We then  prove that the closure $\ol{H}^\se$ of $H$ in the weak topology is equicontinuous. Let $Q \in \cP_k(N)$ and let $P \in \cP_k(M)$ be such that $f(P) \subset Q$ for any $f \in H$. We claim that this also holds for $f \in \ol{H}^\se$. 
 In fact, let  
  $\alpha \mapsto \phi_\alpha$, for $\alpha$ in the filtered set $(A,\leq)$, be a net in $H$ converging in $\Lin^\se_k(M,N)$ to $\phi \in \ol{H}^\se$.  
We want to show that $\phi(P) \subset Q$. In fact, for any $v \in P$, there is an index $\alpha_Q \in A$ such that, if $\alpha \geq \alpha_Q$, 
$\phi(v) - \phi_\alpha(v) \in Q$. Then $\phi(v)  \in Q$, hence $\phi(P) \subset Q$.  \end{proof}

 \begin{lemma}\label{pf_of_7.13} Let $M, N$ be objects of $\cLMu_k$. Then any subset $H \subset \Hom_{\cLMu_k} (M,N)$  which is complete for the weak topology on $\Hom_{\cLMu_k} (M,N)$, is complete for the strong topology, as well. 
\end{lemma}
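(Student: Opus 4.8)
The plan is to exploit that the strong topology on $\Hom_{\cLMu_k}(M,N)$ is finer than the weak one, so that, as recorded in \eqref{simplevsbddgen}, the identity induces a continuous bijection $\cL^\ba_k(M,N)\to\cL^\se_k(M,N)$. First I would pick a net $(\phi_\alpha)_{\alpha\in A}$ in $H$ which is Cauchy for the strong uniformity. Since every strong basic entourage contains a weak one, this net is a fortiori Cauchy for the weak uniformity; by the assumed weak completeness of $H$ it therefore converges, in the topology of simple convergence, to some $\phi\in H$ (in particular $\phi$ is again continuous, being an element of $H$). It then remains only to upgrade this to strong convergence $\phi_\alpha\to\phi$, i.e.\ uniform convergence on all of $M$.

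The key step is a limiting argument. Fix $Q\in\cP_k(N)$. By the strong Cauchy condition there is an index $\alpha_0\in A$ with $(\phi_\alpha-\phi_\beta)(M)\subset Q$ for all $\alpha,\beta\geq\alpha_0$. Now fix $\alpha\geq\alpha_0$ and an arbitrary $x\in M$. Weak convergence $\phi_\beta\to\phi$ entails in particular that $\phi_\beta(x)\to\phi(x)$ in $N$, so the net $\beta\mapsto\phi_\alpha(x)-\phi_\beta(x)$ converges to $\phi_\alpha(x)-\phi(x)$. This net lies in $Q$ for every $\beta\geq\alpha_0$, and since $Q$, being an open submodule of the topological abelian group $N^\ab$, is also closed (Remark~\ref{topgroup1}), the limit $\phi_\alpha(x)-\phi(x)$ lies in $Q$. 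As $x\in M$ was arbitrary, this gives $(\phi_\alpha-\phi)(M)\subset Q$, that is $\phi_\alpha-\phi\in W(M,Q)$, for every $\alpha\geq\alpha_0$.

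Letting $Q$ range over $\cP_k(N)$ this shows that $\phi_\alpha\to\phi$ in $\cL^\ba_k(M,N)$, with limit $\phi\in H$; hence $H$ is complete for the strong topology. The only delicate point is the passage to the limit inside $Q$, and I expect this to be the main (indeed the sole) obstacle; it is resolved cleanly by the fact that open submodules are closed, so that — in contrast to the equicontinuity statements of Lemma~\ref{nfa_6.10} — no equicontinuity of $H$ and no separation hypothesis on $M$ or $N$ is actually required. This is the relative analogue of the corresponding completeness transfer for locally convex spaces in \cite{schneider}.
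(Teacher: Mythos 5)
Your proof is correct and is essentially identical to the paper's own argument: both observe that a strongly Cauchy net is weakly Cauchy, invoke the assumed weak completeness of $H$ to obtain a limit $\phi \in H$, and then upgrade to strong convergence by fixing $\alpha \geq \alpha_0$, letting the other index vary, and passing to the pointwise limit inside $Q$ using that an open submodule of $N$ is closed. The only blemish is a wording slip in the first step: weak Cauchyness follows because every \emph{weak} basic entourage contains a \emph{strong} one (i.e.\ $W(M,Q) \subseteq W(B,Q)$ for $B \subset M$ finite and $Q \in \cP_k(N)$), not the reverse inclusion as you wrote it.
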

\begin{proof} 
Let $(f_i)_{i \in I}$ be a  Cauchy net in $H$ with respect to the strong topology. By assumption $(f_i)_{i \in I}$ converges for the weak topology to some $f \in H$. We show that $(f_i)_{i \in I}$ converges to $f$ in the strong topology, as well. For any $P \in \cP_k(N)$ there is an $i_P \in I$ such that $f_j - f_h \in \Hom_{\cLMu_k} (M,P)$ for any any $j,h \geq i_P$. So, let us fix $P$ and $h \geq i_P$; then $f_j  \in  f_h + \Hom_{\cLMu_k} (M,P)$ for any any $j \geq i_P$  so that, taking limits for $j \in I_{\geq i_P}$ in the weak topology, we deduce that 
$$
f \in f_h + \Hom_{\cLMu_k} (M,P)\;\;,\;\;\forall \; h \geq i_P\;.
$$
The conclusion follows. 
\end{proof}
The proof of \cite[Prop. 7.13]{schneider} easily generalizes to show the following
 \begin{prop} \label{stabquasicomplete} 
Let $M,N$ be objects of $\cLMu_k$ with  $N$ separated and complete.  For $\ast \in \{\se,\ba\}$ any equicontinuous closed subset 
 $H \subset \cL_k^\ast(M,N)$ is complete. 
 \end{prop}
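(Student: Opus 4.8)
The plan is to reduce the statement to a completeness criterion for the weak topology and then invoke the preceding lemmas. Concretely, I want to show that an equicontinuous closed subset $H \subset \cL^\ast_k(M,N)$ is complete for $\ast = \ba$ (the strong case), and by Lemma~\ref{pf_of_7.13} it suffices to establish completeness for the weak topology $\ast = \se$. So the first move is: by Lemma~\ref{pf_of_7.13}, a subset of $\Hom_{\cLMu_k}(M,N)$ complete for the weak topology is automatically complete for the strong topology. Hence I reduce everything to proving that $H$, assumed equicontinuous and closed in $\cL^\ba_k(M,N)$, is complete for the weak topology induced on it.

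The key point is then to produce a limit for an arbitrary Cauchy net in $H$ with respect to the weak topology. First I would take a Cauchy net $(f_i)_{i\in I}$ in $H$ for the topology of simple convergence. Since the weak topology on $\Hom_k(M^\for,N^\for)$ is the subspace topology of the product topology on $N^M$ (pointwise convergence), and since $N$ is separated and complete, for each $x \in M$ the net $(f_i(x))_{i}$ is Cauchy in $N$ and thus converges to a well-defined element $f(x) \in N$. As recalled in Remark~\ref{boundedhomgen1}, the pointwise limit $f$ is $k$-linear, so $f \in \Hom_k(M^\for,N^\for)$ and $(f_i)$ converges to $f$ in $\Lin^\se_k(M,N)$. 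The next step is to check that this limit $f$ actually lies in $H$ rather than merely in the ambient space $\Lin^\se_k(M,N)$: here I use that $H$ is equicontinuous, so by Lemma~\ref{nfa_6.10} its closure $\ol{H}^\se$ in $\Lin^\se_k(M,N)$ is still equicontinuous and therefore contained in $\Hom_{\cLMu_k}(M,N)$; moreover $\ol{H}^\se$ is precisely the closure of $H$ in $\cL^\se_k(M,N)$. Since $H$ is closed in $\cL^\ba_k(M,N)$, I must compare this with closedness in $\cL^\se_k(M,N)$, using equicontinuity to transfer between the two topologies — this is where Lemma~\ref{nfa_6.10} does the real work, guaranteeing $f \in \ol{H}^\se \cap \Hom_{\cLMu_k}(M,N)$ and that $f$ belongs to $H$ by closedness.

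I expect the main obstacle to be the bookkeeping around \emph{which} closure is being taken and in \emph{which} topology, since $H$ is assumed closed in the strong topology $\cL^\ba_k$ but the completeness I exhibit naturally lives in the weak topology $\cL^\se_k$. The resolution is exactly the content of Lemma~\ref{nfa_6.10}: for an equicontinuous set the weak and strong closures coincide as subsets of $\Hom_{\cLMu_k}(M,N)$, so a strongly closed equicontinuous $H$ is also weakly closed (within $\Hom_{\cLMu_k}(M,N)$), and therefore contains all weak limits of its nets. Thus the argument runs: a strong Cauchy net in $H$ is a fortiori a weak Cauchy net, its pointwise limit $f$ exists in $N^M$ and is linear and continuous by equicontinuity, $f$ lies in $H$ by (weak = strong) closedness, and finally Lemma~\ref{pf_of_7.13} upgrades weak completeness to strong completeness. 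I would close by remarking that the overall structure mirrors the proof of \cite[Prop. 7.13]{schneider}, with $N$ separated and complete playing the role that the complete base field plays in the classical nonarchimedean setting, and that no further hypotheses on $k$ beyond $k \in \cCRu$ are needed.
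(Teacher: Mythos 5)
Your handling of the weak case is fine, but the strong case contains a genuine gap. You claim that, by Lemma~\ref{nfa_6.10}, ``for an equicontinuous set the weak and strong closures coincide as subsets of $\Hom_{\cLMu_k}(M,N)$, so a strongly closed equicontinuous $H$ is also weakly closed.'' Lemma~\ref{nfa_6.10} asserts no such comparison: for each \emph{fixed} $\ast\in\{\se,\ba\}$ it says that the closure of $H$ in $\Lin^\ast_k(M,N)$ is equicontinuous, lands in $\Hom_{\cLMu_k}(M,N)$, and agrees with the closure taken in $\cL^\ast_k(M,N)$; it never relates $\ol{H}^\se$ to $\ol{H}^\ba$. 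Since the strong topology is finer one only has $\ol{H}^\ba\subset\ol{H}^\se$, and equicontinuity does not force equality: the open submodule $P$ it provides controls each coset $m+P$ separately, but $M/P$ may be infinite, so pointwise convergence on an equicontinuous set does not imply uniform convergence on all of $M$. Consequently your reduction ``it suffices to show $H$ is weakly complete'' is not merely unjustified but can fail outright: the analogue of the unit vectors in a dual unit ball is an equicontinuous set that is strongly closed and strongly complete, yet a weakly Cauchy net in it (the vectors themselves) converges weakly to $0$ outside the set, so such an $H$ is \emph{not} weakly complete even though the proposition's conclusion holds for it. Applying Lemma~\ref{pf_of_7.13} to $H$ itself, as you propose, therefore cannot work in general.

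The paper circumvents this by applying the weak case not to $H$ but to $\ol{H}^\se$, which is weakly closed by construction and hence weakly complete; Lemma~\ref{pf_of_7.13} then upgrades $\ol{H}^\se$ to strong completeness, and $H$, being strongly closed inside the strongly complete $\ol{H}^\se$, is strongly complete. Alternatively, your net argument can be repaired without any weak-closedness claim about $H$: given a strongly Cauchy net $(f_i)$ in $H$, its pointwise limit $f$ exists and is continuous since $f\in\ol{H}^\se\subset\Hom_{\cLMu_k}(M,N)$ by Lemma~\ref{nfa_6.10}; the computation inside the proof of Lemma~\ref{pf_of_7.13} (fix $P\in\cP_k(N)$, note $f_j\in f_h+W(M,P)$ for $j,h$ large, and pass to the pointwise limit in $j$, using that $P$ is open hence closed in $N$) shows that $(f_i)$ converges to $f$ in the \emph{strong} topology; only then does closedness of $H$ in $\cL^\ba_k(M,N)$ give $f\in H$. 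The moral is that the weak topology should serve only to manufacture the candidate limit, while membership in $H$ must be secured by strong convergence and strong closedness.
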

\begin{proof} Assume first $\ast = \se$.
Lemma~\ref{nfa_6.10} shows that the closure $\ol{H}^\se$ of $H$   in $\Lin^\se_k (M,N)$ is equicontinuous, so that $\ol{H}^\se$ is the closure of $H$   in $\cL^\se_k (M,N)$, hence coincides with $H$. 
So, $H$ is closed in $\Lin^\se_k (M,N)$.  
It then suffices to observe that, by Remark~\ref{boundedhomgen1}, 
$\Lin^\se_k (M,N)$ is  complete. 
\par Let now  $\ast = \ba$, so that $H$ is closed in $\cL_k^\ba(M,N)$. Let $\ol{H}^\se$ be the closure of $H$ 
in $\Lin^\se_k (M,N)$. Lemma~\ref{nfa_6.10} shows that  $\ol{H}^\se \subset \Hom_{\cLMu_k} (M,N)$ is equicontinuous and it is closed in $\cL_k^\se (M,N)$.  By the previous case, $\ol{H}^\se$ is a complete subset of $\cL_k^\se (M,N)$. 
By Lemma~\ref{pf_of_7.13} any subset of $\Hom_{\cLMu_k} (M,N)$  which is complete for the weak topology, is complete for the strong topology, as well. So, $H$, which is closed in $\ol{H}^\se$ for the strong topology,  is complete for the strong topology.
\end{proof}
We also have (\cf    \cite[Cor. 7.14]{schneider}):
\begin{prop} \label{barrelquasicomplete}  
Let $k \in \cCRu$ and let $M,N$ be objects of $\cLMu_k$ with $N$  complete. If $M$  pseudocanonical (in particular, if $M$ is barrelled), then   ${\cL}^\se_k(M,N)$  is  
complete.
\end{prop}
\begin{proof} 
By Proposition~\ref{equivequicont}, ${\cL}^\se_k(M,N)$ is equicontinuous. By 
Proposition~\ref{stabquasicomplete}, ${\cL}^\se_k(M,N)$ is complete.
\end{proof}
\begin{rmk} By Theorem~\ref{borncomplete},  ${\cL}^\ba_k(M,N)$  is  complete as soon as $N$ is. 
\end{rmk}
\end{subsection}
\begin{subsection}{Adjunctions and closedness} \label{adjunct}
\begin{lemma} \label{genadj0} Let  $M_1,M_2,N \in \cLMu_k$. There are
 canonical $k$-linear isomorphisms
\beq  
\label{adjgen} \begin{split}
 &\Bil^\un_k(M_1 \times M_2,N)  \iso  \\
&\left\{ \Phi:M_1 \to \cL^\ba_k(M_2,N) : ~
 \parbox[c][3em][c]{0.38\textwidth}{$\Phi$   is $k$-linear, continuous,  and \\ $\Phi(M_1)$  is equicontinuous}  ~ \right\}
 \iso  \\
&\left\{ \Psi:M_2 \to \cL^\ba_k(M_1,N) : ~
 \parbox[c][3em][c]{0.38\textwidth}{$\Psi$   is $k$-linear, continuous,  and \\ $\Psi(M_2)$  is equicontinuous}  ~ \right\}\;.
\end{split}
\eeq   
\end{lemma}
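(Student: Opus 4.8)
The plan is to realise both bijections by the usual currying of a bilinear map, and then to show that the three defining conditions on the right-hand side match, term by term, the single uniform-continuity condition on $\varphi$ recorded in part 2 of Remark~\ref{sepcont}. Concretely, to $\varphi \in \Bil^\un_k(M_1 \times M_2, N)$ I would associate the map $\Phi$ with $\Phi(m_1) := \varphi(m_1,-) : M_2 \to N$, and conversely, to a $\Phi$ as in the statement I would associate the $k$-bilinear map $\varphi(m_1,m_2) := \Phi(m_1)(m_2)$. That these two assignments are mutually inverse $k$-linear isomorphisms at the level of the underlying $k$-modules $\Bil_k(M_1\times M_2,N)$ and $\Hom_k(M_1^\for,\Hom_k(M_2^\for,N^\for))$ is immediate from $k$-bilinearity; the entire content lies in verifying that the topological side-conditions correspond.

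First I would record, via part 2 of Remark~\ref{sepcont}, that $\varphi$ is uniformly continuous precisely when for every $Q \in \cP_k(N)$ there exist $P_1 \in \cP_k(M_1)$ and $P_2 \in \cP_k(M_2)$ with $\varphi(M_1 \times P_2) + \varphi(P_1 \times M_2) \subseteq Q$. Since $Q$ is a $k$-submodule and each summand contains $0$ (take the other factor to be $\{0\}$), this single condition is equivalent to the conjunction of $\varphi(M_1 \times P_2) \subseteq Q$ and $\varphi(P_1 \times M_2) \subseteq Q$; conversely two such inclusions into a common $Q$ recombine because $Q + Q = Q$. Next I would translate each half. The inclusion $\varphi(M_1 \times P_2) \subseteq Q$ says exactly that every $\Phi(m_1)$ carries $P_2$ into $Q$, i.e. that $\Phi(M_1)$ is equicontinuous in the sense of Definition~\ref{equicont}; specialised to a single point $m_1$ it also shows that each $\Phi(m_1)=\varphi(m_1,-)$ is continuous, so that it genuinely lies in $\Hom_{\cLMu_k}(M_2,N)$, the underlying module of $\cL^\ba_k(M_2,N)$. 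The inclusion $\varphi(P_1 \times M_2) \subseteq Q$ says exactly that $\Phi(P_1) \subseteq W(M_2,Q)$, i.e. that $\Phi$ is continuous at $0$, hence continuous, for the strong topology on $\cL^\ba_k(M_2,N)$ (a $k$-linear map of linearly topologized modules being continuous iff continuous at $0$).

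Assembling these equivalences yields the first displayed bijection, and the second follows verbatim after interchanging $M_1$ and $M_2$, the uniform-continuity condition being symmetric in the two factors. The $k$-linearity of the correspondences and the check that they are inverse to one another are routine. The one genuinely non-formal step — and the point I would treat most carefully — is the splitting and recombination of the additive condition $\varphi(M_1 \times P_2) + \varphi(P_1 \times M_2) \subseteq Q$ into its two halves: equicontinuity of $\Phi(M_1)$ is a condition over \emph{all} of $M_1$ (a fixed $P_2$ must work simultaneously for every $\Phi(m_1)$), whereas continuity of $\Phi$ constrains only the $\Phi(m_1)$ with $m_1$ near $0$; neither implies the other, so it is essential that the uniform-continuity hypothesis delivers both at once, and this is precisely what distinguishes $\Bil^\un_k$ from the separately continuous or merely continuous variants. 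I would also note that when $M_2$ is pseudocanonical the equicontinuity clause is automatic by the last assertion of Proposition~\ref{equivequicont}, which explains why it may be dropped in the cases of principal interest.
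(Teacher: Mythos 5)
Your proposal is correct and takes essentially the same route as the paper's own proof: both curry $\varphi$ into $\Phi$ and translate the single uniform-continuity condition of part~2 of Remark~\ref{sepcont} into the three clauses (continuity of each $\Phi(m_1)$, continuity of $\Phi$ via $\Phi(P_1)\subset W(M_2,Q)$, equicontinuity of $\Phi(M_1)$), with symmetry giving the $\Psi$-side. Your explicit splitting and recombination of the sum condition $\varphi(M_1\times P_2)+\varphi(P_1\times M_2)\subset Q$, and the closing observation about the pseudocanonical case via Proposition~\ref{equivequicont}, are sound elaborations of steps the paper leaves implicit.
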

\begin{proof}
Let $\varphi:  M_1 \times M_2 \longrightarrow N$ be an element of   
$\Bil^\un_k(M_1 \times M_2,N)$. Let $\Phi$ be associated to $\varphi$ as in Notation~\ref{linbilin}.
For any $Q \in \cP_k(N)$  
there exist $P_i \in \cP_k(M_i)$, for $i=1,2$, such that 
$$
\varphi (M_1 \times P_2) + \varphi (P_1 \times M_2) \subset Q  \;.
$$ 
For any $m_1 \in M_1$,  $\varphi (\{m_1\} \times P_2) \subset Q$ implies that   $\Phi(m_1) \in \Hom_{\cLMu_k}(M_2,N)$. Since for any fixed $Q \in \cP_k(N)$ 
there is $P_1 \in \cP_k(M_1)$ such that  $\varphi (P_1 \times M_2) \subset Q$ 
that is 
$\Phi(P_1) \subset W(M_2,Q)$, 
 we see that 
$\Phi: M_1 \longrightarrow \cL^\ba_k (M_2,N)$   
is continuous. Finally,  
for any   $Q \in \cP_k(N)$, the existence of $P_2 \in \cP_k(M_2)$ such that $\varphi(M_1 \times P_2) \subset Q$ shows that $\Phi (M_1) \subset \Hom_{\cLMu_k}(M_2,N)$ is equicontinuous. 
The argument can obviously be reversed, to show that  the $k$-bilinear map $\varphi$ associated   to 
$\Phi \in \Hom_{\cLMu_k}(M_1,\cL^\ba(M_2,N))$, under the \rhs assumptions on $\Phi$,    by the rule 
$\varphi(m_1,m_2) = \Phi(m_1)(m_2)$, for $m_i \in M_i$, $i=1,2$,  is in $\Bil^\un_k(M_1 \times M_2,N)$. 
\end{proof}
\begin{cor} \label{adjgen1} Let $k \in \cCRou$, 
$M_1,M_2 \in \cCLMpscan_k$, $N \in \cCLMu_k$.
Then  \eqref{adjgen} becomes 
\beq  
\label{adjgen2}  \begin{split}
  \Bil^\un_k(M_1 \times M_2,N)  \iso \Hom_{\cLMu_k} (M_1 \wt^\un_k M_2,N)  \iso
  \\
   \Hom_{\cLMu_k}(M_1, \cL^\ba_k(M_2,N))  
  \iso  \Hom_{\cLMu_k}(M_2, \cL^\ba_k(M_1,N)) \;.
 \end{split} \eeq  
\end{cor}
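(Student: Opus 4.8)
The plan is to assemble three ingredients already established in the excerpt: the universal property of the complete tensor product (Proposition~\ref{compltenshu}), the bijections of Lemma~\ref{genadj0}, and the automatic equicontinuity granted by pseudocanonicity (Proposition~\ref{equivequicont}). The corollary is essentially a repackaging of these, so I expect no deep obstacle; the only points requiring care are checking that the internal \emph{Hom} objects genuinely lie in $\cCLMu_k$ and isolating where the hypothesis $M_1,M_2\in\cCLMpscan_k$ is used.

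First I would obtain the isomorphism $\Bil^\un_k(M_1 \times M_2,N)\iso \Hom_{\cCLMu_k}(M_1 \wt^\un_k M_2,N)$ directly from Proposition~\ref{compltenshu}, since $M_1 \wt^\un_k M_2$ corepresents the functor $X \mapsto \Bil^\un_k(M_1 \times M_2,X)$ on $\cCLMu_k$ and $N\in\cCLMu_k$. This step uses nothing beyond $M_1,M_2,N\in\cCLMu_k$. Next, before invoking Lemma~\ref{genadj0}, I would verify that $\cL^\ba_k(M_2,N)$ and $\cL^\ba_k(M_1,N)$ are themselves objects of $\cCLMu_k$: since $N$ is complete, Theorem~\ref{borncomplete} gives completeness of each $\cL^\ba_k(M_i,N)$, while uniformity holds because each is a $k$-submodule, with the subspace topology, of the uniform module $\Lin^\ba_k(M_i,N)$. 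Consequently $\Hom_{\cCLMu_k}(M_1,\cL^\ba_k(M_2,N))$ and $\Hom_{\cCLMu_k}(M_2,\cL^\ba_k(M_1,N))$ are meaningful, and a morphism in $\cCLMu_k$ is exactly a continuous $k$-linear map.

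Then Lemma~\ref{genadj0} identifies $\Bil^\un_k(M_1 \times M_2,N)$ with the $k$-module of continuous $k$-linear maps $\Phi:M_1 \to \cL^\ba_k(M_2,N)$ such that $\Phi(M_1)$ is equicontinuous, and symmetrically with the analogous $k$-module of maps $\Psi:M_2 \to \cL^\ba_k(M_1,N)$. The decisive step is to discard the equicontinuity constraints. Here I would use that $M_1,M_2\in\cCLMpscan_k$ are pseudocanonical: by Proposition~\ref{equivequicont}, since $M_2$ is pseudocanonical the entire module $\Hom_{\cLMu_k}(M_2,N)$ is equicontinuous, so the $k$-submodule $\Phi(M_1)$ is automatically equicontinuous; dually, $\Psi(M_2)\subset\Hom_{\cLMu_k}(M_1,N)$ is equicontinuous because $M_1$ is pseudocanonical. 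Hence the two sets of Lemma~\ref{genadj0} collapse to the full $\Hom$-modules $\Hom_{\cCLMu_k}(M_1,\cL^\ba_k(M_2,N))$ and $\Hom_{\cCLMu_k}(M_2,\cL^\ba_k(M_1,N))$.

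Combining these identifications with the tensor isomorphism of the first step yields the asserted chain \eqref{adjgen2}, and all the isomorphisms involved are $k$-linear and natural since they already are so in Lemma~\ref{genadj0} and in the corepresentability of Proposition~\ref{compltenshu}. As noted, the hard part is not any computation but recognizing that pseudocanonicity of $M_1$ and $M_2$ is precisely what trivializes the equicontinuity side-conditions of Lemma~\ref{genadj0}, which is where the standing hypothesis $k\in\cCRouclop$ (hence $\cCLMpscan_k=\cCLMouclop_k$) enters through Remark~\ref{pseudocan2} and Proposition~\ref{equivequicont}.
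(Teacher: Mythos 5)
Your proof is correct and follows essentially the same route as the paper's (very terse) proof: corepresentability (Proposition~\ref{compltenshu}) for the tensor factor, Lemma~\ref{genadj0} for the exponential correspondence, Proposition~\ref{equivequicont} applied to the pseudocanonical $M_1,M_2$ to make the equicontinuity side-conditions automatic, and completeness of $\cL^\ba_k(M_i,N)$ from $N$ complete via Theorem~\ref{borncomplete}. The only cosmetic difference is that the paper's proof additionally remarks that $M_1$, $M_2$ and $M_1\wt^\un_k M_2$ are barrelled because $k\in\cCRouclop$, a fact your argument legitimately bypasses since Theorem~\ref{borncomplete} gives completeness of the internal Homs without any barrelledness hypothesis.
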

\begin{proof} Since $M_1,M_2 \in \cCLMpscan_k$,  Proposition~\ref{equivequicont} applies to prove that 
$\Hom_{\cLMu_k}(M_i,N)$ are equicontinuous for $i=1,2$. 
\end{proof} 
\begin{cor} \label{adjcan1} Let $k \in \cCRou$ and $M_1,M_2,N \in \cCLMcan_k$. Then 
$$
\Hom_{\cCLMcan_k} (M_1 \wt^\un_k M_2,N)  \iso   \Hom_{\cCLMcan_k} (M_1, \cL^\can_k(M_2,N)) 
   \iso  \Hom_{\cCLMcan_k}(M_2, \cL^\can_k(M_1,N)) \;.
 $$
\end{cor}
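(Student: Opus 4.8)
The plan is to deduce everything from a single \emph{internal} tensor--Hom adjunction inside $\cCLMcan_k$, followed by the standard currying argument via Yoneda. The analytic content is already packaged in Corollary~\ref{adjgen1}; what remains is to promote it to an internal-$\Hom$ statement and then to reassociate.

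First I would establish that for all $A,B,N$ in $\cCLMcan_k$ there is a natural isomorphism
\begin{equation*}
\Hom_{\cCLMcan_k}(A \wt^\un_k B, N) \iso \Hom_{\cCLMcan_k}(A, \cL^\can_k(B,N)) \tag{$\ast$}
\end{equation*}
To obtain $(\ast)$: since $\cCLMcan_k$ is a \emph{full} subcategory of $\cCLMu_k$ and $A \wt^\un_k B$ is canonical (Corollary~\ref{tenscan}, valid as $k\in\cCRouclop\subset\cCRou$), the left-hand group equals $\Hom_{\cCLMu_k}(A \wt^\un_k B, N)$. A canonical module is in particular pseudocanonical and $N$ is complete, so Corollary~\ref{adjgen1} applies and gives $\Hom_{\cCLMu_k}(A \wt^\un_k B, N) \iso \Hom_{\cCLMu_k}(A, \cL^\ba_k(B,N))$. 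Finally $A \in \cCLMcan_k$ while $\cL^\ba_k(B,N) \in \cCLMu_k$, so the right-adjoint property of $(-)^\can$ (Proposition~\ref{scan-adjoint}) rewrites the last group as $\Hom_{\cCLMcan_k}(A, (\cL^\ba_k(B,N))^\can) = \Hom_{\cCLMcan_k}(A, \cL^\can_k(B,N))$, which is $(\ast)$. Each of the three steps is natural in $A$ and in $N$, hence so is the composite.

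With $(\ast)$ available I would argue by Yoneda inside $\cCLMcan_k$. Both $\cL^\can_k(M_1 \wt^\un_k M_2, N)$ and $\cL^\can_k(M_1, \cL^\can_k(M_2,N))$ lie in $\cCLMcan_k$ (the latter because $\cL^\can_k(M_2,N)\in\cCLMcan_k$ by Definition~\ref{cancomplete2}), so it suffices to exhibit, for every $C \in \cCLMcan_k$, a natural isomorphism between $\Hom_{\cCLMcan_k}(C, \cL^\can_k(M_1 \wt^\un_k M_2, N))$ and $\Hom_{\cCLMcan_k}(C, \cL^\can_k(M_1, \cL^\can_k(M_2,N)))$. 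Applying $(\ast)$ to the first (with $B = M_1 \wt^\un_k M_2$, which is canonical by Corollary~\ref{tenscan}) gives $\Hom_{\cCLMcan_k}(C \wt^\un_k (M_1 \wt^\un_k M_2), N)$. Applying $(\ast)$ twice to the second --- first with $A=C,\ B=M_1,\ N\rightsquigarrow \cL^\can_k(M_2,N)$, then with $A = C \wt^\un_k M_1$ (canonical) and $B=M_2$ --- gives $\Hom_{\cCLMcan_k}((C \wt^\un_k M_1) \wt^\un_k M_2, N)$. The associativity constraint of the symmetric monoidal structure $(\cCLMcan_k, \wt^\un_k, k)$ (Corollary~\ref{cor-mon-cat-compl3}) identifies $(C \wt^\un_k M_1) \wt^\un_k M_2 \cong C \wt^\un_k (M_1 \wt^\un_k M_2)$ naturally in $C$, so the two functors of $C$ coincide and Yoneda yields the first isomorphism. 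The isomorphism with the third term, where $M_1$ and $M_2$ are interchanged, follows identically upon replacing the associator by the symmetry constraint of $\wt^\un_k$.

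The hypothesis checks (pseudocanonicity and canonicity of the various tensor products, completeness of the internal $\Hom$'s) are routine given the cited results. \textbf{The main obstacle} is bookkeeping of naturality, so that Yoneda is genuinely applicable: one must be sure that the tensor--Hom bijection of Corollary~\ref{adjgen1} is natural in all of its arguments rather than a mere bijection of underlying sets for fixed objects, that the unit/counit of the $(-)^\can$ adjunction are natural transformations, and that the associator invoked is the structural one of Corollary~\ref{cor-mon-cat-compl3}. Since the cited isomorphisms are asserted to be \emph{canonical}, these naturalities hold and the argument closes.
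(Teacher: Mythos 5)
Your proof is correct, but it takes a genuinely different route from the paper's. The paper's own proof stays entirely at the level of internal Hom \emph{objects}: from \eqref{adjgen2} it ``immediately deduces'' isomorphisms $\cL^\ba_k(M_1\wt^\un_k M_2,N)\iso\cL^\ba_k(M_1,\cL^\ba_k(M_2,N))\iso\cL^\ba_k(M_2,\cL^\ba_k(M_1,N))$ --- that is, it upgrades the currying bijection of Hom-sets to a homeomorphism for the strong topologies of uniform convergence --- then replaces the inner $\cL^\ba_k$ by $\cL^\can_k$ invoking right-adjointness of $(-)^\can$ (legitimate since the sources $M_i$ are canonical), and concludes by applying the functor $(-)^\can$ to the outer Hom. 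You instead first promote Corollary~\ref{adjgen1} to the external adjunction $(\ast)$, $\Hom_{\cCLMcan_k}(A\wt^\un_k B,N)\iso\Hom_{\cCLMcan_k}(A,\cL^\can_k(B,N))$ --- in effect proving the closedness assertion of Theorem~\ref{cor-closed-scan} directly --- and then obtain the internal isomorphisms purely formally from the associator and symmetry of $\wt^\un_k$ plus Yoneda inside $\cCLMcan_k$. What your route buys: you never have to verify that currying is compatible with the topologies on the internal Homs, which is exactly the content hidden in the paper's ``immediately deduce'' step and is the only point of genuine topological substance; for you everything reduces to naturality, which is visible from the explicit formulas of Lemma~\ref{genadj0} and the unit/counit of Proposition~\ref{scan-adjoint}. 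What the paper's route buys: it is shorter, identifies the underlying $k$-modules of the three objects concretely, and produces the sharper intermediate isomorphism of the $\cL^\ba_k$'s before canonicalization, which has independent interest. Your hypothesis bookkeeping is in order: canonical implies pseudocanonical (Remark~\ref{strong-can-can}), so Corollary~\ref{adjgen1} applies; $\cL^\ba_k(B,N)$ is complete by Theorem~\ref{borncomplete}, so $(-)^\can$ and Definition~\ref{cancomplete2} apply; $C\wt^\un_k M_1$ and $M_1\wt^\un_k M_2$ are canonical by Corollary~\ref{tenscan}; and since both outer objects lie in the full subcategory $\cCLMcan_k$, Yoneda there legitimately yields an isomorphism in $\cCLMcan_k$, which is what the corollary asserts.
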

\begin{proof} Follows from \eqref{adjgen2} by right-adjointness of $(-)^\can$.
\end{proof} 
\begin{cor} \label{adjcan2} Let $k \in \cCRou$ and $M,N \in \cCLMcan_k$.   Then
\beq  \label{adjcan31}
\Hom_{\cCLMcan_k}  (\cL^\can_k (M,N) \wt^\un_k M, N)  \iso \Hom_{\cCLMcan_k} (\cL^\can_k (M,N), \cL^\can_k (M,N)) \;.
\eeq
Therefore the identity morphism $\cL^\can_k (M,N) \iso \cL^\can_k (M,N)$ determines a morphism
\beq  \label{adjcan32}
{\rm ev}_{M,N}: \cL^\can_k (M,N)  \wt^\un_k M \map{} N
\eeq
called \emph{evaluation}. In particular, for any $M \in  \cCLMcan_k$, we obtain a  pairing 
\beq  \label{adjcan33}
{\rm ev}_M:   \cL^\can_k (M,k) \; \wt^\un_k \;  M \map{} k \;.
\eeq
\end{cor}
\begin{proof} 
We take $M_1 =  \cL^\can_k (M,N)$ and $M_2 = M$ in Corollary~\ref{adjcan1} to get 
\eqref{adjcan31}. The rest follows. 
\end{proof} 
\begin{thm} \label{cor-closed-scan} Let $k$ be in $\cCRou$. \ben
\item The category $\cCLMcan_k = (\cCLMcan_k,  \wt^\un_k, k)$  is a quasi-abelian complete  symmetric monoidal subcategory of $\cCLMou_k$. It is moreover closed with internal Hom $\cL^\can_k$ and has enough projectives. 
\item Any projective object of $\cCLMcan_k$ is $\wt^\un_k$-flat.  
\item If $k \in \cCRoufop$ then $\cCLMcan_k$ is bicomplete and coincides with the full subcategory of $\cCLMou_k$
of the objects that are complete in their naive canonical topology.
\een
\end{thm}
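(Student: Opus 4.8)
The plan is to assemble this summary theorem from the results established throughout the preceding sections, since each of its assertions has essentially already been proven; the remaining work consists in matching every claim to its source and checking that the standing hypothesis $k \in \cCRouclop$ (resp. $k \in \cCRoufop$) is the one actually required there.

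First I would record the structural properties valid under $k \in \cCRouclop$. That $\cCLMcan_k$ is a complete quasi-abelian category is exactly Theorem~\ref{quasiabelian-scan}, completeness having been explained in Remark~\ref{limcan}, where $\limit^\can$ is computed by applying $(-)^\can$ to limits taken in $\cCLMou_k$. The symmetric monoidal structure with tensor $\wt^\un_k$ and unit $k$ is Corollary~\ref{cor-mon-cat-compl3}, which rests on the stability of canonical modules under $\wt^\un_k$ established in Corollary~\ref{tenscan}. For closedness I would invoke the internal Hom $\cL^\can_k(M,N) = (\cL^\ba_k(M,N))^\can$ of Definition~\ref{cancomplete2} together with the bifunctorial adjunction isomorphisms of Corollary~\ref{adjcan1}: these exhibit $\cL^\can_k$ as right adjoint to $-\wt^\un_k M$ inside $\cCLMcan_k$, which is precisely the closedness requirement. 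It is worth noting that Corollary~\ref{adjcan1} is the point where the full force of $k \in \cCRouclop$ is genuinely used, through the barrelledness of $M_1$, $M_2$ and $M_1 \wt^\un_k M_2$ and the equicontinuity of the relevant Hom-modules supplied by Proposition~\ref{equivequicont}; the clop hypothesis therefore enters here, not merely in the existence of limits.

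Next I would treat projectivity and flatness. The category has enough projectives directly from Definition~\ref{strong-can-def}: every object $M$ receives the open surjection $\pi_M : S(M) = k^{(M,\un)} \to M$ of Notation~\ref{S(M)} and Proposition~\ref{quotient-sum}, while $k^{(A,\un)}$ is projective by Lemma~\ref{kproj}; thus $\pi_M$ is a strict epimorphism from a projective object, exactly as required by Definition~\ref{enough}. The identification of projectives as direct summands of objects $k^{(A,\un)}$ and their pro-flatness is Proposition~\ref{sumproj}, and pro-flat objects of $\cCLMcan_k$ are $\wt^\un_k$-flat by Corollary~\ref{topflat}; combining these yields that any projective object is $\wt^\un_k$-flat.

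Finally, for the sharper conclusion under $k \in \cCRoufop$, I would cite Corollary~\ref{colimcan1} for bicompleteness (colimits being computed as the naive-canonical completion of the algebraic colimit) and Theorem~\ref{2ndmainthm} for the identification of $\cCLMcan_k$ with the full subcategory of $\cCLMou_k$ of objects complete in their naive canonical topology; the latter rests on part $\mathit 1$ of Corollary~\ref{naivecancor}, namely that $\nwhat{M} \to M^{\max} = M^\can$ is an isomorphism once $k \in \cCRoufop$. No fresh verification is needed for the monoidal and closed data on this subcategory, since it is literally the same category described in two ways. Consequently the only real obstacle is careful bookkeeping: making sure each cited result is applied under hypotheses it actually assumes. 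The most delicate instance of this is the closedness adjunction of Corollary~\ref{adjcan1}, which is exactly where the clop condition on $k$ is indispensable and cannot be relaxed.
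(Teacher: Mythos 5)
Your proposal is correct and matches the paper's approach exactly: the paper gives no separate proof for Theorem~\ref{cor-closed-scan}, treating it as a summary assembled from Theorem~\ref{quasiabelian-scan}, Corollaries~\ref{cor-mon-cat-compl3}, \ref{adjcan1}, \ref{topflat}, \ref{colimcan1}, Proposition~\ref{sumproj}, Lemma~\ref{kproj} and Theorem~\ref{2ndmainthm}, which is precisely the bookkeeping you carry out, including the correct observation that the Hom-set adjunction for closedness follows from Corollary~\ref{adjcan1} (or directly from Corollary~\ref{adjgen1} plus right-adjointness of $(-)^\can$). One small inaccuracy in your aside: the clop hypothesis is not used \emph{only} in Corollary~\ref{adjcan1} --- it already enters Theorem~\ref{quasiabelian-scan} itself (via Corollary~\ref{canmax2}, on which Corollaries~\ref{open-map} and \ref{canrex} and Proposition~\ref{sumproj} depend), though since you cite that theorem with its stated hypothesis this does not affect the validity of your argument.
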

\begin{proof} $\mathit 1.$ This part follows from  Theorem~\ref{quasiabelian-scan},   Corollary~\ref{cor-mon-cat-compl3}, Corollary~\ref{adjcan1}  and Remark~\ref{existproj}. 
\par $\mathit 2.$ The statement is clear for a projective object of $\cCLMcan_k$  of the form $k^{(A,\un)}$ because then for any $M \in \cCLMcan_k$ one has 
$$
k^{(A,\un)} \wt^\un_k M = M^{(A,\un)} \;.
$$
Since projectives are direct summands of modules of the previous form, the statement follows in full 
generality. 
\par $\mathit 3.$ This follows from Corollary~\ref{colimcan1}
and part $\mathit 1$ of Corollary~\ref{naivecancor}.
\end{proof}
\begin{rmk} \label{notclosed}\hfill  \ben
\item
The category $\cCLMcan_k$ has exact products \cite[Prop. 1.4.5]{schneiders}.
\item From the fact that $\cCLMcan_k$ is closed, it follows formally that $\wt^\un_k$ commutes with colimits in $\cCLMcan_k$. 
\een
\end{rmk}
\begin{defn}  Let $k \in \cCRou$. A (possibly non-commutative) ring-object of the closed symmetric monoidal  category  $\cCLMcan_k$ will be called a 
\emph{canonical $k$-algebra}. We will  denote by  $\cR^\can_k$ the full subcategory of $\cRu$ consisting of canonical $k$-algebras. 
\end{defn}
\begin{prop} For $k \in \cCRouclop$ (resp. $\in \cCRoufop$) a commutative canonical $k$-algebra is an object of $\cCRouclop$ (resp. of $\cCRoufop$) and its structural morphism is $\clop$-adic (resp. $\op$-adic).
\end{prop}
\begin{proof} For $k \in \cCRouclop$ (resp. $\in \cCRoufop$) any object of $\cCLMcan_k$ is in $\cCRouclop$ (resp. $\cCRoufop$).
\end{proof}
\end{subsection} 

\end{section}


\end{document}